%% file: main.tex
\documentclass[a4paper]{amsart}
\usepackage[T1]{fontenc}
\usepackage{amstext}
\usepackage{amsthm}
\usepackage{amssymb}
\usepackage{stackrel}
\usepackage[pdfusetitle,
 bookmarks=true,bookmarksnumbered=false,bookmarksopen=false,
 breaklinks=false,pdfborder={0 0 0},pdfborderstyle={},backref=false,colorlinks=false]
 {hyperref}

\makeatletter

\numberwithin{equation}{section}
\numberwithin{figure}{section}

\usepackage{tikz-cd}
\usepackage{mathtools}
\usepackage{adjustbox}
\usepackage{enumitem}
\tikzcdset{scale cd/.style={every label/.append style={scale=#1},
    cells={nodes={scale=#1}}}}
\usepackage{quiver}
\usepackage{xcolor}
\usepackage{eucal}
\usepackage{tikz}
\usetikzlibrary{arrows.meta, positioning, shapes, fit}
\newtheorem{innercustomthm}{Theorem}
\newenvironment{customthm}[1]
  {\renewcommand\theinnercustomthm{#1}\innercustomthm}
  {\endinnercustomthm}
\DeclareMathSymbol{:}{\mathpunct}{operators}{"3A}
\usepackage{mleftright}
\mleftright
\usepackage[font=footnotesize]{caption}

\makeatother

\theoremstyle{plain}
\newtheorem{thm}{\protect\theoremname}[section]
\theoremstyle{definition}
\newtheorem{defn}[thm]{\protect\definitionname}
\theoremstyle{plain}
\newtheorem{prop}[thm]{\protect\propositionname}
\theoremstyle{remark}
\newtheorem{rem}[thm]{\protect\remarkname}
\theoremstyle{definition}
\newtheorem{example}[thm]{\protect\examplename}
\theoremstyle{plain}
\newtheorem{lem}[thm]{\protect\lemmaname}
\theoremstyle{definition}
\newtheorem{notation}[thm]{\protect\notationname}
\newtheorem{construction}[thm]{\protect\constructionname}
\theoremstyle{plain}
\newtheorem{cor}[thm]{\protect\corollaryname}
\theoremstyle{definition}
\newtheorem{recollection}[thm]{\protect\recollectionname}

\providecommand{\constructionname}{Construction}
\providecommand{\corollaryname}{Corollary}
\providecommand{\definitionname}{Definition}
\providecommand{\examplename}{Example}
\providecommand{\lemmaname}{Lemma}
\providecommand{\notationname}{Notation}
\providecommand{\propositionname}{Proposition}
\providecommand{\recollectionname}{Recollection}
\providecommand{\remarkname}{Remark}
\providecommand{\theoremname}{Theorem}

\begin{document}
\include{macros}

\vspace*{-1.5cm}
\title{On the equivalence of two approaches to multiplicative homotopy theories}
\email{arakawa.kensuke.22c@st.kyoto-u.ac.jp}
\address{Department of Mathematics, Kyoto University, Kyoto, 606-8502, Japan}
\author{Kensuke Arakawa}
\begin{abstract}
We study the relation of two frameworks for multiplicative homotopy
theories: Presentably symmetric monoidal $\infty$-categories and
combinatorial symmetric monoidal model categories. Our main theorem
establishes an equivalence of their homotopy theories.

As consequences, we solve Pavlov's conjecture and obtain a solution
to a special case of Hovey's 10th problem. We also prove several variations
of the main theorem, such as an analog for non-symmetric monoidal
semi-model categories. 
\end{abstract}

\maketitle
A recurring theme in homotopy theory is that the homotopy theory of
homotopy theories---possibly with additional structures---is equivalent
to the homotopy theory of 1-categories with a distinguished subcategory
of ``weak equivalences,'' again possibly with additional structures.
A primary example is a result of Barwick and Kan \cite{BK12b}, which
asserts that the homotopy theory of $\infty$-categories is equivalent
to that of relative categories. Additional results of this kind are
summarized in the table below. These results have profound implications,
providing not only philosophical clarity but also an essential link
between  the concrete and the abstract. The importance and the non-triviality
of this theme are also reflected in the existence of several unresolved
conjectures, such as \cite[Problems 8 and 9]{Hoveylist_model}, \cite[Conjecture 3.7]{KL18},
and \cite[Conjecture 1.10]{Pav25}.

{\footnotesize

\begin{table}[h]
\begin{tabular}{|p{4.6cm}|p{4.6cm}|p{1.6cm}|}
\hline

Abstract theory &
Concrete model &
Reference \\
\hline

$\infty$-Categories &
Relative categories &
\cite{BK12b} \\[1mm]

Finitely cocomplete $\infty$-categories &
Cofibration categories &  
\cite{Szu16}  \\[1mm]

$\infty$-Groupoids &
Categories &
\cite{Tho80} \\[1mm]

Presentable $\infty$-categories &
Combinatorial model categories &
\cite{Pav25}\\[1mm]

Monoidal relative categories &
Monoidal $\infty$-categories &
\cite{A25a}\\[1mm]

Relative opeards &
$\infty$-Operads &
\cite{ACP25}\\[1mm]

Presentably symmetric monoidal $\infty$-categories &
Combinatorial symmetric monoidal model categories &
\textbf{This work}\\

\hline
\end{tabular}
\end{table}

}

In this paper, we contribute a new result to this theme by establishing
an equivalence of the homotopy theories of two frameworks for higher
algebra. The first is the abstract theory of\textit{ presentably symmetric
monoidal $\infty$-categories}. These are \textit{presentable $\infty$-categories}
with a closed \textit{symmetric monoidal structure}. The second is
the concrete theory of \textit{combinatorial symmetric monoidal model
categories}. These are \textit{combinatorial model categories} with
a compatible \textit{symmetric monoidal structure}. In the literature,
the two frameworks stand out as two of the most popular settings for
higher algebraic structures, such as operads, operadic algebras, and
enrichment \cite{CH20,Haug19,Haug15,GH15}. Our main result asserts
the equivalence of the homotopy theories of these frameworks:

\begin{customthm}{A}[Theorems \ref{thm:main} and \ref{thm:var_SM}]\label{thm:main-inro}

The $\infty$-category of presentably symmetric monoidal $\infty$-categories
is the localization of the category of combinatorial symmetric monoidal
model categories and symmetric monoidal left Quillen functors. 

Moreover, the maximal sub $\infty$-groupoid of the former is the
localization of the subcategory of the latter spanned by symmetric
monoidal left Quillen equivalences.

\end{customthm}

In the absence of the symmetric monoidal structure, the first half
of Theorem \ref{thm:main-inro} was established recently by Pavlov
\cite{Pav25}, building on earlier work of Dugger and Lurie \cite{Dug01b, HTT}.
Pavlov also conjectured the first half of Theorem \ref{thm:main-inro};
given its naturality, the conjecture arises quite readily. The theorem
is significantly more subtle in the presence of the multiplicative
structure, as many techniques used to rigidify $\infty$-categories
break down multiplicatively. A concrete manifestation of this difficulty
is that even basic structural questions remain open. For instance,
it is not known whether every monoidal model category is monoidally
Quillen equivalent to a simplicial one, which is the content of Hovey\textquoteright s
10th problem \cite[Problem 10]{Hoveylist_model}. Partial progress
was made by Nikolaus and Sagave, who showed that every presentably
symmetric monoidal $\infty$-category arises from a combinatorial
symmetric monoidal model category \cite{NS17}. However, this does
not provide an equivalence of homotopy theories, nor is such an equivalence
clear from their argument. 

The second half of Theorem \ref{thm:main-inro} gives the \textit{existence
and essential uniqueness} of a combinatorial symmetric monoidal model
category modeling a given presentably symmetric monoidal $\infty$-category
(such as spectra with the smash product, and $\infty$-operads with
the Boardman--Vogt tensor product). This solves Hovey's 10th problem
in the special case of combinatorial symmetric monoidal model categories,
showing that every such category is symmetric-monoidally Quillen equivalent
to a simplicial one. As a consequence, we can extend results on combinatorial
symmetric monoidal model categories that have only been available
for simplicial ones, such as \cite{HB23}. (See also Remark \ref{rem:Hovey}
for the non-symmetric case.)

Beyond these applications, Theorem \ref{thm:main-inro} provides an
effective way to work with presentably symmetric monoidal $\infty$-categories.
In essence, it allows us to reduce general questions about presentably
symmetric monoidal $\infty$-categories to corresponding statements
about combinatorial symmetric monoidal model categories. This reduction
often makes otherwise intractable constructions accessible, due to
the extensive literature documenting the relation between constructions
in symmetric monoidal model categories and those in their underlying
$\infty$-categories---for instance, for operads \cite{PS18,CH20,HB23,batanin2023modelstructuresoperadsalgebras,carmona_env},
for operadic algebras \cite{Whi17,PS18,WY18,Whi22,BatWhi22,batanin2023modelstructuresoperadsalgebras,Whi24,carmona_env},
and for enrichment \cite{GH15, Haug15}. In the forthcoming work \cite{arakawa_BCHcomparison},
we apply this strategy to establish a new equivalence of two models
of enriched $\infty$-operads. The use of Theorem \ref{thm:main-inro}
is crucial, and we believe that the comparison would be very difficult
without it. There should also be plenty of other applications of this
nature.

Our proof of Theorem \ref{thm:main-inro} follows the general strategy
of Pavlov, but requires substantial new input. Pavlov\textquoteright s
argument relies on two key ingredients: The description of $\infty$-categories
via relative categories due to Barwick--Kan \cite{BK12b}, and the
theory of presentations of combinatorial model categories due to Dugger
and Low \cite{Dug01b, Low16}. To extend Pavlov's argument, we develop
symmetric monoidal analogs of these results. The main new tools are
the theory of symmetric monoidal relative categories developed earlier
by the author \cite{A25a}, and a new model structure on symmetric
cubical sets (Subsection \ref{subsec:sym_cube_set}).

\subsection*{Organization of the paper}

This paper has 6 sections in the main body and 4 sections in the appendix. 

In Section \ref{sec:definition}, we recall the definition of monoidal
model categories and their variations. In Section \ref{sec:Main-Result},
we state the main theorem, and then show that it follows from three
separate propositions. The proof of these propositions will be given
in the next three sections (Sections \ref{sec:main_L}, \ref{sec:main_flat},
and \ref{sec:main_U}). In Section \ref{sec:variation}, we give variations
of the main result of this paper, such as for non-symmetric presentably
monoidal $\infty$-categories and semi-model categories.

Appendices contain miscellaneous results that will be used in the
main body of the paper. In Appendix \ref{sec:semi}, we give an overview
of semi-model categories. In Appendix \ref{sec:MGUD}, we review Gabriel--Ulmer
duality and extend it multiplicatively. In Appendix \ref{sec:lb},
we discuss the compatibility of localization and base change. In Appendix
\ref{sec:Muro}, we explain how to replace enriched monoidal model
categories by ones with cofibrant unit.

\subsection*{Notation and convention}
\begin{itemize}
\item In addition to the ZFC axioms, we will assume the existence of three
nested Grothendieck universes whose elements are called small sets,
large sets, and very large sets.\footnote{Although Pavlov did not make this assumption in his paper \cite{Pav25},
it seems extremely inconvenient to drop this assumption due to the
nature of the statements we will prove. For example, the statement
of the main theorem itself needs to be adjusted without this.} All locally presentable categories are assumed to be large but not
very large, so that the collection of locally presentable categories
themselves form a very large set.
\item We assume that model categories are at most large. Also, we only require
the existence of finite limits and finite colimits for model categories.
If $\mathbf{M}$ is a model category, we write $\mathbf{M}_{\cof}\subset\mathbf{M}$
for the full subcategory of cofibrant objects. We also write $\mathbf{M}_{\infty}$
for the underlying $\infty$-category of $\mathbf{M}$ (i.e., localization
at weak equivalences).
\item We use the term ``regular cardinal'' to mean ``small regular cardinal.''
\item By $\infty$-categories, we mean \textit{quasicategories} in the sense
of \cite{Joyal_qcat_Kan,HTT}. We mostly follow the terminology and
notation of \cite{HTT}.
\item If $\cat C$ is a presentable $\infty$-category and $\kappa$ is
a regular cardinal, we write $\cat C_{\kappa}\subset\cat C$ for the
full subcategory spanned by the $\kappa$-compact objects of $\cat C$.
\item We let $\Fin_{\ast}$ denote the category of finite pointed sets $\inp n=\pr{\{\ast,1,\dots,n\},\ast}$
for $n\geq0$ and pointed maps.
\item We let $\SM\Cat_{\infty}$ denote the $\infty$-category of small
symmetric monoidal $\infty$-categories (defined as the underlying
$\infty$-category of the model category described in \cite[Variant 2.1.4.13]{HA}).
We write $\SM\hat{\Cat}_{\infty}$ for the $\infty$-category of large
symmetric monoidal $\infty$-categories.
\item We write $\Pr\SM\subset\SM\hat{\Cat}_{\infty}$ for the subcategory
spanned by the presentably symmetric monoidal $\infty$-categories
and symmetric monoidal functors that preserve small colimits. We define
the $\infty$-category $\Pr\Mon$ of presentably monoidal $\infty$-categories
similarly.
\item We will not notationally distinguish between ordinary categories and
their nerves. We will also regard every $\pr{2,1}$-category (i.e.,
categories enriched over groupoids) as an $\infty$-category by taking
its Duskin nerve. (Recall that this converts $\pr{2,1}$-categories
into $\infty$-categories, and it is functorial in strictly unitary
pseudofunctors \cite[\href{https://kerodon.net/tag/00AU}{Tag 00AU}]{kerodon}.)
We also do not notationally distinguish between (symmetric) monoidal
categories and the associated (symmetric) monoidal $\infty$-categories
(realized as a cocartesian fibration over $\Fin_{\ast}$ or $\Del^{\op}$).
\item Given a pair of symmetric $\infty$-monoidal categories $\cat C,\cat D$,
we let $\Fun^{\t}\pr{\cat C,\cat D}$ denote the $\infty$-category
of symmetric monoidal functors $\cat C\to\cat D$. 
\end{itemize}

\section{\label{sec:definition}Monoidal model categories and their variations}

The use of the term ``monoidal model category'' and its variations
is not entirely standardized in the literature. The goal of this section
is to record the precise definitions we will use.

\subsection{Plain case}

We start with the definition of monoidal model categories. There are
several competing definitions in the literature: Most definitions
require the pushout-product axiom, but they often differ in their
requirement on how ``flat'' the unit object should be. In some literature
(e.g., \cite{HA}), the unit object is required to be cofibrant. This
assumption is convenient for theoretical purposes but ends up excluding
many interesting examples, such as symmetric spectra. At the other
extreme (e.g., \cite{PS18}), no condition on the unit object is assumed.
In this paper, we go for a middle ground by adopting Muro's unit axiom,
which is satisfied by most monoidal model categories we encounter.
(The author is not aware of a counterexample.)
\begin{defn}
\label{def:Quillen_bifunctor}Let $\mathbf{A},\mathbf{B},\mathbf{C}$
be model categories. A functor $F\from\mathbf{A}\times\mathbf{B}\to\mathbf{C}$
is called a \textbf{left Quillen bifunctor} if it satisfies the following
pair of conditions:
\begin{enumerate}
\item The functor $F$ preserves small colimits in each variable.
\item For every pair of cofibrations $f\from A\to A'$ in $\mathbf{A}$
and $g\from B\to B'$ in $\mathbf{B}$, the map
\[
F\pr{A,B'}\amalg_{F\pr{A,B}}F\pr{A',B}\to F\pr{A,B}
\]
is a cofibration, and moreover it is a weak equivalence if $f$ or
$g$ is one.
\end{enumerate}
\end{defn}

\begin{defn}
\label{def:MMC}\hfill
\begin{itemize}
\item A\textbf{ monoidal model category} is a biclosed monoidal category
$\pr{\mathbf{M},\otimes,\mathbf{1}}$ equipped with a model structure,
satisfying the following axioms:
\begin{enumerate}
\item (\textbf{Pushout-product axiom}) The functor $\otimes\from\mathbf{M}\times\mathbf{M}\to\mathbf{M}$
is a left Quillen bifunctor (Definition \ref{def:Quillen_bifunctor}).
\item (\textbf{Muro's unit axiom} \cite{Mur15}) There is a weak equivalence
$q\from\widetilde{\mathbf{1}}\to\mathbf{1}$, with $\widetilde{\mathbf{1}}$
cofibrant, such that for every object $X\in\mathbf{M}$, the maps
$X\otimes q$ and $q\otimes X$ are weak equivalences.
\end{enumerate}
\item We say that a monoidal model category is \textbf{combinatorial} if
its underlying model category is combinatorial. If further it admits
generating sets of cofibrations and trivial cofibrations whose elements
have a cofibrant domain, it is called \textbf{tractable}.
\item If $\mathbf{M}$ and $\mathbf{N}$ are monoidal model categories,
then a \textbf{monoidal left Quillen functor} is a monoidal functor\footnote{By a monoidal functor, we mean a strong monoidal functor in the sense
of \cite[Chapter XI]{CWM}.} $\mathbf{M}\to\mathbf{N}$ whose underlying functor of model categories
is left Quillen. We will write $\Fun^{\t,LQ}\pr{\mathbf{M},\mathbf{N}}$
for the category of monoidal left Quillen functors $\mathbf{M}\to\mathbf{N}$
and monoidal natural transformations between them.
\item We will write $\CMMC$ for the (very large) category of combinatorial
monoidal model categories and monoidal left Quillen functors, and
write $\CMMC^{\mathbf{1}}\subset\CMMC$ for the full subcategory spanned
by the combinatorial monoidal model categories with a cofibrant unit. 
\item \textbf{Symmetric monoidal model categories} and \textbf{symmetric
monoidal left Quillen functors} are defined similarly. (We will use
the notation $\Fun^{\t,LQ}\pr{-,-}$ in the symmetric monoidal case,
too. This is abusive but is rarely confusing.) The categories $\CSMMC$
and $\CSMMC^{\mathbf{1}}$ are also defined similarly, using combinatorial
symmetric monoidal model categories.
\item We define categories $\TMMC,\TSMMC^{\mathbf{1}},$ etc, similarly,
using tractable symmetric and non-symmetric monoidal model categories.
\end{itemize}
\end{defn}

The reason why we adopt Muro's unit axiom is that it allows us to
turn every monoidal model category into one with a cofibrant unit.
More precisely, we have the following theorem.
\begin{thm}
\cite[Theorem 1, Proposition 12]{Mur15}\label{thm:Muro} The inclusion
$\CMMC^{\mathbf{1}}\subset\CMMC$ admits a left adjoint. Moreover,
the unit of this adjunction preserves and reflects weak equivalences
and induces an isomorphism of underlying monoidal categories. A similar
claim holds for combinatorial symmetric monoidal model categories.
\end{thm}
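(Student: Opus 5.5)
The plan is to recall Muro's construction. Given a combinatorial monoidal model category $\mathbf{M}$, we keep the underlying monoidal category (hence the biclosed structure and local presentability) and enlarge the class of cofibrations. The new model structure $\mathbf{M}^{\mathbf{1}}$ is cofibrantly generated by $I\cup\{\emptyset\to\mathbf{1}\}$ and $J$, where $I,J$ are generating sets for $\mathbf{M}$. Its weak equivalences are those of $\mathbf{M}$.

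The first step is to check that this is a combinatorial model structure, via Smith's recognition theorem. Here $\mathbf{M}$ is locally presentable, the weak equivalences are accessible and satisfy 2-out-of-3, and $J$-cell maps are still weak equivalences. The remaining point is that maps with the right lifting property against the new $I$ are trivial fibrations. Such maps are trivial fibrations of $\mathbf{M}$ that also lift against $\emptyset\to\mathbf{1}$, so this holds. For the converse direction we need that new cofibrations which are weak equivalences lift against fibrations. This reduces to showing that pushouts of $\emptyset\to\mathbf{1}$, i.e. maps $X\to X\amalg\mathbf{1}$, interact well. Concretely, we need the class of weak equivalences to be closed under pushout along the new cofibrations in the relevant sense.

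The second step is the pushout-product axiom for the new generating cofibrations. The pushout-product of $\emptyset\to\mathbf{1}$ with $f\colon A\to B$ is just $f$, and with itself it is $\emptyset\to\mathbf{1}$. So the new cofibrations are closed under pushout-product. Trivial cofibrations are unchanged, so the acyclic part only needs $(\emptyset\to\mathbf{1})\,\square\, j=j$. Muro's unit axiom holds trivially because $\mathbf{1}$ is now cofibrant.

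The third step is the universal property. A monoidal left Quillen functor $F\colon\mathbf{M}\to\mathbf{N}$ with $\mathbf{N}$ having cofibrant unit sends $\emptyset\to\mathbf{1}$ to $\emptyset\to\mathbf{1}_{\mathbf{N}}$, which is a cofibration, and it preserves $I$ and $J$. So $F$ is left Quillen out of $\mathbf{M}^{\mathbf{1}}$. Conversely, the identity $\mathbf{M}\to\mathbf{M}^{\mathbf{1}}$ is left Quillen, since the cofibrations and trivial cofibrations only grow. Functoriality in $\mathbf{M}$ then gives the left adjoint, and the unit is the identity on monoidal categories and on weak equivalences. The symmetric case is identical.

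The main obstacle is the model-structure step. Showing that $J$-injectives which are weak equivalences are $I\cup\{\emptyset\to\mathbf{1}\}$-injective is not the hard direction. The hard direction is showing that new cofibrations that are weak equivalences are generated by $J$, which is equivalent to the acyclicity statement above. Here I would use Muro's unit axiom: with $q\colon\widetilde{\mathbf{1}}\to\mathbf{1}$ a cofibrant replacement, the map $\emptyset\to\mathbf{1}$ factors through $\widetilde{\mathbf{1}}$. I would then argue that cells attached along $\emptyset\to\mathbf{1}$ behave homotopically like cells along $\widetilde{\mathbf{1}}$, using that $X\otimes q$ is a weak equivalence together with left properness–type arguments. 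Failing this, one can settle for a semi-model structure as in Appendix~\ref{sec:semi}, which is the approach I'd expect Muro to take refinements of.
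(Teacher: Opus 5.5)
Your proposal has a genuine gap in the model-structure step: the trivial cofibrations of $\mathbf{M}^{\mathbf 1}$ cannot be generated by $J$.

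Suppose they were. Then the fibrations of $\mathbf{M}^{\mathbf 1}$ would be the $J$-injectives, i.e.\ the old fibrations. The new trivial fibrations would then be the $J$-injective weak equivalences, i.e.\ exactly the old trivial fibrations. So the new cofibration $\phi\colon\emptyset\to\mathbf 1$ would have to lift against every old trivial fibration, which says precisely that $\mathbf 1$ was already cofibrant in $\mathbf M$. The step you call easy ("$J$-injective weak equivalences are $(I\cup\{\phi\})$-injective") is therefore the one that fails, and it fails exactly when the theorem has content. Falling back to semi-model structures does not help, since it is axiom M4(i) that breaks.

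For the same reason, "trivial cofibrations are unchanged" is false, and so is the "hard direction" you state (that new trivial cofibrations are $J$-cofibrations). To see this, factor $(q,\mathrm{id})\colon\widetilde{\mathbf 1}\amalg\mathbf 1\to\mathbf 1$ as a cofibration $\iota\colon\widetilde{\mathbf 1}\amalg\mathbf 1\to C$ followed by a weak equivalence $\pi\colon C\to\mathbf 1$, and let $\kappa\colon\widetilde{\mathbf 1}\to C$ be the restriction of $\iota$. Then $\kappa$ is a new cofibration (a pushout of $\phi$ followed by $\iota$) and a weak equivalence, since $\pi\kappa=q$. But $\kappa$ is not an old cofibration: $\mathbf 1$ is a retract of $C$ via $\iota|_{\mathbf 1}$ and $\pi$, so otherwise $\mathbf 1$ would be cofibrant. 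Consequently your pushout-product check (only $\phi\square j=j$) and your universal-property check (only on $I\cup\{\phi\}$ and $J$) are also incomplete.

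The missing idea is Muro's, and the paper reproduces it in enriched form in the proof of Proposition~\ref{prop:Mur15}.
\begin{itemize}
\item Take $I\cup\{\phi\}$ and $J\cup\{\kappa\}$ as generating sets.
\item For acyclicity, replace your ``left properness'' argument with the following observation. By Muro's unit axiom, $-\otimes\widetilde{\mathbf 1}$ preserves and detects weak equivalences. It also sends $\phi$ to the old cofibration $\emptyset\to\widetilde{\mathbf 1}$ and $\kappa$ to an old trivial cofibration. Hence it carries new (trivial) cofibrations to old ones. This gives acyclicity of $(J\cup\{\kappa\})$-cofibrations and the new case $i\square\kappa$ of the pushout-product axiom.
\item The lifting condition you thought was easy now holds because $\phi$ is a retract of $\emptyset\to C$. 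That map factors as the old cofibration $\emptyset\to\widetilde{\mathbf 1}$ followed by $\kappa$.
\end{itemize}
The same tensor trick would also rescue your Smith-theorem route, provided you let Smith's theorem produce the generating trivial cofibrations instead of prescribing $J$.

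Finally, in the universal property you must check that $F(\kappa)$ is acyclic. Since $F(\kappa)\otimes F(\widetilde{\mathbf 1})\cong F(\kappa\otimes\widetilde{\mathbf 1})$ is a trivial cofibration, this follows once $F(q)\colon F(\widetilde{\mathbf 1})\to\mathbf 1_{\mathbf N}$ is a weak equivalence. Ken Brown's lemma does not give this, because $\mathbf 1$ is not cofibrant in $\mathbf M$. It is the unit condition in Hovey's definition of monoidal Quillen adjunctions, and your argument should make it explicit.
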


The categories $\CMMC$ can be upgraded to a $\pr{2,1}$-category
$\CMMC_{\pr{2,1}}$, whose mapping groupoid are given by the maximal
subgroupoid $\Fun^{\t,LQ}\pr{\mathbf{M},\mathbf{N}}^{\cong}\subset\Fun^{\t,LQ}\pr{\mathbf{M},\mathbf{N}}$.
We will use similar notations for the categories introduced in Definition
\ref{def:MMC}. These $\infty$-categories satisfy the following universal
property:
\begin{prop}
\label{prop:(2.1)}The functors 
\begin{align*}
\CMMC & \to\CMMC_{\pr{2,1}},\\
\CMMC^{{\bf 1}} & \to\CMMC^{\mathbf{1}}_{\pr{2,1}},\\
\CSMMC & \to\CSMMC_{\pr{2,1}},\\
\CSMMC^{{\bf 1}} & \to\CSMMC^{\mathbf{1}}_{\pr{2,1}}
\end{align*}
are $\infty$-categorical localizations at the left Quillen functors
whose underlying functors are equivalences of categories.
\end{prop}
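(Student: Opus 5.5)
We will deduce the statement from a general criterion: if $\mathcal{C}$ is a (2,1)-category whose 2-morphisms are invertible, $\mathcal{C}_0$ its underlying 1-category, and $W$ the class of 1-morphisms $F\colon \mathbf{M}\to\mathbf{N}$ that admit a ``path object'' in a suitable sense, then $\mathcal{C}_0\to\mathcal{C}$ is a localization at $W$. Concretely, we use the standard argument (going back to the treatment of localizations of $\mathrm{Cat}$ at equivalences): for each object $\mathbf{M}$, build an ``interval object'' $\mathbf{M}^{\cong}$ together with maps $s,t\colon \mathbf{M}^{\cong}\to \mathbf{M}$ and a section $c\colon\mathbf{M}\to\mathbf{M}^{\cong}$, such that 1-morphisms $\mathbf{N}\to \mathbf{M}^{\cong}$ in $\mathcal{C}_0$ correspond to pairs of 1-morphisms $\mathbf{N}\rightrightarrows\mathbf{M}$ with an invertible 2-cell between them, and such that $s,t,c$ lie in $W$. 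Given this, any functor $\mathcal{C}_0\to\mathcal{D}$ inverting $W$ identifies $s$ and $t$ (since $sc=tc=\mathrm{id}$ and $c$ is inverted), so naturally isomorphic functors are identified, and one checks that the induced map on mapping spaces is an equivalence, using that the mapping groupoids of $\mathcal{C}$ are computed by the nerve of the groupoid whose 2-simplices are detected by $\mathbf{M}^{\cong\cong}$ (iterated path objects for $\Delta^n$). Equivalently, we may use the Dwyer--Kan style criterion that $\mathcal{C}_0\to\mathcal{C}$ is a localization if $\mathcal{C}_0$ with the path-object structure forms a category of fibrant objects–like setting; in fact we will apply the known result for categories (e.g.\ \cite{kerodon} / Joyal's argument that $\mathrm{Cat}\to\mathrm{Cat}_{(2,1)}$ is a localization at equivalences) in a form requiring only such cotensors by the groupoids $\tilde{[n]}$ (the contractible groupoid on $n+1$ objects).

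So the key construction: for a combinatorial (symmetric) monoidal model category $\mathbf{M}$ and a small groupoid $J$ (contractible, e.g.\ $\tilde{[n]}$), take $\mathbf{M}^{J}=\mathrm{Fun}(J,\mathbf{M})$ with pointwise tensor product and the \emph{projective} (equivalently injective, since $J$ is a contractible groupoid) model structure. Since $J$ is equivalent to the terminal category, the constant functor $\mathbf{M}\to \mathbf{M}^J$ and each evaluation $\mathbf{M}^J\to\mathbf{M}$ are equivalences of underlying categories, strictly (symmetric) monoidal; the transported model structure along this equivalence is combinatorial, satisfies pushout-product and Muro's unit axiom (pointwise), and has cofibrant unit whenever $\mathbf{M}$ does. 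These functors are left Quillen, hence in $W$. The universal property holds: a monoidal left Quillen functor $\mathbf{N}\to\mathbf{M}^{J}$ is the same as a $J$-indexed diagram of monoidal left Quillen functors with monoidal natural isomorphisms, i.e.\ a functor $J\to \Fun^{\otimes,LQ}(\mathbf{N},\mathbf{M})^{\cong}$; left Quillen-ness is checked pointwise because cofibrations and weak equivalences in $\mathbf{M}^J$ are pointwise (all maps in $J$ are isomorphisms). Thus $\mathcal{C}_{(2,1)}$'s simplicial mapping spaces are represented by $\mathbf{M}^{\tilde{[\bullet]}}$ in $\mathcal{C}_0$.

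Finally, we need that the class $W$ of left Quillen functors that are equivalences of categories is exactly what gets inverted: every such functor is an equivalence in the (2,1)-category (its inverse equivalence is again monoidal and left Quillen, since an equivalence of categories preserving cofibrations and trivial cofibrations and being Quillen in a categorical equivalence identifies the model structures — here one must check the inverse preserves cofibrations; this holds because the left Quillen equivalence of categories has right adjoint equal to its inverse, which is right Quillen, and a right Quillen equivalence of categories whose inverse is left Quillen forces both to preserve all classes, using that (trivial) cofibrations are determined by lifting against (trivial) fibrations). Conversely, $W$ is contained in the 1-morphisms becoming equivalences. The main obstacle is the general localization criterion itself: one must verify carefully that the Duskin nerve of $\mathcal{C}_{(2,1)}$ is the localization, which I would do by exhibiting $\mathcal{C}_{(2,1)}$ as the realization of the simplicial object $[n]\mapsto$ (category with the same objects and morphisms $\mathbf{N}\to\mathbf{M}^{\tilde{[n]}}$), each of whose face maps is a localization at $W$ by the section argument above, and invoking that a simplicial diagram of localizations at the same class, degreewise equivalent via $W$-inverting maps, has colimit $\mathcal{C}_0[W^{-1}]$. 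The symmetric and cofibrant-unit variants are identical.
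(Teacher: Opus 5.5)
The step that fails is your final paragraph. It is not true that every monoidal left Quillen functor whose underlying functor is an equivalence of categories is an equivalence in the $(2,1)$-category. Your justification only restates the hypothesis: knowing that the inverse $G$ of $F$ is right Quillen and that $F$ is left Quillen says nothing about $G$ preserving cofibrations, i.e.\ about $F$ preserving fibrations.

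For example, $\mathrm{id}\colon\mathrm{sSet}_{\mathrm{Joyal}}\to\mathrm{sSet}_{\mathrm{KQ}}$ is a strict symmetric monoidal left Quillen isomorphism of categories between cartesian model categories with cofibrant unit. An inverse in $\mathsf{CombSMMC}_{(2,1)}$ would be a left Quillen functor isomorphic to the identity, which would force $\Lambda^2_0\hookrightarrow\Delta^2$ to be a Joyal equivalence. The unit $\mathbf{M}\to\widetilde{\mathbf{M}}$ of Theorem~\ref{thm:Muro} is another counterexample whenever $\mathbf{M}$ has non-cofibrant unit.

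A functor into $\mathsf{CombSMMC}_{(2,1)}$ inverts exactly the $2$-categorical equivalences. So the class in the statement has to be read as the monoidal left Quillen functors that are equivalences of categories \emph{and} reflect cofibrations and trivial cofibrations (equivalently, whose inverse equivalence is again left Quillen). This is the class at which the paper's argument actually localizes. It is also what is needed later, since this class lies inside the Quillen equivalences, whereas the Joyal-to-Kan--Quillen identity is not even a Quillen equivalence. So instead of trying to prove your inclusion, take $W$ to be this class; your maps $c,s,t$ belong to it because the model structure on $\mathbf{M}^J$ is transported.

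With that correction, your key construction is the paper's. $\mathbf{N}^J$, with the degreewise tensor product and the model structure transported along $\mathbf{N}\simeq\mathbf{N}^J$, is a weak cotensor of $\mathbf{N}$ by $[1]$, natural in the source. The paper then simply applies (the dual of) \cite[Corollary 3.16]{ACK25}, which needs only this one cotensor. So the cotensors by $\tilde{[n]}$ and your realization argument are unnecessary.

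As written, that realization argument would not yet be a proof anyway. You assert, but do not show, that the structure maps of the simplicial object of categories with morphisms $\mathbf{N}\to\mathbf{M}^{\tilde{[n]}}$ are localizations at $W$. The section argument only shows that $s$ and $t$ become equivalent, and supplying the missing step is precisely the content of the cited criterion.
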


\begin{proof}
We will prove the assertion for the first functor; the remaining assertions
can be proved in a similar way. According to (the dual of) \cite[Corollary 3.16]{ACK25},
it suffices to show that $\CMMC_{\pr{2,1}}$ admits weak cotensor
by $[1]$. That is, it suffices to show that for each $\mathbf{N}\in\CMMC_{\pr{2,1}}$,
there is an object $[1]\pitchfork\mathbf{N}$ admitting an isomorphism
of groupoids
\[
\Fun^{\t,LQ}\pr{\mathbf{M},[1]\pitchfork\mathbf{N}}^{\cong}\cong\Fun\pr{[1],\Fun^{\t,LQ}\pr{\mathbf{M},\mathbf{N}}^{\cong}}
\]
natural in $\mathbf{M}\in\CMMC$. For this, we consider the groupoid
$J$ with two objects $0,1$, and with exactly one morphism between
any pair of objects. We can make $\mathbf{N}^{J}$ into a monoidal
model category by using the degreewise tensor product and the model
structure coming from the equivalence of categories $\mathbf{N}\xrightarrow{\simeq}\mathbf{N}^{J}$.
Then $\mathbf{N}^{J}$ is a weak cotensor of $\mathbf{N}$ by $[1]$,
and we are done.
\end{proof}

\subsection{Enriched case}

We next turn to the definition of enriched monoidal model categories.
\begin{defn}
\label{def:V-MMC}Let $\mathbf{V}$ be a combinatorial symmetric monoidal
model category. 
\begin{itemize}
\item A \textbf{$\mathbf{V}$-monoidal model category} is a tensored and
cotensored $\mathbf{V}$-monoidal category $\mathbf{M}$ (in the sense
of \cite{Day_convolution}) which is biclosed (i.e., for each $X\in\mathbf{M}$,
the $\mathbf{V}$-functors $X\otimes-$ and $-\otimes X$ admit right
adjoints) and equipped with a model structure on its underlying category,
such that the tensor bifunctor
\[
\mathbf{V}\times\mathbf{M}\to\mathbf{M}
\]
is a left Quillen bifunctor. 
\item We say that a $\mathbf{V}$-monoidal model category is \textbf{combinatorial}
if its underlying model category is combinatorial.
\item If $\mathbf{M}$ and $\mathbf{N}$ are $\mathbf{V}$-monoidal model
categories, we write $\Fun^{\t,LQ}_{\mathbf{V}}\pr{\mathbf{M},\mathbf{N}}$
for the category of $\mathbf{V}$-monoidal functors $\mathbf{M}\to\mathbf{N}$
that are $\mathbf{V}$-cocontinuous and whose underlying functors
are left Quillen.
\item We write $\CMMC_{\mathbf{V},\pr{2,1}}$ for the $\pr{2,1}$-category
whose objects are the combinatorial $\mathbf{V}$-monoidal model categories,
with hom-groupoids given by the maximal subgroupoids of $\Fun^{\t,LQ}_{\mathbf{V}}\pr{-,-}$. 
\item We write $\CMMC_{\mathbf{V}}$ for the underlying category of $\CMMC_{\mathbf{V},\pr{2,1}}$.
\end{itemize}
We define $\mathbf{V}$-symmetric monoidal model categories similarly,
and define a $\pr{2,1}$-category $\CSMMC_{\mathbf{V},\pr{2,1}}$
and an ordinary category $\CSMMC_{\mathbf{V}}$ similarly. As in Definition
\ref{def:MMC}, we also consider full subcategories of these categories
such as $\TMMC_{\mathbf{V}}$ and $\TSMMC^{\mathbf{1}}_{\mathbf{V}}$.
\end{defn}

In ordinary algebra, a commutative algebra $A$ over a commutative
ring $k$ can be defined as a commutative ring equipped with a ring
homomorphism $k\to A$. Extending an analogy of this to the model-categorical
setting, we arrive at the notion of \textit{symmetric algebras}:
\begin{defn}
\label{def:symalg}Let $\mathbf{V}$ be a combinatorial symmetric
monoidal model category. 
\begin{itemize}
\item A \textbf{symmetric $\mathbf{V}$-algebra} is a monoidal model category
$\mathbf{M}$ equipped with a monoidal left Quillen functor $\mathbf{V}\to\mathbf{M}$.
\item We say that a symmetric $\mathbf{V}$-algebra is \textbf{combinatorial}
if its underlying model category is combinatorial.
\item We write $\CombSymAlg{\mathbf{V}}_{\pr{2,1}}=\int^{\mathbf{M}\in\CSMMC_{\pr{2,1}}}\Fun^{\t,LQ}\pr{\mathbf{V},\mathbf{M}}$
for the $\pr{2,1}$-category of combinatorial symmetric $\mathbf{V}$-algebras,
where $\int$ denotes the $2$-categorical Grothendieck construction
\cite[1.10]{Str80}. Explicitly:
\begin{itemize}
\item Objects are combinatorial symmetric $\mathbf{V}$-algebras.
\item A $1$-morphism $\pr{F\from\mathbf{V}\to\mathbf{M}}\to\pr{G\from\mathbf{V}\to\mathbf{N}}$
is a pair $\pr{H,\alpha}$, where $H\from\mathbf{M}\to\mathbf{N}$
is a symmetric monoidal functor and $\alpha:HF\To G$ is a symmetric
monoidal natural transformation.
\item A $2$-morphism $\pr{H,\alpha}\To\pr{H',\alpha'}\from\pr{F\from\mathbf{V}\to\mathbf{M}}\to\pr{G\from\mathbf{V}\to\mathbf{N}}$
is a symmetric monoidal natural isomorphism $\beta\from H\stackrel{\cong}{\To}H'$
satisfying $\alpha'\circ\beta F=\alpha$:
\[\begin{tikzcd}
	& {\mathbf{V}} &&& {\mathbf{V}} \\
	&& {} & {} \\
	{\mathbf{M}} && {\mathbf{N}} & {\mathbf{M}} && {\mathbf{N}.}
	\arrow[""{name=0, anchor=center, inner sep=0}, "F"', from=1-2, to=3-1]
	\arrow[""{name=1, anchor=center, inner sep=0}, "G", from=1-2, to=3-3]
	\arrow[""{name=2, anchor=center, inner sep=0}, "F"', from=1-5, to=3-4]
	\arrow[""{name=3, anchor=center, inner sep=0}, "G", from=1-5, to=3-6]
	\arrow["{=}"{description}, draw=none, from=2-3, to=2-4]
	\arrow["H"', from=3-1, to=3-3]
	\arrow[""{name=4, anchor=center, inner sep=0}, "{H'}", curve={height=-18pt}, from=3-4, to=3-6]
	\arrow[""{name=5, anchor=center, inner sep=0}, "H"', curve={height=18pt}, from=3-4, to=3-6]
	\arrow["\alpha"{description}, between={0.2}{0.8}, Rightarrow, from=0, to=1]
	\arrow["{\alpha'}"{description}, between={0.2}{0.8}, Rightarrow, from=2, to=3]
	\arrow["\beta"{description}, between={0.2}{0.8}, Rightarrow, from=5, to=4]
\end{tikzcd}\]
\end{itemize}
\item We write $\CombSymAlg{\mathbf{V}}$ for the underlying category of
$\CombSymAlg{\mathbf{V}}_{\pr{2,1}}$.
\end{itemize}
\end{defn}

\begin{rem}
There is a related notion of \textit{central algebras} \cite[Definition 4.1.10]{Hovey},
but they are not amenable to the techniques we use in this paper.
\end{rem}

As in Proposition \ref{prop:(2.1)}, we have:
\begin{prop}
\label{prop:(2.1)_V}Let $\mathbf{V}$ be a combinatorial symmetric
monoidal model category. The functors
\begin{align*}
\CMMC_{\mathbf{V}} & \to\CMMC_{\mathbf{V},\pr{2,1}},\\
\CMMC^{\mathbf{1}}_{\mathbf{V}} & \to\CMMC^{\mathbf{1}}_{\mathbf{V},\pr{2,1}},\\
\CSMMC_{\mathbf{V}} & \to\CSMMC_{\mathbf{V},\pr{2,1}},\\
\CSMMC^{\mathbf{1}}_{\mathbf{V}} & \to\CSMMC^{\mathbf{1}}_{\mathbf{V},\pr{2,1}},\\
\CombSymAlg{\mathbf{V}} & \to\CombSymAlg{\mathbf{V}}_{\pr{2,1}}
\end{align*}
are all localizations. 
\end{prop}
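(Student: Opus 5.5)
We follow the proof of Proposition \ref{prop:(2.1)} and apply (the dual of) \cite[Corollary 3.16]{ACK25} to each of the five $(2,1)$-categories. That criterion says it suffices to show that each $(2,1)$-category admits weak cotensors by $[1]$. Concretely, for each object $\mathbf{N}$ we must find an object $[1]\pitchfork\mathbf{N}$ together with an isomorphism of groupoids between the hom-groupoid into $[1]\pitchfork\mathbf{N}$ and $\Fun([1],-)$ of the hom-groupoid into $\mathbf{N}$, natural in the source. The localization is then at the 1-morphisms whose underlying functors are equivalences, matching the previous statement. In every case the candidate is $\mathbf{N}^{J}$, where $J$ is the contractible groupoid on two objects $0,1$.

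For the four categories of $\mathbf{V}$-(symmetric) monoidal model categories, we first equip $\mathbf{N}^J$ with the necessary structure.
\begin{itemize}
\item The $\mathbf{V}$-enrichment, tensors and cotensors are computed pointwise.
\item The (symmetric) monoidal product is degreewise.
\item The model structure is transported along the equivalence of categories $\mathbf{N}\xrightarrow{\simeq}\mathbf{N}^J$ given by the diagonal.
\end{itemize}
Since $\mathbf{N}\to\mathbf{N}^J$ is an equivalence of $\mathbf{V}$-categories compatible with all this structure, each axiom transfers from $\mathbf{N}$: combinatoriality, biclosedness, the left Quillen bifunctor condition for $\mathbf{V}\times\mathbf{N}^J\to\mathbf{N}^J$, and, where relevant, cofibrancy of the unit. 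For the universal property, a $\mathbf{V}$-(symmetric) monoidal functor $\mathbf{M}\to\mathbf{N}^J$ is the same as a pair of such functors $F_0,F_1\colon\mathbf{M}\to\mathbf{N}$ together with a $\mathbf{V}$-monoidal natural isomorphism $F_0\cong F_1$. Such a functor is $\mathbf{V}$-cocontinuous and left Quillen exactly when $F_0$ (equivalently $F_1$) is. Isomorphisms between such data are compatible pairs of isomorphisms, so the hom-groupoid is naturally isomorphic to $\Fun([1],\Fun^{\otimes,LQ}_{\mathbf{V}}(\mathbf{M},\mathbf{N})^{\cong})$.

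For $\CombSymAlg{\mathbf{V}}_{(2,1)}$, given an object $G\colon\mathbf{V}\to\mathbf{N}$, we take $[1]\pitchfork G$ to be the composite $\mathbf{V}\xrightarrow{G}\mathbf{N}\xrightarrow{\Delta}\mathbf{N}^J$, using the symmetric monoidal model structure on $\mathbf{N}^J$ from the previous step. A 1-morphism $(F\colon\mathbf{V}\to\mathbf{M})\to\Delta G$ is a pair $(H,\alpha)$, where $H=(H_0,H_1,\theta\colon H_0\cong H_1)$ and $\alpha\colon HF\Rightarrow\Delta G$ is a symmetric monoidal transformation. Equivalently, it consists of $\alpha_0\colon H_0F\Rightarrow G$ and $\alpha_1\colon H_1F\Rightarrow G$ with $\alpha_1\circ\theta F=\alpha_0$. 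By Definition \ref{def:symalg}, this is precisely a 2-morphism $\theta\colon(H_0,\alpha_0)\Rightarrow(H_1,\alpha_1)$. Isomorphisms of such data match commutative squares in the hom-groupoid, which gives the required natural isomorphism with $\Fun([1],\mathrm{Hom}(F,G))$.

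The main point to check carefully is this last case. We must make sure that 2-morphisms in the Grothendieck construction $\int$ (symmetric monoidal isomorphisms $\beta$ with $\alpha'\circ\beta F=\alpha$) correspond exactly to morphisms in the cotensor description. We must also make sure the correspondence is strictly natural in $F$, as the criterion of \cite{ACK25} requires an isomorphism of groupoids, not merely an equivalence. Since everything is defined componentwise over $J$, a direct bookkeeping check should suffice.
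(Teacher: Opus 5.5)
Your proposal is correct and is the argument the paper intends: the paper states this proposition without a separate proof, saying only that it holds ``as in Proposition~\ref{prop:(2.1)}'', that is, by exhibiting weak cotensors by $[1]$ via $\mathbf{N}^{J}$ and invoking the localization criterion from ACK25. Your bookkeeping for the $\CombSymAlg{\mathbf{V}}$ case, taking $\Delta\circ G\colon\mathbf{V}\to\mathbf{N}^{J}$ and matching $J$-naturality of $\alpha$ with the 2-morphism condition $\alpha_{1}\circ\theta F=\alpha_{0}$, correctly fills in the one case not literally covered by that earlier proof.
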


\subsection{Semi-model-categorical case}

(Left) \textbf{Semi-model categories} are a weakening of model categories
in which part of the axioms of lifting and factorization are only
required to hold for cofibrations with \textit{cofibrant} source \cite[Definition 2.1]{BW24}.
While this leads to a less attractive set of axioms, semi-model categories
are practically indistinguishable from model categories. The main
advantages of semi-model categories over ordinary model categories
is that they are much easier to construct than a full model structure.
In particular, tractable semi-model categories are fully compatible
with (left) Bousfield localization \cite{BW24}. For these reasons,
semi-model categories is becoming increasingly popular, and we have
no reason not to include them in this paper.

For the basics of semi-model categories, we refer the reader to Section
\ref{sec:semi}. In this subsection, we collect definitions related
to the interplay of semi-model categories and monoidal structures.
\begin{defn}
A semi-model category is said to be \textbf{tractable} if it is locally
presentable as a category and admits generating sets of cofibrations
of cofibrant objects and trivial cofibrations of cofibrant objects.
\end{defn}

\begin{defn}
Let $\mathbf{M},\mathbf{N},\mathbf{P}$ be tractable semi-model categories.
A \textbf{left Quillen bifunctor} $F\from\mathbf{M}\times\mathbf{N}\to\mathbf{P}$
is a functor that preserves small colimits in each variable, and which
satisfies the pushout-product axiom for cofibrations of cofibrant
objects and trivial cofibrations of cofibrant objects.
\end{defn}

\begin{defn}
The definition of monoidal model categories carries over to the semi-model
categorical case. We define a category $\TMMC_{\semi}$ of tractable
monoidal semi-model categories, and define categories such as $\TMMC^{\mathbf{1}}_{\semi}$,
$\TSMMC_{\semi}$, and $\TSMMC^{\mathbf{1}}_{\semi}$ similarly, as
in Definition \ref{def:MMC}. Also, for a tractable symmetric monoidal
semi-model category $\mathbf{V}$, there is a notion of $\mathbf{V}$-monoidal
model categories and $\mathbf{V}$-symmetric monoidal model categories.
The categories $\TMMC^{\mathbf{1}}_{\mathbf{V},\semi}$ and $\TSMMC_{\mathbf{V},\semi}$,
etc, are defined exactly as in Definition \ref{def:V-MMC}.
\end{defn}

\begin{rem}
Theorem \ref{thm:Muro} and Proposition \ref{prop:(2.1)} remains
valid for semi-model categories, with the same proof. 
\end{rem}

\section{\label{sec:Main-Result}Main result}

In this section, we state the main result of this paper and explain
how we will prove it.

To state our main result, we need a bit of terminology and notation.
\begin{defn}
\cite[Definition 0.1]{A25a} A \textbf{symmetric monoidal relative
category} is a symmetric monoidal category $\mathbf{C}$ equipped
with a subcategory $\mathbf{W}\subset\mathbf{C}$ that contains all
isomorphisms and is stable under tensor products. Morphisms of $\mathbf{W}$
are called \textbf{weak equivalences}.

We write $\SMRelCat$ for the category of small symmetric monoidal
relative categories and symmetric monoidal relative functors that
preserve weak equivalences. We define the category $\SMRelCatlarge$
of \textit{large} symmetric monoidal relative categories similarly.
\end{defn}

\begin{example}
Let $\mathbf{M}$ be a symmetric monoidal model category. Weak equivalences
of $\mathbf{M}$ are generally not stable under tensor products, so
$\mathbf{M}$ is generally \textit{not} a symmetric monoidal relative
category. Nonetheless, its full subcategory $\mathbf{M}^{\flat}\subset\mathbf{M}$
of cofibrant objects and the objects isomorphic to the unit object
is a symmetric monoidal relative category.
\end{example}

For the next definition, we recall that localization of symmetric
monoidal relative categories determines a functor $L\from\SMRelCat\to\SM\Cat_{\infty}$
\cite[Notation 2.1]{A25a}.
\begin{defn}
\label{def:loc}We define a functor
\[
\pr -_{\infty}\from\CSMMC\to\Pr\SM
\]
as the codomain restriction of the composite
\[
\CSMMC\xrightarrow{\pr -^{\flat}}\SMRelCatlarge\xrightarrow{L}\SM\hat{\Cat}_{\infty}.
\]
(Note that this is well-defined by \cite[Proposition 1.3.4.22 and Corollary 1.3.4.26]{HA}.)
If $\mathbf{M}$ is a combinatorial symmetric monoidal model category,
we refer to $\mathbf{M}_{\infty}$ as its \textbf{underlying symmetric
monoidal $\infty$-category}.
\end{defn}

We now arrive at the statement of the main result.
\begin{thm}
\label{thm:main}The functor $\pr -_{\infty}$ of Definition \ref{def:loc}
is a localization at Quillen equivalences, i.e., it induces a categorical
equivalence
\[
\CSMMC[\Quilleq^{-1}]\xrightarrow{\simeq}\Pr\SM.
\]
\end{thm}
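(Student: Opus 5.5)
The plan is to follow Pavlov's strategy for the non-monoidal case and exhibit $\pr -_{\infty}$ as a localization by producing an \emph{inverse up to natural weak equivalence}. Concretely, we factor the problem through the full subcategory $\CSMMC^{\mathbf{1}}$ and through a further full subcategory of ``flat'' (tractable, cofibrant-unit, simplicial or cubical-enriched) objects. We then construct a functor $U\from\Pr\SM\to\CSMMC[\Quilleq^{-1}]$ together with natural equivalences $\pr{U(-)}_{\infty}\simeq\mathrm{id}$ and $U\pr{\pr -_{\infty}}\simeq\mathrm{id}$. By a standard criterion for localizations (e.g.\ if a functor $F\from\mathbf{C}\to\cat D$ inverting $W$ admits $G\from\cat D\to\mathbf{C}[W^{-1}]$ with both composites naturally equivalent to identities, then $\mathbf{C}[W^{-1}]\simeq\cat D$), this suffices. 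I would organize the proof into three propositions, corresponding to the three following sections.

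\textbf{Step 1 (reduction to cofibrant unit).} By Theorem \ref{thm:Muro}, the inclusion $\CSMMC^{\mathbf{1}}\subset\CSMMC$ has a left adjoint whose unit is a symmetric monoidal left Quillen functor that is the identity on underlying categories and preserves and reflects weak equivalences. It is therefore a Quillen equivalence, and an adjunction whose unit and counit are componentwise in $\Quilleq$ induces an equivalence of localizations. So it suffices to treat $\CSMMC^{\mathbf{1}}$. We also check that $\pr -_{\infty}$ sends Quillen equivalences to equivalences, which reduces to the non-monoidal statement since a symmetric monoidal functor of symmetric monoidal $\infty$-categories is an equivalence iff its underlying functor is. Along the way we record, via Proposition \ref{prop:(2.1)}, that we may work in the $(2,1)$-category $\CSMMC^{\mathbf{1}}_{\pr{2,1}}$, where naturality of comparison maps is only needed up to monoidal isomorphism.

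\textbf{Step 2 (the inverse $U$ and the flat replacement).} Given $\cat C\in\Pr\SM$, choose a regular cardinal $\kappa$ such that $\cat C_{\kappa}$ is a symmetric monoidal subcategory and $\cat C\simeq\mathrm{Ind}_{\kappa}\cat C_{\kappa}$. By the author's earlier work \cite{A25a}, present $\cat C_{\kappa}$ as the localization of a small symmetric monoidal relative category $\mathbf{C}$. Then form a Day-convolution model category of presheaves on $\mathbf{C}$ valued in a symmetric monoidal model category of spaces, and left Bousfield localize at the weak equivalences of $\mathbf{C}$ and at the $\kappa$-small colimit diagrams. This is a monoidal analogue of the Dugger--Low presentation theorem; the multiplicative Gabriel--Ulmer duality of Appendix \ref{sec:MGUD} handles the $\kappa$-dependence, and Appendix \ref{sec:lb} ensures that localization commutes with base change so the construction stays monoidal. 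The spaces model must be both symmetric monoidal and strictly functorial. Simplicial sets fail the required strict symmetric-monoidal rigidity for the universal-property argument, so I would use the new model structure on symmetric cubical sets (Subsection \ref{subsec:sym_cube_set}), whose cartesian product is compatible with the Day convolution and with the needed symmetries. Making this functorial in $\cat C$ requires the $\kappa$-choices to be handled colimit-wise, passing to a filtered union over $\kappa$.

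\textbf{Step 3 (comparison maps), the main obstacle.} For $\mathbf{M}\in\CSMMC^{\mathbf{1}}$ we need a natural zigzag of symmetric monoidal Quillen equivalences between $\mathbf{M}$ and $U(\mathbf{M}_{\infty})$. The natural approach is to take $\mathbf{C}=\mathbf{M}^{\flat}_{\kappa}$ (cofibrant $\kappa$-small objects) and use the universal property of the localized Day-convolution presheaf category. The resulting left Quillen functor to $\mathbf{M}$ is only oplax/lax monoidal a priori, because the left Kan extension along the Yoneda embedding needs a cofibrant \emph{resolution} of the generators in order to be homotopically meaningful. I expect this to be the hardest point: producing a \emph{strong} symmetric monoidal left Quillen functor requires cosimplicial or cocubical resolutions that are themselves strictly symmetric monoidal. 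This is exactly where symmetric cubical sets enter, since the cube category is monoidal with symmetries, so cocubical resolutions $X\otimes\square^{\bullet}$ can be chosen strictly compatibly with $\otimes$. I would first try to construct a symmetric monoidal functorial cocubical framing on the flat objects of $\mathbf{M}$, then verify the pushout-product axiom and that the induced functor is a Quillen equivalence on underlying $\infty$-categories, which reduces to the known non-monoidal result of Pavlov. Naturality in $\mathbf{M}$ then yields the equivalence $U\circ\pr -_{\infty}\simeq\mathrm{id}$. Combined with $\pr{U\cat C}_{\infty}\simeq\cat C$, which follows from the localization computation in Step 2, this establishes Theorem \ref{thm:main}.
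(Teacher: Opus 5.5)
\paragraph{The main gap: the $\mathbf{S}$-algebra structure on $\mathbf{M}$ is a choice.}
The decisive gap is in Step 3, where you write that ``naturality in $\mathbf{M}$ then yields $U\circ(-)_{\infty}\simeq\mathrm{id}$.'' To map a presheaf model $\Fun(\mathbf{C}^{\op},\mathbf{S})$ into $\mathbf{M}$, you need $\mathbf{M}$ to be tensored over $\mathbf{S}$ compatibly with $\otimes$, i.e.\ a symmetric monoidal left Quillen functor $\mathbf{S}\to\mathbf{M}$. This is what your ``symmetric monoidal cocubical framing'' amounts to. For $\mathbf{S}=\Set^{\square^{\op}_{\Sigma}}$, Corollary \ref{cor:sym_cube_day} says it is just an interval $\mathbf{1}\amalg\mathbf{1}\rightarrowtail I\xrightarrow{\sim}\mathbf{1}$ in $\mathbf{M}$. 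Such a structure exists, but it is a choice. A symmetric monoidal left Quillen functor $G\colon\mathbf{M}\to\mathbf{N}$ does not carry the interval chosen for $\mathbf{M}$ to the one chosen for $\mathbf{N}$; they are related only by non-canonical lifts. So no choice is functorial on $\CSMMC^{\mathbf{1}}$, and you get no natural zigzag.

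The missing idea (Proposition \ref{prop:main_U}) is to not choose. Work on the category $\CombSymAlg{\mathbf{S}}$ of pairs $(\mathbf{M},\mathbf{S}\to\mathbf{M})$, where the comparison is strictly functorial. Then prove that the forgetful functor $\CombSymAlg{\mathbf{S}}\to\CSMMC^{\mathbf{1}}$ is itself a localization. It is a cocartesian fibration whose fibers $\Fun^{\t,LQ}(\Set^{\square^{\op}_{\Sigma}},\mathbf{M})$ are weakly contractible (Corollary \ref{cor:symcub}, proved by contracting the category of intervals with a functorial factorization). That contractibility, not strictness of resolutions, is the reason for symmetric cubical sets. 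The rest of the argument works for any admissible $\mathbf{S}$, for instance Kan--Quillen simplicial sets.

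Conversely, the obstacle you single out is not one. Once $\mathbf{M}$ is an $\mathbf{S}$-algebra, $\theta(X)=\int^{M}M\otimes X(M)$ is strong symmetric monoidal by the universal property of Day convolution. It is a Quillen equivalence by Corollary \ref{cor:modcat_Indcomp}, with no resolutions involved.

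\paragraph{A second gap: the inverse is not a functor.}
Your inverse $\Pr\SM\to\CSMMC[\Quilleq^{-1}]$ is only an object-wise recipe: choose $\kappa$, then choose a presenting relative category $\mathbf{C}$. It is not a functor of $\infty$-categories, so the localization criterion you invoke has nothing to apply to.

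The paper never builds a functor out of $\Pr\SM$. Instead it zigzags through the strict category $\SMRelCatlarge_{\Pr\SM}$ and proves that it localizes to $\Pr\SM$ (Lemma \ref{lem:main_L}). This is where Appendix \ref{sec:lb} enters, and not ``to keep the construction monoidal.'' Localizations underlying model structures, such as that of $\Fun(\Fin_{\ast},\RelCat_{\BK})$, are universal. Homotopy equivalences of relative categories also survive pullback along conservative functors. Together these let the author's earlier equivalence $\SMRelCat[\loceq^{-1}]\simeq\SM\Cat_{\infty}$ restrict along $\Pr\SM\to\SM\hat{\Cat}_{\infty}$. The $\kappa$-dependence is then handled by Grothendieck constructions over $\urCard$, combined with the relative multiplicative Gabriel--Ulmer duality (Proposition \ref{prop:rel_GUdual}).

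Your Step 1 and the shape of your Step 2 agree with the paper. For the argument to close, you still need these two ingredients: a strict model of $\Pr\SM$, and the contractibility of the space of $\mathbf{S}$-algebra structures.
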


We will prove Theorem \ref{thm:main} as follows: Fix a left proper
tractable symmetric monoidal model category $\mathbf{S}$ with a cofibrant
unit, whose underlying symmetric monoidal $\infty$-category is equivalent
to the cartesian monoidal $\infty$-category $\mathcal{S}$ of $\infty$-groupoids.\footnote{Since $\mathcal{S}$ is the initial presentably symmetric monoidal
$\infty$-category \cite[Corollary 3.2.1.9, Example 4.8.1.20]{HA},
such an equivalence is unique up to a contractible space of choices
if it exists.} (One example is the cartesian model category of simplicial sets with
the Kan--Quillen model structure.) We also define a category $\SMRelCatlarge_{\Pr\SM}$
by the pullback
\[\begin{tikzcd}
	{\mathsf{SMRel}\hat{\mathsf{Cat}}_{\mathcal{P}\mathsf{r}\mathcal{SM}}} & {\mathsf{SMRel}\hat{\mathsf{Cat}}} \\
	{\mathcal{P}\mathsf{r}\mathcal{SM}} & {\mathcal{SM}\hat{\mathcal{C}\mathsf{at}}_\infty.}
	\arrow[from=1-1, to=1-2]
	\arrow[from=1-1, to=2-1]
	\arrow["\lrcorner"{description, pos=0}, draw=none, from=1-1, to=2-2]
	\arrow["L", from=1-2, to=2-2]
	\arrow[from=2-1, to=2-2]
\end{tikzcd}\]We then contemplate the following commutative diagram 
\[\begin{tikzcd}
	{\mathsf{CombSym}\mathbf{S}\text{-}\mathsf{Alg}} & {\mathsf{CombSMMC}^{\mathbf{1}}} & {\mathsf{CombSMMC}} \\
	{\mathsf{SMRel}\hat{\mathsf{Cat}}_{\mathcal{P}\mathsf{r}\mathcal{SM}}} && {\mathcal{P}\mathsf{r}\mathcal{SM}\hat{\mathcal{C}\mathsf{at}}_\infty,}
	\arrow["U", from=1-1, to=1-2]
	\arrow["{(-)_\flat}"', from=1-1, to=2-1]
	\arrow["\iota", hook, from=1-2, to=1-3]
	\arrow["{(-)_\infty}", from=1-3, to=2-3]
	\arrow["L"', from=2-1, to=2-3]
\end{tikzcd}\]where $U$ is the forgetful functor and $\iota$ is the inclusion.
We will then prove the following propositions:
\begin{prop}
\label{prop:main_L}The functor $L\from\SMRelCatlarge_{\Pr\SM}\to\Pr\SM\hat{\Cat}_{\infty}$
induces a categorical equivalence
\[
\SMRelCatlarge_{\Pr}[\loceq^{-1}]\to\Pr\SM\hat{\Cat}_{\infty},
\]
where $\loceq$ denotes the subcategory of \textbf{local equivalences},
i.e., morphisms inverted by $L$.
\end{prop}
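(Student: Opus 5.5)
The plan is to deduce this from the analogous result for all (large) symmetric monoidal relative categories, established in \cite{A25a}: the localization functor $L\from\SMRelCatlarge\to\SM\hat{\Cat}_{\infty}$ exhibits $\SM\hat{\Cat}_{\infty}$ as the localization of $\SMRelCatlarge$ at the local equivalences. This is the symmetric monoidal analogue of Barwick--Kan. The statement of Proposition \ref{prop:main_L} is then a base change of that localization along the (non-full) subcategory inclusion $\Pr\SM\subset\SM\hat{\Cat}_{\infty}$. Localizations are not in general stable under pullback, so the work is to show that this particular pullback square is well behaved.

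The key input I would use is that $L\from\SMRelCatlarge\to\SM\hat{\Cat}_{\infty}$ is more than a localization: I expect \cite{A25a} to produce a functor $\Phi\from\SM\hat{\Cat}_{\infty}\to\SMRelCatlarge$, together with natural zigzags of local equivalences $\mathrm{id}\leftarrow\cdots\to\Phi L$. This is built from a symmetric monoidal version of the flagged or marked-simplices construction, for instance a Day-convolution category of simplices. With $\Phi$ in hand, I would apply the criterion for localizations ``with homotopy inverse'' (e.g.\ \cite[Corollary 3.16]{ACK25}, or the standard lemma that a functor with a homotopy inverse up to natural zigzags of weak equivalences is a localization).

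I would then check that $\Phi$ restricts to the pullback. Given $\cat C\in\Pr\SM$, we have $L\Phi\cat C\simeq\cat C$, so $\Phi\cat C$ lies in $\SMRelCatlarge_{\Pr\SM}$. A colimit-preserving symmetric monoidal functor is sent to a morphism whose image under $L$ is again colimit preserving, since it is equivalent to the original functor. Likewise, every intermediate object of the zigzag for $\mathbf{C}\in\SMRelCatlarge_{\Pr\SM}$ has localization equivalent to $L\mathbf{C}$, hence lies in the pullback. The maps of the zigzag become equivalences under $L$, so their images are colimit preserving and they lie in $\SMRelCatlarge_{\Pr\SM}$. The same localization criterion then shows that $\SMRelCatlarge_{\Pr\SM}[\loceq^{-1}]\to\Pr\SM\hat{\Cat}_{\infty}$ is an equivalence.

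The main obstacle is that \cite{A25a} may only state the abstract localization theorem, or may state it for small categories, rather than providing a functorial inverse with natural zigzags. In that case I would first redo the argument of \cite{A25a} to extract such a $\Phi$, then pass to large relative categories by enlarging the universe; the three nested universes assumed in the conventions make this harmless. Failing that, I would show directly that the square is a pullback along a ``replete'' subcategory: a subcategory of the target that contains all equivalences and is closed under composition with them. I would verify by hand that localization commutes with pulling back along such a subcategory, which holds when the localization is a cocartesian-type (Dwyer--Kan) localization whose fibers are weakly contractible relative to the subcategory.
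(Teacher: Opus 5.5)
The load-bearing object in your plan cannot exist, however carefully one re-reads \cite{A25a}. That object is an $\infty$-functor $\Phi\from\SM\hat{\Cat}_{\infty}\to\SMRelCatlarge$ with $L\Phi\simeq\mathrm{id}$ and natural zigzags $\mathrm{id}\leftarrow\cdots\to\Phi L$. Since $\SMRelCatlarge$ is (the nerve of) an ordinary category, $\Phi$ factors through the homotopy category of $\SM\hat{\Cat}_{\infty}$. Then $L\Phi\simeq\mathrm{id}$ would exhibit every mapping space of $\SM\hat{\Cat}_{\infty}$ as a retract of a set, i.e.\ homotopy discrete. That is false. Symmetric monoidal functors from the free symmetric monoidal $\infty$-category on one object (finite sets and bijections under disjoint union) to $\cat C$ form the space $\cat C^{\simeq}$. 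The same failure occurs inside $\Pr\SM$, using presheaves on that free object. The category-of-simplices constructions you allude to start from a strict model (simplicial sets, Segal objects), not from the $\infty$-category itself.

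What \cite{A25a} actually supplies are homotopy equivalences between relative $1$-categories: $\Fact\from\PermRelCat\to\Fun^{\Seg}(\Fin_{\ast},\RelCat)$, together with $\SMRelCat\simeq\PermRelCat$ and $\RelCat\subset\RelCat_{\BK}$. Your restriction argument is essentially right for these; it is the content of Proposition~\ref{prop:hoeq_pb}, where conservativity of $\Pr\SM\to\SM\hat{\Cat}_{\infty}$ enters. But it only shows that $\SMRelCatlarge_{\Pr\SM}$ has the same localization as the corresponding pullback of the strict model. It says nothing about whether that localization is $\Pr\SM$.

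So the missing idea is a \emph{universal} localization: a strict model whose localization is stable under base change along $\Pr\SM\subset\SM\hat{\Cat}_{\infty}$. The paper gets this in two steps. First, the Barwick--Kan combinatorial model structure on $\RelCat_{\BK}$ makes $\Fun(\Fin_{\ast},\RelCat_{\BK})$ a model category. Second, Theorem~\ref{thm:abs_loc_model_gen} says that localizations of $\infty$-categories with weak equivalences and fibrations are stable under arbitrary pullback, proved via Mazel-Gee's criterion. Pulling back along $\Pr\SM$ then stays a localization, and the chain of homotopy equivalences transports this back to $\SMRelCatlarge_{\Pr\SM}$.

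Your fallback does not replace this input, because localizations are not stable under pullback along replete subcategories in general, even full and conservative ones. Let $\cat C$ be the category $a\to e\leftarrow e'\to c$ and invert only $e'\to e$. The localization is $[2]$, with $a\mapsto 0$, $e,e'\mapsto 1$, $c\mapsto 2$. The full subcategory on $\{0,2\}$ is replete and contains a morphism, yet its preimage in $\cat C$ is the discrete category on $a,c$. The ``weakly contractible fibers'' condition you gesture at would have to be established for $\SMRelCatlarge$ itself, and that is precisely the content the proposal leaves out.
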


\begin{prop}
\label{prop:main_flat}The functor $\pr -_{\flat}$ induces a categorical
equivalence
\[
\CombSymAlg{\mathbf{S}}[\Quilleq^{-1}]\xrightarrow{\simeq}\SMRelCatlarge_{\Pr\SM}[\loceq^{-1}].
\]
\end{prop}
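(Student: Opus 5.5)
The plan is to follow Pavlov's strategy. I would build a functor in the opposite direction, $\Phi\colon\SMRelCatlarge_{\Pr\SM}\to\CombSymAlg{\mathbf{S}}$, and then show that $\Phi$ and $\pr -_{\flat}$ are inverse to each other up to zigzags of natural transformations whose components lie in $\Quilleq$ and $\loceq$ respectively. First I would record the formal lemma that makes this sufficient. Suppose we have relative functors $F\colon(\mathbf{A},W_{\mathbf{A}})\rightleftarrows(\mathbf{B},W_{\mathbf{B}})\colon G$ between (very large) relative categories. Suppose also that $GF$ is connected to $\mathrm{id}$ by a zigzag of natural transformations with components in $W_{\mathbf{A}}$, and similarly for $FG$. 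Then $F$ and $G$ induce inverse equivalences of localizations. I also need that $\pr -_{\flat}$ is a relative functor. Indeed, a symmetric monoidal left Quillen equivalence restricts on cofibrant objects to a functor inducing an equivalence of underlying symmetric monoidal $\infty$-categories. This uses Definition \ref{def:loc} and the fact that localization of symmetric monoidal relative categories is computed on underlying $\infty$-categories \cite{A25a}. Hence it lies in $\loceq$.

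Next I would construct $\Phi$. Take $(\mathbf{C},\mathbf{W})$ with $L\mathbf{C}$ presentably symmetric monoidal. Choose a regular cardinal $\kappa$ and a small full symmetric monoidal relative subcategory $\mathbf{C}_{0}\subset\mathbf{C}$ whose image in $L\mathbf{C}$ generates it under colimits. The cardinal $\kappa$ should be chosen so that $(L\mathbf{C})_{\kappa}$ is a symmetric monoidal subcategory; to make this functorial I would take $\mathbf{C}_0$ to be all of $\mathbf{C}$ and work with large presheaves, or fix a functorial choice via the multiplicative Gabriel--Ulmer duality of Appendix \ref{sec:MGUD}. Then form the $\mathbf{S}$-enriched presheaf category $\Fun(\mathbf{C}_{0}^{\op},\mathbf{S})$. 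I would give it the projective model structure and Day convolution; since $\mathbf{S}$ is tractable, left proper and has cofibrant unit, this is a tractable symmetric $\mathbf{S}$-algebra. Finally I would left Bousfield localize at the images of $\mathbf{W}$ and at the maps encoding the $\kappa$-small colimit relations of $L\mathbf{C}$. The localization is symmetric monoidal because the corresponding localization of $\mathcal{P}(\mathbf{C}_0)$ is compatible with Day convolution, as $L\mathbf{C}$ is a presentably symmetric monoidal localization; Appendix \ref{sec:lb} gives the compatibility of localization with base change needed here. By construction $\Phi(\mathbf{C})_{\infty}\simeq L\mathbf{C}$. The Yoneda embedding $\mathbf{C}_{0}\to\Phi(\mathbf{C})^{\flat}$, combined with $\mathbf{C}_0\hookrightarrow\mathbf{C}$, gives the zigzag $\mathbf{C}\leftarrow\mathbf{C}_0\to\Phi(\mathbf{C})_{\flat}$ of local equivalences.

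The remaining step, and the one I expect to be the main obstacle, is the natural zigzag between $\mathbf{M}$ and $\Phi(\mathbf{M}_{\flat})$ for $\mathbf{M}\in\CombSymAlg{\mathbf{S}}$. Non-multiplicatively one uses Dugger's comparison: the cosimplicial resolutions of objects of $\mathbf{M}_{\cof}$ give a left Quillen functor $\Fun(\mathbf{M}_{0}^{\op},s\mathbf{Set})\to\mathbf{M}$. Cosimplicial resolutions are not compatible with tensor products, however, so this functor is not symmetric monoidal. The fix I would try first is to replace the simplex category with the symmetric cube category, which is symmetric monoidal. The new model structure on symmetric cubical sets (Subsection \ref{subsec:sym_cube_set}) is presumably a symmetric monoidal model category presenting $\mathcal{S}$. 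Using it, I would build a functorial, symmetric monoidal cubical resolution of $\mathbf{M}_{0}$ inside $\mathbf{M}$. This should yield a symmetric monoidal left Quillen functor from the localized presheaf category (taken with symmetric cubical coefficients, compared to $\mathbf{S}$ through the unique-up-to-contractible-choice equivalence of underlying $\infty$-categories) to $\mathbf{M}$. On underlying $\infty$-categories this functor is the canonical equivalence, so it is a Quillen equivalence.

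Along the way, the mismatch between $\mathbf{S}$-coefficients and cubical coefficients has to be resolved. I would do this by a zigzag through a common tractable algebra, for example a Day convolution model over the tensor product of the two, again using Appendix \ref{sec:lb} and Theorem \ref{thm:Muro} to keep units cofibrant. The naturality of all these zigzags in $\mathbf{M}$ then completes the argument via the formal lemma above.
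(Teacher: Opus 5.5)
There is a genuine gap in the step comparing $\mathbf{C}$ with $\Phi\pr{\mathbf{C}}_{\flat}$. Your zigzag $\mathbf{C}\leftarrow\mathbf{C}_{0}\to\Phi\pr{\mathbf{C}}_{\flat}$ is not a zigzag of local equivalences. Since $\mathbf{C}_{0}$ is small, $L\pr{\mathbf{C}_{0}}$ is essentially small, whereas $L\pr{\mathbf{C}}\simeq\Phi\pr{\mathbf{C}}_{\infty}$ is presentable (think of $\mathcal{S}$). So in general neither map is inverted by $L$. Worse, $\mathbf{C}_{0}$ is in general not even an object of $\SMRelCatlarge_{\Pr\SM}$, so your formal lemma cannot be applied.

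The correct relation is that $L\pr{\mathbf{C}}$ is an $\Ind_{\kappa}$-completion of $L\pr{\mathbf{C}_{0}}$. Even this requires choosing $\mathbf{C}_{0}$ so that $L\pr{\mathbf{C}_{0}}\to L\pr{\mathbf{C}}$ is fully faithful with image the $\kappa$-compact objects, which is not automatic for a full relative subcategory. The missing idea is how to promote ``is an $\Ind_{\kappa}$-completion'' to an equivalence of homotopy theories.

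The paper never constructs an inverse functor on $\SMRelCatlarge_{\Pr\SM}$. It uses Lemma \ref{lem:main_L} to identify $\SMRelCat_{\SM\Cat_{\infty}\pr{\kappa}}[\loceq^{-1}]\simeq\SM\Cat_{\infty}\pr{\kappa}$ and $\SMRelCatlarge_{\kappa\-\Pr\SM,\pr{2,1}}[\loceq^{-1}]\simeq\kappa\-\Pr\SM$. It then applies the multiplicative Gabriel--Ulmer duality for uncountable $\kappa$ (Corollary \ref{cor:MGUD_2}) to show that $\mathcal{C}\mapsto\MInd^{\mathbf{S}}_{\kappa}\pr{L\pr{\mathcal{C}}}_{\flat}$ induces an equivalence (Proposition \ref{prop:rel_GUdual}).

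You flag the dependence on $\kappa$ but do not resolve it; a morphism preserves $\kappa$-compact objects only for $\kappa$ large depending on the morphism. The paper handles this with Grothendieck constructions over $\urCard$, whose projections are localizations because everything is $\kappa$-good for some $\kappa$ (cf.\ Lemma \ref{lem:monoidally_comb}). It also needs $\pr{2,1}$-categorical bookkeeping, because $\MInd^{\mathbf{S}}$ and the comparison maps are only pseudofunctorial and pseudonatural. Taking $\mathbf{C}_{0}=\mathbf{C}$ does not help: presheaves on a large category are not locally presentable (not even locally small in general), so the result would not lie in $\CombSymAlg{\mathbf{S}}$.

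On the other side, you overlook that an object of $\CombSymAlg{\mathbf{S}}$ comes equipped with a symmetric monoidal left Quillen functor $\mathbf{S}\to\mathbf{M}$. The resulting tensoring makes $X\mapsto\int^{M\in\mathbf{M}_{\kappa,\cof}}M\otimes X\pr M$, from $\MInd^{\mathbf{S}}_{\kappa}\pr{L\pr{\mathbf{M}_{\kappa,\cof}}}$ to $\mathbf{M}$, strictly symmetric monoidal by the universal property of Day convolution, with no resolutions at all. It is a Quillen equivalence because $\mathbf{M}_{\infty}\simeq\Ind_{\kappa}\pr{\mathbf{M}_{\kappa,\infty}}$ for strongly $\kappa$-combinatorial $\mathbf{M}$ (Proposition \ref{prop:strongly_comb_ind}, Corollary \ref{cor:modcat_Indcomp}). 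It is pseudonatural in $\mathbf{M}$ precisely because morphisms of $\mathbf{S}$-algebras carry the compatibility $2$-cell.

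Your symmetric-cubical detour is therefore unnecessary, and it is also not natural. A symmetric cubical structure on $\mathbf{M}$ is determined only up to a weakly contractible category of choices (Corollary \ref{cor:symcub}), so it cannot be chosen functorially in $\mathbf{M}$ without an argument like that of Proposition \ref{prop:main_U}. The common algebra ``over the tensor product'' of $\mathbf{S}$ and cubical sets is also left unspecified. In the paper, symmetric cubical sets enter only in Proposition \ref{prop:main_U}, never in this proposition.
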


\begin{prop}
\label{prop:main_U}The functor $U$ induces a categorical equivalence
\[
\CombSymAlg{\mathbf{S}}[\Quilleq^{-1}]\xrightarrow{\simeq}\CSMMC^{\mathbf{1}}[\Quilleq^{-1}].
\]
\end{prop}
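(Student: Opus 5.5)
The plan is to show that the forgetful functor $U\from\CombSymAlg{\mathbf{S}}\to\CSMMC^{\mathbf{1}}$ admits a homotopy inverse, built by base change along a monoidal left Quillen functor out of $\mathbf{S}$. Given $\mathbf{M}\in\CSMMC^{\mathbf{1}}$, the candidate is $\mathbf{S}\otimes\mathbf{M}$, a combinatorial symmetric monoidal model structure on a tensor product of locally presentable categories. Concretely, I would take $\mathbf{S}\otimes\mathbf{M}$ to be the category of cocontinuous functors $\mathbf{S}^{\op}\to\mathbf{M}$ with the projective-type model structure. I expect this to require passing to tractable or left proper replacements, which is where Appendix \ref{sec:lb} on localization and base change enters. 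It carries the Day-type tensor product, together with the structure maps $\mathbf{S}\to\mathbf{S}\otimes\mathbf{M}\leftarrow\mathbf{M}$.

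First I reduce to a convenient subclass. Using Theorem \ref{thm:Muro} and a Dugger-type presentation, every $\mathbf{M}\in\CSMMC^{\mathbf{1}}$ is Quillen equivalent to a tractable, left proper one. So it suffices to prove the equivalence after restricting both sides to the full subcategories of tractable, left proper objects. This is legitimate because the inclusions of these subcategories induce equivalences on localizations, by a standard replacement-functor argument.

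Next I construct functorial maps relating $\mathbf{M}$ and $U(\mathbf{S}\otimes\mathbf{M})$. The map $\mathbf{M}\to\mathbf{S}\otimes\mathbf{M}$ should be a Quillen equivalence. On underlying $\infty$-categories this amounts to $\mathcal{S}\otimes\mathbf{M}_\infty\simeq\mathbf{M}_\infty$, which holds because $\mathcal{S}$ is the unit of $\Pr^L$. Similarly, for $(F\from\mathbf{S}\to\mathbf{N})\in\CombSymAlg{\mathbf{S}}$, the multiplication map $\mathbf{S}\otimes\mathbf{N}\to\mathbf{N}$ induced by $F$ and the tensor product is a map of $\mathbf{S}$-algebras. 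Checking that it is a Quillen equivalence again reduces to $\mathcal{S}\otimes\mathbf{N}_\infty\simeq\mathbf{N}_\infty$. To conclude, I invoke the standard fact that a functor between relative categories with a homotopy inverse (connected to the identities by zigzags of natural weak equivalences) induces an equivalence of localizations. Here the "natural transformations" are morphisms in the $(2,1)$-categories, so I use Proposition \ref{prop:(2.1)} and Proposition \ref{prop:(2.1)_V} to replace natural isomorphisms by equalities after localization.

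The main obstacle is that $\mathbf{S}\otimes\mathbf{M}$ must satisfy the pushout-product axiom, and that the comparison maps must compute the $\infty$-categorical tensor product $\mathbf{S}_\infty\otimes\mathbf{M}_\infty$. For a non-simplicial $\mathbf{S}$, the projective structure on a tensor product is not obviously homotopically correct. My first attempt would be to present $\mathbf{S}\otimes\mathbf{M}$ as a left Bousfield localization of a diagram category $\mathrm{Fun}(\mathbf{S}_{c}^{\op},\mathbf{M})$ on a small dense subcategory of cofibrant objects containing the unit. I would then use the results of Appendix \ref{sec:lb} on the compatibility of localization and base change to identify the underlying $\infty$-category, with the projective pushout-product verified on generators.
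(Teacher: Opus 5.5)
The homotopy-inverse strategy is fine in principle, but there is a genuine gap: the object $\mathbf{S}\otimes\mathbf{M}$, on which everything rests, is never produced. For \emph{every} $\mathbf{M}\in\CSMMC^{\mathbf{1}}$, functorially, you need three things. First, a combinatorial symmetric monoidal \emph{model} (not semi-model) category with cofibrant unit. Second, a symmetric monoidal left Quillen functor $\mathbf{S}\to\mathbf{S}\otimes\mathbf{M}$. Third, a symmetric monoidal left Quillen equivalence $\mathbf{M}\to\mathbf{S}\otimes\mathbf{M}$. Both of your Quillen-equivalence checks presuppose $(\mathbf{S}\otimes\mathbf{M})_{\infty}\simeq\mathbf{S}_{\infty}\otimes\mathbf{M}_{\infty}$, and that is exactly the unproved part.

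Your concrete candidates do not work as stated.
\begin{itemize}
\item The tensor product of locally presentable categories consists of \emph{limit}-preserving functors $\mathbf{S}^{\op}\to\mathbf{M}$, not cocontinuous ones.
\item A presheaf category $\Fun(\mathbf{S}_{c}^{\op},\mathbf{M})$ on a small dense subcategory receives no evident cocontinuous symmetric monoidal functor from $\mathbf{S}$, because the restricted Yoneda functor is not cocontinuous. So it does not come with an $\mathbf{S}$-algebra structure.
\item Even when $\mathbf{S}=\Set^{C^{\op}}$ is a presheaf category, so that $\mathbf{S}\otimes\mathbf{M}=\Fun(C^{\op},\mathbf{M})$, the functor $A\mapsto A\cdot\mathbf{1}$ is not left Quillen for the projective structure. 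For instance, take $\mathbf{M}=\SS$ and $\mathbf{S}$ symmetric cubical sets. Then $\partial\square^{1}_{\Sigma}\cdot\mathbf{1}\to\square^{1}_{\Sigma}\cdot\mathbf{1}$ has no lift against the projective trivial fibration from the constant diagram at a contractible Kan complex with two vertices to the point.
\end{itemize}
So you must enlarge the cofibrations and then localize. Without left properness of $\mathbf{M}$, this generally yields only a semi-model structure. That is exactly what happens in Lemma \ref{lem:injloc}: the paper runs essentially your strategy for $\SS$ there, gets only semi-model categories, and therefore uses it only for the semi-model variants in Section \ref{sec:variation}.

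Your escape route, reducing to left proper tractable $\mathbf{M}$ via Muro and a Dugger-type presentation, is not available either. Dugger's presentation is not monoidal. Its monoidal analogue in this paper (Corollary \ref{cor:modcat_Indcomp}) requires $\mathbf{M}$ to already be an $\mathbf{S}$-algebra, which is precisely what is at stake. A functorial monoidal left proper replacement would be a rectification result of the same strength as the theorem. Finally, Appendix \ref{sec:lb} is about pullbacks of $\infty$-categorical localization functors, not about base change of model categories along $\mathbf{S}$. It cannot supply the identification of underlying $\infty$-categories.

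The missing idea is that no inverse needs to be built.
\begin{itemize}
\item Propositions \ref{prop:main_L} and \ref{prop:main_flat} hold for every admissible $\mathbf{S}$. By two-out-of-three, it therefore suffices to prove the statement for a single $\mathbf{S}$, and the paper takes $\mathbf{S}=\Set^{\square^{\op}_{\Sigma}}$.
\item Because $\square_{\Sigma}$ is the free symmetric monoidal category on an interval, symmetric monoidal left Quillen functors $\mathbf{S}\to\mathbf{M}$ correspond to cylinder data $\mathbf{1}\amalg\mathbf{1}\rightarrowtail I\xrightarrow{\sim}\mathbf{1}$ (Theorem \ref{thm:sym_cube}(IV)).
\item The category of such data is weakly contractible by a functorial-factorization argument (Corollary \ref{cor:symcub}).
\end{itemize}
Hence $U$ is a cocartesian fibration with weakly contractible fibers, and so is already a localization. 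This needs no hypothesis on $\mathbf{M}$ beyond a cofibrant unit.
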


The proof of these propositions will be given in the next three sections
(Section \ref{sec:main_L}, \ref{sec:main_flat}, and \ref{sec:main_U}).
Since we know that $\iota$ induces a categorical equivalence upon
localizing at Quillen equivalences (Theorem \ref{thm:Muro}), these
proposition will prove Theorem \ref{thm:main}. 

\section{\label{sec:main_L}Proof of Proposition \ref{prop:main_L}}

In this section, we prove Proposition \ref{prop:main_L}, which asserts
that the functor
\[
\SMRelCatlarge_{\Pr\SM}[\loceq^{-1}]\to\Pr\SM
\]
is an equivalence. In fact, we will prove the following more general
assertion:
\begin{lem}
\label{lem:main_L}For every conservative functor $\mathcal{X}\to\SM\Cat_{\infty}$
of $\infty$-categories, the functor 
\[
\SMRelCat_{\mathcal{X}}=\mathcal{X}\times_{\SM\Cat_{\infty}}\SMRelCat\to\mathcal{X}
\]
is a localization.
\end{lem}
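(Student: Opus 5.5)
We would deduce this from the author's earlier result \cite{A25a} that $L\from\SMRelCat\to\SM\Cat_{\infty}$ is itself a localization at the local equivalences (the symmetric monoidal analog of Barwick--Kan), together with a general principle: localizations are stable under pullback along conservative functors, provided the localization has a suitable ``pointwise'' form. Since a pullback of an arbitrary localization need not be a localization, we will not use only that $L$ is a localization. We will also use that it admits a well-behaved section up to equivalence, namely a symmetric monoidal analog of the Barwick--Kan rigidification functor $\SM\Cat_{\infty}\to\SMRelCat$, together with natural zigzags of local equivalences.

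Concretely, we would invoke the following recognition criterion, in the style of \cite[Corollary 3.16]{ACK25} or of the homotopical-category arguments of Barwick--Kan. Let $p\from\mathbf{C}\to\mathcal{D}$ be a functor from a relative category. Suppose there are a functor $R\from\mathcal{D}\to\mathbf{C}[W^{-1}]$ and natural equivalences $pR\simeq\mathrm{id}$ and $Rp\simeq\mathrm{id}$, the latter exhibited by zigzags of weak equivalences in $\mathbf{C}$ that are functorial in $\mathbf{C}$. Then $p$ is a localization. We first check that the statement holds for $\mathcal{X}=\SM\Cat_{\infty}$, using the data from \cite{A25a}: a functor $N\from\SMRelCat\to\SM\Cat_{\infty}$-style rigidification, and natural zigzags $\mathbf{C}\leftarrow\cdots\to R L\mathbf{C}$ of local equivalences in $\SMRelCat$.

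For the base change, write $q\from\mathcal{X}\to\SM\Cat_{\infty}$ and set $\SMRelCat_{\mathcal{X}}=\mathcal{X}\times_{\SM\Cat_{\infty}}\SMRelCat$. Since $\SM\Cat_\infty$ is not a 1-category, this pullback must be read as an $\infty$-categorical pullback whose objects are pairs $(x,\mathbf{C},\phi\from q(x)\simeq L\mathbf{C})$. The weak equivalences are the maps whose $\SMRelCat$-component is a local equivalence. By conservativity of $q$, these are exactly the maps inverted by the projection to $\mathcal{X}$, because $L$ inverts a map iff $qx\to qx'$ is an equivalence iff $x\to x'$ is an equivalence. We then lift the zigzags: given an object $(x,\mathbf{C},\phi)$, each term of the natural zigzag $\mathbf{C}\leftarrow\mathbf{C}'\to RL\mathbf{C}$ maps under $L$ to an equivalence. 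We can therefore transport $x$ along these equivalences, using that $q$ is an isofibration up to equivalence (we may replace $q$ by a categorical fibration). This produces a zigzag in $\SMRelCat_{\mathcal{X}}$ connecting $(x,\mathbf{C},\phi)$ to $(x,RLq^{-1}\ldots)$, that is, to $R_{\mathcal{X}}(x):=(x,R q(x),\text{counit})$. Functoriality of the zigzag and of $R$ gives a section $R_{\mathcal X}\from\mathcal{X}\to\SMRelCat_{\mathcal{X}}$ satisfying the hypotheses of the criterion, and so the projection is a localization.

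We expect the main obstacle to be the coherence in the last step: making $R_{\mathcal{X}}$ an honest functor of $\infty$-categories and making the zigzags natural after the base change. The fix we would try first is to avoid explicit transport and phrase everything fiberwise. A functor $p$ is a localization at $W$ iff for every $\infty$-category $\mathcal{E}$, restriction $\Fun(\mathcal{D},\mathcal{E})\to\Fun^{W}(\mathbf{C},\mathcal{E})$ is an equivalence. The natural zigzags show that $\SMRelCat\to\SM\Cat_{\infty}$ is in fact a localization that is moreover stable under base change along categorical fibrations; this is the analog of Barwick--Kan's statement that the relative nerve is a \emph{homotopically} fully faithful functor with a homotopy inverse. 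We would then check this universal property directly for the pulled-back data. Conservativity of $q$ is exactly what ensures that the marked edges upstairs are precisely the edges inverted downstairs, so the marked-simplicial-set model of the localization pulls back.
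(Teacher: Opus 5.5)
\textbf{There is a genuine gap.} The step that carries all the content is that the localization $L\colon\SMRelCat\to\SM\Cat_{\infty}$ survives base change, and it is never established.

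Your first mechanism cannot work even in principle. $\SMRelCat$ is (the nerve of) a $1$-category, so every functor $R\colon\SM\Cat_{\infty}\to\SMRelCat$ factors through the homotopy category. Then $LR\simeq\mathrm{id}$ would make every mapping space of $\SM\Cat_{\infty}$ a retract of a discrete set. This already fails for maps out of the free symmetric monoidal $\infty$-category on one object, which compute $\mathcal{C}^{\simeq}$. So the section $R_{\mathcal{X}}\colon x\mapsto(x,Rq(x),\text{counit})$ does not exist, and this is not a coherence issue to be repaired later. Barwick--Kan-type homotopy inverses are relative functors between relative categories (for instance out of a model category), not functors out of the $\infty$-category.

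Your fallback asserts that ``the natural zigzags show'' $L$ is stable under base change along categorical fibrations, and that conservativity of $q$ makes the marked-simplicial-set model pull back. Neither claim is justified. Being a universal localization is a genuinely extra property, as you note yourself. Natural zigzags inside $\SMRelCat$ do not produce it. The marked simplicial set $(\SMRelCat,\text{loc.eq})$ is not fibrant, and pullback along a fibration need not preserve marked equivalences out of it. Conservativity only fixes which edges are marked upstairs; it says nothing about the pullback question. Indeed, were $L$ known to be universal, the pullback would be a localization for every $q$, and conservativity would only serve to identify the inverted maps with the local equivalences.

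The missing idea is to route through a relative category whose localization is \emph{known} to be universal, and to transport the conclusion back along homotopy equivalences of relative categories. This is what the paper does, in four moves:
\begin{enumerate}
\item It replaces $\SMRelCat$ by $\PermRelCat$ using May's strictification.
\item It replaces that by $\Fun^{\Seg}(\Fin_{\ast},\RelCat)$ via the homotopy equivalence of \cite[Theorem 1.1]{A25a}, and then by $\Fun^{\Seg}(\Fin_{\ast},\RelCat_{\BK})$. Each step is compatible with the maps to $\SM\Cat_{\infty}$.
\item It uses Proposition \ref{prop:hoeq_pb}: such homotopy equivalences pull back along conservative functors. This is the true role of conservativity: it keeps the pulled-back homotopies inside the weak equivalences.
\item Barwick--Kan's combinatorial model structure on $\RelCat_{\BK}$ makes $\Fun(\Fin_{\ast},\RelCat_{\BK})$ a category with weak equivalences and fibrations. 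Its localization is therefore universal by Theorem \ref{thm:abs_loc_model_gen}, proved via Mazel-Gee's criterion. The functor $\Fun^{\Seg}(\Fin_{\ast},\RelCat_{\BK})_{\mathcal{X}}\to\mathcal{X}$ is a pullback of it, hence a localization.
\end{enumerate}
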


Replacing $\SM\Cat_{\infty}$ by $\SM\hat{\Cat}_{\infty}$ and then
substituting $\Pr\SM$ for $\mathcal{X}$, we obtain Proposition \ref{prop:main_L}. 

For the proof of Lemma \ref{lem:main_L}, we need some terminology
and notation.
\begin{itemize}
\item A \textbf{relative $\infty$-category} is an $\infty$-category equipped
with a subcategory $\cat W\subset\cat C$ containing all equivalences.
Morphisms in $\cat W$ are typically called \textbf{weak equivalences}.
When $\cat C$ is (the nerve of) an ordinary category, we call such
a pair a\textbf{ relative category}. This should not be confused with
the definition of Barwick--Kan \cite{BK12b}, which only requires
that $\mathcal{W}$ contains all objects.
\item If $\cat C$ and $\cat D$ are relative $\infty$-categories, a\textbf{
relative functor} $f\from\cat C\to\cat D$ is a functor between the
underlying $\infty$-categories that preserves weak equivalences.
We say that such an $f$ is a \textbf{homotopy equivalence} if there
is a relative functor $g\from\cat D\to\cat C$ and zig-zags of natural
weak equivalences connecting $f\circ g$ and $g\circ f$ to the identity
functors.
\item We write $\RelCat$ for the category of small relative categories
and functors preserving weak equivalences. For disambiguation, we
will write $\RelCat_{\BK}$ for the slightly larger category of Barwick--Kan's
relative categories that are small. 
\item We call a morphism $f:\mathcal{C}\to\mathcal{D}$ of $\RelCat_{\BK}$
a \textbf{local equivalence} if it induces an equivalence between
the ($\infty$-categorical) localizations. 
\item Recall that a symmetric monoidal category is called a \textbf{permutative
category} if its underlying monoidal category is strict (i.e., its
associators and the unitors are the identity maps). We write $\PermRelCat\subset\SMRelCat$
for the subcategory of objects whose underlying symmetric monoidal
categories are permutative categories, and morphisms whose underlying
symmetric monoidal functors are strict.
\item We will regard the categories $\RelCat,\RelCat_{\BK},\PermRelCat,\SMRelCat$
as relative categories whose weak equivalences are the local equivalences.
\item We write $\Fin_{\ast}$ for the category of finite pointed sets and
pointed maps. We write $\Fun^{\Seg}\pr{\Fin_{\ast},\RelCat}\subset\Fun^{\Seg}\pr{\Fin_{\ast},\RelCat}$
for the full subcategory spanned by the functors $F\from\Fin_{\ast}\to\RelCat$
satisfying the following \textbf{Segal condition}: For every $n\geq0$,
the map $\rho_{i}\from\inp n\to\inp 1$ defined by 
\[
\rho_{i}\pr j=\begin{cases}
\ast & \text{if }i\neq j,\\
1 & \text{if }i=j,
\end{cases}
\]
induces a local equivalence 
\[
F\inp n\xrightarrow{\simeq}\prod_{1\leq i\leq n}F\inp 1.
\]
We define $\Fun^{\Seg}\pr{\Fin_{\ast},\RelCat_{\BK}}\subset\Fun^{\Seg}\pr{\Fin_{\ast},\RelCat_{\BK}}$
similarly.
\end{itemize}
\begin{proof}
[Proof of Lemma \ref{lem:main_L}]Since symmetric monoidal categories
are functorially equivalent to permutative categories \cite[Proposition 4.2]{May78},
using Proposition \ref{prop:hoeq_pb}, we may replace $\SMRelCat$
by $\PermRelCat$. 

According to \cite[Theorem 1.1]{A25a}, there is a homotopy equivalence
$\Fact\from\PermRelCat\to\Fun^{\Seg}\pr{\Fin_{\ast},\RelCat}$ of
relative categories. The proof of loc. cit. further shows that the
localization of $\Fun^{\Seg}\pr{\Fin_{\ast},\RelCat}$ is equivalent
to $\SM\Cat_{\infty}$, and the diagram
\[\begin{tikzcd}
	{\mathsf{PermRelCat}} & {\operatorname{Fun}^{\mathrm{Seg}}(\mathsf{Fin}_\ast,\mathsf{RelCat})} \\
	{\mathsf{SMRelCat}} & {\mathcal{SM}\mathcal{C}\mathsf{at}_{\infty}.}
	\arrow["{\operatorname{Fact}}", from=1-1, to=1-2]
	\arrow[hook, from=1-1, to=2-1]
	\arrow["{\text{localization}}", from=1-2, to=2-2]
	\arrow["L"', from=2-1, to=2-2]
\end{tikzcd}\]commutes up to natural equivalence. Thus, by Proposition \ref{prop:hoeq_pb},
it will suffice to show that the functor
\[
\Fun^{\Seg}\pr{\Fin_{\ast},\RelCat}_{\mathcal{X}}\to\mathcal{X}
\]
is a localization. Since the inclusion $\RelCat\hookrightarrow\RelCat_{\BK}$
is a homotopy equivalence of relative categories, we can replace $\RelCat$
by $\RelCat_{\BK}$ by Proposition \ref{prop:hoeq_pb} again.

We now recall from \cite[Theorem 6.1]{BK12b} and \cite[Theorem 1.8]{BK12}
that $\RelCat_{\BK}$ and its local equivalences is part of a combinatorial
model structure. Therefore, Theorem \ref{thm:abs_loc_model_gen} and
\cite[Theorem 11.6.1]{Hirschhorn} show that the localization
\[
\Fun\pr{\Fin_{\ast},\RelCat_{\BK}}\to\Fun\pr{\Fin_{\ast},\RelCat_{\BK}}[\weq^{-1}]
\]
is stable under pullback. In particular, the functor
\[
\Fun^{\Seg}\pr{\Fin_{\ast},\RelCat_{\BK}}_{\cat X}\to\mathcal{X}
\]
is a localization, which was to be proved.
\end{proof}

\section{\label{sec:main_flat}Proof of Proposition \ref{prop:main_flat}}
\begin{notation}
Throughout this section, we fix a left proper tractable symmetric
monoidal model category $\mathbf{S}$ with a cofibrant unit, whose
underlying symmetric monoidal $\infty$-category is equivalent to
the cartesian monoidal $\infty$-category $\mathcal{S}$ of $\infty$-groupoids.
\end{notation}

In this section, we give a proof of Proposition \ref{prop:main_flat},
which asserts that the functor
\[
\CombSymAlg{\mathbf{S}}[\Quilleq^{-1}]\xrightarrow{\simeq}\SMRelCatlarge_{\Pr\SM}[\loceq^{-1}]
\]
is an equivalence. This will need two ingredients: A relative-categorical
version of the multiplicative Gabriel--Ulmer duality, and some formal
cardinality argument of model categories. We will tackle each of these
in the next two subsections, and then prove Proposition \ref{prop:main_flat}
in the final subsection.

\subsection{Relative-categorical multiplicative Gabriel--Ulmer duality}

In this subsection, we state and prove a relative-categorical multiplicative
Gabriel--Ulmer duality (Proposition \ref{prop:rel_GUdual}). We refer
the reader to Section \ref{sec:MGUD} for a review of Gabriel--Ulmer
duality. 

To state the main result, we need a bit of preliminaries.
\begin{notation}
Let $\kappa$ be an uncountable regular cardinal. We write $\SM\Cat_{\infty}\pr{\kappa}$
for the subcategory of $\SM\Cat_{\infty}$ spanned by the symmetric
monoidal $\infty$-categories that are compatible with $\kappa$-small
colimits (i.e., those $\infty$-categories that have $\kappa$-small
colimits and whose tensor product preserves such colimits in each
variable). 
\end{notation}

\begin{construction}
\label{const:MInd}Let $\kappa$ be a regular cardinal. For each $\mathcal{C}\in\SMRelCat_{\SM\Cat_{\infty}\pr{\kappa}}$,
we define an $\mathbf{S}$-algebra $\MInd^{\mathbf{S}}_{\kappa}\pr{L\pr{\mathcal{C}}}$
as follows: 
\begin{itemize}
\item Its underlying category is $\Fun\pr{\mathcal{C}^{\op},\mathbf{S}}$.
\item The symmetric monoidal structure comes from Day's convolution product.
\item The symmetric monoidal functor $\mathbf{S}\to\Fun\pr{\mathcal{C}^{\op},\mathbf{S}}$
is given by $X\mapsto\cat C\pr{-,\mathbf{1}}\cdot X$, where $\mathbf{1}\in\cat C$
denote the monoidal unit and the dot ``$\cdot$'' denotes copowering
by sets.
\item The model structure is the left Bousfield localization of the projective
model structure on $\Fun\pr{\mathcal{C}^{\op},\mathbf{S}}$ whose
weak equivalences are the maps inverted by the composite
\[
\Fun\pr{\mathcal{C}^{\op},\mathbf{S}}\to\Fun\pr{\mathcal{C}^{\op},\mathcal{S}}\to\Fun\pr{L\pr{\mathcal{C}}^{\op},\mathcal{S}}\to\Ind_{\kappa}\pr{L\pr{\mathcal{C}}}.
\]
Here, the first functor postcomposes with the localization $\mathbf{S}\to\mathcal{S}$,
the second functor is the left adjoint Kan extension along the localization
functor $\mathcal{C}^{\op}\to L\pr{\mathcal{C}}^{\op}$, and the third
functor is the left adjoint to the inclusion $\Ind_{\kappa}\pr{L\pr{\mathcal{C}}}\hookrightarrow\Fun\pr{L\pr{\mathcal{C}},\mathcal{S}}$. 
\end{itemize}
We sometimes use the notation $\MInd^{\mathbf{S}}_{\kappa}\pr{L\pr{\mathcal{C}}}$
for essentially small symmetric monoidal relative categories whose
symmetric monoidal localization is idempotent complete and is compatible
with $\kappa$-small colimits.
\end{construction}

\begin{lem}
\label{lem:MInd_welldefined}Construction \ref{const:MInd} is well-defined.
More precisely, in the situation of Construction \ref{const:MInd},
the following holds:
\begin{enumerate}
\item The model category $\MInd^{\mathbf{S}}_{\kappa}\pr{L\pr{\mathcal{C}}}$
exists.
\item The model category $\MInd^{\mathbf{S}}_{\kappa}\pr{L\pr{\mathcal{C}}}$
satisfies the pushout-product axiom, and the monoidal unit is cofibrant.
\item The symmetric monoidal functor $\mathbf{S}\to\MInd^{\mathbf{S}}_{\kappa}\pr{L\pr{\mathcal{C}}}$
is left Quillen.
\end{enumerate}
\end{lem}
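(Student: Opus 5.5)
We plan to treat the three assertions in order, starting from the projective model structure on $\Fun\pr{\mathcal{C}^{\op},\mathbf{S}}$ and then localizing. Write $\mathcal{C}$ for the underlying category of the relative category; it is essentially small, so we may replace it by a small skeleton. Since $\mathbf{S}$ is left proper, tractable and combinatorial, the projective model structure on $\Fun\pr{\mathcal{C}^{\op},\mathbf{S}}$ exists. It is again left proper, combinatorial and tractable: its generating (trivial) cofibrations are $\mathcal{C}\pr{-,c}\cdot i$ for $i$ a generating (trivial) cofibration of $\mathbf{S}$, and these have cofibrant domains. Its underlying $\infty$-category is $\Fun\pr{\mathcal{C}^{\op},\mathcal{S}}$.

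\emph{Step 1: existence of the localization (1).} The composite functor of Construction \ref{const:MInd} is a left adjoint between presentable $\infty$-categories. It is a composite of two accessible localizations, namely left Kan extension along $\mathcal{C}^{\op}\to L\pr{\mathcal{C}}^{\op}$ and the reflection onto $\Ind_{\kappa}$. So it is itself an accessible localization of $\Fun\pr{\mathcal{C}^{\op},\mathcal{S}}$, and it is therefore generated by a small set $T$ of maps. Concretely, $T$ consists of:
\begin{itemize}
\item the maps $\mathcal{C}\pr{-,c}\to\mathcal{C}\pr{-,d}$ for $c\to d$ a weak equivalence;
\item for each $\kappa$-small diagram in $L\pr{\mathcal{C}}$ with colimit $x$, the canonical map from the colimit of the representables to the representable of $x$.
\end{itemize}
Lift $T$ to cofibrant representatives in the projective structure, using $\mathcal{C}\pr{-,c}\cdot\widetilde{\mathbf{1}}$. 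Because the projective structure is left proper and combinatorial, the left Bousfield localization at this set exists (Smith/Hirschhorn, or Barwick's theorem). Its weak equivalences are exactly the maps inverted by the composite, since a localization of underlying $\infty$-categories is determined by the $T$-local objects.

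\emph{Step 2: the pushout-product axiom and the unit (2).} Day convolution makes $\Fun\pr{\mathcal{C}^{\op},\mathbf{S}}$ closed symmetric monoidal. On generators we have
\[
\pr{\mathcal{C}\pr{-,c}\cdot i}\otimes\pr{\mathcal{C}\pr{-,d}\cdot j}\cong\mathcal{C}\pr{-,c\otimes d}\cdot\pr{i\square j},
\]
so the projective structure satisfies the pushout-product axiom by that of $\mathbf{S}$. The unit $\mathcal{C}\pr{-,\mathbf{1}}\cdot\mathbf{1}_{\mathbf{S}}$ is cofibrant because $\mathbf{1}_{\mathbf{S}}$ is. For the localized structure, the standard criterion for tractable, left proper monoidal model categories reduces the pushout-product axiom to one check: for every $t\in T$ and every generating cofibration $g$ (with cofibrant domain), the map $t\square g$ must be a local equivalence. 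Equivalently, tensoring a representable with $t$ must preserve $T$-equivalences. This is where the hypothesis on $\mathcal{C}$ enters. The $\infty$-categorical localization $\Fun\pr{L\pr{\mathcal{C}}^{\op},\mathcal{S}}\to\Ind_{\kappa}\pr{L\pr{\mathcal{C}}}$ is compatible with Day convolution, because $L\pr{\mathcal{C}}$ lies in $\SM\Cat_{\infty}\pr{\kappa}$: its tensor product preserves $\kappa$-small colimits in each variable. Likewise, left Kan extension along the symmetric monoidal localization $\mathcal{C}\to L\pr{\mathcal{C}}$ is symmetric monoidal for Day convolution, since weak equivalences are stable under tensor products. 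Hence tensoring with a representable $\mathcal{C}\pr{-,c}$ sends the maps of $T$ to local equivalences:
\begin{itemize}
\item weak equivalences go to weak equivalences, since $c\otimes-$ preserves them;
\item $\kappa$-colimit cones go to $\kappa$-colimit cones, since $c\otimes-$ preserves $\kappa$-small colimits in $L\pr{\mathcal{C}}$.
\end{itemize}
Since $\mathbf{S}$ models $\mathcal{S}$, tensoring further with cofibrant objects of $\mathbf{S}$ is harmless.

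\emph{Step 3: the functor from $\mathbf{S}$ is left Quillen (3).} The functor $X\mapsto\mathcal{C}\pr{-,\mathbf{1}}\cdot X$ is strong symmetric monoidal for Day convolution, because $\mathbf{1}\otimes\mathbf{1}\cong\mathbf{1}$. It is left adjoint to evaluation at $\mathbf{1}$, which preserves fibrations and trivial fibrations in the projective structure. So the functor is left Quillen into the projective structure, and left Bousfield localization keeps it left Quillen.

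The main obstacle is Step 2: identifying a precise set $T$ and verifying that $T$-local equivalences are stable under tensoring with cofibrant objects. This requires comparing the strict Day convolution with the $\infty$-categorical Day convolution on cofibrant objects. I would first reduce to representables, and then invoke the $\infty$-categorical fact that $\Ind_{\kappa}\pr{L\pr{\mathcal{C}}}$ is a symmetric monoidal localization of presheaves.
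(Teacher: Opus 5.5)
Your proposal is correct and follows the same three-part structure as the paper. For (1), you Bousfield-localize the left proper, tractable projective structure; for (2), you get the pushout-product axiom from the compatibility of Day convolution with the localization $\Fun(\mathcal{C}^{\op},\mathcal{S})\to\Ind_{\kappa}(L(\mathcal{C}))$; for (3), you use that evaluation at the unit is right Quillen.

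The differences are only technical. You localize at an explicit small generating set $T$ rather than invoking the class-based Proposition~\ref{prop:bousfield}. You also check monoidality through the generator criterion ($t\square g$ a local equivalence for $t\in T$ and $g$ a generating cofibration) rather than the paper's formulation that $X\otimes-$ preserves weak equivalences between cofibrant objects. For that criterion, $T$ should consist of cofibrations between cofibrant objects, which you get by factoring your cofibrant lifts. Your version has the advantage of spelling out the comparison between strict and $\infty$-categorical Day convolution, a step the paper simply calls clear.
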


\begin{proof}
Part (1) is a consequence of Proposition \ref{prop:bousfield}. 

Next, we prove part (2). Choose generating sets $I,J$ of cofibrations
and trivial cofibrations of $\mathbf{S}$. According to \cite[Theorem 11.6.1]{Hirschhorn},
the sets $I_{\mathcal{C}}=\{\mathcal{C}\pr{-,C}\cdot i\}_{i\in I,\,C\in\mathcal{C}}$
and $J_{\mathcal{C}}=\{\mathcal{C}\pr{-,C}\cdot j\}_{j\in J,\,C\in\mathcal{C}}$
generate cofibrations and trivial cofibrations of the projective model
structure on $\Fun\pr{\mathcal{C}^{\op},\mathbf{S}}$. It follows
immediately that:
\begin{itemize}
\item The monoidal unit of $\Fun\pr{\mathcal{C}^{\op},\mathbf{S}}$ is cofibrant
(because $\mathbf{S}$ has a cofibrant unit). 
\item The Day convolution product satisfies the pushout-product axiom for
the projective model structure.
\item The projective model structure is tractable.
\end{itemize}
Therefore, to prove (2), it will suffice to prove the following: For
every cofibrant object $X\in\MInd^{\mathbf{S}}_{\kappa}\pr{L\pr{\mathcal{C}}}$,
the functor 
\[
X\otimes-\from\MInd^{\mathbf{S}}_{\kappa}\pr{L\pr{\mathcal{C}}}_{\cof}\to\MInd^{\mathbf{S}}_{\kappa}\pr{L\pr{\mathcal{C}}}_{\cof}
\]
preserves weak equivalences. But this is clear, because the composite
\[
\Fun\pr{\mathcal{C}^{\op},\mathbf{S}}_{\cof}\to\Fun\pr{\mathcal{C}^{\op},\mathcal{S}}\to\Fun\pr{L\pr{\mathcal{C}}^{\op},\mathcal{S}}\to\Ind_{\kappa}\pr{L\pr{\mathcal{C}}}
\]
is symmetric monoidal.

Part (3) follows from the fact that its right adjoint is the evaluation
at the unit object, which is evidently right Quillen. The claim follows.
\end{proof}

\begin{defn}
\label{def:kappa_pres}Let $\mathcal{C}$ be a monoidal $\infty$-category,
and let $\kappa$ be an uncountable regular cardinal. We say that
$\mathcal{C}$ is \textbf{$\kappa$-presentably monoidal} if it satisfies
the following conditions:
\begin{enumerate}
\item The underlying $\infty$-category of $\mathcal{C}$ is $\kappa$-presentable.
\item The tensor product of $\cat C$ preserves small colimits in each variable.
\item For every $n\geq0$ and every collection of $\kappa$-presentable
objects $X_{1},\dots,X_{n}\in\mathcal{C}$, the object $X_{1}\otimes\cdots\otimes X_{n}$
is $\kappa$-presentable. (In particular, the unit object is $\kappa$-presentable).
\end{enumerate}
We write $\kappa\-\Pr\SM\subset\SM\hat{\Cat}_{\infty}$ for the subcategory
of $\kappa$-presentably monoidal $\infty$-categories and monoidal
functors that preserves small colimits and $\kappa$-presentable objects.
\end{defn}

\begin{notation}
For each replete subcategory\footnote{Recall that a subcategory $\cat C'\subset\cat C$ of an $\infty$-category
is called \textbf{replete} if every equivalence $f\from X\xrightarrow{\simeq}X'$
in $\cat C$ such that $X'\in\cat C'$ is a morphism of $\cat C'$.} $\mathcal{X}\subset\SM\Cat_{\infty}$, we will write $\SMRelCat_{\mathcal{X}}=\mathcal{X}\times_{\SM\Cat_{\infty}}\SMRelCat$,
and define $\SMRelCat_{\pr{2,1},\mathcal{X}}\subset\SMRelCat_{\pr{2,1}}$
for the sub $\pr{2,1}$-category whose mapping groupoids are the components
corresponding to the morphisms in $\SMRelCat_{\mathcal{X}}$.
\end{notation}

We can now state the main result of this subsection.
\begin{prop}
\label{prop:rel_GUdual}Let $\kappa$ be an uncountable regular cardinal.
The strictly unitary pseudofunctor
\begin{align*}
\SMRelCat_{\SM\Cat_{\infty}\pr{\kappa}} & \to\SMRelCatlarge_{\kappa\-\Pr\SM,\pr{2,1}}\\
\mathcal{C} & \mapsto\MInd^{\mathbf{S}}_{\kappa}\pr{L\pr{\mathcal{C}}}_{\flat}
\end{align*}
induces a categorical equivalence
\[
\SMRelCat_{\SM\Cat_{\infty}\pr{\kappa}}[\loceq^{-1}]\xrightarrow{\simeq}\SMRelCatlarge_{\kappa\-\Pr\SM,\pr{2,1}}[\loceq^{-1}].
\]
\end{prop}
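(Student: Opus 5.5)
We reduce the statement to the (multiplicative) Gabriel--Ulmer duality of Appendix \ref{sec:MGUD} by passing to localizations on both sides. The plan is to fit the pseudofunctor into a square
\[
\begin{tikzcd}
	{\SMRelCat_{\SM\Cat_{\infty}\pr{\kappa}}} & {\SMRelCatlarge_{\kappa\-\Pr\SM,\pr{2,1}}} \\
	{\SM\Cat_{\infty}\pr{\kappa}^{\idem}} & {\kappa\-\Pr\SM}
	\arrow[from=1-1, to=1-2]
	\arrow["L"', from=1-1, to=2-1]
	\arrow["L", from=1-2, to=2-2]
	\arrow["\Ind_{\kappa}"', from=2-1, to=2-2]
\end{tikzcd}
\]
commuting up to natural equivalence. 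Here the lower horizontal functor is the multiplicative $\Ind_{\kappa}$, which is an equivalence onto $\kappa\-\Pr\SM$ from the idempotent-complete part by multiplicative Gabriel--Ulmer duality, with inverse $\cat D\mapsto\cat D_{\kappa}$. One then has to show that each vertical functor is a localization at the local equivalences. For the right vertical functor this is Lemma \ref{lem:main_L} applied with $\mathcal{X}=\kappa\-\Pr\SM\to\SM\hat{\Cat}_{\infty}$, which is conservative. The $\pr{2,1}$-enhancement is harmless: every natural isomorphism of symmetric monoidal relative functors becomes an equivalence after localization, so the $(2,1)$-category has the same localization as its underlying 1-category, as in Proposition \ref{prop:(2.1)}.

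For the left vertical functor, Lemma \ref{lem:main_L} gives a localization onto $\SM\Cat_{\infty}\pr{\kappa}$, not onto its idempotent-complete part. There are two ways to handle this. One is to read the statement with the convention of Construction \ref{const:MInd}, where the source consists of relative categories whose localization is idempotent complete. The other is to note that, for uncountable $\kappa$, $\kappa$-small colimits already include the colimits that split idempotents, so every object of $\SM\Cat_{\infty}\pr{\kappa}$ is automatically idempotent complete. I would take the second route, since it needs no change to the statement.

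The substantive step is commutativity of the square. We must exhibit a natural symmetric monoidal equivalence
\[
L\pr{\MInd^{\mathbf{S}}_{\kappa}\pr{L\pr{\cat C}}_{\flat}}\simeq\Ind_{\kappa}\pr{L\pr{\cat C}},
\]
natural with respect to the pseudofunctoriality. The approach is as follows.
\begin{enumerate}
\item The underlying $\infty$-category of the projective model structure on $\Fun\pr{\cat C^{\op},\mathbf{S}}$ is $\Fun\pr{\cat C^{\op},\mathcal{S}}$. Left Bousfield localization at the maps described in Construction \ref{const:MInd} identifies the underlying $\infty$-category with $\Ind_{\kappa}\pr{L\pr{\cat C}}$ (cf. Proposition \ref{prop:bousfield}).
\item The composite functor appearing in the proof of Lemma \ref{lem:MInd_welldefined}(2) is symmetric monoidal on cofibrant objects, and it inverts weak equivalences. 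It therefore descends to a symmetric monoidal functor out of the localization of $\MInd^{\mathbf{S}}_{\kappa}\pr{L\pr{\cat C}}_{\flat}$.
\item By step 1 this descended functor is an equivalence on underlying $\infty$-categories, hence a symmetric monoidal equivalence.
\end{enumerate}
Naturality comes from the functoriality of left Kan extension (Day convolution is functorial under strict symmetric monoidal maps). This requires checking that the induced maps preserve $\kappa$-presentable objects, which holds because representables land in $L\pr{\cat C}\subset\Ind_{\kappa}L\pr{\cat C}$.

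Given the square, the conclusion follows by 2-out-of-3. The top functor sends local equivalences to local equivalences, since the square commutes and the lower functor is an equivalence. So it induces a functor between the localizations, and this functor is identified with $\Ind_{\kappa}$ composed with the two localization equivalences, hence is an equivalence.

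The main obstacle is making the comparison equivalence coherently natural, i.e. as a natural transformation of functors of $\infty$-categories out of a pseudofunctor rather than objectwise. My first attempt would be to build it at the point-set level as a zig-zag of strictly natural symmetric monoidal relative functors, namely the Yoneda embedding $\cat C\to\Fun\pr{\cat C^{\op},\mathbf{S}}_{\flat}$ twisted by the unit of $\mathbf{S}$. After localization this becomes the inclusion $L\pr{\cat C}\to\Ind_{\kappa}L\pr{\cat C}$. By the universal property of $\Ind_{\kappa}$ in $\kappa\-\Pr\SM$, that inclusion determines the comparison map naturally.
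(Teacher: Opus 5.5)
Your proposal is correct and follows the paper's argument. The paper also treats the Yoneda embedding $C\mapsto\mathcal{C}(-,C)\cdot\mathbf{1}$ as a pseudonatural transformation, localizes both sides via Lemmas \ref{lem:main_L} and \ref{lem:SMRelCatX(2.1)loc} to get a functor $I$ with a natural transformation $\alpha$ whose components are $\operatorname{Ind}_\kappa$-completions, and concludes by multiplicative Gabriel--Ulmer duality in the form of Corollary \ref{cor:MGUD_2}, which is your ``universal property of $\operatorname{Ind}_\kappa$'' step. Once you use the Yoneda map, your Day-convolution comparison (steps 2--3) is unnecessary: symmetric monoidality of $\alpha_{\mathcal{C}}$ comes for free, and only the underlying identification from step 1 is needed.
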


We need a lemma for the proof of Proposition \ref{prop:rel_GUdual}.
We will use the following notation: For each replete subcategory $\mathcal{X}\subset\SM\Cat_{\infty}$,
we will write $\SMRelCat_{\pr{2,1},\mathcal{X}}\subset\SMRelCat_{\pr{2,1}}$
for the sub $\pr{2,1}$-category whose mapping groupoids are the union
of the components of the mapping groupoids of $\SMRelCat_{\pr{2,1}}$
corresponding to the morphisms in $\SMRelCat_{\mathcal{X}}$. We then
have:
\begin{lem}
\label{lem:SMRelCatX(2.1)loc}For every replete subcategory $\mathcal{X}\subset\SM\Cat_{\infty}$,
the inclusion $\SMRelCat_{\mathcal{X}}\hookrightarrow\SMRelCat_{\pr{2,1},\mathcal{X}}$
is a localization. 
\end{lem}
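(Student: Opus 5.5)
The plan is to imitate the proof of Proposition \ref{prop:(2.1)}: invoke (the dual of) \cite[Corollary 3.16]{ACK25}. That criterion says the canonical functor from the underlying $1$-category of a $(2,1)$-category to the $(2,1)$-category is a localization once weak cotensors by $[1]$ exist. So it suffices to show the following. For every $\mathcal{D}\in\SMRelCat_{\pr{2,1},\mathcal{X}}$ there should be an object $[1]\pitchfork\mathcal{D}$ of $\SMRelCat_{\pr{2,1},\mathcal{X}}$, together with isomorphisms of groupoids
\[
\operatorname{Map}\pr{\mathcal{C},[1]\pitchfork\mathcal{D}}\cong\Fun\pr{[1],\operatorname{Map}\pr{\mathcal{C},\mathcal{D}}}
\]
natural in $\mathcal{C}\in\SMRelCat_{\mathcal{X}}$. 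Here $\operatorname{Map}$ denotes the mapping groupoids of $\SMRelCat_{\pr{2,1},\mathcal{X}}$.

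\textbf{Construction.} As before, let $J$ be the contractible groupoid on objects $0,1$, and set $[1]\pitchfork\mathcal{D}=\mathcal{D}^{J}$.
\begin{itemize}
\item The symmetric monoidal structure is pointwise.
\item A morphism is a weak equivalence if both of its components are weak equivalences of $\mathcal{D}$. This is stable under tensor products and contains the isomorphisms, so $\mathcal{D}^{J}$ is a symmetric monoidal relative category.
\end{itemize}
A strong symmetric monoidal relative functor $\mathcal{C}\to\mathcal{D}^{J}$ amounts to the following data: two symmetric monoidal relative functors $F_{0},F_{1}\colon\mathcal{C}\to\mathcal{D}$ and a symmetric monoidal natural isomorphism $F_{0}\cong F_{1}$. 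Monoidal natural isomorphisms between such functors correspond to compatible pairs of isomorphisms. This gives the displayed isomorphism of groupoids in the unrestricted $(2,1)$-category $\SMRelCat_{\pr{2,1}}$, naturally in $\mathcal{C}$.

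\textbf{Membership in the subcategory.} Two things must be checked.
\begin{enumerate}
\item The object $\mathcal{D}^{J}$ lies in $\SMRelCat_{\mathcal{X}}$. The diagonal $\mathcal{D}\to\mathcal{D}^{J}$ and the evaluation $\mathrm{ev}_{0}\colon\mathcal{D}^{J}\to\mathcal{D}$ are mutually inverse up to natural isomorphism. Since natural isomorphisms consist of weak equivalences, these are homotopy equivalences of symmetric monoidal relative categories. Hence $L\pr{\mathcal{D}^{J}}\simeq L\pr{\mathcal{D}}$ via $\mathrm{ev}_{0}$, and this lies in $\mathcal{X}$ because $\mathcal{X}$ is replete.
\item The isomorphism restricts correctly to the mapping groupoids of $\SMRelCat_{\pr{2,1},\mathcal{X}}$. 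A functor $G\colon\mathcal{C}\to\mathcal{D}^{J}$ lies in $\SMRelCat_{\mathcal{X}}$ exactly when $L\pr{G}$ is a morphism of $\mathcal{X}$. Since $L\pr{\mathrm{ev}_{0}}$ is an equivalence, this holds iff $L\pr{\mathrm{ev}_{0}G}=L\pr{F_{0}}$ is a morphism of $\mathcal{X}$, by repleteness. Moreover $F_{0}\cong F_{1}$ gives $L\pr{F_{0}}\simeq L\pr{F_{1}}$, and a replete subcategory is closed under homotopic morphisms. So the condition is equivalent to $F_{0}$ and $F_{1}$ both lying in the chosen components.
\end{enumerate}

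The main obstacle is this bookkeeping with the subcategory $\mathcal{X}$. One must verify that repleteness really gives closure under composition with equivalences and under equivalent morphisms. One must also check that the hypotheses of \cite[Corollary 3.16]{ACK25} apply to the non-full sub $(2,1)$-category $\SMRelCat_{\pr{2,1},\mathcal{X}}$, i.e.\ that the weak cotensor exists within that sub-$(2,1)$-category and not merely in $\SMRelCat_{\pr{2,1}}$. If the corollary requires cotensors natural in all $1$-morphisms, I would check naturality with respect to morphisms $\mathcal{C}'\to\mathcal{C}$ in $\SMRelCat_{\mathcal{X}}$. That naturality is immediate, since both sides are defined by precomposition.
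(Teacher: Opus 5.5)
Your proposal is correct and follows the paper's proof: it uses the criterion of \cite{ACK25} to reduce to the existence of weak cotensors by $[1]$, which are given by $\mathcal{D}^{J}$ with the pointwise tensor product and componentwise weak equivalences. Your extra check that $\mathcal{D}^{J}$ and the relevant functors lie in the $\mathcal{X}$-restricted sub-$(2,1)$-category (via repleteness and the fact that $L(\mathrm{ev}_0)$ is an equivalence) is something the paper leaves implicit.
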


\begin{proof}
According to \cite[Corollary 3.15]{ACK25}, it suffices to show that
the $\pr{2,1}$-category $\SMRelCat_{\pr{2,1},\mathcal{X}}$ admits
a weak cotensor by $[1]$. Given an object $\mathcal{C}\in\SMRelCat_{\pr{2,1}}$,
a weak cotensor by $[1]$ is given by $\mathcal{C}^{J}$, where the
tensor product is given by the degreewise tensor product, and the
weak equivalences are the natural transformations whose components
are weak equivalences.
\end{proof}

We can now prove Proposition \ref{prop:rel_GUdual}.
\begin{proof}
[Proof of Proposition \ref{prop:rel_GUdual}]The Yoneda embedding
determines a pseudonatural transformation depicted as 
\[\begin{tikzcd}
	{\mathsf{SMRelCat}_{\mathcal{SM}\mathcal{C}\mathsf{at}_\infty^{}(\kappa)}} && {\mathsf{SMRel}\widehat{\mathsf{Cat}}_{\kappa\text{-}\mathcal{P}\mathsf{r}\mathcal{SM},(2,1)}} \\
	& {\mathsf{SMRel}\widehat{\mathsf{Cat}}_{(2,1)}}
	\arrow["{\mathbf{Ind}^{\mathbf{S}}_\kappa(L(-))_\flat}", from=1-1, to=1-3]
	\arrow[""{name=0, anchor=center, inner sep=0}, hook, from=1-1, to=2-2]
	\arrow[""{name=1, anchor=center, inner sep=0}, from=1-3, to=2-2]
	\arrow[between={0.2}{0.8}, Rightarrow, from=0, to=1]
\end{tikzcd}\]Using Lemmas \ref{lem:main_L} and \ref{lem:SMRelCatX(2.1)loc} to
identify the localization of each category at local equivalences,
we obtain a functor $I:\SM\Cat_{\infty}\pr{\kappa}\to\kappa\-\Pr\SM$
and a natural transformation depicted as
\[\begin{tikzcd}
	{\mathcal{SM}\mathcal{C}\mathsf{at}_\infty^{}(\kappa)} && {\kappa\text{-}\mathcal{P}\mathsf{r}\mathcal{SM}} \\
	& {\mathcal{SM}\widehat{\mathcal{C}\mathsf{at}}_\infty.}
	\arrow["I", from=1-1, to=1-3]
	\arrow[""{name=0, anchor=center, inner sep=0}, hook, from=1-1, to=2-2]
	\arrow[""{name=1, anchor=center, inner sep=0}, hook', from=1-3, to=2-2]
	\arrow["\alpha", between={0.2}{0.8}, Rightarrow, from=0, to=1]
\end{tikzcd}\]By construction, the pair $\pr{I,\alpha}$ satisfies the hypothesis
of Corollary \ref{cor:MGUD_2}. It follows from this corollary that
$I$ is an equivalence, and we are done.
\end{proof}

\subsection{\label{subsec:strongkappa}Strongly $\kappa$-combinatorial model
categories}

Combinatorial model categories enjoy the following curious property,
first articulated by Dugger in \cite{Dug01b} and later expanded by
Low \cite{Low16}: Beyond sufficiently large regular cardinals, the
distinction between ordinary-categorical notions and $\infty$-categorical
notions gets blurry. In this subsection, we record several results
that embody this principle.

The following definition is essentially due to Low \cite[Definition 5.1]{Low16}:
\begin{defn}
\label{def:strongly_comb}Let $\kappa$ be a regular cardinal. We
say that a model category $\mathbf{M}$ is \textbf{strongly $\kappa$-combinatorial
}if there is a regular cardinal $\kappa_{0}\vartriangleleft\kappa$
with the following properties:
\begin{enumerate}
\item $\mathbf{M}$ is locally $\kappa_{0}$-presentable.
\item $\mathbf{M}_{\kappa}$ is closed under finite limits in $\mathbf{M}$.
\item Each hom-set in $\mathbf{M}_{\kappa_{0}}$ is $\kappa$-small.
\item There are $\kappa$-small sets of morphisms in $\mathbf{M}_{\kappa_{0}}$
that cofibrantly generate the model structure on $\mathbf{M}$.
\end{enumerate}
\end{defn}

\begin{rem}
We can also define \textbf{strongly $\kappa$-tractable model categories}
as strongly $\kappa$-combinatorial model categories satisfying conditions
(1) through (4) of Definition \ref{const:MInd} and the following
additional condition: In (4), the elements in the generating sets
must have a cofibrant domain. Likewise, we have the notion of \textbf{strongly
$\kappa$-tractable semi-model categories}.
\end{rem}

The following results summarize the basic properties of strongly $\kappa$-combinatorial
model categories.
\begin{prop}
\cite[Propositions 5.6, 5.12]{Low16}\label{prop:Low16a} For every
combinatorial model category $\mathbf{M}$, there is a regular cardinal
$\kappa$ such that $\mathbf{M}$ is strongly $\kappa$-combinatorial.
Moreover, in this situation, $\mathbf{M}_{\kappa}$ is a model category
whose cofibrations, fibrations, and weak equivalences are precisely
those of $\mathbf{M}$ of $\kappa$-compact objects.
\end{prop}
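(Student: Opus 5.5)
We want two things: that a strongly $\kappa$-combinatorial $\mathbf{M}$ exists for some $\kappa$, and that $\mathbf{M}_{\kappa}$ inherits a model structure. For existence, we choose cardinals in order so that each condition of Definition \ref{def:strongly_comb} holds.

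Fix generating sets $I,J$ of cofibrations and trivial cofibrations. Choose a regular $\kappa_{0}$ with three properties: $\mathbf{M}$ is locally $\kappa_{0}$-presentable, and the domains and codomains of the maps in $I\cup J$ are $\kappa_{0}$-compact. Since $\mathbf{M}_{\kappa_{0}}$ is essentially small, there is a cardinal bounding the hom-sets of a skeleton and the cardinalities of $I$ and $J$. We then take $\kappa$ with $\kappa_{0}\vartriangleleft\kappa$ exceeding that bound. Arbitrarily large such $\kappa$ exist (for instance, successors of suitable strong limit cardinals), and this settles conditions (1), (3) and (4).

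Condition (2) requires $\mathbf{M}_{\kappa}$ to be closed under finite limits. A $\kappa$-compact object is a $\kappa$-small colimit of $\kappa_{0}$-compact objects. A finite limit of such colimits is computed using the fact that $\kappa$-filtered colimits commute with finite limits, together with the uniform bound on hom-sets in $\mathbf{M}_{\kappa_{0}}$. I expect this step to be the main obstacle. One must show that the canonical $\kappa$-filtered diagram of $\kappa_{0}$-compact objects over a finite limit of $\kappa$-compact objects has a cofinal $\kappa$-small piece. This is the standard sharply-larger-cardinal accessibility argument (Makkai--Paré; Adámek--Rosický 2.15), and I would cite it, possibly after enlarging $\kappa$ once more.

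For the second claim we use the small object argument restricted to $\mathbf{M}_{\kappa}$. Because the generating maps are $\kappa$-small in number and have $\kappa_{0}$-compact ends, factoring a map between $\kappa$-compact objects requires a transfinite composite of length $\kappa_{0}$. Each stage is a pushout of a $\kappa$-small coproduct of generators, so the factorization stays in $\mathbf{M}_{\kappa}$. Thus $\mathbf{M}_{\kappa}$ has functorial factorizations, and by (2) and closure under $\kappa$-small colimits it also has finite limits and colimits. Lifting is inherited from $\mathbf{M}$, and two-out-of-three and retract closure are automatic. Every cofibration in $\mathbf{M}_{\kappa}$ is a retract of the factorization built in $\mathbf{M}_{\kappa}$, so the restricted classes are exactly those of $\mathbf{M}$ among $\kappa$-compact objects.
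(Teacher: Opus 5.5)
Your treatment of conditions (1), (3), (4) and of the model structure on $\mathbf{M}_\kappa$ is sound. Condition (2), the step you defer, is a genuine gap. If $\kappa$ is chosen only so that $\kappa_0\vartriangleleft\kappa$ and $\kappa$ exceeds the hom-sets of $\mathbf{M}_{\kappa_0}$ and $|I|,|J|$, then $\mathbf{M}_\kappa$ need not be closed under finite limits.

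Take a set $S$ of cardinality $\aleph_1$. Give $\mathsf{Set}^{S}$ the combinatorial model structure in which every map is a weak equivalence and a fibration, and the cofibrations are the isomorphisms (so $I=J=\emptyset$). With $\kappa_0=\aleph_0$ and $\kappa=\aleph_1$, all your requirements hold, since hom-sets between finitely presentable objects are finite. Yet the terminal presheaf is not $\aleph_1$-compact: it is the $\aleph_1$-filtered union of its subobjects with countable support, and the identity factors through none of them. Even pullbacks can fail. Take presheaves on a category with $\aleph_1$ objects $w_s$ mapping compatibly to a cospan $x\to z\leftarrow y$; the pullback of the representables is a coproduct of $\aleph_1$ representables.

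The result you cite does not cover this. Ad\'amek--Rosick\'y 2.15 only says that, for $\kappa_0\vartriangleleft\kappa$, a $\kappa$-compact object is (a retract of) a $\kappa$-small $\kappa_0$-filtered colimit of $\kappa_0$-compact objects. Your ``cofinal $\kappa$-small piece'' is the converse statement, and a bound on hom-sets alone cannot supply it.

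The missing idea is to also take $\kappa$ larger than the number of isomorphism classes of $\kappa_0$-compact objects. Then a skeleton $\mathcal{A}$ of $\mathbf{M}_{\kappa_0}$ has fewer than $\kappa$ morphisms, and this enlargement keeps (1), (3), (4). With this choice, $X$ is $\kappa$-compact if and only if $\sum_{A\in\mathcal{A}}|\mathbf{M}(A,X)|<\kappa$.

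\begin{itemize}
\item For the forward direction, apply $\mathbf{M}(A,-)$ to the colimit from 2.15. This gives a $\kappa$-small colimit of $\kappa$-small sets, and there are fewer than $\kappa$ objects $A$.
\item For the converse, the canonical diagram $\mathcal{A}_{/X}\to\mathbf{M}$ is itself $\kappa$-small with colimit $X$. Note that this diagram is $\kappa_0$-filtered, not $\kappa$-filtered as you wrote.
\end{itemize}

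Since each $\mathbf{M}(A,-)$ preserves limits, and finite limits of $\kappa$-small sets are $\kappa$-small, (2) follows, including the terminal object.

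The forward estimate is also what your second half uses without saying so. It is why, at each stage of the small object argument, the set of squares from generators into a $\kappa$-compact object is $\kappa$-small. Once you make it explicit, your argument that $\mathbf{M}_\kappa$ is a model category with the restricted classes works for any strongly $\kappa$-combinatorial $\mathbf{M}$.
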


\begin{prop}
\cite[Remark 5.2]{Low16}\label{prop:Low16b} Let $\kappa'\vartriangleright\kappa$
be regular cardinals. Every strongly $\kappa$-combinatorial model
category is strongly $\kappa'$-combinatorial.
\end{prop}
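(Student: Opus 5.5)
We need to prove Proposition \ref{prop:Low16b}: if $\kappa\vartriangleleft\kappa'$ and $\mathbf{M}$ is strongly $\kappa$-combinatorial, then $\mathbf{M}$ is strongly $\kappa'$-combinatorial. The plan is to reuse the witness $\kappa_{0}\vartriangleleft\kappa$ from Definition \ref{def:strongly_comb} and check that it also witnesses strong $\kappa'$-combinatoriality. The one preliminary fact needed is transitivity of the sharply-smaller relation: $\kappa_{0}\vartriangleleft\kappa$ and $\kappa\vartriangleleft\kappa'$ imply $\kappa_{0}\vartriangleleft\kappa'$. This holds because a $\kappa_{0}$-filtered poset can be written as a $\kappa$-filtered union of $\kappa_{0}$-filtered $\kappa$-small subposets, and each of these in turn as a $\kappa'$-filtered union of the required pieces. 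Alternatively, one can cite the standard fact (Adámek--Rosický, Makkai--Paré) that $\vartriangleleft$ is transitive.

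Conditions (1), (3) and (4) are then immediate. Condition (1) concerns only $\kappa_{0}$. For (3), every $\kappa$-small set is $\kappa'$-small, since $\kappa\vartriangleleft\kappa'$ forces $\kappa<\kappa'$. For (4), the generating sets are $\kappa$-small sets of morphisms in $\mathbf{M}_{\kappa_{0}}$, hence also $\kappa'$-small.

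The remaining condition (2) requires that $\mathbf{M}_{\kappa'}$ be closed under finite limits in $\mathbf{M}$, and this is the main step. Since $\mathbf{M}$ is locally $\kappa_{0}$-presentable and $\kappa_{0}\vartriangleleft\kappa'$, every $\kappa'$-compact object is a retract of a $\kappa'$-small $\kappa_{0}$-filtered colimit of objects of $\mathbf{M}_{\kappa_{0}}$. I would first try to use that $\kappa'$-small limits commute with $\kappa'$-filtered colimits in $\mathbf{Set}$, and that finite limits commute with $\kappa_{0}$-filtered colimits in the locally $\kappa_{0}$-presentable category $\mathbf{M}$. Writing a finite diagram of $\kappa'$-compact objects as a $\kappa_{0}$-filtered colimit of finite diagrams in $\mathbf{M}_{\kappa_{0}}$ indexed by a $\kappa'$-small category, its limit becomes a $\kappa'$-small $\kappa_{0}$-filtered colimit of finite limits of $\kappa_{0}$-compact objects. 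Those finite limits lie in $\mathbf{M}_{\kappa}$ by the original condition (2), hence in $\mathbf{M}_{\kappa'}$ because $\kappa<\kappa'$. A $\kappa'$-small colimit of $\kappa'$-compact objects is $\kappa'$-compact, which gives (2).

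The delicate part is arranging the finite diagram as such a colimit of finite diagrams, since retracts and arbitrary finite shapes must be handled. I would reduce to pullbacks and the terminal object. The terminal object is $\kappa$-compact by (2), hence $\kappa'$-compact. For pullbacks, I would reindex the presentations of the three objects over a common $\kappa'$-small $\kappa_{0}$-filtered category, using that $\mathbf{M}_{\kappa_{0}}$ has $\kappa$-small hom-sets by (3), which keeps the cardinalities below $\kappa'$. Retracts are then absorbed by noting that a retract of a $\kappa'$-compact object is $\kappa'$-compact.
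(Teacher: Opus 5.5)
The paper gives no argument of its own for this statement; it simply cites \cite[Remark 5.2]{Low16}. Your proposal is a correct, self-contained verification that follows a different route from a bare citation. It isolates the only point with content. Once you keep the same witness $\kappa_0$, which uses transitivity of $\vartriangleleft$, conditions (1), (3) and (4) are immediate, and only closure of $\mathbf{M}_{\kappa'}$ under finite limits needs proof. Your argument for that closure is sound:
\begin{itemize}
\item present a cospan of $\kappa'$-compact objects as (a retract of) a $\kappa'$-small $\kappa_0$-filtered colimit of cospans in $\mathbf{M}_{\kappa_0}$;
\item commute the pullback past this colimit, using that $\kappa_0$-filtered colimits commute with finite limits in the locally $\kappa_0$-presentable category $\mathbf{M}$;
\item apply the old condition (2) to each term, which lands in $\mathbf{M}_\kappa\subset\mathbf{M}_{\kappa'}$;
\item use closure of $\mathbf{M}_{\kappa'}$ under $\kappa'$-small colimits and retracts.
\end{itemize}
The citation buys brevity. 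Your argument shows explicitly why no new auxiliary cardinal is needed.

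Two spots should be tightened.

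First, your poset sketch of transitivity is garbled. The $\kappa$-small $\kappa_0$-directed pieces are already $\kappa'$-small. What must be decomposed, using $\kappa\vartriangleleft\kappa'$, is the $\kappa$-directed \emph{family} of pieces. You then take unions over its $\kappa'$-small $\kappa$-directed subfamilies. Alternatively, argue via cofinal subsets of the posets of $\kappa_0$-small subsets. Citing the standard fact is fine.

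Second, the reindexing step is cleanest if you apply \cite[Theorem 2.3.11]{MP89} directly to $\mathrm{Fun}(\Lambda,\mathbf{M})$, where $\Lambda$ is the cospan shape. This category is locally $\kappa_0$-presentable. For $\lambda=\kappa_0$ and $\lambda=\kappa'$, its $\lambda$-compact objects are exactly the pointwise $\lambda$-compact cospans; this is the fact the paper quotes from \cite[Proposition 2.23]{ZhenLin_Universe}. This route handles retracts automatically, since the cospan itself becomes a retract of a $\kappa'$-small $\kappa_0$-filtered colimit of cospans in $\mathbf{M}_{\kappa_0}$. It also shows that condition (3) is not needed for (2).

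If you instead reindex by hand over the category of compatible tuples $(i,j,k,\phi,\psi)$, then (3) is indeed what bounds its size below $\kappa'$. However, you must still check that this category is $\kappa_0$-filtered and final over each of the three original index categories. In that version, retracts must be handled explicitly, for instance by replacing $f$ with $s_Z f r_X$.
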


\begin{prop}
\label{prop:strongly_comb_ind}Let $\kappa$ be a regular cardinal,
and let $\mathbf{M}$ be a strongly $\kappa$-combinatorial model
category. Then the $\infty$-category $\mathbf{M}_{\kappa,\infty}=\pr{\mathbf{M}_{\kappa}}_{\infty}$
admits $\kappa$-small colimits, and the functor
\[
\iota\from\mathbf{M}_{\kappa,\infty}\to\mathbf{M}_{\infty}
\]
exhibits $\mathbf{M}_{\infty}$ as an $\Ind_{\kappa}$-completion
of $\mathbf{M}_{\kappa,\infty}$.
\end{prop}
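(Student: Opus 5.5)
The plan is to reduce the statement to two assertions about the functor $\iota$: (a) $\mathbf{M}_{\kappa,\infty}$ admits $\kappa$-small colimits and $\iota$ preserves them, and $\iota$ is fully faithful; (b) the essential image of $\iota$ consists of $\kappa$-compact objects of $\mathbf{M}_{\infty}$ and generates $\mathbf{M}_{\infty}$ under $\kappa$-filtered colimits. Once (a) and (b) are established, \cite[Proposition 5.3.5.11]{HTT} shows that the induced functor $\Ind_{\kappa}\pr{\mathbf{M}_{\kappa,\infty}}\to\mathbf{M}_{\infty}$ is an equivalence. Throughout, I use Proposition \ref{prop:Low16a}: $\mathbf{M}_{\kappa}$ is a model category whose cofibrations, fibrations and weak equivalences are those of $\mathbf{M}$ between $\kappa$-compact objects. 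Hence $\mathbf{M}_{\kappa}$ has functorial factorizations inherited from $\mathbf{M}$ via the small object argument on $\kappa$-small generating sets with $\kappa_0$-compact domains (condition (4) of Definition \ref{def:strongly_comb}), which keeps everything $\kappa$-compact.

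For (a), $\kappa$-small homotopy colimits in $\mathbf{M}_{\kappa,\infty}$ can be computed by projective cofibrant replacement of $\kappa$-small diagrams in $\mathbf{M}_{\kappa}$ followed by an ordinary colimit. The colimit stays in $\mathbf{M}_{\kappa}$ since $\kappa$-compact objects are closed under $\kappa$-small colimits. The projective model structure on $\kappa$-small diagrams in $\mathbf{M}_{\kappa}$ is generated by maps of the form $I\pr{d,-}\cdot i$, which remain $\kappa$-compact. Because $\iota$ is left Quillen-like (it preserves cofibrations, weak equivalences and colimits), it carries these homotopy colimits to homotopy colimits in $\mathbf{M}$, so $\iota$ preserves $\kappa$-small colimits. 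For full faithfulness, I would compute mapping spaces with cosimplicial resolutions of cofibrant objects and fibrant replacements. The relevant cylinder and path constructions in $\mathbf{M}$ applied to $\kappa$-compact objects can be taken inside $\mathbf{M}_{\kappa}$: the inherited factorizations take care of cylinders, and fibrant replacement of a $\kappa$-compact object is again $\kappa$-compact by the small object argument. The Reedy-cofibrant cosimplicial resolutions need finite limits/latching objects, which is where condition (2) of Definition \ref{def:strongly_comb} enters. Consequently $\operatorname{Map}_{\mathbf{M}_{\kappa,\infty}}\pr{X,Y}\simeq\operatorname{Map}_{\mathbf{M}_{\infty}}\pr{X,Y}$.

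For (b), take a cofibrant $X\in\mathbf{M}_{\kappa}$ and a $\kappa$-filtered diagram $Y_{\bullet}$ in $\mathbf{M}$. Replace $Y_{\bullet}$ by a diagram of fibrant objects whose colimit is still fibrant and computes the homotopy colimit; this works because $\kappa$-filtered colimits of fibrant objects are fibrant when the generating trivial cofibrations have $\kappa$-compact domains and codomains, and $\kappa$-filtered colimits are homotopy colimits in combinatorial model categories. Then $\operatorname{Map}\pr{X,-}$, computed by $\hom\pr{X^{\bullet},-}$ with each $X^{n}$ $\kappa$-compact, commutes with the colimit, so $\iota X$ is $\kappa$-compact. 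For generation, every cofibrant object of $\mathbf{M}$ is a retract of an $I$-cell complex. Since $I$ consists of maps between $\kappa$-compact objects, such a cell complex is the $\kappa$-filtered colimit of its $\kappa$-small subcomplexes, which are cofibrant objects of $\mathbf{M}_{\kappa}$. Each inclusion is a cofibration, so this is a homotopy $\kappa$-filtered colimit, and retracts are absorbed into the $\kappa$-filtered colimit via an idempotent splitting argument.

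The step I expect to be the main obstacle is full faithfulness in (a). Low's result gives that $\mathbf{M}_{\kappa}$ is a model category, but identifying its homotopy function complexes with those of $\mathbf{M}$ requires checking carefully that the fibrant replacements and Reedy resolutions can be chosen to be $\kappa$-compact; this is where the hypothesis $\kappa_{0}\vartriangleleft\kappa$ and condition (3) on $\kappa$-small hom-sets are used. I would try first to avoid resolutions altogether. The idea is to use that $\mathbf{M}_{\kappa}\hookrightarrow\mathbf{M}$ preserves and reflects weak equivalences and that both localizations can be computed by hammock/Dwyer--Kan localizations. I would then show, with a $\kappa$-filtered colimit argument, that every zig-zag in $\mathbf{M}$ between $\kappa$-compact objects factors through $\mathbf{M}_{\kappa}$ up to weak equivalence; the other direction of the comparison is tautological.
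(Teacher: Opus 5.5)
Your proposal is correct and follows essentially the paper's route. You reduce to \cite[Proposition 5.3.5.11]{HTT}, get full faithfulness from Proposition \ref{prop:Low16a} plus the resolution formula for mapping spaces, and get $\kappa$-compactness of the image from fibrant replacement, closure of fibrant objects under $\kappa$-filtered colimits, and a cosimplicial resolution by $\kappa$-compact objects. Your ``main obstacle'' is already disposed of by Low's result, since resolutions and fibrant replacements formed in the model category $\mathbf{M}_{\kappa}$ are automatically resolutions in $\mathbf{M}$, so no hammock argument is needed. You leave implicit the step of rectifying a $\kappa$-filtered $\infty$-categorical diagram to a poset-indexed diagram in $\mathbf{M}$, which the paper makes explicit.

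The remaining differences are minor. The paper gets $\kappa$-small colimits from the argument of \cite[Proposition 7.7.4]{HCHA} rather than projective replacement inside $\mathbf{M}_{\kappa}$. It gets generation directly from local $\kappa$-presentability of $\mathbf{M}$ and Corollary \ref{cor:RR15_3.1}: every object, cofibrant or not, is a $\kappa$-filtered colimit of $\kappa$-compact objects, which makes your cell-complex and idempotent-splitting detour unnecessary.
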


\begin{proof}
The fact that $\mathbf{M}_{\kappa,\infty}$ admits $\kappa$-small
colimits follows from the argument of \cite[Proposition 7.7.4]{HCHA}.
Next, to show that $\iota$ induces a categorical equivalence $\Ind_{\kappa}\pr{\mathbf{M}_{\kappa,\infty}}\xrightarrow{\simeq}\mathbf{M}_{\infty}$,
we use \cite[Proposition 5.3.5.11]{HTT}. We must prove the following:
\begin{enumerate}
\item The functor $\iota$ is fully faithful.
\item The essential image of $\iota$ generates $\mathbf{M}_{\infty}$ under
$\kappa$-filtered colimits.
\item The image of $\iota$ lies in $\pr{\mathbf{M}_{\infty}}_{\kappa}$.
\end{enumerate}

Claim (1) follows from Proposition \ref{prop:Low16a} and \cite[Corrollary 3.1]{ACK25},
which asserts that the classical derived mapping spaces (computed
using simplicial resolutions objects) computes the mapping spaces
of the underlying $\infty$-categories of model categories.

For (2), recall that $\kappa$-filtered colimits in $\mathbf{M}$
are already homotopy colimits (Corollary \ref{cor:RR15_3.1}). Thus,
we only have to show that $\mathbf{M}_{\kappa}$ generates $\mathbf{M}$
under $\kappa$-filtered colimits. This is clear, because $\mathbf{M}$
is locally $\kappa$-presentable. 

For (3), suppose we are given a $\kappa$-filtered $\infty$-category
$\mathcal{I}$ and a colimit diagram $\overline{F}\from\mathcal{I}^{\rcone}\to\mathbf{M}_{\infty}$.
We must show that, for every object $X\in\mathbf{M}_{\kappa}$, the
diagram $\mathbf{M}_{\infty}\pr{X,-}\circ\overline{F}\from\mathcal{I}^{\rcone}\to\mathcal{S}$
is a colimit diagram. Using \cite[\href{https://kerodon.net/tag/02QA}{Tag 02QA}]{kerodon},
we may assume that $\mathcal{I}$ is (the nerve of) a poset. In this
case, the functor $\Fun\pr{\mathcal{I}^{\rcone},\mathbf{M}}\to\Fun\pr{\mathcal{I}^{\rcone},\mathbf{M}_{\infty}}$
is a localization \cite[Theorem 7.9.8]{HCHA}, so we may assume that
$F$ lifts to a diagram $\overline{G}:\mathcal{I}^{\rcone}\to\mathbf{M}$.
Without loss of generality, we may assume that $\overline{G}$ takes
values in the full subcategory of fibrant objects. Set $G=\overline{G}\vert\mathcal{I}$.
Since $\cat I$ is $\kappa$-filtered, the map 
\[
\colim_{\mathcal{I}}G\to\overline{G}\pr{\infty}
\]
is a weak equivalence. Moreover, since fibrations of $\mathbf{M}$
are stable under $\kappa$-filtered colimits, the object $\colim_{\mathcal{I}}G\in\mathbf{M}$
is fibrant. Thus, we may assume that $\overline{G}$ is a strict colimit
diagram.

Now choose a cosimplicial resolution $X^{\bullet}$ of $X$ in $\mathbf{M}_{\kappa}$.
By \cite[Corrollary 3.1]{ACK25}, the functor $\mathbf{M}\pr{X^{\bullet},-}:\mathbf{M}_{\fib}\to\SS$
descends to the functor $\mathbf{M}_{\infty}\to\mathcal{S}$ corepresented
by $X$. Therefore, we are reduced to showing that the diagram $\mathbf{M}\pr{X^{\bullet},\overline{G}}:\mathcal{I}^{\rcone}\to\SS$
is a homotopy colimit diagram. Since $X^{\bullet}$ carries each object
to a $\kappa$-compact object, this diagram is a colimit diagram.
Since filtered colimits are homotopy colimits in $\SS$, we are done.
\end{proof}

\begin{cor}
\label{cor:modcat_Indcomp}Let $\kappa$ be a regular cardinal, and
let $\mathbf{M}$ be a symmetric $\mathbf{S}$-algebra whose underlying
model category is strongly $\kappa$-combinatorial. The inclusion
$\mathbf{M}_{\kappa,\cof}\hookrightarrow\mathbf{M}$ induces a left
Quillen equivalence of $\mathbf{S}$-algebras
\begin{align*}
\theta\from\MInd^{\mathbf{S}}_{\kappa}\pr{L\pr{\mathbf{M}_{\kappa,\cof}}} & \xrightarrow{\simeq}\mathbf{M}\\
X & \mapsto\int^{M\in\mathbf{M}_{\kappa,\cof}}M\otimes X\pr M.
\end{align*}
\end{cor}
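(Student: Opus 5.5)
First I would make sure $\theta$ makes sense as a morphism of $\mathbf{S}$-algebras. Regard $\mathbf{C}=\mathbf{M}_{\kappa,\cof}$ together with the unit, so that $\mathbf{C}$ is a symmetric monoidal relative category. This uses Definition \ref{def:strongly_comb}, which guarantees that tensor products of $\kappa$-compact cofibrant objects are again $\kappa$-compact once $\kappa$ is large enough (I would enlarge $\kappa$ via Proposition \ref{prop:Low16b} if needed). Its localization $L(\mathbf{C})\simeq\mathbf{M}_{\kappa,\infty}$ has $\kappa$-small colimits by Proposition \ref{prop:strongly_comb_ind}, and these are compatible with the tensor product because $\otimes$ is left Quillen. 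So Construction \ref{const:MInd} applies.

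The functor $\theta$ is the left Kan extension of the inclusion $\mathbf{C}\hookrightarrow\mathbf{M}$, tensored over $\mathbf{S}$ through the structure map $\mathbf{S}\to\mathbf{M}$. Since Day convolution is the universal way to extend a symmetric monoidal functor out of $\mathbf{C}$ cocontinuously, $\theta$ is strong symmetric monoidal. Compatibility with the maps from $\mathbf{S}$ holds because the unit $\mathbf{C}(-,\mathbf{1})\cdot X$ is sent to $\mathbf{1}\otimes X$.

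Next I would check that $\theta$ is left Quillen. For the projective model structure, its right adjoint $N\mapsto\mathbf{M}(-,N)$, computed $\mathbf{S}$-enriched, sends (trivial) fibrations to objectwise (trivial) fibrations, because each $M\in\mathbf{C}$ is cofibrant and $\mathbf{S}\times\mathbf{M}\to\mathbf{M}$ is a Quillen bifunctor. For the left Bousfield localization, by Proposition \ref{prop:bousfield} (or the universal property of left Bousfield localization) it suffices that the derived functor of $\theta$ inverts the localizing maps. These are exactly the maps inverted after passing to $\Ind_\kappa(L(\mathbf{C}))$. On the $\infty$-categorical level, the derived functor of $\theta$ on the projective structure is the colimit-preserving extension $\Fun(L(\mathbf{C})^{\op},\mathcal{S})\to\mathbf{M}_\infty$ of $\iota$. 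This extension factors through $\Ind_\kappa(L(\mathbf{C}))$ because $\iota$ preserves $\kappa$-small colimits, being left Quillen on $\kappa$-compact objects via Proposition \ref{prop:Low16a}. Hence the localizing maps are inverted.

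Finally, for the Quillen equivalence I would identify the derived functor with the map $\Ind_\kappa(\mathbf{M}_{\kappa,\infty})\to\mathbf{M}_\infty$ induced by $\iota$. Proposition \ref{prop:strongly_comb_ind} says this map is an equivalence, and the underlying $\infty$-category of the localized model structure is $\Ind_\kappa(L(\mathbf{C}))$ by construction. The main obstacle is the identification of $\mathbf{R}\theta$ with the $\infty$-categorical left Kan extension. This needs the facts that representables $\mathbf{C}(-,M)\cdot\widetilde{\mathbf{1}}$ go to $M$, that the coend formula computes a homotopy colimit on projectively cofibrant objects, and that the localization $\mathbf{C}\to L(\mathbf{C})$ is compatible with the left Kan extension along it. I would handle this by checking the statement on generators and using that both sides preserve homotopy colimits.
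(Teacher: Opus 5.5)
Your proposal is correct and follows essentially the same route as the paper: show $\theta$ is left Quillen for the projective structure, identify its derived functor with the colimit-preserving extension of $\iota$ (which therefore factors through $\Ind_\kappa(L(\mathbf{M}_{\kappa,\mathrm{cof}}))$, so $\theta$ is left Quillen for the localized structure), and conclude with Proposition \ref{prop:strongly_comb_ind}. One small correction: closure of $\mathbf{M}_{\kappa,\mathrm{cof}}$ under tensor products is not part of Definition \ref{def:strongly_comb}, and you may not enlarge the fixed $\kappa$; read it instead as the implicit hypothesis that $\mathbf{M}$ is monoidally $\kappa$-combinatorial, which holds where the corollary is used in the proof of Proposition \ref{prop:main_flat}.
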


\begin{proof}
We first show that $\theta$ induces a left Quillen functor
\[
\theta'\from\Fun\pr{\mathbf{M}^{\op}_{\kappa,\cof},\mathbf{S}}\to\mathbf{M},
\]
where $\Fun\pr{\mathbf{M}^{\op}_{\kappa,\cof},\mathbf{S}}$ carries
the projective model structure. Since $\mathbf{M}$ is strongly $\kappa$-combinatorial,
it has a generating set $I$ of cofibrations consisting of maps in
$\mathbf{M}_{\kappa}$. The maps $\{\mathbf{M}\pr{-,M}\otimes i\}_{M\in\mathbf{M}_{\kappa,\cof},\,i\in I}$
generate projective cofibrations of $\Fun\pr{\mathbf{M}^{\op}_{\kappa,\cof},\mathbf{S}}$
\cite[Theorem 11.6.1]{Hirschhorn}. The images of these maps under
$\theta'$ are simply the maps $\{M\otimes i\}_{M\in\mathbf{M}_{\kappa,\cof},\,i\in I}$,
which are all cofibrations. Hence, $\theta'$ carries projective cofibrations
to cofibrations. A similar argument shows that $\theta'$ carries
projective trivial cofibrations to trivial cofibrations. Thus $\theta'$
is left Quillen, as claimed.

We now consider the following diagram, which commutes up to natural
equivalence:
\[\begin{tikzcd}
	{\mathbf{M}_{\kappa,\mathrm{cof}}} & {\mathbf{M}_{\kappa,\infty}} \\
	{\operatorname{Fun}(\mathbf{M}_{\kappa,\mathrm{cof}}^{\mathrm{op}},\mathbf{S})_{\infty}} & {\mathbf{M}_{\infty}} \\
	{\mathbf{Ind}_{\kappa}^{\mathbf{S}}(L(\mathbf{M}_{\kappa,\mathrm{cof}}))_{\infty}.}
	\arrow[from=1-1, to=1-2]
	\arrow[from=1-1, to=2-1]
	\arrow["\iota", from=1-2, to=2-2]
	\arrow["{\theta'_\infty}", from=2-1, to=2-2]
	\arrow[from=2-1, to=3-1]
\end{tikzcd}\]According to Proposition \ref{prop:strongly_comb_ind}, the functor
$\iota$ exhibits $\mathbf{M}_{\infty}$ as an $\Ind_{\kappa}$-completion
of $\mathbf{M}_{\kappa,\infty}$. Also, by what we have just shown
in the previous paragraph, the functor $\theta_{\infty}'$ preserves
small colimits. It follows that $\theta'_{\infty}$ can be identified
with the functor 
\[
\Fun\pr{\mathbf{M}^{\op}_{\kappa,\cof},\mathcal{S}}\to\Ind_{\kappa}\pr{\mathbf{M}_{\kappa,\cof}}
\]
described in Construction \ref{const:MInd}. By the definition of
weak equivalences of $\MInd^{\mathbf{S}}_{\kappa}\pr{L\pr{\mathbf{M}_{\kappa,\cof}}}$,
this means that $\theta$ preserves and reflects weak equivalences.
Since we already know that $\theta'$ is a left Quillen functor, this
is enough to conclude that $\theta$ is a left Quillen functor. The
resulting functor 
\[
\theta_{\infty}\from\MInd^{\mathbf{S}}_{\kappa}\pr{L\pr{\mathbf{M}_{\kappa,\cof}}}_{\infty}\to\mathbf{M}_{\infty}
\]
is an equivalence, because both sides are the localization of $\Fun\pr{\mathbf{M}^{\op}_{\kappa,\cof},\mathbf{S}}_{\infty}$
at the same class of morphisms. The proof is now complete.
\end{proof}

We conclude this subsection by taking monoidal structures into account.
\begin{defn}
Let $\kappa$ be a regular cardinal. A \textbf{monoidally $\kappa$-combinatorial
model category} is a monoidal model category $\mathbf{M}$ satisfying
the following conditions:
\begin{itemize}
\item As a model category, $\mathbf{M}$ is strongly $\kappa$-combinatorial.
\item $\kappa$-compact objects are stable under finite tensor products.
(In particular, the unit object is $\kappa$-compact.)
\end{itemize}
We write $\CMMC\pr{\kappa}\subset\CMMC$ for the subcategory spanned
by the monoidally $\kappa$-combinatorial model category and those
maps that preserve $\kappa$-compact objects.
\end{defn}

\begin{lem}
\label{lem:monoidally_comb}\hfill 
\begin{enumerate}
\item For every pair of regular cardinals $\kappa\lcone\lambda$, we have
$\CMMC\pr{\kappa}\subset\CMMC\pr{\lambda}$.
\item Every morphism of $\CMMC$ belongs to $\CMMC\pr{\kappa}$ for some
regular cardinal $\kappa$. 
\end{enumerate}
\end{lem}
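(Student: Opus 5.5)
Part (1) I would reduce to Proposition \ref{prop:Low16b} together with two elementary facts about sharply larger cardinals. Let $\mathbf{M}$ be monoidally $\kappa$-combinatorial and let $\kappa\lcone\lambda$. Proposition \ref{prop:Low16b} already gives that $\mathbf{M}$ is strongly $\lambda$-combinatorial, so the only new point is that $\lambda$-compact objects are stable under finite tensor products. Since $\mathbf{M}$ is locally $\kappa$-presentable and $\kappa\lcone\lambda$, every $\lambda$-compact object is a $\lambda$-small $\kappa$-filtered colimit of $\kappa$-compact objects (equivalently, a retract of a $\lambda$-small colimit of them). The tensor product preserves colimits in each variable, so $X\otimes Y$ is a colimit, indexed by a product of two $\lambda$-small $\kappa$-filtered categories, of objects $X_i\otimes Y_j$, and these are $\kappa$-compact, hence $\lambda$-compact. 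A $\lambda$-small colimit of $\lambda$-compact objects is $\lambda$-compact, and retracts behave the same way; for the empty product, the unit is $\kappa$-compact and therefore $\lambda$-compact. For morphisms, a left Quillen monoidal functor $F$ that preserves $\kappa$-compact objects is a left adjoint, so it preserves colimits, and by the same presentation of $\lambda$-compact objects it preserves $\lambda$-compactness.

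For part (2), take a morphism $F\from\mathbf{M}\to\mathbf{N}$ of $\CMMC$. I would choose $\kappa$ so that three things hold at once. First, both $\mathbf{M}$ and $\mathbf{N}$ are strongly $\kappa$-combinatorial; Proposition \ref{prop:Low16a} provides a cardinal for each, and Proposition \ref{prop:Low16b} lets me pass to a common cardinal sharply above both. Second, $F$ preserves $\kappa$-compact objects. Since $F$ is an accessible left adjoint whose right adjoint is accessible, $F$ preserves $\kappa$-compact objects for arbitrarily large $\kappa$, sharply above any given cardinal (\cite[Proposition 5.4.7.7]{HTT}-type arguments or Adámek–Rosický). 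Third, $\kappa$-compact objects are closed under $\otimes$ in both $\mathbf{M}$ and $\mathbf{N}$. The functor $\otimes\from\mathbf{M}\times\mathbf{M}\to\mathbf{M}$ is an accessible functor preserving colimits separately in each variable. Hence there are arbitrarily large $\kappa$ for which it sends $\kappa$-compact pairs to $\kappa$-compact objects: choose $\kappa_0$ with $\mathbf{M}$ locally $\kappa_0$-presentable and $X\otimes Y$ being $\mu$-compact for all $\kappa_0$-compact $X,Y$ (a set of objects), and then take $\kappa\rcone\mu,\kappa_0$ and argue as in part (1). We may also enlarge $\kappa$ so that the unit is $\kappa$-compact.

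Each requirement holds for all sufficiently large cardinals in the sharply-larger sense, so I would take $\kappa$ large enough to satisfy all of them simultaneously, using that the collection of cardinals sharply greater than a given one is unbounded and that the relevant conditions persist upward by part (1). The main obstacle is this bookkeeping: making sure that the conditions obtained for different cardinals can be satisfied by a single $\kappa$. Each condition is stable under passing to $\lambda\rcone\kappa$ (by part (1) and Proposition \ref{prop:Low16b}), so it suffices to find, for each condition, some cardinal satisfying it above which a sharply larger cardinal still satisfies it. The standard way to arrange this is to take $\kappa=\lambda^{+}$ for a large $\lambda$ of the form $\lambda=\lambda^{<\kappa_0}$. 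I would rely on this standard trick rather than verify every detail.
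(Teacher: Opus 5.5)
Your proposal is correct and follows essentially the paper's route: part (1) uses Proposition \ref{prop:Low16b} together with the presentation of $\lambda$-compact objects as $\lambda$-small colimits of $\kappa$-compact ones, applied both to $\otimes$ and to $F$, and part (2) chooses a cardinal for each requirement and propagates it upward along $\lcone$. The closing bookkeeping is easier than you expect: each condition is upward-stable along $\lcone$, so you only need one regular cardinal sharply larger than finitely many given ones (e.g.\ $(2^{\gamma})^{+}$ for $\gamma$ above all of them); the paper streamlines further by first fixing a common $\kappa_0$ for which $\mathbf{M}$ and $\mathbf{N}$ are monoidally $\kappa_0$-combinatorial, and then taking $\kappa\rcone\kappa_0$ with $F(\mathbf{M}_{\kappa_0})\subset\mathbf{N}_{\kappa}$.
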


\begin{proof}
We first show that every combinatorial monoidal model category $\mathbf{M}$
is monoidally $\kappa$-combinatorial for some $\kappa$, and that
such an $\mathbf{M}$ is monoidally $\lambda$-combinatorial for every
$\lambda\rcone\kappa$. By Proposition \ref{prop:Low16a}, there is
some regular cardinal $\kappa_{0}$ such that $\mathbf{M}$ is strongly
$\kappa_{0}$-combinatorial as a model category. Find a regular cardinal
$\kappa\rcone\kappa_{0}$ such that, if $X,Y\in\mathbf{M}_{\kappa_{0}}$,
then $X\otimes Y\in\mathbf{M}_{\kappa}$. Since every $\kappa$-compact
object is a $\kappa$-small colimit of $\kappa_{0}$-compact object
\cite[Theorem 2.3.11]{MP89}, this ensures that $\kappa$-compact
objects is stable under tensor product. Hence, $\mathbf{M}$ is monoidally
$\kappa$-combinatorial. A similar argument, using Proposition \ref{prop:Low16b},
shows that every monoidally $\kappa$-combinatorial model category
is monoidally $\lambda$-combinatorial.

We now prove (1) and (2). Part (1) follows from the result in the
previous paragraph and \cite[Remark 2.20]{AR1994}. For (2), let $F\from\mathbf{M}\to\mathbf{N}$
be a morphism of $\CMMC$. By the result in the previous paragraph
and part (1), we can find some $\kappa_{0}$ such that $\mathbf{M}$
and $\mathbf{N}$ are monoidally $\kappa_{0}$-combinatorial. Choose
$\kappa\rcone\kappa_{0}$ such that $F\pr{\mathbf{M}_{\kappa_{0}}}\subset\mathbf{N}_{\kappa}$.
We claim that $F$ belongs to $\CMMC\pr{\kappa}$. 

Since every $\kappa$-compact object is a $\kappa$-small colimit
of $\kappa_{0}$-compact object \cite[Theorem 2.3.11]{MP89}, our
choice of $\kappa$ and $\kappa_{0}$ ensures that $F\pr{\mathbf{M}_{\kappa}}\subset\mathbf{N}_{\kappa}$.
We also know from (1) that $\mathbf{M}$ and $\mathbf{N}$ are monoidally
$\kappa$-combinatorial. Therefore, $F$ belongs to $\CMMC\pr{\kappa}$,
as desired.
\end{proof}

\subsection{Proof of Proposition \ref{prop:main_flat}}

We now arrive at the proof of Proposition \ref{prop:main_flat}. 

\begin{notation}
We write $\urCard$ for the (large) poset of (small) uncountable regular
cardinals, ordered by $\lcone$. Given a functor $F:\urCard\to\hat{\mathsf{Cat}}$,
we write $\int^{\urCard}F\to\urCard$ for the Grothendieck construction
of $F$. Likewise, given a functor $G:\urCard^{\op}\to\hat{\mathsf{Cat}}$,
we write $\int_{\urCard}G\to\urCard$ for its Grothendieck construction.
\end{notation}

\begin{notation}
We let $\combsymalg{\mathbf{S}}\subset\CombSymAlg{\mathbf{S}}$ denote
the full subcategory of $\mathbf{S}$-algebras $\mathbf{M}$ with
the following property: For every regular cardinal $\kappa$ such
that (the underlying category of) $\mathbf{M}$ is $\kappa$-presentable,
the full subcategory $\mathbf{M}_{\kappa}\subset\mathbf{M}$ of $\kappa$-compact
objects is \textit{literally} small (not just essentially small).

We define a full subcategory $\combsymalg{\mathbf{S}}_{\pr{2,1}}\subset\combsymalg{\mathbf{S}}_{\pr{2,1}}$
similarly. 
\end{notation}

\begin{rem}
\label{rem:(2.1)_V}The functor $\combsymalg{\mathbf{S}}\to\combsymalg{\mathbf{S}}_{\pr{2,1}}$
is a localization. This follows from the argument in Proposition \ref{prop:(2.1)_V}
and the following observation \cite[Proposition 2.23]{ZhenLin_Universe}:
Let $\kappa$ be a regular cardinal, and let $\mathbf{C}$ be a locally
$\kappa$-presentable category. For every category $\mathbf{I}$ with
only finitely many morphisms, the category $\Fun\pr{\mathbf{I},\mathbf{C}}$
is locally $\kappa$-presentable, and its $\kappa$-compact objects
are exactly those diagrams $\mathbf{I}\to\mathbf{C}$ carrying each
object to a $\kappa$-compact object.
\end{rem}

\begin{proof}
[Proof of Proposition \ref{prop:main_flat}]Consider the following
diagram:
\[\begin{tikzcd}[column sep = small]
	{\mathsf{combsym}\mathbf{S}\text{-}\mathsf{alg}[\mathrm{Quill.eq}^{-1}]} & {\mathsf{CombSym}\mathbf{S}\text{-}\mathsf{Alg}[\mathrm{Quill.eq}^{-1}]} & {\mathsf{SMRel}\widehat{\mathsf{Cat}}_{\mathcal{P}\mathsf{r}\mathcal{SM}}} \\
	{\mathsf{combsym}\mathbf{S}\text{-}\mathsf{alg}_{(2,1)}[\mathrm{Quill.eq}^{-1}]} & {\mathsf{CombSym}\mathbf{S}\text{-}\mathsf{Alg}_{(2,1)}[\mathrm{Quill.eq}^{-1}]} & {\mathsf{SMRel}\widehat{\mathsf{Cat}}_{\mathcal{P}\mathsf{r}\mathcal{SM},(2,1)}}
	\arrow[from=1-1, to=1-2]
	\arrow["{\alpha'}", from=1-1, to=2-1]
	\arrow[from=1-2, to=1-3]
	\arrow["\alpha"', from=1-2, to=2-2]
	\arrow["\beta"', from=1-3, to=2-3]
	\arrow["\gamma", from=2-1, to=2-2]
	\arrow[from=2-2, to=2-3]
\end{tikzcd}\]By Proposition \ref{prop:(2.1)_V}, the map $\alpha$ is a localization,
and we saw in Remark \ref{rem:(2.1)_V} that $\alpha'$ is also a
localization. The map $\beta$ is a localization by Lemma \ref{lem:SMRelCatX(2.1)loc}.
Furthermore, the map $\gamma$ is an equivalence (as it is an equivalence
even before localizing). Thus, it suffices to show that bottom composite
\[
\combsymalg{\mathbf{S}}_{\pr{2,1}}[\Quilleq^{-1}]\to\SMRelCatlarge_{\pr{2,1}}[\loceq^{-1}]
\]
is an equivalence. 

For each uncountable regular cardinal $\kappa$, let $\combsymalg{\mathbf{S}}\pr{\kappa}$
denote the fiber product
\[
\combsymalg{\mathbf{S}}\times_{\CMMC}\CMMC\pr{\kappa}.
\]

We now consider the following diagram:
\[\begin{tikzcd}
	{\int^{\mathsf{urCard}}\mathsf{combsym}\mathbf{S}\text{-}\mathsf{alg}(-)} & {\int_{\mathsf{urCard}}\mathsf{SMRelCat}_{\mathcal{SMC}\mathsf{at}_\infty(-)}} \\
	{\mathsf{combsym}\mathbf{S}\text{-}\mathsf{alg}_{(2,1)}} & {\mathsf{SMRel}\widehat{\mathsf{Cat}}_{\mathcal{P}\mathsf{r}\mathcal{SM},(2,1)}}
	\arrow["F", from=1-1, to=1-2]
	\arrow["U"', from=1-1, to=2-1]
	\arrow["{\mathbf{Ind}^{\mathbf{S}}}"{description}, from=1-2, to=2-1]
	\arrow["{(-)_\flat\circ \mathbf{Ind}^{\mathbf{S}}}", from=1-2, to=2-2]
	\arrow["{(-)_\flat}"', from=2-1, to=2-2]
\end{tikzcd}\]Here $U$ and $F$ are functors and $\MInd^{\mathbf{S}}$ is a strictly
unitary pseudofunctor, defined by the formulas
\begin{align*}
F\pr{\kappa,\mathbf{M}} & =\pr{\kappa,\mathbf{M}_{\kappa,\cof}},\\
U\pr{\kappa,\mathbf{M}} & =\mathbf{M},\\
\MInd^{\mathbf{S}}\pr{\kappa,\mathcal{C}} & =\MInd^{\mathbf{S}}_{\kappa}\pr{L\pr{\mathcal{C}}}.
\end{align*}
The lower triangle commutes by construction; the upper triangle does
not commute on the nose, but Corollary \ref{cor:modcat_Indcomp} gives
a pseudonatural Quillen equivalence $\MInd^{\mathbf{S}}\circ F\xrightarrow{\simeq}U$.
Thus, to prove the claim, it suffices to show that $U$ and $\pr -_{\flat}\circ\MInd^{\mathbf{S}}$
induce equivalences of $\infty$-categories when localized at the
maps whose images in $\Pr\SM$ are equivalences.

We start from the claim on $U$. We claim more strongly that $U$
is a localization. According to Lemma \ref{lem:monoidally_comb},
the category $\combsymalg{\mathbf{S}}$ is the colimit of $\combsymalg{\mathbf{S}}\pr{\kappa}$
as $\kappa$ ranges over $\urCard$. It follows from \cite[\href{https://kerodon.net/tag/02UU}{Tag 02UU}]{kerodon}
that the functor 
\[
\int^{\urCard}\combsymalg{\mathbf{S}}\pr -\to\combsymalg{\mathbf{S}}
\]
is a localization. Since the functor $\combsymalg{\mathbf{S}}\to\combsymalg{\mathbf{S}}_{\pr{2,1}}$
is also a localization (Remark \ref{rem:(2.1)_V}), we deduce that
$U$ is also a localization. 

Next, for $\pr -_{\flat}\circ\MInd^{\mathbf{S}}$, we factor it as
\begin{align*}
\int_{\urCard}\SMRelCat_{\SM\Cat_{\infty}\pr -} & \xrightarrow{\Phi}\int_{\urCard}\SMRelCatlarge_{\pr -\-\Pr\SM,\pr{2,1}}\\
 & \xrightarrow{\Psi}\SMRelCatlarge_{\Pr\SM,\pr{2,1}},
\end{align*}
where $\int^{\urCard}\SMRelCatlarge_{\pr -\-\Pr\SM,\pr{2,1}}$ denotes
the cartesian fibration associated with the functor $\kappa\mapsto\SMRelCatlarge_{\kappa\-\Pr\SM,\pr{2,1}}$,
concretely realized as (the dual of) Lurie's relative nerve \cite[Definition 3.2.5.2]{HTT}.
The functor $\Phi$ is induced by the functors $\{\MInd^{\mathbf{S}}_{\kappa}\pr{L\pr -}_{\flat}\}_{\kappa}$,
and $\Psi$ is the forgetful functor. As in the previous paragraph,
the functor $\Psi$ is already a localization, so it suffices to verify
that $\Phi$ induces a localization upon localizing at the maps whose
images in $\Pr\SM$ are equivalences. This follows from (the dual
of) \cite[\href{https://kerodon.net/tag/02LW}{Tag 02LW}]{kerodon}
and Proposition \ref{prop:rel_GUdual}, which show more strongly that
it induces a localization when we localize at fiberwise local equivalences.
The proof is now complete.
\end{proof}

\section{\label{sec:main_U}Proof of Proposition \ref{prop:main_U}}

In this section, we take up the proof of Proposition \ref{prop:main_U}.
In light of Propositions \ref{prop:main_L} and \ref{prop:main_flat},
it will suffice to prove this for one \textit{specific} choice of
$\mathbf{S}$. To this end, we will construct a new model category
of \textit{symmetric cubical sets} in Subsection \ref{subsec:sym_cube_set}.
We then use this to prove Proposition \ref{prop:main_U} in Subsection
\ref{subsec:main_U}.

\subsection{\label{subsec:sym_cube_set}Symmetric cubical sets}

In this subsection, we construct the symmetric monoidal model category
of symmetric cubical sets (Theorem \ref{thm:sym_cube}). It has two
essential features: Firstly, it models the homotopy theory of spaces,
in the sense that its underlying symmetric monoidal $\infty$-category
is equivalent to the cartesian monoidal $\infty$-category of spaces.
Secondly, every combinatorial symmetric monoidal model category with
cofibrant unit can be enriched over symmetric cubical sets in an essentially
unique manner (Corollary \ref{cor:symcub}). Together, these features
make the category of symmetric cubical sets an ideal choice of $\mathbf{S}$.
\begin{rem}
Part of the contents of this subsection is similar in spirit to Isaacson's
papers \cite{isaacson2009symmetriccubicalsets,Isa11}. In these papers,
he considered symmetric cubical sets by using a slightly bigger category
of symmetric cubes. He then showed that every combinatorial symmetric
monoidal model category with cofibrant unit and which satisfies the
monoid axiom can be enriched over symmetric cubical sets \cite[Theorem 10.1]{isaacson2009symmetriccubicalsets}.
However, his approach does not seem to give an essential uniqueness
of this enrichment.
\end{rem}

We start by recalling the classical cube category and the Grothendieck
model structure.
\begin{defn}
The \textbf{box category} $\square$ has the following descriptions:
\begin{itemize}
\item Objects are the posets $[1]^{n}$, where $n\geq0$.
\item A morphism $f:[1]^{n}\to[1]^{m}$ is a poset map that erases coordinates
and inserts $0$ and $1$ without changing the order of the coordinates.
Equivalently, they are  generated under composition by the following
maps:
\begin{enumerate}
\item The \textbf{face map} $\delta^{i,\varepsilon}=\delta^{i,\varepsilon}_{n}:[1]^{n}\to[1]^{n+1}$
for $n\geq0$ and $1\leq i\leq n+1$, which inserts $\varepsilon\in\{0,1\}$
in the $i$th coordinate.
\item The \textbf{degeneracy map} $\sigma^{i}_{n}:[1]^{n+1}\to[1]^{n}$
for $n\geq0$ and $1\leq i\leq n+1$, which deletes the $i$th coordinate.
\end{enumerate}
\end{itemize}
We will write $\square_{\leq1}\subset\square$ for the full subcategory
spanned by the objects $[1]^{n}$ with $n\leq1$. 

The category $\square$ is naturally a Reedy category. (In fact, an
Eilenberg--Zilber category in the sense of \cite[Definition 1.3.1]{HCHA}.)
Also, the cartesian product of posets makes $\square$ into a monoidal
category. The category $\Set^{\square^{\op}}$ of \textbf{cubical
sets} admits an induced monoidal structure, given by Day's convolution
product. The presheaf represented by $[1]^{n}$ will be denote by
$\square^{n}$.
\end{defn}

The category $\Set^{\square^{\op}}$ can be endowed with a model structure
which models the homotopy theory of spaces. To state it more precisely,
set $\partial\square^{n}=\bigcup_{i,\varepsilon}\delta^{i,\varepsilon}\pr{\square^{n-1}}$
and $\sqcap^{n}_{i,\varepsilon}=\bigcup_{\pr{j,\eta}\neq\pr{i,\varepsilon}}\delta^{i,\varepsilon}\pr{\square^{n-1}}$.
We consider the following sets of morphisms of cubical sets: 
\[
I=\{\partial\square^{n}\to\square^{n}\mid n\geq0\},\,J=\{\sqcap^{n}_{i,\varepsilon}\to\square^{n}\mid n\geq1,\,1\leq i\leq n,\,\varepsilon\in\{0,1\}\}.
\]

\begin{thm}
\cite{cisinski_test}, \cite[Theorem 6.2, Theorem 8.6, Theorem 8.8]{jardine_cathtpy}\label{thm:cis}
The category $\Set^{\square^{\op}}$ of \textbf{cubical sets} has
a combinatorial monoidal model structure called the \textbf{Grothendieck
model structure}, whose:
\begin{itemize}
\item cofibrations are the monomorphisms.
\item weak equivalences are preserved and detected by the triangulation
functor $\Set^{\square^{\op}}\to\Set^{\Del^{\op}}$, which is the
left adjoint carrying $\square^{n}\mapsto\pr{\Delta^{1}}^{n}$.
\end{itemize}
Moreover, $I$ and $J$ cofibrantly generate this model structure,
and $T$ is a left Quillen equivalence with respect to the Kan--Quillen
model structure on simplicial sets.
\end{thm}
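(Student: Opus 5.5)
Since this theorem is cited from the literature (Cisinski, Jardine), I will reconstruct its proof from Cisinski's theory of test categories and model structures on presheaf toposes rather than from scratch. The first step applies Cisinski's general existence theorem for presheaves on a small category $A$. Given a functorial cylinder on $\Set^{\square^{\op}}$ and a set of anodyne maps, there is a combinatorial model structure whose cofibrations are the monomorphisms and whose weak equivalences are determined by the cylinder. For the cylinder I take $X\mapsto X\otimes\square^{1}$, using Day convolution, so that $\square^{n}\otimes\square^{1}=\square^{n+1}$. The boundary inclusions $I$ generate the monomorphisms, because $\square$ is an Eilenberg--Zilber category: every monomorphism is a transfinite composite of pushouts of maps $\partial\square^{n}\to\square^{n}$, attaching nondegenerate cells one at a time. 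Cisinski's theory also shows that $\square$ is a strict test category, which gives the comparison with spaces.

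The second step identifies the weak equivalences with those detected by the triangulation functor $T$. The functor $T$ sends $\partial\square^{n}\to\square^{n}$ to the boundary inclusion of $(\Delta^{1})^{n}$, which is a monomorphism. It also sends each open box inclusion $\sqcap^{n}_{i,\varepsilon}\to\square^{n}$ to an anodyne extension, which I check by a standard prism argument: the image is a retract of a pushout-product of a horn $\Lambda^{1}_{\varepsilon}\to\Delta^{1}$ with the boundary inclusion of a smaller cube. Consequently $T$ is left Quillen. That $T$ is a Quillen equivalence follows from the test category property. The unit and counit are natural weak equivalences, which I verify on representables: $T(\square^{n})=(\Delta^{1})^{n}$ is contractible, and $\square^{n}$ is weakly contractible in the cubical model structure. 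A skeletal induction then extends this to all cofibrant objects. To see that $J$ generates the trivial cofibrations, I show that the $J$-cell complexes are exactly Cisinski's anodyne class for the chosen cylinder, meaning they are closed under pushout-product with $I$. This reduces to the combinatorial identity that $\sqcap^{n}_{i,\varepsilon}\to\square^{n}$ pushout-product $\partial\square^{m}\to\square^{m}$ equals $\sqcap^{n+m}_{i,\varepsilon}\to\square^{n+m}$. Fibrant objects are then exactly the $J$-injective ones, namely cubical Kan complexes.

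The third step is the monoidal axiom. The pushout-product axiom for cofibrations holds on generators because $(\partial\square^{n}\to\square^{n})\,\square\,(\partial\square^{m}\to\square^{m})$ is $\partial\square^{n+m}\to\square^{n+m}$. The trivial-cofibration part follows from the box identity just stated, applied in both orders of the tensor. The unit $\square^{0}$ is cofibrant, so the unit axiom is automatic.

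I expect the main obstacle to be the second step: identifying the weak equivalences of the model structure generated by $(I,J)$ with the $T$-equivalences, equivalently proving that $T$ reflects weak equivalences. I would first try to show that the composite of $T$ with geometric realization agrees up to natural homotopy with the cubical realization. I would then invoke Cisinski's theorem that for a (local) test category the weak equivalences are exactly the maps inducing equivalences on nerves of categories of elements, together with the fact that $\square$ is a test category.
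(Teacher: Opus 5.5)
The paper gives no proof of this statement; it only cites Cisinski and Jardine. Your reconstruction therefore has to stand on its own. Much of it is right: the pushout-product identities for boundaries and open boxes, and the images of $I$ and $J$ under $T$. There is, however, a genuine gap at the claim that $J$ generates the trivial cofibrations.

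\emph{The gap.} Identifying the $J$-cofibrations with Cisinski's smallest class of anodyne extensions for the cylinder $-\otimes\square^{1}$ only yields what Cisinski's existence theorem provides. It tells you that the fibrant objects are the $J$-injective objects, and that a map \emph{between fibrant objects} is a fibration if and only if it is $J$-injective. It does not tell you that an arbitrary $J$-injective map is a fibration, equivalently that every trivial cofibration is a $J$-cofibration. In the language of Kan's recognition theorem, you still owe the statement that a $J$-injective map which is a weak equivalence lifts against all monomorphisms.

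This is not formal. Already for simplicial sets with the cylinder $\Delta^{1}\times-$, the fact that every Kan fibration (not only one between Kan complexes) is a fibration is a separate theorem, proved via minimal fibrations or $\mathrm{Ex}^{\infty}$. For cubical sets it is true, but it is part of the content of the cited results. So it needs either that citation or an actual argument, for example via cubical minimal fibrations.

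\emph{Consequences for your other steps.} Your deduction that $T$ is left Quillen from $T(J)$ being anodyne leans on the same point. That conclusion is immediate anyway once the weak equivalences are known to be the $T$-equivalences. Your monoidal step also inherits the gap, since you derive the acyclic half of the pushout-product axiom from $J$ generating. That half can be obtained independently, again once the weak equivalences are the $T$-equivalences:
\begin{itemize}
\item $T$ is strong monoidal, so $T(X\otimes Y)\cong TX\times TY$, and hence $X\otimes-$ preserves weak equivalences.
\item Take monomorphisms $f\colon A\to B$ and $g\colon C\to D$ with $g$ acyclic. The map $B\otimes C\to(A\otimes D)\amalg_{A\otimes C}(B\otimes C)$ is a pushout of the trivial cofibration $A\otimes g$.
\item Two-out-of-three with $B\otimes g$ then shows that $f\square g$ is a weak equivalence.
\end{itemize}

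\emph{Two smaller corrections.} First, the cube category without connections is a test category but \emph{not} a strict one. In the cartesian product $\square^{1}\times\square^{1}$, the diagonal is a nondegenerate $1$-cube lying on the boundary of neither nondegenerate $2$-cube, and the product realizes to a space homotopy equivalent to $S^{2}\vee S^{1}$. Strictness is never needed here: only the test property enters, and the tensor product is Day convolution, not the cartesian product. So just drop that claim.

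Second, the Quillen-equivalence step does not work as written. The counit lives on simplicial sets, so there is nothing to check on cubical representables. Pushing the derived unit $X\to T^{*}R(TX)$ through skeletal pushouts would require the right adjoint $T^{*}$ to carry homotopy pushouts to homotopy pushouts, which is essentially what is to be proved.

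The test-category argument you allude to is the right one. Each $T(\square^{n})=(\Delta^{1})^{n}$ is contractible and $\square$ is Eilenberg--Zilber, so $TX$ is naturally weakly equivalent to the nerve of $\square/X$. Combined with Cisinski's description of the weak equivalences through the nerves of $\square/X$, this shows that $T$ preserves and reflects weak equivalences. It also shows that $T$ is compatible, on homotopy categories, with the equivalences of both sides to the homotopy category of spaces. Hence $T$ is a Quillen equivalence.
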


Having recalled the basics of cubical sets, we now develop the theory
of symmetric cubical sets. Our goal is to consider the symmetric monoidal
version of Theorem \ref{thm:cis} (Theorem \ref{thm:sym_cube}) and
to prove its model-categorical universal property (Corollary \ref{cor:symcub}).
\begin{defn}
We define the symmetric monoidal category of \textbf{symmetric box
category} $\square_{\Sigma}$ by adjoining to the box category the
permutation map $\pi_{p}:[1]^{n}\to[1]^{n}$ for each $n\geq0$ and
$p\in\Sigma_{n}$, defined by $\pi_{p}\pr{x_{1},\dots,x_{n}}=\pr{x_{p^{-1}\pr 1},\dots,x_{p^{-1}\pr n}}$.
There is an inclusion $i:\square\to\square_{\Sigma}$. We denote by
$\square^{n}_{\Sigma}$ the presheaves on $\square_{\Sigma}$ represented
by $[1]^{n}$. Note that $\square_{\leq1}$ is a full subcategory
of $\square_{\Sigma}$.
\end{defn}

The following proposition is immediate from the definitions:
\begin{prop}
We have the following \textbf{cocubical relations} for maps in $\square_{\Sigma}$:
\begin{itemize}
\item $\delta^{j,\eta}\delta^{i,\varepsilon}=\delta^{i,\varepsilon}\delta^{j-1,\eta}$
for $j\leq i$.
\item $\sigma^{j}\delta^{i,\varepsilon}=\begin{cases}
\delta^{i,\varepsilon}\sigma^{j-1} & \text{if }i<j,\\
\id & \text{if }i=j,\\
\delta^{i-1}\sigma^{j} & \text{if }i>j.
\end{cases}$
\item $\sigma^{j}\sigma^{i}=\sigma^{i}\sigma^{j+1}$ if $i\leq j$.
\item $\pi_{p}\pi_{q}=\pi_{pq}$.
\item $\pi_{p}\delta^{i,\varepsilon}=\delta^{p\pr i}\pi_{q}$, where $q$
denotes the composite
\[
\{1,\dots,n\}\cong\{1,\dots,n+1\}\setminus\{i\}\xrightarrow{p}\{1,\dots,n+1\}\setminus\{p\pr i\}\cong\{1,\dots,n\}.
\]
\item $\sigma^{i}\pi_{p}=\pi_{q}\sigma^{p^{-1}\pr i}$, where $q$ denotes
the composite
\[
\{1,\dots,n-1\}\cong\{1,\dots n\}\setminus\{p^{-1}\pr i\}\xrightarrow{p}\{1,\dots,n\}\setminus\{i\}\cong\{1,\dots,n-1\}.
\]
\end{itemize}
Moreover, every map in $\square_{\Sigma}$ can be written uniquely
as
\[
\delta^{i_{1},\varepsilon_{1}}\cdots\delta^{i_{n}\varepsilon_{n}}\pi_{p}\sigma^{j_{1}}_{l}\cdots\sigma^{j_{m}},
\]
where $i_{1}>\cdots i_{n}$, $j_{1}<\cdots<j_{m}$, and $p\in\Sigma_{l}$.
\end{prop}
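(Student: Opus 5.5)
The statement consists of the cocubical relations plus a unique normal form, so the plan is to work concretely with $\square_{\Sigma}$ as a category of poset maps $[1]^{n}\to[1]^{m}$. I will identify each generator with an explicit coordinate formula: $\delta^{i,\varepsilon}$ inserts $\varepsilon$ in slot $i$, $\sigma^{j}$ deletes slot $j$, and $\pi_{p}$ moves the coordinate in slot $k$ to slot $p(k)$. Every relation is then an equality of functions $\{0,1\}^{n}\to\{0,1\}^{m}$, checked by evaluating both sides on a tuple $(x_{1},\dots,x_{n})$.

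\textbf{Relations.} The first three relations are the classical cocubical identities of $\square$, verified by index bookkeeping on where slots shift after insertion or deletion. The relation $\pi_{p}\pi_{q}=\pi_{pq}$ follows from the formula $\pi_{p}(x)_{k}=x_{p^{-1}(k)}$. For the two mixed relations, $\pi_{p}\delta^{i,\varepsilon}$ puts $\varepsilon$ in slot $p(i)$ and moves the remaining coordinates by the restriction of $p$ to the complement of $i$. After reindexing the complements to $\{1,\dots,n\}$, this restriction is exactly $q$, which gives $\delta^{p(i),\varepsilon}\pi_{q}$. The degeneracy relation is the dual computation: deleting slot $i$ after permuting amounts to first deleting slot $p^{-1}(i)$ and then permuting the remaining slots by $q$.

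\textbf{Existence of the normal form.} Every morphism is, by definition, a composite of faces, degeneracies and permutations. Using the mixed relations, I will push all permutations to the middle: faces stay to the left of permutations and degeneracies to their right. The classical relations then sort the faces into decreasing indices $i_{1}>\cdots>i_{n}$ and the degeneracies into increasing indices $j_{1}<\cdots<j_{m}$. This sorting is the standard Eilenberg--Zilber normal form argument in $\square$. Termination follows by induction on word length together with the number of inversions.

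\textbf{Uniqueness.} This is the main point. I will read the normal form off from the function $f\colon[1]^{n}\to[1]^{m}$ itself.
\begin{itemize}
\item Each output coordinate of $f$ is either a constant $0$ or $1$, or equal to some input coordinate $x_{k}$. Each $x_{k}$ is used at most once, because morphisms are built from the generators, which never duplicate a coordinate.
\item The constant output slots together with their values determine the face part, namely the decreasingly ordered pairs $(i_{r},\varepsilon_{r})$.
\item The input coordinates that appear in no output slot determine the degeneracy part $\{j_{1}<\cdots<j_{m}\}$.
\item The bijection between the surviving input coordinates and the non-constant output slots, after reindexing both sets order-preservingly, is exactly $p$.
\end{itemize}
Since this data is recovered directly from $f$, two normal forms defining the same map must coincide.

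The step I expect to be most delicate is the claim that morphisms of $\square_{\Sigma}$ are faithfully represented as these poset maps. The category is defined by adjoining the maps $\pi_{p}$ as actual poset maps, so morphisms are genuine functions and equality of morphisms is equality of functions. I will therefore take this as the definition rather than working with a presentation by generators and relations, which reduces uniqueness to the function-level reading above.
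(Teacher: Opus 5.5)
Your proposal is correct and is the direct verification from the definitions that the paper intends (the paper gives no argument beyond calling the proposition immediate); getting uniqueness by reading the constant output slots, the deleted input coordinates, and the permutation off the underlying function is the natural way to do it. One correction: as printed, the first relation is false for $j\le i$ (for $i=j=2$ on $[1]^{1}$ the two sides send $x$ to $(x,\eta,\varepsilon)$ and $(\eta,\varepsilon,x)$). It should read $\delta^{j,\eta}\delta^{i,\varepsilon}=\delta^{i,\varepsilon}\delta^{j-1,\eta}$ for $i<j$, equivalently $\delta^{j,\eta}\delta^{i,\varepsilon}=\delta^{i+1,\varepsilon}\delta^{j,\eta}$ for $j\le i$. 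This corrected form is what your index bookkeeping actually produces, and it is the one your sorting of faces into decreasing order uses.
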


\begin{cor}
\label{cor:cube}The symmetric box category $\square_{\Sigma}$ is
freely generated by the face maps, degeneracy maps, and permutation
maps, subject to the cocubical relations. A similar claim holds for
the classical box category.
\end{cor}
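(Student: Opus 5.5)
The plan is to deduce Corollary \ref{cor:cube} from the normal form in the preceding proposition by a standard presentation argument. Let $\mathbf{F}$ denote the category with objects $[1]^{n}$, $n\geq0$, generated by formal symbols $\delta^{i,\varepsilon}$, $\sigma^{j}$, $\pi_{p}$ and subject only to the cocubical relations. There is an evident functor $\Phi\colon\mathbf{F}\to\square_{\Sigma}$ which is the identity on objects and sends each symbol to the map of the same name; it is well defined because the cocubical relations hold in $\square_{\Sigma}$. Since $\square_{\Sigma}$ is generated under composition by faces, degeneracies and permutations (by definition, as $\square$ is so generated and we adjoin the $\pi_{p}$), $\Phi$ is full. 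It therefore remains to prove faithfulness.

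For faithfulness, I will show that every morphism of $\mathbf{F}$ can be rewritten, using only the cocubical relations, into the normal form
\[
\delta^{i_{1},\varepsilon_{1}}\cdots\delta^{i_{n},\varepsilon_{n}}\pi_{p}\sigma^{j_{1}}\cdots\sigma^{j_{m}},\qquad i_{1}>\cdots>i_{n},\ j_{1}<\cdots<j_{m}.
\]
Once this is done, two parallel words with the same image under $\Phi$ reduce in $\mathbf{F}$ to normal-form words whose images agree. By the uniqueness part of the preceding proposition, these normal-form words coincide letter by letter, so the original words were already equal in $\mathbf{F}$.

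The rewriting proceeds by moving letters in a fixed order. First, all permutations are moved to the right past faces, using $\pi_{p}\delta^{i,\varepsilon}=\delta^{p(i),\varepsilon}\pi_{q}$. Next, degeneracies are moved to the right past permutations, using $\sigma^{i}\pi_{p}=\pi_{q}\sigma^{p^{-1}(i)}$. Degeneracies standing to the left of faces are then either cancelled or commuted to the right, via the $\sigma\delta$ relations. Finally, adjacent permutations are merged with $\pi_{p}\pi_{q}=\pi_{pq}$, and the strings of faces and of degeneracies are sorted using $\delta\delta$ and $\sigma\sigma$.

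The main obstacle is proving that this rewriting terminates. I will handle it with a lexicographic measure. The first component is the total number of letters, which never increases: the $\sigma\delta=\mathrm{id}$ case strictly decreases it, and merging permutations also decreases it. The second component counts inversions, namely pairs consisting of a degeneracy or permutation positioned to the left of a face, or a degeneracy positioned to the left of a permutation. Every swap strictly lowers this count. The third component measures the disorder within the face block and within the degeneracy block, and the sorting relations lower it exactly as in the classical argument for $\Delta$ or $\square$.

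The claim for the classical box category $\square$ follows by the same argument with all permutations omitted, using the corresponding unique factorization $\delta\cdots\delta\sigma\cdots\sigma$ in $\square$. That factorization is the restriction of the preceding proposition to words with $p=\mathrm{id}$: faces and degeneracies in $\square$ are order-preserving, so no permutation can appear in their factorization.
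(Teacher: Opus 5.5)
Your strategy is the one the paper intends. The corollary is stated without proof and is meant to be read off from the normal form in the preceding proposition. However, your faithfulness step has a concrete gap at identity permutations.

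None of your rewriting rules creates a letter $\pi_p$: they only move, merge, cancel or re-index letters. So a word containing no permutation reduces to $\delta\cdots\delta\,\sigma\cdots\sigma$. The normal form of the proposition, by contrast, always carries a letter $\pi_p$, possibly with $p=\mathrm{id}$. For instance, the empty word at $[1]^{n}$ and the one-letter word $\pi_{\mathrm{id}}$ are both irreducible under your rules and have the same image, but they are different words. Hence the step ``these normal-form words coincide letter by letter'' fails.

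This is not just a defect of the argument: with only the listed relations, $\pi_{\mathrm{id}}\neq\mathrm{id}$ in $\mathbf{F}$. Send every $\delta^{i,\varepsilon}$ and $\sigma^{j}$ to $1$ and every $\pi_p$ to $e$, in the one-object category of the monoid $\{1,e\}$ with $e^{2}=e$. Every listed relation holds there, so this is a functor on $\mathbf{F}$, and it separates $\pi_{\mathrm{id}}$ from the identity; thus $\Phi$ is not faithful.

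The fix is to add the relations $\pi_{\mathrm{id}_{n}}=\mathrm{id}_{[1]^{n}}$, which the paper assumes implicitly but omits from its list. Equivalently, drop $\pi_{\mathrm{id}}$ from the generators and read it as the empty word in the relations and in the normal form. With that relation you may insert $\pi_{\mathrm{id}}$ into any permutation-free reduced word, and your argument goes through. The classical case is unaffected, since it has no permutation letters.

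There are also two smaller points.

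First, when you check that the relations hold in $\square_{\Sigma}$ (so that $\Phi$ is defined), note that the face--face relation as printed has its inequality reversed. The correct identity is $\delta^{j,\eta}\delta^{i,\varepsilon}=\delta^{i,\varepsilon}\delta^{j-1,\eta}$ for $i<j$, and this is the version your sorting step actually uses.

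Second, the third component of your measure cannot be the naive count of pairs of face indices in the wrong order. The swap $\delta^{i,\varepsilon}\delta^{k,\eta}\mapsto\delta^{k+1,\eta}\delta^{i,\varepsilon}$ (for $i\leq k$) shifts an index. For example, on the index sequence $(3,1,2)\mapsto(3,3,1)$ that count stays at $1$ rather than decreasing. Instead, count inversions among the final positions of the inserted coordinates, which the swap leaves invariant, or use the usual insertion-sort induction. The same applies to the degeneracies.
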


Corollary \ref{cor:cube} is quite elementary, but it has an important
consequence. Namely, it can be used to show that $\square_{\Sigma}$
is the free symmetric monoidal category generated by an ``interval'':
\begin{cor}
\label{cor:sym_cube}Let $\pr{\mathcal{C},\otimes,\mathbf{1}}$ be
a symmetric monoidal category. The evaluation at the unit object $[1]\in\square_{\Sigma}$
determines a categorical equivalence
\[
\theta\from\Fun^{\t}\pr{\square_{\Sigma},\mathcal{C}}\xrightarrow{\simeq}\Fun\pr{\square_{\leq1},\mathcal{C}}\times_{\Fun\pr{\{[1]^{0}\},\mathcal{C}}}\{\mathbf{1}\}.
\]
A similar claim holds for monoidal categories and the plain cube category.
\end{cor}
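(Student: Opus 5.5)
The plan is to use Corollary \ref{cor:cube}, which presents $\square_{\Sigma}$ by generators and relations, to show that a strong symmetric monoidal functor out of $\square_{\Sigma}$ is determined, up to unique monoidal isomorphism, by its restriction to $\square_{\leq1}$. By Mac Lane's coherence theorem we may replace $\mathcal{C}$ by an equivalent permutative (strict symmetric monoidal) category. Both sides of $\theta$ are invariant under symmetric monoidal equivalence of $\mathcal{C}$, so it suffices to treat permutative $\mathcal{C}$. On the right-hand side we use the strict fibre, i.e.\ functors $\square_{\leq1}\to\mathcal{C}$ sending $[1]^{0}$ to $\mathbf{1}$. Such a functor $D$ is exactly an "interval object": an object $I=D([1])$ with maps $d^{0},d^{1}\colon\mathbf{1}\to I$ and $s\colon I\to\mathbf{1}$ satisfying $sd^{\varepsilon}=\id$.

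Next we construct an inverse to $\theta$ up to equivalence. Given $(I,d^{0},d^{1},s)$, define $\Phi(D)\colon\square_{\Sigma}\to\mathcal{C}$ on objects by $[1]^{n}\mapsto I^{\otimes n}$. On generators we set
\begin{itemize}
\item $\delta^{i,\varepsilon}\mapsto I^{\otimes i-1}\otimes d^{\varepsilon}\otimes I^{\otimes n-i+1}$;
\item $\sigma^{i}\mapsto I^{\otimes i-1}\otimes s\otimes I^{\otimes n-i}$;
\item $\pi_{p}\mapsto$ the symmetry isomorphism of $I^{\otimes n}$ associated with $p$, which is well defined because $\mathcal{C}$ is permutative.
\end{itemize}
By Corollary \ref{cor:cube}, this assignment extends to a functor once the cocubical relations are checked in $\mathcal{C}$. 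Those checks fall into three groups:
\begin{itemize}
\item the face/face, degeneracy/degeneracy and face/degeneracy relations follow from bifunctoriality of $\otimes$ together with $sd^{\varepsilon}=\id$;
\item $\pi_{p}\pi_{q}=\pi_{pq}$ is the coherence of the symmetry in a permutative category;
\item the mixed relations with permutations follow from naturality of the symmetry applied to $d^{\varepsilon}$ and $s$, using $\mathbf{1}$ as a strict unit.
\end{itemize}
The functor $\Phi(D)$ is strict symmetric monoidal, since the product $[1]^{n}\times[1]^{m}=[1]^{n+m}$ in $\square_{\Sigma}$ maps to concatenation. Morphisms of interval objects (maps $I\to I'$ compatible with $d^{\varepsilon}$ and $s$) induce monoidal transformations $f^{\otimes n}$, so $\Phi$ is a functor and $\theta\Phi=\id$ on the nose.

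It remains to show that $\theta$ is fully faithful and essentially surjective onto the strict fibre. A symmetric monoidal transformation $\alpha\colon F\Rightarrow G$ between strong symmetric monoidal functors is determined by $\alpha_{[1]}$, because $\alpha_{[1]^{n}}$ must be the conjugate of $\alpha_{[1]}^{\otimes n}$ by the structure isomorphisms $F([1])^{\otimes n}\cong F([1]^{n})$. Conversely, any morphism of interval objects defines such a transformation. Naturality of this transformation need only be verified on the generating maps, and there it follows from the monoidality constraints; this gives full faithfulness. For essential surjectivity, fix $F\in\Fun^{\otimes}(\square_{\Sigma},\mathcal{C})$. Its structure isomorphisms $F([1])^{\otimes n}\cong F([1]^{n})$ assemble into a monoidal isomorphism $\Phi(\theta F)\cong F$. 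Naturality again reduces to the generators. For $\pi_{p}$ it holds because $F$ is symmetric monoidal and $\pi_{p}$ is the symmetry of $\square_{\Sigma}$. One subtlety is the identification of the fibre over $\mathbf{1}$: $F([1]^{0})$ is only isomorphic to $\mathbf{1}$, not equal to it. So we either use the homotopy fibre or transport $F$ along the unit isomorphism, which is harmless because the unit constraint is part of the structure.

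The main obstacle is the verification that the mixed permutation/face and permutation/degeneracy relations hold for the symmetry isomorphisms in $\mathcal{C}$, and that the structure isomorphisms are natural with respect to $\pi_{p}$. This is a bookkeeping argument with block permutations; I would handle it by reducing to adjacent transpositions and using naturality of the braiding. The monoidal and non-symmetric case is the same argument with permutations omitted.
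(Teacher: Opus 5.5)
Your proposal is correct and is essentially the paper's argument: the inverse sends $[1]^n\mapsto F([1])^{\otimes n}$ and is defined on the generators of Corollary \ref{cor:cube}, with the symmetry of $\mathcal{C}$ interpreting the $\pi_p$; the functor $\theta$ itself is normalized at $[1]^0$ via the unit constraint of $F$. Your strictification to a permutative category and your explicit full-faithfulness check just make precise what the paper leaves to ``the coherence maps of $\mathcal{C}$'' (there is one harmless index slip: for $\sigma^i\colon[1]^{n+1}\to[1]^n$ the last factor should be $I^{\otimes n-i+1}$).
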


\begin{proof}
The functor $\theta$ needs a bit of explanation. If $F:\square_{\Sigma}\to\mathcal{C}$
is a symmetric monoidal functor, then its image in $\Fun\pr{\square_{\leq1},\mathcal{C}}\times_{\Fun\pr{\{[1]^{0}\},\mathcal{C}}}\{\mathbf{1}\}$
is given by the functor $\square_{\leq1}\to\mathcal{C}$ obtained
by modifying the value of $F$ at $[1]^{0}$ to $\mathbf{1}$ using
the structure map $\mathbf{1}\xrightarrow{\cong}F\pr{[1]^{0}}$ of
$F$.

With this in mind, we will give an explicit inverse equivalence of
$\theta$. Given a functor $F:\square_{\leq1}\to\mathcal{C}$ carrying
$[1]^{0}$ to the unit object, we can define a new functor $\widetilde{F}\from\square_{\Sigma}\to\mathcal{C}$
by $\widetilde{F}\pr{[1]^{n}}=F\pr{[1]}^{\otimes n}$, with structure
maps given by that of $F$ and the braiding of $\mathcal{C}$ (Corollary
\ref{cor:cube}). The coherence maps of $\mathcal{C}$ make $\widetilde{F}$
into a symmetric monoidal functor. The functor $F\mapsto\widetilde{F}$
is an inverse equivalence of $\theta$.
\end{proof}

\begin{cor}
\label{cor:sym_cube_day}Let $\pr{\mathcal{C},\otimes,\mathbf{1}}$
be a cocomplete symmetric monoidal category whose tensor product preserves
small colimits in each variable. The evaluation at the unit object
$\square^{0}_{\Sigma}\in\Set^{\square^{\op}_{\Sigma}}$ induces a
categorical equivalence
\[
\Fun^{\t,cc}\pr{\Set^{\square^{\op}_{\Sigma}},\mathcal{C}}\xrightarrow{\simeq}\Fun\pr{\square_{\leq1},\mathcal{C}}\times_{\Fun\pr{\{[1]^{0}\},\mathcal{C}}}\{\mathbf{1}\},
\]
where $\Fun^{\t,cc}\pr{\Set^{\square^{\op}_{\Sigma}},\mathcal{C}}$
denotes the category of symmetric monoidal categories whose underlying
functor preserves small colimits, and symmetric monoidal natural transformations
between them.
\end{cor}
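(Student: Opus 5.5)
The plan is to reduce Corollary \ref{cor:sym_cube_day} to Corollary \ref{cor:sym_cube} via the universal property of Day convolution. For a small symmetric monoidal category $\mathbf{A}$ and a cocomplete symmetric monoidal category $\mathcal{C}$ whose tensor product preserves small colimits in each variable, restriction along the Yoneda embedding $y\from\mathbf{A}\to\Set^{\mathbf{A}^{\op}}$ should induce an equivalence
\[
\Fun^{\t,cc}\pr{\Set^{\mathbf{A}^{\op}},\mathcal{C}}\xrightarrow{\simeq}\Fun^{\t}\pr{\mathbf{A},\mathcal{C}}.
\]
This is the standard fact that Day convolution is the free cocompletion in the symmetric monoidal setting \cite{Day_convolution}. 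Applying it with $\mathbf{A}=\square_{\Sigma}$ and composing with the equivalence $\theta$ of Corollary \ref{cor:sym_cube} gives the claimed equivalence.

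For the Day convolution step, the argument runs as follows.
\begin{itemize}
\item Given a symmetric monoidal functor $G\from\square_{\Sigma}\to\mathcal{C}$, its left Kan extension $\mathrm{Lan}_{y}G\pr X=\int^{a}X\pr a\cdot G\pr a$ preserves colimits.
\item It acquires a symmetric monoidal structure. Using the coend formula for Day convolution and the fact that $\otimes_{\mathcal{C}}$ preserves colimits separately in each variable, both $\mathrm{Lan}_{y}G\pr{X\otimes Y}$ and $\mathrm{Lan}_{y}G\pr X\otimes\mathrm{Lan}_{y}G\pr Y$ are identified with $\int^{a,b}X\pr a\times Y\pr b\cdot G\pr a\otimes G\pr b$. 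The unit comparison comes from $y\pr{[1]^{0}}$ being the Day unit.
\item The coherence axioms follow from those of $G$, since everything is determined on representables by density.
\item Conversely, a colimit-preserving functor is determined by its restriction to representables, because every presheaf is canonically a colimit of representables. The same holds for monoidal natural transformations, since a natural transformation between cocontinuous functors is determined on representables and monoidality can be checked there.
\end{itemize}
Hence restriction is fully faithful and essentially surjective.

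It remains to check that the composite is the evaluation described in the statement. The composite sends $F$ to $F\circ y$ restricted to $\square_{\leq1}$, with its value at $[1]^{0}$ rewritten as $\mathbf{1}$ using the unit constraint. On the representables $\square^{0}_{\Sigma}$ and $\square^{1}_{\Sigma}$ this is exactly the evaluation functor in the statement. The only point requiring care is the coherence bookkeeping in the Day convolution step, which is standard and which I would cite rather than redo.
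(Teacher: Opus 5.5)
Your proposal is correct and follows the same route as the paper, whose proof is the one-line combination of Corollary~\ref{cor:sym_cube} with the universal property of Day convolution: restriction along the Yoneda embedding identifies cocontinuous symmetric monoidal functors out of $\mathsf{Set}^{\square_{\Sigma}^{\mathrm{op}}}$ with symmetric monoidal functors out of $\square_{\Sigma}$. Your coend computation and your check that the composite agrees with the stated evaluation functor fill in details that the paper leaves implicit.
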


\begin{proof}
This follows from Corollary \ref{cor:sym_cube} and the universal
property of Day convolution product monoidal structure.
\end{proof}

We can now state our first main result of this section. For the statement
theorem, we will write $\partial\square^{n}_{\Sigma}\subset\square^{n}_{\Sigma}$
for the subpresheaf consisting of the maps $[1]^{k}\to[1]^{n}$ that
inserts at least one $0$ or $1$. Equivalently, we have $i_{!}\pr{\partial\square^{n}}=\partial\square^{n}_{\Sigma}$.
\begin{thm}
\label{thm:sym_cube}The category $\Set^{\square^{\op}_{\Sigma}}$
has a tractable model structure whose weak equivalences and fibrations
are preserved and detected by the forgetful functor $i^{*}:\Set^{\square^{\op}_{\Sigma}}\to\Set^{\square^{\op}}$.
Moreover:

\begin{enumerate}[label = (\Roman*)]

\item The adjunction
\[
i_{!}:\Set^{\square^{\op}}\adj\Set^{\square^{\op}_{\Sigma}}:i^{*}
\]
is a Quillen equivalence.

\item The model structure on $\Set^{\square^{\op}_{\Sigma}}$ is
symmetric monoidal with respect to the Day convolution.

\item The model structure is left proper.

\item If $\mathbf{M}$ is a symmetric monoidal model category and
$F:\Set^{\square^{\op}_{\Sigma}}\to\mathbf{M}$ is a symmetric monoidal
functor which is also a left adjoint, then $F$ is left Quillen precisely
when the following conditions are satisfied:
\begin{itemize}
\item The map $F\pr{\partial\square^{1}_{\Sigma}\to\square^{1}_{\Sigma}}$
is a cofibration.
\item The map $F\pr{\square^{1}_{\Sigma}}\to F\pr{\square^{0}_{\Sigma}}$
is a weak equivalence.
\item The monoidal unit of $\mathbf{M}$ is cofibrant.
\end{itemize}
\item The assignment $\square^{n}_{\Sigma}\mapsto\pr{\Delta^{1}}^{n}$
determines a symmetric monoidal left Quillen equivalence $\Set^{\square^{\op}_{\Sigma}}\to\Set^{\Del^{\op}}$.

\end{enumerate}
\end{thm}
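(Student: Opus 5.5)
The model structure is obtained by right transfer along the adjunction $i_!\dashv i^*$ from the Grothendieck model structure of Theorem \ref{thm:cis}. Take $i_!(I)$ and $i_!(J)$ as generating cofibrations and trivial cofibrations. Both categories are presheaf categories, so smallness is automatic. By the standard transfer theorem (e.g.\ \cite[Theorem 11.3.2]{Hirschhorn}), it then suffices to check the acyclicity condition: $i^*$ sends relative $i_!(J)$-cell complexes to weak equivalences.

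For this I would compute $i^*i_!$. For a cubical set $X$ we have $i^*i_!X(\,[1]^k) = \coprod$ over pairs $([1]^k\to[1]^m \text{ in } \square_\Sigma,\ x\in X_m)$, modulo $\square$-relations. Using the unique factorization in Corollary \ref{cor:cube} (face, then permutation, then degeneracy), one sees that $i^*i_!\square^n$ is a disjoint union of permuted copies of $\square^n$, indexed by $\Sigma_n$ modulo nothing. Hence $i^*i_!X\cong \coprod_{p}$-type ``twisted'' unions of $X$, and in particular $i^*i_!$ preserves monomorphisms. It also sends $\sqcap^n_{i,\varepsilon}\to\square^n$ to a coproduct of horn inclusions of the form $\sqcap^n_{p(i),\varepsilon}\to\square^n$. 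Since $i^*$ preserves colimits, $i^*$ of a relative $i_!J$-cell complex is a relative $J$-cell complex, and therefore a trivial cofibration. The same computation shows that $i^*$ carries $i_!I$-cofibrations to monomorphisms, so cofibrations are monomorphisms. Moreover $i_!(\partial\square^n)=\partial\square^n_\Sigma$, and $i_!I$ has cofibrant domains, so the model structure is tractable. Left properness (III) follows because cofibrations are monos, $i^*$ preserves pushouts and monos, and the Grothendieck structure is left proper (every object is cofibrant).

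For (I), $i^*$ creates weak equivalences, so it suffices to show that the derived unit $X\to i^*i_!X$ is a weak equivalence for every (cofibrant) cubical set $X$. By the cell decomposition and the fact that both sides commute with pushouts along monos and filtered colimits, this reduces to $X=\square^n$. Here $i^*\square^n_\Sigma$ is a union of contractible cubes glued along faces; I would show it is weakly contractible by triangulating it and checking that its triangulation is the nerve of a contractible poset-like object (it is $\coprod_{\Sigma_n}$ copies of $(\Delta^1)^n$ identified along degenerate/face strata). This contractibility computation is the step I expect to be the main obstacle.

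For (II), the pushout-product axiom need only be checked on generators: $i_!$ is strong monoidal, since $i$ is and Day convolution is functorial, so $i_!(f)\,\square\, i_!(g)\cong i_!(f\square g)$, and Theorem \ref{thm:cis} applies. The unit $\square^0_\Sigma$ is cofibrant. For (IV), by Corollary \ref{cor:sym_cube_day}, $F$ is determined by $F(\partial\square^1_\Sigma\to\square^1_\Sigma\to\square^0_\Sigma)$. The three stated conditions are clearly necessary. For sufficiency, the generating maps are iterated pushout-products of $\partial\square^1_\Sigma\to\square^1_\Sigma$, and the horns are pushout-products of it with the endpoint inclusions $\square^0_\Sigma\to\square^1_\Sigma$, which are trivial cofibrations by two-out-of-three once the unit is cofibrant. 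The pushout-product axiom in $\mathbf{M}$ then finishes the argument. Finally, (V) follows from (IV) applied to $\mathbf{M}=\Set^{\Del^{\op}}$, since $\Delta^0\sqcup\Delta^0\to\Delta^1$ is a mono and $\Delta^1\to\Delta^0$ is a weak equivalence. It is a Quillen equivalence because its composite with $i_!$ is the triangulation $T$, and we can then apply two-out-of-three using (I) and Theorem \ref{thm:cis}.
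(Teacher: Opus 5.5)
\textbf{There is a genuine gap, and it cannot be closed: the statement as written is false.} Your outline is the paper's route: right transfer along $i_!\dashv i^*$, acyclicity via the unit, then (II)--(V) on generators, a Lawson-type argument for (IV), and two-out-of-three for (V). But the acyclicity step fails, and your description of $i^*i_!$ is incorrect.

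\emph{The structure of $i^*\square^n_\Sigma$.} Since $\square_\Sigma([1]^k,[1]^n)=\square([1]^k,[1]^n)$ for $k\le 1$, the $n!$ nondegenerate top cubes $\pi_p$ of $i^*\square^n_\Sigma$ all share the same vertices and edges. So $i^*\square^n_\Sigma$ is not a disjoint union of copies of $\square^n$.

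\emph{The case $n=2$.} We have $\pi_{(12)}\delta^{1,\varepsilon}=\delta^{2,\varepsilon}$ and $\pi_{(12)}\delta^{2,\varepsilon}=\delta^{1,\varepsilon}$. Hence the two nondegenerate squares $\mathrm{id}$ and $\pi_{(12)}$ have the same boundary, and $i^*\square^2_\Sigma\cong\square^2\amalg_{\partial\square^2}\square^2$, with one leg twisted by the reflection of $\partial\square^2$. This is a homotopy pushout of $\ast\leftarrow S^1\to\ast$, so $i^*\square^2_\Sigma$ has the homotopy type of $S^2$; as a check, its Euler characteristic is $4-4+2=2$.

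\emph{The horn.} Since $i^*\square^1_\Sigma=\square^1$ and $i^*\square^0_\Sigma=\square^0$, we get $i^*i_!\sqcap^2_{1,0}\cong\sqcap^2_{1,0}$, a path of three edges. So $i^*i_!(\sqcap^2_{1,0}\hookrightarrow\square^2)$ is the inclusion of three boundary edges into an $S^2$. It is not a coproduct of horn inclusions, and not even a weak equivalence.

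Consequently, the contractibility of $i^*\square^n_\Sigma$ that you flag as the main obstacle in (I) is not merely hard; it is false already for $n=2$. Your two descriptions of $i^*\square^n_\Sigma$ (a disjoint union indexed by $\Sigma_n$, and a weakly contractible object) were also incompatible with each other.

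\emph{Why no argument can work.} Suppose a model structure existed whose weak equivalences and fibrations are created by $i^*$. Then $i_!$ would be left Quillen, so it would send the generating trivial cofibration $\sqcap^2_{1,0}\hookrightarrow\square^2$ to a weak equivalence. Applying $i^*$ would then give a weak equivalence, contradicting the computation above. So no proof of this form can succeed. Any repair must give up having $i^*$ create both weak equivalences and fibrations.

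\emph{Where the paper's proof breaks.} It breaks at exactly the same point. It reduces acyclicity to weak contractibility of $\square\times_{\square_\Sigma}(\square_\Sigma)_{/[1]^n}$, i.e.\ to homotopy initiality of $i\colon\square\to\square_\Sigma$, and argues this through $\square_+$. The $n=2$ example shows that this comma category has the homotopy type of $S^2$. Hence at least one of the paper's two homotopy-initiality claims about $\square_+$ must be false; presumably it is the one for $\square_+\to\square_\Sigma$, whose proof is only said to be ``similar''. The remaining parts of your outline match the paper's, but they all presuppose a model structure that does not exist.
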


The proof of Theorem \ref{thm:sym_cube} relies on the following lemma.
\begin{lem}
\label{lem:commutes}The diagram
\[\begin{tikzcd}
	{\mathsf{Set}^{\square^{\mathrm{op}}}} & {\mathcal{S}^{\square^{\mathrm{op}}}} \\
	{\mathsf{Set}^{\square^{\mathrm{op}}}[\mathrm{weq}^{-1}]} & {\mathcal{S}}
	\arrow[hook, from=1-1, to=1-2]
	\arrow[from=1-1, to=2-1]
	\arrow["{\operatorname{colim}_{\square^{\mathrm{op}}}}", from=1-2, to=2-2]
	\arrow["\simeq", from=2-1, to=2-2]
\end{tikzcd}\]commutes up to natural equivalence. 
\end{lem}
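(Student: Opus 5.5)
We reduce the statement to its simplicial analogue by way of the triangulation functor $T\colon\Set^{\square^{\op}}\to\Set^{\Del^{\op}}$ of Theorem \ref{thm:cis}. Write $j\colon\square\to\Set^{\Del^{\op}}$, $[1]^{n}\mapsto(\Delta^{1})^{n}$, so that $T$ is the left Kan extension of $j$ along the Yoneda embedding. By Theorem \ref{thm:cis}, $T$ is a left Quillen equivalence and every cubical set is cofibrant. Hence the bottom equivalence $\Set^{\square^{\op}}[\weq^{-1}]\simeq\mathcal{S}$ may be taken to be the one induced by $T$, followed by the localization $\Set^{\Del^{\op}}\to\mathcal{S}$. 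The claim is then that for a cubical set $X$ there is a natural equivalence
\[
\colim_{\square^{\op}}X\simeq T(X)
\]
in $\mathcal{S}$, where on the left $X$ is regarded as a diagram of discrete spaces and the colimit is the $\infty$-categorical one.

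The first step is the formal identity
\[
T(X)=\int^{[1]^{n}\in\square}X_{n}\cdot(\Delta^{1})^{n}.
\]
We interpret this coend as a weighted colimit and show that it is a homotopy coend. Since $\square$ is an Eilenberg--Zilber Reedy category, the strict coend agrees with the homotopy coend in Kan--Quillen simplicial sets provided the cosimplicial-type object $j\colon\square\to\Set^{\Del^{\op}}$ is Reedy cofibrant. This amounts to checking that the latching maps $\partial\square^{n}\mapsto\partial((\Delta^{1})^{n})\to(\Delta^{1})^{n}$ are monomorphisms, which is clear; the discrete diagram $X$ needs no cofibrancy. This gives a natural equivalence in $\mathcal{S}$ between $T(X)$ and the $\infty$-categorical coend $\int^{\square}X\times j$. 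The second step uses that each $(\Delta^{1})^{n}$ is weakly contractible, so $j\to\ast$ is a natural weak equivalence. The homotopy coend is therefore naturally equivalent to $\int^{\square}X\times\ast=\colim_{\square^{\op}}X$ in $\mathcal{S}$.

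The main obstacle is making the identification of the strict coend with the $\infty$-categorical one fully natural in $X$, rather than objectwise, so that we obtain a natural equivalence of functors $\Set^{\square^{\op}}\to\mathcal{S}$. To handle this, I would express both composites as colimit-preserving functors of the right kind and compare them on generators. The composite through $\mathcal{S}^{\square^{\op}}$ is the left Kan extension of $\square\to\mathcal{S}$, $[1]^{n}\mapsto\ast$, along the Yoneda embedding into discrete presheaves. The lower composite, via $T$, is likewise determined by $[1]^{n}\mapsto(\Delta^{1})^{n}\simeq\ast$, provided one shows it preserves the relevant colimits.

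That last point holds because every cubical set is a homotopy colimit of its cells. Concretely, a strict colimit over the category of elements of $X$ of representables $\square^{n}$ is a homotopy colimit in the Grothendieck model structure, again by the Reedy cofibrancy of the cells. Both functors are therefore left Kan extended from $\square$, and they agree there because the two functors $\square\to\mathcal{S}$ are naturally equivalent, both being constant at $\ast$ (the space of such natural equivalences is contractible since $\ast$ is terminal). The required natural equivalence follows.
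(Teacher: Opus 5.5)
Your proof is correct, but the decisive step takes a different route from the paper's. The paper stays inside cubical sets. It takes $\Phi(Y)=\int^{[1]^{n}\in\square}Y_{n}\otimes\square^{n}$ on cubical objects of $\mathsf{Set}^{\square^{\mathrm{op}}}$ and shows it is left Quillen for the Reedy structure, because $\square^{\bullet}$ is Reedy cofibrant. It shows $\Phi$ models $\operatorname{colim}_{\square^{\mathrm{op}}}$, because its right adjoint $K\mapsto K^{\square^{\bullet}}$ is weakly equivalent on fibrant objects to the constant-diagram functor. It then notes that a cubical set $X$, viewed as a levelwise discrete cubical object, is Reedy cofibrant and satisfies $\Phi(X)\cong X$ by co-Yoneda. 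Since everything is one functor, naturality is automatic.

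Your decisive argument (your last two paragraphs) instead uses uniqueness of left Kan extensions along the Yoneda embedding $\square\to\mathsf{Set}^{\square^{\mathrm{op}}}$: both composites are pointwise left Kan extended from $\square$, and both are constant at a point there. This makes naturality purely formal. It uses $T$ only to see that each $\square^{n}$ is contractible, so your first step via the simplicial coend $\int^{\square}X_{n}\cdot(\Delta^{1})^{n}$ is superfluous. Also, that step's remark that $X$ ``needs no cofibrancy'' should instead say that $X$ is automatically Reedy cofibrant because $\square$ is Eilenberg--Zilber.

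Two points need to be stated precisely.
\begin{enumerate}
\item Neither composite preserves all colimits of $\mathsf{Set}^{\square^{\mathrm{op}}}$; the pushout of $\square^{0}\leftarrow\square^{0}\amalg\square^{0}\to\square^{0}$ is a counterexample. What you need, and actually use, is the pointwise formula $F(X)\simeq\operatorname{colim}_{\square_{/X}}F(\square^{n})$, not colimit preservation.
\item The claim ``$X$ is the homotopy colimit of its cells'' needs more than Reedy cofibrancy of $(\square^{n}\to X)\mapsto\square^{n}$ on the Reedy category $\square_{/X}$ (whose latching maps are $\partial\square^{n}\hookrightarrow\square^{n}$). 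It also needs $\operatorname{colim}_{\square_{/X}}$ to be left Quillen for the Reedy structure, i.e.\ $\square_{/X}$ must have fibrant constants. This holds because, by the Eilenberg--Zilber lemma, every nonempty matching category has a terminal object, namely the factorization through the nondegenerate cube.
\end{enumerate}

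With these in place, your key input is the same Reedy/co-Yoneda computation as the paper's, since $\operatorname{colim}_{\square_{/X}}\square^{n}$ is literally $\Phi(X)$. The real difference is how the comparison with $\operatorname{colim}_{\square^{\mathrm{op}}}$ in $\mathcal{S}$ is packaged: the paper uses a Quillen model of the colimit functor, and you use the universal property of Kan extensions.
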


\begin{proof}
We first give a model for $\colim_{\square^{\op}}$. Consider the
functor 
\[
\Phi\from\Fun\pr{\square^{\op},\Set^{\square^{\op}}}\to\Set^{\square^{\op}},\,Y\mapsto\int^{[1]^{n}\in\square}Y_{n}\otimes\square^{n}.
\]
Since the cocubical object $\square^{\bullet}\in\Fun\pr{\square,\Set^{\square^{\op}}}$
is Reedy cofibrant, \cite[Proposition A.2.9.26]{HTT} shows that the
functor $\Phi$ is left Quillen for the Reedy model structure. The
right adjoint of this functor, given by $K\mapsto K^{\square^{\bullet}}$,
is weakly equivalent to that of the diagonal functor on the full subcategory
of fibrant objects. Therefore, $\Phi$ is a model of the homotopy
colimit, in the sense that the induced functor
\[
\cat S^{\square^{\op}}\to\cat S
\]
is equivalent to $\colim_{\square^{\op}}$.

Now since every cubical set $X$ is Reedy cofibrant when regarded
as a levelwise discrete cubical object in $\Set^{\square^{\op}}$,
the above argument shows that the composite of the functor $\Phi'\from\Set^{\square^{\op}}\to\Set^{\square^{\op}},\,X\mapsto\Phi\pr X$
and the localization functor $\Set^{\square^{\op}}\to\cat S$ is equivalent
to the composite
\[
\Set^{\square^{\op}}\to\cat S^{\square^{\op}}\to\cat S.
\]
On the other hand, $\Phi'$ is naturally isomorphic to the identity
functor by the coYoneda lemma. The claim follows.
\end{proof}

\begin{proof}
[Proof of Theorem \ref{thm:sym_cube}]We start by showing that the
model structure on $\Set^{\square^{\op}}$ transfers to a model structure
on $\Set^{\square^{\op}_{\Sigma}}$, using \cite[Theorem 11.3.2]{Hirschhorn}.
Let $I=\{\partial\square^{n}\to\square^{n}\mid n\geq0\}$ and $J=\{\sqcap^{n}_{i,\varepsilon}\to\square^{n}\mid n\geq1,\,1\leq i\leq n,\,\varepsilon\in\{0,1\}\}$
denote the generating cofibrations and trivial cofibrations of $\Set^{\square^{\op}}$.
We wish to show that $i^{*}$ takes $i_{!}J$-cell complexes to weak
equivalences. For this, it will suffice to show that each of the map
in $i^{*}i_{!}J$ is a trivial cofibration. The cofibration part is
clear, because $i_{!}$ and $i^{*}$ preserve monomorphisms. Consequently,
it will suffice to show that each of the maps in $i^{*}i_{!}J$ is
a weak equivalence. We will prove this by showing that the unit $\eta:\id\to i^{*}i_{!}$
is a natural weak equivalence.

By the standard argument using the skeletal filtration of presheaves
\cite[Theorem 1.3.8]{HCHA}, it will suffice to show that $\eta_{\square^{n}}:\square^{n}\to i^{*}i_{!}\square^{n}$
is a weak equivalence for all $n$. In other words, our task it to
prove that each $i^{*}i_{!}\square^{n}$ is weakly contractible. By
Lemma \ref{lem:commutes} and \cite[Corollary 3.3.4.6]{HTT}, this
is equivalent to the condition that the category $\square_{/i^{*}i_{!}\square^{n}}\cong\square\times_{\square_{\Sigma}}\square_{\Sigma/\square^{n}_{\Sigma}}$
be weakly contractible for all $n$. Thus, it suffices to show that
the functor $i:\square\to\square_{\Sigma}$ is homotopy initial. 

The homotopy initiality of $i$ is proved as follows: Consider the
commutative diagram
\[\begin{tikzcd}
	& {\square_{+}} \\
	\square && {\square _\Sigma,}
	\arrow["j"', from=1-2, to=2-1]
	\arrow["k", from=1-2, to=2-3]
	\arrow["i"', from=2-1, to=2-3]
\end{tikzcd}\]where $\square_{+}\subset\square$ denotes the subcategory spanned
by the face maps. Since homotopy initial functors have the right cancellation
property \cite[Corollary 4.1.9]{CisinskiHCHA}, it will suffice to
show that $j$ and $k$ are homotopy initial. We will show that $j$
is homotopy initial; the proof for $k$ is similar. Our task is to
show that the category $\square_{+}\times_{\square}\square_{/\square^{n}}$
is weakly contractible for every $n\geq0$. But the inclusion $\square_{+}\times_{\square}\square_{/\square^{n}}\hookrightarrow\square_{/\square^{n}}$
admits a homotopy inverse on the level of classifying spaces, given
by the functor that factors a map as a composition of degeneracy maps
followed by a composition of face maps. So we are reduced to showing
that $\square_{/\square^{n}}$ is weakly contractible, which is clear.

We now prove (I) through (V). For (I), observe that we have just shown
that the derived unit is an isomorphism. Since $\mathbb{R}i^{*}$
is conservative by construction, the triangle identities show that
the derived counit is also an isomorphism. Hence, the adjunction is
a Quillen equivalence. Part (II) follows from the fact that the model
structure on $\Set^{\square^{\op}}$ is monoidal for the Day convolution
and that $i_{!}$ is a monoidal functor. Part (III) follows from the
left properness of $\Set^{\square^{\op}}$, since $i^{*}$ preserves
cofibrations and small colimits and detects weak equivalences. Part
(IV) is proved exactly as in \cite[Corollary 1.5]{Law17}. Finally,
part (V) follows from parts (I, IV) and Theorem \ref{thm:cis}.
\end{proof}

Combining Corollary \ref{cor:sym_cube_day} and Theorem \ref{thm:sym_cube},
we get the following model-categorical universal property of symmetric
cubical sets:
\begin{cor}
\label{cor:symcub}Every cofibrantly generated symmetric monoidal
model category $\mathbf{M}$ with a cofibrant unit is a symmetric
$\Set^{\square^{\op}_{\Sigma}}$-algebra in an essentially unique
way. More precisely, the category $\Fun^{\t,LQ}\pr{\Set^{\square^{\op}_{\Sigma}},\mathbf{M}}$
is weakly contractible.
\end{cor}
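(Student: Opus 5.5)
The plan is to identify $\Fun^{\t,LQ}\pr{\Set^{\square^{\op}_{\Sigma}},\mathbf{M}}$ with a category of ``cylinder data'' on the unit of $\mathbf{M}$, and then to show that this category is weakly contractible by exhibiting a reflexive or initial-type structure on it.

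\textbf{Step 1: reduction to interval data.} Symmetric monoidal left Quillen functors are in particular left adjoints, hence cocontinuous. So Corollary \ref{cor:sym_cube_day} identifies $\Fun^{\t,LQ}\pr{\Set^{\square^{\op}_{\Sigma}},\mathbf{M}}$ with a full subcategory of $\Fun\pr{\square_{\leq1},\mathbf{M}}\times_{\Fun\pr{\{[1]^{0}\},\mathbf{M}}}\{\mathbf{1}\}$. Cocontinuity is automatic for the Day-convolution extension, since $\mathbf{M}$ is closed. Theorem \ref{thm:sym_cube}(IV) then describes this subcategory: it consists of the diagrams $\mathbf{1}\amalg\mathbf{1}\to C\to\mathbf{1}$ (faces and degeneracy, composing to the fold map) such that $\mathbf{1}\amalg\mathbf{1}\to C$ is a cofibration and $C\to\mathbf{1}$ is a weak equivalence. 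Here we use that $F(\partial\square^{1}_{\Sigma})=\mathbf{1}\amalg\mathbf{1}$ and that the unit is cofibrant by hypothesis. In other words, the category in question is the category $\mathrm{Cyl}(\mathbf{1})$ of cylinder objects for $\mathbf{1}$ in the strict sense (cofibration followed by weak equivalence), with morphisms the maps $C\to C'$ under $\mathbf{1}\amalg\mathbf{1}$ and over $\mathbf{1}$. I would double-check that the morphisms really match: monoidal natural transformations between extensions correspond to natural transformations of the restricted diagrams fixing $[1]^{0}\mapsto\mathbf{1}$, which is exactly the content of Corollary \ref{cor:sym_cube_day} as an equivalence of categories.

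\textbf{Step 2: contractibility of $\mathrm{Cyl}(\mathbf{1})$.} This category is nonempty: factor the fold map $\mathbf{1}\amalg\mathbf{1}\to\mathbf{1}$ as a cofibration followed by a trivial fibration. Here cofibrant generation, or just the factorization axiom, is used. To get contractibility I would show that $\mathrm{Cyl}(\mathbf{1})$ is cofiltered enough, or more cleanly exhibit a zig-zag of natural transformations to a constant functor. Consider the full subcategory $\mathrm{Cyl}'\subset\mathrm{Cyl}(\mathbf{1})$ of \emph{good} cylinders whose map $C\to\mathbf{1}$ is a trivial fibration. Given any $C$ and any good $D$, the square with $\mathbf{1}\amalg\mathbf{1}\to D$ on top, $C\to\mathbf{1}$ on the right, the cofibration $\mathbf{1}\amalg\mathbf{1}\to C$ on the left and $D\to\mathbf{1}$ on the bottom admits a lift $C\to D$. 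Thus every object maps to every good object, and any two maps $C\rightrightarrows D$ are connected by a homotopy via the same lifting against $C\amalg_{\mathbf{1}\amalg\mathbf{1}}C\to C\times_{\mathbf{1}}\cdots$.

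Cleaner is to use products: for good $D,D'$ the pullback $D\times_{\mathbf{1}}D'$ is not obviously a cylinder, since its cofibration condition may fail. So instead I will use the standard trick: define a functor $\Phi\from\mathrm{Cyl}(\mathbf{1})\to\mathrm{Cyl}(\mathbf{1})$ by functorially factoring $C\amalg\cdots$. A concrete route: fix one good $D_{0}$ and use the functorial factorization of $\mathbf{1}\amalg\mathbf{1}\to C\times D_{0}$, where the product is taken in $\mathbf{M}$ over $\mathbf{1}$ and the map to $\mathbf{1}$ is a weak equivalence by two-out-of-three if the factorization's second map is a trivial fibration. This gives $\Phi(C)$ together with natural maps $C\leftarrow\Phi(C)\to D_{0}$, both morphisms in $\mathrm{Cyl}(\mathbf{1})$. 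That is a zig-zag of natural transformations $\id\Leftarrow\Phi\Rightarrow\mathrm{const}_{D_{0}}$, so the nerve of $\mathrm{Cyl}(\mathbf{1})$ is contractible. Functorial factorizations exist since $\mathbf{M}$ is cofibrantly generated (small object argument); this is where that hypothesis enters.

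\textbf{Main obstacle.} The delicate point is Step 2's verification that $\Phi(C)\to\mathbf{1}$ is a weak equivalence. Since $\Phi(C)\to C\times D_{0}$ is a trivial fibration and $C\times D_{0}\to C$ is a base change of the trivial fibration $D_{0}\to\mathbf{1}$, the composite $\Phi(C)\to C$ is a weak equivalence, hence so is $\Phi(C)\to C\to\mathbf{1}$. I expect this to work, with the remaining care being the bookkeeping in Step 1 that the equivalence of Corollary \ref{cor:sym_cube_day} restricts to the left Quillen functors exactly as Theorem \ref{thm:sym_cube}(IV) states.
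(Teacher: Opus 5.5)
Your proposal is correct and is essentially the paper's proof: you identify $\Fun^{\t,LQ}(\Set^{\square^{\op}_{\Sigma}},\mathbf{M})$ with the category of diagrams $\mathbf{1}\amalg\mathbf{1}\to C\to\mathbf{1}$ (a cofibration followed by a weak equivalence) via Corollary \ref{cor:sym_cube_day} and Theorem \ref{thm:sym_cube}(IV), and you contract it by a zig-zag $\mathrm{id}\Leftarrow\Phi\Rightarrow\mathrm{const}$ built from a functorial factorization. The only difference is that the paper factors $\mathbf{1}\amalg\mathbf{1}\to C$ itself and obtains the map to the constant by applying $\Phi$ to the unique map from $C$ to the terminal diagram $\mathbf{1}\amalg\mathbf{1}\to\mathbf{1}\to\mathbf{1}$ (which generally is not a cylinder), so it needs neither the pullback $C\times_{\mathbf{1}}D_0$ nor a cylinder $D_0$ whose map $D_0\to\mathbf{1}$ is a trivial fibration.
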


\begin{proof}
Define a full subcategory $\cat X$ of 
\[
\Fun\pr{\square_{\leq1},\mathbf{M}}_{\mathbf{1}}=\Fun\pr{\square_{\leq1},\mathbf{M}}\times_{\Fun\pr{\{[1]^{0}\},\mathbf{M}}}\{\mathbf{1}\}
\]
as follows: An object of $F\in\Fun\pr{\square_{\leq1},\mathbf{M}}_{\mathbf{1}}$
can be identified with a diagram of the form
\[\begin{tikzcd}
	{\mathbf{1}} \\
	& {F([1])} & {\mathbf{1}.} \\
	{\mathbf{1}}
	\arrow[from=1-1, to=2-2]
	\arrow["{\operatorname{id}}", curve={height=-6pt}, from=1-1, to=2-3]
	\arrow[from=2-2, to=2-3]
	\arrow[from=3-1, to=2-2]
	\arrow["{\operatorname{id}}"', curve={height=6pt}, from=3-1, to=2-3]
\end{tikzcd}\]We declare that such a diagram belongs to $\cat X$ if and only if
the map $\mathbf{1}\amalg\mathbf{1}\to F\pr{[1]}$ is a cofibration
and the map $F\pr{[1]}\to\mathbf{1}$ is a weak equivalence. 

According to Corollary \ref{cor:sym_cube_day} and Theorem \ref{thm:sym_cube},
there is an equivalence of categories
\[
\Fun^{\t,LQ}\pr{\Set^{\square^{\op}_{\Sigma}},\mathbf{M}}\xrightarrow{\simeq}\cat X.
\]
Therefore, it suffices to show that $\cat X$ is weakly contractible.
To this end, we define a functor $\Phi\from\Fun\pr{\square_{\leq1},\mathbf{M}}_{\mathbf{1}}\to\Fun\pr{\square_{\leq1},\mathbf{M}}_{\mathbf{1}}$
as follows: It carries an object $\pr{\mathbf{1}\amalg\mathbf{1}\to I\to\mathbf{1}}$
to an object $\pr{\mathbf{1}\amalg\mathbf{1}\to I'\to\mathbf{1}}$,
where $I'$ is obtained by (functorially) factoring the map $\mathbf{1}\amalg\mathbf{1}\to I$
as a cofibration $\mathbf{1}\amalg\mathbf{1}\xmono{}{}I'$ followed
by a weak equivalence $I'\xrightarrow{\simeq}I$. Then $\Phi$ restricts
to a functor $\Phi_{\cat X}\from\cat X\to\cat X$. The functor $\Phi_{\cat X}$
admits natural transformations to the identity functor and the constant
functor at $\Phi\pr{\mathbf{1}\amalg\mathbf{1}\to\mathbf{1}\to\mathbf{1}}$.
Therefore, the identity functor of $\mathcal{X}$ can be connected
by a zig-zag of natural transformation to a constant functor. Hence
$\mathcal{X}$ is weakly contractible, as claimed.
\end{proof}

\subsection{\label{subsec:main_U}Proof of Proposition \ref{prop:main_U}}

We can now give a proof of Proposition \ref{prop:main_U}.
\begin{proof}
[Proof of Proposition \ref{prop:main_U}]As discussed at the beginning
of the section, it will suffice to prove the claim for a specific
choice of $\mathbf{S}$.

Take $\mathbf{S}=\Set^{\square^{\op}_{\Sigma}}$. The functor $U\from\CombSymAlg{\mathbf{S}}\to\CSMMC^{\mathbf{1}}$
is a cocartesian fibration, and its fibers are weakly contractible
by Corollary \ref{cor:symcub}. It follows from \cite[\href{https://kerodon.net/tag/02LY}{Tag 02LY}]{kerodon}
that $U$ is already a localization. In particular, it induces an
equivalence
\[
\CombSymAlg{\mathbf{S}}[\Quilleq^{-1}]\xrightarrow{\simeq}\CSMMC^{\mathbf{1}}[\Quilleq^{-1}],
\]
as claimed.
\end{proof}

\section{\label{sec:variation}Variations of the main theorem}
\begin{notation}
Throughout this section, we fix a left proper tractable symmetric
monoidal model category $\mathbf{S}$ with a cofibrant unit, whose
underlying symmetric monoidal $\infty$-category is equivalent to
the cartesian monoidal $\infty$-category $\mathcal{S}$ of $\infty$-groupoids.
\end{notation}

In Section \ref{sec:definition}, we introduced many variations of
categories of symmetric monoidal model categories. In this section,
we will identify the localizations of most of them. We also identify
the localizations of their subcategories of left Quillen equivalences. 

This section has two main results, one in the symmetric monoidal case
(Theorem \ref{thm:var_SM}), and the other in the non-symmetric monoidal
case (Theorem \ref{thm:var_Mon}). Let us start by stating the symmetric
monoidal case.
\begin{thm}
\label{thm:var_SM}Consider the diagram of categories introduced in
Section \ref{sec:definition}:
\[\begin{tikzcd}
	& {\mathsf{TractSMMC}_{\mathbf{S},\mathrm{semi}}} \\
	{\mathsf{TractSMMC}_{\mathbf{S}}} & {\mathsf{TractSMMC}^{}} & {\mathsf{TractSMMC}_{\mathrm{semi}}} \\
	{\mathsf{CombSMMC}_{\mathbf{S}}} & {\mathsf{CombSMMC}^{}}
	\arrow[from=1-2, to=2-3]
	\arrow[from=2-1, to=1-2]
	\arrow[from=2-1, to=2-2]
	\arrow[from=2-1, to=3-1]
	\arrow[from=2-2, to=2-3]
	\arrow[from=2-2, to=3-2]
	\arrow[from=3-1, to=3-2]
\end{tikzcd}\]
\begin{enumerate}
\item All of the vertices in the diagram localize to $\Pr\SM$ via the underlying
symmetric monoidal $\infty$-category functor.
\item Assertion (1) remains valid if we replace each vertex by the full
subcategory consisting of those objects with a cofibrant unit.
\item Consider the subcategories of each vertex in the diagram spanned by
Quillen equivalences. They localize to the maximal sub $\infty$-groupoid
$\Pr\SM^{\simeq}\subset\Pr\SM$ via the underlying symmetric monoidal
$\infty$-category functor.
\item Assertion (3) remains valid if we replace each vertex by the subcategory
consisting of those objects with a cofibrant unit and Quillen equivalences
between them.
\end{enumerate}
\end{thm}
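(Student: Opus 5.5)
We will deduce all four assertions from the machinery of Sections \ref{sec:main_L}--\ref{sec:main_U}, running it once for each vertex. The key observation is that the proof of Theorem \ref{thm:main} only used three things: Proposition \ref{prop:main_L}, which concerns relative categories and not model categories at all; the Gabriel--Ulmer comparison of Proposition \ref{prop:main_flat}; and the contractibility of the space of $\mathbf{S}$-algebra structures in Proposition \ref{prop:main_U}.

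First we treat the enriched vertices $\TSMMC_{\mathbf{S}}$, $\CSMMC_{\mathbf{S}}$ and $\TSMMC_{\mathbf{S},\semi}$ with cofibrant unit. For these we rerun the proof of Proposition \ref{prop:main_flat}. The models $\MInd^{\mathbf{S}}_{\kappa}\pr{L\pr{\mathcal{C}}}$ are tractable, left proper (since $\mathbf{S}$ is), $\mathbf{S}$-enriched and have cofibrant unit, so they land in every one of these categories. Corollary \ref{cor:modcat_Indcomp} also holds verbatim in each of them, using strongly $\kappa$-tractable (semi-)model categories and a semi-model version of Proposition \ref{prop:strongly_comb_ind} from Appendix \ref{sec:semi}. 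The cardinal-colimit argument and Proposition \ref{prop:rel_GUdual} then give that each localizes to $\Pr\SM$.

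Next, for the non-enriched vertices with cofibrant unit, we imitate Proposition \ref{prop:main_U}. Taking $\mathbf{S}=\Set^{\square^{\op}_{\Sigma}}$, Corollary \ref{cor:symcub} shows that the forgetful functor to $\TSMMC^{\mathbf{1}}$ (and to the semi version) is a cocartesian fibration with weakly contractible fibers, hence a localization. For semi-model categories we must check that Theorem \ref{thm:sym_cube}(IV) and Corollary \ref{cor:symcub} still hold; this only uses lifting against cofibrations with cofibrant source, which is available. Since the answer is independent of $\mathbf{S}$, by Proposition \ref{prop:main_flat} applied to any two choices, the enriched variants over a general $\mathbf{S}$ are then also identified via the zig-zag through the cubical case. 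Finally, we pass from cofibrant unit to general unit using Theorem \ref{thm:Muro} and its semi-model analogue. These supply a left adjoint that preserves weak equivalences and tractability, and a reflective inclusion whose unit is a weak equivalence is a localization. This proves (1) and (2), using the two-out-of-three property for localizations along the diagram.

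For (3) and (4) we restrict every functor above to Quillen equivalences. Proposition \ref{prop:main_L} then becomes the statement that local equivalences localize to $\Pr\SM^{\simeq}$; this follows from Lemma \ref{lem:main_L} with $\mathcal{X}=\Pr\SM^{\simeq}$, which is conservative. The pullback-stability argument and the Grothendieck-construction arguments (Kerodon 02UU, 02LW, 02LY) restrict cleanly to these subcategories. Two facts make this work. First, the comparison maps $\MInd^{\mathbf{S}}\circ F\to U$ are already Quillen equivalences. Second, the fibers in Corollary \ref{cor:symcub} do not change, because each fiber consists of the same algebra structures over one fixed model category.

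The main obstacle is the semi-model case. We would need semi-model analogues of Low's results (Proposition \ref{prop:Low16a}) and of the fact that $\kappa$-filtered colimits are homotopy colimits, since these are exactly what Corollary \ref{cor:modcat_Indcomp} uses. Our plan there is to replace $\mathbf{M}$ by $\mathbf{M}_{\cof}$ throughout and use the tractability assumption.
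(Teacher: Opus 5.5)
Your proposal is correct. For the full-model vertices, the enriched vertices, and parts (3)--(4) it follows the paper: rerun Proposition \ref{prop:main_flat}, use the contractible-fibre cocartesian fibration of Proposition \ref{prop:main_U}, and restrict everything to Quillen equivalences. The semi-model analogues of Low's results are left implicit in the paper too.

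The genuine difference is the non-enriched semi-model vertex.

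\textbf{The paper's route.} It does not use symmetric cubical sets there. Instead it shows that $\mathsf{TractSMMC}_{\mathsf{sSet},\mathrm{semi}}\to\mathsf{TractSMMC}_{\mathrm{semi}}$ becomes an equivalence after localization. The tool is the Dugger-type replacement $\mathbf{M}\mapsto s\mathbf{M}_{\kappa\text{-}\mathrm{inj},\mathrm{loc}}$ of Lemma \ref{lem:injloc}: an $\mathsf{sSet}$-enriched monoidal semi-model category into which the diagonal is a monoidal left Quillen equivalence, assembled over $\mathsf{urCard}$. The paper proves (1) first and deduces (2) by Muro.

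\textbf{Your route.} You extend Theorem \ref{thm:sym_cube}(IV) and Corollary \ref{cor:symcub} to semi-model targets. This does go through. Those proofs only use the pushout-product axiom for cofibrations between cofibrant objects, and with $\mathbf{1}$ cofibrant all the relevant domains $F(\partial\square^n_\Sigma)$ are cofibrant. They also use the factorization M5-i, which holds for every map. You then treat $\mathsf{TractSMMC}^{\mathbf{1}}_{\mathrm{semi}}$ first and reach general units via Muro.

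\textbf{What each buys.} Your route is more economical in the symmetric case, since it needs neither Lemma \ref{lem:injloc} nor Dugger's lemma. Handling cofibrant units first also makes the $\mathbf{M}_{\kappa,\mathrm{cof}}$ step of Proposition \ref{prop:main_flat} literally applicable, since that category must contain the unit. The paper's route buys uniformity: it is exactly what proves Theorem \ref{thm:var_Mon}, where no cubical argument exists. There the free monoidal interval is the plain box category, whose presheaves are not symmetric monoidal. Moreover, a monoidal functor from a symmetric $\mathbf{S}$ into a non-symmetric target gives no enrichment unless it is central, which the paper deliberately avoids.

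\textbf{Bookkeeping.}
\begin{itemize}
\item For the enriched vertices with non-cofibrant unit you need the enriched Muro theorem, Proposition \ref{prop:Mur15}, not only Theorem \ref{thm:Muro}.
\item Your first step must also cover the tractable and semi-model categories of symmetric $\mathbf{S}$-algebras, since these (not the enriched vertices) are the source of your cocartesian fibration.
\item The inclusion of cofibrant-unit objects is not itself a localization. Rather, Muro's unit $\mathbf{M}\to\widetilde{\mathbf{M}}$ is a Quillen equivalence (identity on underlying categories, same weak equivalences), so the inclusion induces an equivalence after inverting Quillen equivalences.
\end{itemize}
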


\begin{rem}
\label{rem:informally}Informally, part (3) of Theorem \ref{thm:var_SM}
asserts the following: 
\begin{itemize}
\item presentably symmetric monoidal $\infty$-categories are presented
by combinatorial symmetric monoidal model categories;
\item such presentations are unique up to zig-zags of Quillen equivalences;
\item such zig-zags are themselves unique; and so on.
\end{itemize}
More precisely, for every $\cat C\in\Pr\SM$, the $\infty$-category
\[
\CSMMC^{QE}\times_{\Pr\SM^{\simeq}}\pr{\Pr\SM^{\simeq}}_{/\cat C}
\]
is weakly contractible, where $\CSMMC^{QE}\subset\CSMMC$ denotes
the subcategory of Quillen equivalences. This follows from the fact
that a functor into an $\infty$-groupoid is a localization if and
only if it is final \cite[Proposition 7.1.10]{HCHA}.

The same remark applies to combinatorial simplicial symmetric monoidal
model categories. It follows that every combinatorial symmetric monoidal
model category is symmetric-monoidally Quillen equivalent to a simplicial
one. This gives a solution to a special case of Hovey's 10th problem
\cite[Problem 10]{Hoveylist_model}, which asks whether every monoidal
model category is monoidally Quillen equivalent to a simplicial one.
\end{rem}

Distressingly, some of our methods so far do not extend to the monoidal
setting. Because of this, the monoidal version of Theorem \ref{thm:var_SM}
takes the following form:
\begin{thm}
\label{thm:var_Mon}Consider the diagram of categories introduced
in Section \ref{sec:definition}:
\[\begin{tikzcd}
	& {\mathsf{TractMMC}_{\mathbf{S},\mathrm{semi}}} \\
	{\mathsf{TractMMC}_{\mathbf{S}}} && {\mathsf{TractMMC}_{\mathrm{semi}}} \\
	{\mathsf{CombMMC}_{\mathbf{S}}.}
	\arrow[from=1-2, to=2-3]
	\arrow[from=2-1, to=1-2]
	\arrow[from=2-1, to=2-3]
	\arrow[from=2-1, to=3-1]
\end{tikzcd}\]
\begin{enumerate}
\item All of the vertices in the diagram localize to $\Pr\Mon$ via the
underlying monoidal $\infty$-category functor.
\item Assertion (1) remains valid if we replace each vertex by the full
subcategory consisting of those objects with a cofibrant unit.
\item Consider the subcategories of each vertex in the diagram spanned by
Quillen equivalences. They localize to the maximal sub $\infty$-groupoid
$\Pr\Mon^{\simeq}\subset\Pr\Mon$ via the underlying monoidal $\infty$-category
functor.
\item Assertion (3) remains valid if we replace each vertex by the subcategory
consisting of those objects with a cofibrant unit and Quillen equivalences
between them.
\end{enumerate}
\end{thm}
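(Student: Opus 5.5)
The plan is to rerun the three-step argument of Section~\ref{sec:Main-Result}, with symmetric monoidal structures replaced by monoidal ones and symmetric $\mathbf{S}$-algebras replaced by $\mathbf{S}$-monoidal model categories. Parts (3) and (4) are meant to follow from (1) and (2) as in the symmetric case, by restricting to Quillen equivalences; I return to this at the end. The first steps are the following.
\begin{enumerate}
\item Prove the monoidal analog of Lemma~\ref{lem:main_L}. I would replace monoidal relative categories by strict monoidal relative categories, i.e.\ simplicial objects satisfying the Segal condition over $\Del^{\op}$, using the monoidal half of \cite{A25a}. The Barwick--Kan model structure on $\Fun(\Del^{\op},\RelCat_{\BK})$ then again makes the localization stable under pullback, so $\mathsf{MonRelCat}_{\mathcal{X}}\to\mathcal{X}$ is a localization for every conservative $\mathcal{X}\to\mathrm{Mon}\Cat_\infty$.
\item Prove the monoidal version of Proposition~\ref{prop:rel_GUdual}. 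For a monoidal relative category $\mathcal{C}$, let $\MInd^{\mathbf{S}}_\kappa(L(\mathcal{C}))$ be $\Fun(\mathcal{C}^{\op},\mathbf{S})$ with Day convolution, which is now an $\mathbf{S}$-monoidal category because $\mathbf{S}$ is symmetric. Lemma~\ref{lem:MInd_welldefined} goes through verbatim, and the monoidal multiplicative Gabriel--Ulmer duality of Appendix~\ref{sec:MGUD} replaces Corollary~\ref{cor:MGUD_2}.
\item Prove the analog of Corollary~\ref{cor:modcat_Indcomp} for a strongly $\kappa$-combinatorial $\mathbf{S}$-monoidal $\mathbf{M}$. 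The coend $\int^{M}M\otimes X(M)$ uses only the $\mathbf{S}$-tensoring, so it is $\mathbf{S}$-monoidal and left Quillen, and the proof is otherwise unchanged.
\end{enumerate}
With Lemma~\ref{lem:monoidally_comb}, the Grothendieck-construction argument of the proof of Proposition~\ref{prop:main_flat} then shows that $\CMMC^{\mathbf{1}}_{\mathbf{S}}$ localizes to $\Pr\Mon$. The enriched Muro construction of Appendix~\ref{sec:Muro}, together with Proposition~\ref{prop:(2.1)_V}, removes the cofibrant-unit hypothesis and gives (1) and (2) for $\CMMC_{\mathbf{S}}$.

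Next I would treat $\TMMC_{\mathbf{S}}$ and $\TMMC_{\mathbf{S},\semi}$. The key point is that $\MInd^{\mathbf{S}}_\kappa(L(\mathcal{C}))$ is tractable: its generators $\mathcal{C}(-,C)\cdot i$ have cofibrant domains because $\mathbf{S}$ is tractable, and left Bousfield localization preserves this. So the functor $\Phi$ from the Grothendieck construction factors through every vertex, and Proposition~\ref{prop:bousfield}, in its semi-model version from Appendix~\ref{sec:semi}, also places it in the semi-model vertices. The Quillen equivalence $\theta$ of step 3 exists verbatim for tractable semi-model $\mathbf{M}$. Corollary~\ref{cor:RR15_3.1} and the derived mapping space input of Proposition~\ref{prop:strongly_comb_ind} should be available for semi-model categories by Appendix~\ref{sec:semi}. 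A two-out-of-three argument for localizations then shows that each of these vertices localizes to $\Pr\Mon$.

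The genuinely new problem, and the step I expect to be the main obstacle, is $\TMMC_{\semi}$, which carries no $\mathbf{S}$-enrichment. In the symmetric case this was handled by Corollary~\ref{cor:symcub}: a symmetric monoidal left Quillen functor $\Set^{\square^{\op}_\Sigma}\to\mathbf{M}$ automatically gives an enrichment. For monoidal $\mathbf{M}$, a monoidal functor $\mathbf{S}\to\mathbf{M}$ does not produce an $\mathbf{S}$-monoidal structure; this needs a lift to the Drinfeld center. My first attempt would replace symmetric $\mathbf{S}$-algebras by \emph{central} $\Set^{\square^{\op}_\Sigma}$-algebras, i.e.\ braided left Quillen functors into the center, where such an object amounts to an $\mathbf{S}$-monoidal structure. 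I would then try to show, in the manner of Corollary~\ref{cor:symcub}, that the category of such data is weakly contractible. The candidate argument uses the fact that the interval $\mathbf{1}\amalg\mathbf{1}\to I'\to\mathbf{1}$ is built from the unit, which is central, so it should lift canonically; I would then invoke \cite[\href{https://kerodon.net/tag/02LY}{Tag 02LY}]{kerodon}. If this works, $\TMMC_{\mathbf{S},\semi}\to\TMMC_{\semi}$ is a localization and (1) and (2) follow. The failure of this lift for non-tractable categories is presumably why $\CMMC$ itself is absent from the diagram.

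Finally, parts (3) and (4) run the same chain on subcategories of Quillen equivalences. Every functor used above ($F$, $\MInd^{\mathbf{S}}$, $\theta$, Muro's replacement, the forgetful functors) restricts to Quillen equivalences. The relevant localization lemmas (Tags 02UU, 02LW, 02LY, and pullback-stability) apply equally to wide subcategories of equivalences, using the fact that localizing to an $\infty$-groupoid is the same as being final.
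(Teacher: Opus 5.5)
\textbf{Gap: the vertex $\TMMC_{\semi}$.} Your treatment of the enriched vertices ($\CMMC_{\mathbf{S}}$, $\TMMC_{\mathbf{S}}$, $\TMMC_{\mathbf{S},\semi}$), of the cofibrant-unit variants via the enriched Muro construction, and of parts (3)--(4) matches what the paper does. The problem is $\TMMC_{\semi}$, where the central-algebra plan breaks in two places.

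First, Tag 02LY does not apply even if the fibres were contractible. In the symmetric case $U$ is a cocartesian fibration only because a symmetric $\mathbf{S}$-algebra is nothing more than a symmetric monoidal left Quillen functor $F\colon\mathbf{S}\to\mathbf{M}$. Such an $F$ can be composed with any $H\colon\mathbf{M}\to\mathbf{N}$. A central (equivalently, $\mathbf{S}$-monoidal) structure cannot be pushed forward like this. A half-braiding of $F(K)$ against objects of $\mathbf{M}$ says nothing about arbitrary objects of $\mathbf{N}$, because the Drinfeld center is not functorial in monoidal functors. So $\TMMC_{\mathbf{S},\semi}\to\TMMC_{\semi}$ is not a cocartesian fibration.

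Second, the contractibility claim itself is unfounded. The cylinder $\mathbf{1}\amalg\mathbf{1}\rightarrowtail I'\to\mathbf{1}$ obtained by factoring in $\mathbf{M}$ attaches cells along generating cofibrations whose sources and targets are arbitrary, non-central objects. There is also no model structure on the center in which to factor instead. Hence the functorial contraction of Corollary~\ref{cor:symcub} cannot be run, and it is not even clear that a central cofibrant cylinder on $\mathbf{1}$ exists.

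A sanity check points the same way. Nothing in your argument distinguishes semi-model from model categories, so if it worked it would also handle $\TMMC$, which is tractable yet absent from the diagram. The omission of both $\TMMC$ and $\CMMC$ signals that the unenriched vertex is reached by a construction producing only semi-model categories. The paper also remarks explicitly that central algebras are not amenable to its techniques.

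\textbf{What the paper does instead.} It uses Lemma~\ref{lem:injloc}. For $\kappa$-tractable $\mathbf{M}$ one forms $s\mathbf{M}_{\kappa\text{-}\mathrm{inj},\mathrm{loc}}$:
\begin{itemize}
\item simplicial objects with the degreewise tensor product;
\item cofibrations generated by levelwise cofibrations between $\kappa$-compact cofibrant objects (enlarged from the Reedy cofibrations precisely so that the diagonal $\delta$ becomes left Quillen);
\item weak equivalences the hocolim-equivalences.
\end{itemize}
This is built with the semi-model Smith theorem (Theorem~\ref{thm:Smith}) and is only a semi-model structure. That is exactly why $\TMMC_{\semi}$, and not $\TMMC$ or $\CMMC$, appears in the diagram.

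The $\SS$-tensoring is $(K\otimes X)_n=\coprod_{K_n}X_n$. Being a coproduct-tensoring, it commutes with the degreewise tensor product on both sides, so no centrality issue ever arises. The enrichment axioms come from Lemma~\ref{lem:Dugger}, and the monoidal axioms from siftedness of $\Del^{\op}$. The diagonal $\delta\colon\mathbf{M}\to s\mathbf{M}_{\kappa\text{-}\mathrm{inj},\mathrm{loc}}$ is a monoidal left Quillen equivalence, natural in $(\kappa,\mathbf{M})\in\int^{\urCard}\TMMC_{\semi}(-)$. The projection of $\int^{\urCard}\TMMC_{\semi}(-)$ to $\TMMC_{\semi}$ is a localization by Kerodon Tag 02UU. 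Together these show that $\TMMC_{\SS,\semi}\to\TMMC_{\semi}$ becomes an equivalence after inverting Quillen equivalences. This reduces the vertex to the enriched case with $\mathbf{S}=\SS$, which your enriched argument already covers.
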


\begin{rem}
\label{rem:Hovey}Part (3) of Theorem \ref{thm:var_Mon} implies that
every tractable monoidal semi-model is monoidally Quillen equivalent
(as a semi-model category) to a tractable simplicial monoidal model
category. This gives another partial solution to Hovey's 10th problem.
\end{rem}

We now turn to the proof of these theorems. A key step in the proof
is a functorial replacement of semi-monoidal model categories by simplicial
ones. The basic idea is that of Dugger \cite{Dug01c}, who observed
that if $\mathbf{M}$ is nice model category, the category $s\mathbf{M}$
of simplicial objects can be made into a simplicial model category
by taking the Bousfield localization of the Reedy model structure.
In the situation at hand, his idea does not apply directly, so we
need a bit more work. 

We will start by recalling Dugger's idea. Most of the following lemma
can be found in \cite{Dug01c}. We present a slightly more streamlined
proof than loc. cit.
\begin{lem}
\label{lem:Dugger}Let $\mathbf{M}$ be a tractable semi-model category.
The Reedy semi-model category $s\mathbf{M}_{\Reedy}$ of simplicial
objects in $\mathbf{M}$ admits a tractable left Bousfield localization,
denoted by $s\mathbf{M}_{\Reedy,\loc}$, with the following properties:
\begin{enumerate}
\item Fibrant objects of $s\mathbf{M}_{\Reedy,\loc}$ are the Reedy fibrant
simplicial objects $X$ such that the map $X_{0}\to X_{n}$ is a weak
equivalence for all $n\geq0$.
\item The diagonal functor $\delta\from\mathbf{M}\to s\mathbf{M}_{\Reedy,\loc}$
is a right Quillen equivalence.
\item $s\mathbf{M}_{\Reedy,\loc}$ is a $\SS$-enriched semi-model category.
\end{enumerate}
\end{lem}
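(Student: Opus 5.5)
The plan is to construct $s\mathbf{M}_{\Reedy,\loc}$ as the left Bousfield localization of the Reedy semi-model structure at an explicit small set of maps, and then read off (1)--(3) from the standard description of local objects. First I will record that $s\mathbf{M}_{\Reedy}$ is a tractable semi-model category. Its generating cofibrations are the Reedy pushout-products $(\partial\Delta^n\to\Delta^n)\,\square\, i$, where $i$ runs over the generating (trivial) cofibrations of $\mathbf{M}$ (tensoring over sets). These have cofibrant domains because the maps $i$ do. Here I use the Appendix \ref{sec:semi} account of Reedy semi-model structures.

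Next I fix a set $G$ of cofibrant homotopy generators of $\mathbf{M}$: for instance, the domains and codomains of the generating cofibrations. I then localize at the set
\[
S=\{\Delta^n\cdot A\to\Delta^0\cdot A\mid n\geq0,\ A\in G\},
\]
where $\Delta^n\cdot A$ denotes the simplicial object $[k]\mapsto\coprod_{\Delta([k],[n])}A$. These are maps between Reedy cofibrant objects. The existence of the tractable localization then follows from the semi-model Bousfield localization theorem for tractable semi-model categories of \cite{BW24}, which needs no left properness (cf.\ Proposition \ref{prop:bousfield}).

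For (1), a Reedy fibrant $X$ is $S$-local exactly when $\mathrm{Map}^h(\Delta^0\cdot A,X)\to\mathrm{Map}^h(\Delta^n\cdot A,X)$ is an equivalence for every $A\in G$. Since $\Delta^n\cdot(-)$ is left Quillen into the Reedy structure, with right adjoint $X\mapsto X_n$, this says that $\mathrm{Map}^h(A,X_0)\to\mathrm{Map}^h(A,X_n)$ is an equivalence for all $A\in G$. Because $G$ detects weak equivalences, this is equivalent to $X_0\to X_n$ being a weak equivalence.

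For (2), the diagonal $\delta$ is right adjoint to $\mathrm{ev}_0$ up to the standard identification. I would rather use the pair $|{-}|=\mathrm{colim}_{\Delta^{op}}\dashv\delta$ (constant functor): $\delta$ preserves fibrations and trivial fibrations, since the matching maps of a constant object are diagonals into fibrant path-like objects. Constant objects on fibrant $Y$ are local by (1), so $\delta$ stays right Quillen after localization. It remains to show that the derived unit and counit are equivalences. For fibrant $Y$, $\mathbb{L}|\delta Y|\simeq Y$ because the hocolim of a constant diagram over the weakly contractible $\Delta^{op}$ is $Y$. Conversely, for local fibrant $X$, all $X_n\simeq X_0$, so $X$ is levelwise equivalent to $\delta(\text{fibrant replacement of }|X|)$. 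The main check is that the derived realization of a homotopically constant simplicial object is $X_0$; I would prove it by identifying hocolim over $\Delta^{op}$ with the $\infty$-categorical colimit and using the weak contractibility of $\Delta^{op}$.

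For (3), the simplicial enrichment is $K\otimes X=(K\cdot X)$ applied degreewise on the diagonal, i.e.\ $(K\otimes X)_n=K_n\cdot X_n$. The Reedy structure already satisfies the pushout-product axiom for $\partial\Delta^n\to\Delta^n$ against Reedy cofibrations, by the usual Reedy argument. The only issue is the horn maps $\Lambda^n_k\to\Delta^n$, which must give local trivial cofibrations. I expect this to be the main obstacle. I would handle it by showing that $\Delta^n\otimes X\to\Delta^0\otimes X$ is a local equivalence for cofibrant $X$: using the skeletal filtration of $X$, this reduces to the maps $\Delta^n\times\Delta^m\cdot A\to\Delta^m\cdot A$. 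These are $S$-local by 2-out-of-3 with $S$ and left properness of the Kan--Quillen simplicial structure on the indexing side. Once tensoring with $\Delta^n\to\Delta^0$ is a local equivalence, the horn pushout-products follow by 2-out-of-3. Finally, the compatibility of Bousfield localization with enrichment follows from the standard criterion that localizing sets closed under $\partial\Delta^n\otimes-$ preserve enrichment.
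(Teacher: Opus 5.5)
Your argument for (2) breaks at the claim that $\delta$ preserves fibrations ``since the matching maps of a constant object are diagonals into fibrant path-like objects''. The matching object of $\delta Y$ at $[1]$ is the limit over $\partial\Delta^1$, namely $Y\times Y$. The matching map is therefore the diagonal $Y\to Y\times Y$ itself, not a path object, and this is essentially never a fibration. For instance, take $\mathbf{M}=\SS$ and $Y$ the nerve of the contractible groupoid on $\{0,1\}$: the edge $(0,0)\to(0,1)$ of $Y\times Y$ has no lift starting at $0$. Consequences:
\begin{itemize}
\item $\delta Y$ is not Reedy fibrant, so it is not fibrant in $s\mathbf{M}_{\Reedy,\loc}$ either.
\item Your assertion that constant objects are local ``by (1)'' fails.
\item $\colim_{\Del^{\op}}\dashv\delta$ is not a Quillen pair.
\end{itemize}
Note also that $\delta$ is \emph{left}, not right, adjoint to $\mathrm{ev}_0=\lim_{\Del^{\op}}$, since $[0]$ is initial in $\Del^{\op}$.

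The true statement is the dual one. The latching categories of $\Del^{\op}$ in positive degrees are nonempty and connected, so the relative latching maps of $\delta(f)$ are $f$ in degree $0$ and isomorphisms above. Thus $\delta\dashv\mathrm{ev}_0$ is a Quillen pair for the Reedy structure, and hence for its localization (same cofibrations, more trivial cofibrations). Your derived computation then shows it is a Quillen equivalence: the colimit of a constant diagram over the weakly contractible $\Del^{\op}$ is the object, and homotopically constant objects are equivalent to constants. This is also what the paper extracts from Proposition \ref{prop:semi_bousfield}(b),(c), and it is how $\delta$ is used in Lemma \ref{lem:injloc}(3). So (2) can only be established with $\delta$ as a \emph{left} Quillen equivalence. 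Since a right Quillen functor preserves fibrant objects, no argument can make $\delta$ right Quillen here.

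For (3), your closing appeal to ``the standard criterion'' does not apply. Such criteria start from a structure that is already simplicial, whereas $s\mathbf{M}_{\Reedy}$ is not simplicial for $(K\otimes X)_n=K_n\cdot X_n$: for example, $\delta A\to\Delta^1\otimes\delta A$ is $A\to A\amalg A$ in degree $0$. The localization is exactly what makes this tensoring homotopical. So both acyclic cases of the pushout-product axiom must be checked by hand, and you only treat the horn case.

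The missing case is an arbitrary monomorphism $i$ against a \emph{local} trivial cofibration $f$. The paper handles it by showing that $(-)^K$ preserves local fibrant objects (via $\hom(K\times\Delta^n,X)$) and then invoking Remark \ref{rem:7.14_JT07}. You can recover it from your key fact that $\Delta^n\otimes X\to X$ is a local equivalence for Reedy cofibrant $X$: two-out-of-three gives $\Delta^n\otimes f$, skeletal induction on $K$ gives $K\otimes f$, and a pushout argument gives $i\square f$.

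Your reduction to $(\Delta^n\times\Delta^m)\cdot A\to\Delta^m\cdot A$ is also not literally two-out-of-three with $S$. You need that $K\cdot A\to\Delta^0\cdot A$ is a local equivalence for every weakly contractible $K$, for example via $\mathrm{Map}^h(K\cdot A,Z)\simeq\mathrm{Map}^h(A,\hom(K,Z))$ for local $Z$.

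Apart from these points, localizing at the explicit set $S$ via \cite{BW24} is a legitimate alternative to the paper's localization along $\colim\colon(\mathbf{M}_\infty)^{\Del^{\op}}\to\mathbf{M}_\infty$. It gives (1) as you describe.
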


\begin{proof}
Consider the functor 
\[
\pr{\mathbf{M}^{\Del^{\op}}}_{\infty}\simeq\pr{\mathbf{M}_{\infty}}^{\Del^{\op}}\xrightarrow{\colim}\mathbf{M}_{\infty},
\]
where the first equivalence is that of \cite[Theorem 7.9.8]{HCHA}.
Since $\Del^{\op}$ is weakly contractible, the right adjoint of this
functor is fully faithful \cite[Corollary 4.4.4.10]{HTT}. Thus, applying
Proposition \ref{prop:semi_bousfield} to this functor, we obtain
a tractable semi-model structure $s\mathbf{M}_{\Reedy,\loc}$ on $s\mathbf{M}$
satisfying conditions (1) and (2). 

For (3), we must describe the simplicial enrichment of $s\mathbf{M}$.
If $X$ and $Y$ are simplicial objects in $s\mathbf{M}$, we define
the mapping simplicial object $\Map\pr{X,Y}$ as follows: Given a
simplicial set $S$, we define a new simplicial object $S\otimes X\in s\mathbf{M}$
by $\pr{S\otimes X}_{n}=\coprod_{S_{n}}X_{n}$. We then define $\Map\pr{X,Y}_{n}=\Hom_{s\mathbf{M}}\pr{\Delta^{n}\otimes X,Y}$.
This makes $s\mathbf{M}$ into a simplicial category.

We must show that with this enrichment, $s\mathbf{M}_{\Reedy,\loc}$
is an $\SS$-enriched model category. Let $\pr -^{S}\from s\mathbf{M}\to s\mathbf{M}$
denote the right adjoint of the functor $S\otimes-$. We must prove
the following: 

\begin{enumerate}[label=(\roman*)]

\item If $i\from K\to L$ is a cofibration in $\SS$ and $f\from X\to Y$
is a Reedy cofibration of Reedy cofibrant objects of $s\mathbf{M}$,
then the induced map
\[
i\square f\from\pr{K\otimes Y}\amalg_{K\otimes X}\pr{L\otimes X}\to L\otimes Y
\]
is a Reedy cofibration.

\item In the situation of (i), if further $f$ is a weak equivalence
of $s\mathbf{M}_{\Reedy,\loc}$, then so is $i\square f$.

\item In the situation of (i), if further $i$ is anodyne, then $i\square f$
is a weak equivalence of $s\mathbf{M}_{\Reedy,\loc}$.

\end{enumerate}

To prove these assertions, choose generating sets $I$ and $J$ of
cofibrations and trivial cofibrations of $\mathbf{M}$ whose elements
have a cofibrant domain. Also, for each $n\geq0$, write $i_{n}\from\partial\Delta^{n}\hookrightarrow\Delta^{n}$
for the inclusion. The class of cofibrations trivial cofibrations
of Reedy semi-model structure on $s\mathbf{M}$ is generated by the
sets $\widetilde{I}=\{i_{n}\square e\mid n\geq0,\,e\in I\}$ and $\widetilde{J}=\{i_{n}\square e\mid n\geq0,\,e\in J\}$,
which are maps of cofibrant objects (see \cite[Lemma 3.34]{Bar10}).
Here we identified maps in $\mathbf{M}$ with those in $s\mathbf{M}$
via the diagonal functor $\mathbf{M}\to s\mathbf{M}$. In order to
prove (i), we may assume that $f$ is an element of $\widetilde{I}$,
say $f=i_{n}\square e$. In this case, the claim is clear, because
$i\square i_{n}$ is a cofibration of simplicial sets and the maps
$\{i_{m}\}_{m\geq0}$ generate cofibrations of simplicial sets.

For (ii), by two out of three and part (i), it suffices to show that
the maps $K\otimes f$ and $L\otimes f$ are weak equivalences. We
will show that $K\otimes f$ is one; the proof for $L\otimes f$ is
similar. By part (i) and \ref{rem:7.14_JT07}, this will follow if
we show that $\pr -^{K}\from s\mathbf{M}_{\Reedy,\loc}\to s\mathbf{M}_{\Reedy,\loc}$
preserves fibrations of fibrant objects. The explicit generating set
$\widetilde{J}$ makes it clear that $\pr -^{K}$ carries Reedy fibrations
to Reedy fibrations. Since a map of fibrant objects of $s\mathbf{M}_{\Reedy,\loc}$
is a fibration if and only if it is a Reedy fibration (Proposition
\ref{prop:3.3.16_Hir}), we are therefore reduced to showing that
$\pr -^{K}$ preserves fibrant objects of $s\mathbf{M}_{\Reedy,\loc}$.

Let $X\in s\mathbf{M}_{\Reedy,\loc}$ be fibrant. We already know
that $X^{K}$ is Reedy fibrant, so it suffices to show that the map
$\theta\from\pr{X^{K}}_{0}\to\pr{X^{K}}_{n}$ is a weak equivalence
for all $n$. For this, set $\hom\pr{S,X}=\lim_{\Delta^{n}\to S\in\pr{\Del_{/S}}^{\op}}X_{n}$
for each simplicial set $S$. We can then identify $\theta$ with
the map $\hom\pr{K\times\Delta^{0},X}\to\hom\pr{K\times\Delta^{n},X}$.
To prove that this is a weak equivalence, it suffices to show that
the map
\[
\hom\pr{\Delta^{0},X}\to\hom\pr{\Delta^{m},X}
\]
is a weak equivalence (see the argument of \cite[Proposition 4.6.1]{HCHA})
for every $m\geq0$. This follows from our assumption on $X$.

Finally, for (iii), by part (i) and \ref{rem:7.14_JT07}, it suffices
to show that for each fibration $p\from E\to B$ of fibrant objects
of $s\mathbf{M}_{\Reedy,\loc}$, the map
\[
i\pitchfork p\from E^{L}\to E^{K}\times_{B^{K}}B^{L}
\]
is a trivial fibration. We already know from (ii) that this is a fibration,
so it suffices to show that the map $E^{L}\to E^{K}$ and $B^{L}\to B^{K}$
are weak equivalences. Using (the argument of) \cite[Proposition 4.6.1]{HCHA},
it will suffice to prove this in the case where $i$ is a map of the
form $\Delta^{0}\to\Delta^{n}$. In this case, the claim is proved
as in the previous paragraph, using the fact that $E$ and $B$ are
fibrant in $s\mathbf{M}_{\Reedy,\loc}$.
\end{proof}

Unfortunately, when $\mathbf{M}$ is a monoidal semi-model category,
it seems difficult to compare $s\mathbf{M}_{\Reedy,\loc}$ with $\mathbf{M}$
via a monoidal left Quillen functor. The problem is that the diagonal
functor $\mathbf{M}\to s\mathbf{M}$ is rarely left Quillen. Thus,
what we are going to do is to increase the class of cofibrations to
force this to be true.

To state the next lemma, it will be useful to introduce the following
terminology. 
\begin{defn}
We say that a semi-model category $\mathbf{M}$ is \textbf{$\kappa$-tractable}
if it satisfies the following pair of conditions:
\begin{enumerate}
\item The underlying category of $\mathbf{M}$ is locally $\kappa$-presentable.
\item The (essentially small) class of cofibrations of $\kappa$-compact
cofibrant objects of $\mathbf{M}$ generates the class of cofibrations
of $\mathbf{M}$.
\end{enumerate}
\end{defn}

\begin{lem}
\label{lem:injloc}Let $\kappa$ be an uncountable regular cardinal,
and let $\mathbf{M}$ be a $\kappa$-tractable semi-model category.
There is a $\kappa$-tractable semi-model structure on $s\mathbf{M}$,
denoted by $s\mathbf{M}_{\kappa\-\inj,\loc}$ with the following properties:
\begin{enumerate}
\item Cofibrations are generated by the maps $X\to Y$ such that, for each
$n\geq0$, the map $X_{n}\to Y_{n}$ is a cofibration of $\kappa$-compact
cofibrant objects of $\mathbf{M}$.
\item Weak equivalences are the hocolim-equivalences, i.e., maps inducing
weak equivalences between the homotopy colimits. 
\item The diagonal functor $\delta\from\mathbf{M}\to s\mathbf{M}_{\inj,\loc}$
is a left Quillen equivalence.
\item $s\mathbf{M}_{\inj,\loc}$ is a $\SS$-enriched semi-model category.
\item If $\mathbf{M}$ is a monoidal semi-model category, then $s\mathbf{M}_{\inj,\loc}$
is also one for the degreewise tensor product.
\end{enumerate}
\end{lem}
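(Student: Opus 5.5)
The plan is to build $s\mathbf{M}_{\kappa\-\inj,\loc}$ in two stages: first a $\kappa$-tractable ``injective'' semi-model structure with levelwise weak equivalences, then a left Bousfield localization at hocolim-equivalences, exactly as in the proof of Lemma \ref{lem:Dugger}. Let $I_{\kappa}$ be a set of representatives of the maps $f\from X\to Y$ in $s\mathbf{M}$ such that each $f_{n}$ is a cofibration between $\kappa$-compact cofibrant objects of $\mathbf{M}$. Since $s\mathbf{M}$ is locally $\kappa$-presentable, this class is essentially small, so $I_{\kappa}$ is a set.

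\emph{Step 1: the levelwise structure.} I would apply the recognition theorem for tractable semi-model categories from Appendix \ref{sec:semi} (a Smith-type theorem), taking $I_{\kappa}$ as generating cofibrations and levelwise weak equivalences as weak equivalences. Two facts are needed.
\begin{itemize}
\item $I_{\kappa}$-injective maps are levelwise trivial fibrations. The left adjoint of $\mathrm{ev}_{n}$ sends $A$ to the simplicial object $[m]\mapsto\coprod_{\Del([m],[n])}A$. Each of these coproducts is finite, and $\kappa$ is uncountable, so this left adjoint sends a generating cofibration of $\mathbf{M}$ with $\kappa$-compact cofibrant domain to an element of $I_{\kappa}$. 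This is where $\kappa$-tractability of $\mathbf{M}$ is used.
\item $I_{\kappa}$-cell complexes with cofibrant domain that are levelwise weak equivalences are stable under pushout and transfinite composition. This holds because these colimits are computed levelwise in $\mathbf{M}$.
\end{itemize}

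\emph{Step 2: localization.} As in Lemma \ref{lem:Dugger}, consider the functor $(s\mathbf{M})_{\infty}\simeq(\mathbf{M}_{\infty})^{\Del^{\op}}\xrightarrow{\colim}\mathbf{M}_{\infty}$. Its right adjoint (the constant diagram) is fully faithful, because $\Del^{\op}$ is weakly contractible. Proposition \ref{prop:semi_bousfield} then gives a $\kappa$-tractable localization whose weak equivalences are the hocolim-equivalences. This establishes (1) and (2).

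\emph{Step 3: properties (3) and (4).} For (3), let $e$ be a cofibration between $\kappa$-compact cofibrant objects of $\mathbf{M}$. Then $\delta(e)$ lies in $I_{\kappa}$, so $\delta$ preserves generating cofibrations. Since $\operatorname{hocolim}\circ\,\delta\simeq\mathrm{id}$ on $\mathbf{M}_{\infty}$, the derived functor of $\delta$ is an equivalence, so $\delta$ is a left Quillen equivalence. For (4), use the enrichment $(S\otimes X)_{n}=\coprod_{S_{n}}X_{n}$ from Lemma \ref{lem:Dugger}.
\begin{itemize}
\item Cofibration part of the pushout-product axiom: it suffices to check $i_{m}\square f$ with $f\in I_{\kappa}$. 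Levelwise this is a finite combination of copies of $f_{n}$, hence again in $I_{\kappa}$.
\item Acyclicity: $\operatorname{hocolim}(K\otimes X)$ is the diagonal of a bisimplicial diagram, which is equivalent to $|K|\times\operatorname{hocolim}X$ computed in $\mathbf{M}_{\infty}$. Hence $K\otimes-$ preserves hocolim-equivalences, and $i\otimes X$ is a hocolim-equivalence when $i$ is anodyne.
\end{itemize}

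\emph{Step 4: property (5), the expected main obstacle.} For acyclicity in (5), $\Del^{\op}$ is sifted and $\otimes$ preserves colimits separately in each variable on $\mathbf{M}_{\infty}$. Hence $\operatorname{hocolim}(X\otimes Y)\simeq\operatorname{hocolim}X\otimes\operatorname{hocolim}Y$, and hocolim-equivalences between cofibrant objects are stable under $\otimes$. For the unit axiom, I would take $\delta$ of a cofibrant replacement of the unit of $\mathbf{M}$.

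The hard part is the cofibration half of the pushout-product axiom. Let $f,g\in I_{\kappa}$. Then $f\square g$ is levelwise a cofibration between cofibrant objects. However, $\otimes$ need not preserve $\kappa$-compact objects, so $f\square g$ need not lie in $I_{\kappa}$. What has to be shown is that \emph{every} levelwise cofibration between levelwise cofibrant objects lies in the saturation of $I_{\kappa}$. I would try a Lurie-style injective-structure argument, adapting the proof of the existence of injective model structures in \cite{HTT}:
\begin{itemize}
\item Write such a map as a $\kappa$-filtered colimit of its restrictions to levelwise $\kappa$-compact sub-simplicial objects. These exist because $\mathbf{M}$ is locally $\kappa$-presentable and $\kappa$ is uncountable, which allows closing up under the countably many simplicial operators.
\item Use that cofibrations between cofibrant objects are closed under $\kappa$-filtered colimits in this setting.
\item Build the map as a transfinite composite of pushouts of elements of $I_{\kappa}$.
\end{itemize}
Keeping control of cofibrancy of the intermediate stages in this filtration is the step I expect to require the most care.
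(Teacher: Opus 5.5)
Your treatment of (1)--(4) and of the acyclic half of (5) is sound. It differs from the paper mainly in packaging. You build a levelwise-equivalence injective structure and then localize with Proposition~\ref{prop:semi_bousfield}, whereas the paper feeds the hocolim-equivalences directly into Theorem~\ref{thm:Smith}. For the acyclic half of (4) you use $\operatorname{hocolim}(K\otimes X)\simeq|K|\otimes\operatorname{hocolim}X$ for levelwise cofibrant $X$, whereas the paper reduces to Reedy cofibrations and Lemma~\ref{lem:Dugger}.

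The gap is the cofibration half of the pushout-product axiom in (5), and the argument you sketch for it fails.
\begin{itemize}
\item Its key input, that cofibrations between cofibrant objects are closed under $\kappa$-filtered colimits, is false at a prescribed $\kappa$. $\mathrm{Ch}(\mathbb{Z})$ with the projective model structure is $\aleph_1$-tractable. The Baer--Specker group $P=\mathbb{Z}^{\mathbb{N}}$, placed in degree $0$, is the $\aleph_1$-directed union of its countable subgroups, all of which are free. So $0\to P$ is an $\aleph_1$-filtered colimit of cofibrations between cofibrant objects. Yet $P$ is not projective, so $0\to P$ is not a cofibration. Applying $\delta$ gives the same failure in $s\mathbf{M}$.
\item ``Restricting'' a levelwise cofibration to levelwise $\kappa$-compact sub-simplicial objects presupposes a calculus of subobjects that a general $\mathbf{M}$ lacks, since cofibrations need not be monomorphisms.
\item Injective-structure arguments of this type typically obtain generation only after enlarging the cardinal. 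That would change the class of cofibrations fixed in (1).
\end{itemize}
So the statement you aim for, that every levelwise cofibration between levelwise cofibrant objects is an $I_\kappa$-cofibration, remains unproved. It is also much more than you need: only $f\square g$ for $f,g\in I_\kappa$ has to be an $I_\kappa$-cofibration.

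The paper passes over this half (``the only nontrivial part is the acyclic part''). It is indeed immediate once $\mathbf{M}_\kappa$ is closed under finite tensor products. In that case $(f\square g)_n=f_n\square g_n$ is a cofibration whose source $X_n\otimes Y'_n\amalg_{X_n\otimes Y_n}X'_n\otimes Y_n$ and target $X'_n\otimes Y'_n$ are cofibrant. As finite colimits of tensor products of $\kappa$-compact objects, they are also $\kappa$-compact, so $f\square g$ lies in $I_\kappa$ itself.

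The clean repair is therefore to require this closure, as in the definition of monoidally $\kappa$-combinatorial model categories. Alternatively, work at some $\lambda\vartriangleright\kappa$ with $\mathbf{M}_\kappa\otimes\mathbf{M}_\kappa\subset\mathbf{M}_\lambda$, as in the proof of Lemma~\ref{lem:monoidally_comb}. This is harmless for the application in Theorem~\ref{thm:var_SM}, which passes to a colimit over all uncountable regular cardinals.

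One smaller point: for the unit axiom, use $\delta(q)$ for the particular $q$ witnessing Muro's axiom in $\mathbf{M}$, not an arbitrary cofibrant replacement of the unit.
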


\begin{proof}
We first construct a $\kappa$-tractable semi-model structure satisfying
conditions (1) through (3), using Theorem \ref{thm:Smith}. (Note
that, since $\kappa$ is uncountable, $\kappa$-compact objects of
$s\mathbf{M}$ are simply the levelwise $\kappa$-compact simplicial
objects \cite[Proposition 2.23]{ZhenLin_Universe}.) Call a map $X\to Y$
of $s\mathbf{M}$ an \textbf{$\kappa$-injective cofibration} if it
is a cofibration in the sense of condition (1). We also say that a
map of $s\mathbf{M}$ is a \textbf{$\kappa$-injective fibration}
if it has the right lifting property for $\kappa$-injective cofibrations.
We must verify the following:

\begin{enumerate}[label=(\roman*)]

\item The full subcategory of $\Fun\pr{[1],s\mathbf{M}}$ spanned
by the hocolim-equivalences is an accessible subcategory.

\item Every $\kappa$-injective fibration is a hocolim-equivalence.

\item Given a pushout diagram 
\[\begin{tikzcd}
	X & {X'} \\
	Y & {Y'}
	\arrow[from=1-1, to=1-2]
	\arrow["i"', from=1-1, to=2-1]
	\arrow["{i'}", from=1-2, to=2-2]
	\arrow[from=2-1, to=2-2]
\end{tikzcd}\]of $\kappa$-injectively cofibrant objects, if $i$ is a hocolim-equivalence
and a $\kappa$-injective cofibration, so is $i'$.

\item Given a limit ordinal $\alpha$ and a transfinite sequence
$X_{\bullet}\from\alpha\to s\mathbf{M}$ of $\kappa$-injective cofibrations
of $\kappa$-injectively cofibrant objects, if the map $X_{\beta}\to X_{\beta+1}$
is a hocolim-equivalence for each $\beta<\alpha$, then so is the
map $X_{0}\to\colim_{\beta<\alpha}X_{\beta}$.

\end{enumerate}

For part (i), by \cite[Proposition 5.4.6.6]{HTT}, it suffices to
show that the composite
\[
\mathbf{M}^{\Del^{\op}}\to\pr{\mathbf{M}^{\Del^{\op}}}_{\infty}\simeq\pr{\mathbf{M}_{\infty}}^{\Del^{\op}}\xrightarrow{\colim}\mathbf{M}_{\infty}
\]
is accessible. The first functor is accessible by Corollary \ref{cor:RR15_3.1},
and the second functor is accessible because it is a left adjoint.
Thus, their composite is accessible.

For part (ii), let $p\from X\to Y$ be a $\kappa$-injective fibration
of $s\mathbf{M}$. We claim that $p$ is levelwise a weak equivalence
(hence a hocolim-equivalence). Since $\mathbf{M}$ is $\kappa$-tractable,
it suffices to show that every lifting problem as depicted in the
left-hand diagram below admits a solution, provided that $i$ is a
cofibration of $\kappa$-compact objects of $\mathbf{M}$:
\[\begin{tikzcd}
	A & {X_n} && {\Delta^n\otimes \delta(A)} & X \\
	B & {Y_n} && {\Delta ^n\otimes \delta (B)} & Y.
	\arrow[from=1-1, to=1-2]
	\arrow["i"', from=1-1, to=2-1]
	\arrow["p", from=1-2, to=2-2]
	\arrow[from=1-4, to=1-5]
	\arrow["{\Delta^n\otimes \delta (i)}"', from=1-4, to=2-4]
	\arrow["p", from=1-5, to=2-5]
	\arrow[dashed, from=2-1, to=1-2]
	\arrow[from=2-1, to=2-2]
	\arrow[dashed, from=2-4, to=1-5]
	\arrow[from=2-4, to=2-5]
\end{tikzcd}\]The lifting problem is equivalent to the one on the right, which is
solvable because $\Delta^{n}\otimes\delta\pr i$ is a $\kappa$-injective
cofibration. 

Par (iii) is clear, because such squares are homotopy cocartesian
in $\mathbf{M}^{\Del^{\op}}$ (as it is levelwise homotopy cocartesian).
Part (iv) is proved similarly.

Next, we prove (4). Let $i\from K\to L$ be a monomorphism of simplicial
sets, and let $f\from X\to Y$ be a generating $\kappa$-injective
cofibration $s\mathbf{M}_{\kappa\-\inj,\loc}$. We must show that
the map
\[
i\square f\from\pr{K\otimes Y}\amalg_{K\otimes X}\pr{L\otimes X}\to L\otimes Y
\]
is a $\kappa$-injective cofibration, and that it is a weak equivalence
if either $i$ or $f$ is one. For the claim on cofibration, it suffices
to prove this in the case where $i$ is the boundary inclusion $\partial\Delta^{n}\subset\Delta^{n}$
for some $n\geq0$. In this case, $i\square f$ is another generating
$\kappa$-injective cofibration, so there is nothing to prove. For
the acyclic part of the claim, find a commutative diagram 
\[\begin{tikzcd}
	{X'} & {Y'} \\
	X & Y
	\arrow["{f'}", from=1-1, to=1-2]
	\arrow["p"', from=1-1, to=2-1]
	\arrow["q", from=1-2, to=2-2]
	\arrow["f", from=2-1, to=2-2]
\end{tikzcd}\]where $p$ and $q$ are trivial Reedy fibrations, $X'$ is Reedy cofibrant,
and $f'$ is a Reedy cofibration. We then have a diagram 
\[\begin{tikzcd}
	{(K\otimes Y')\amalg_{K\otimes X'}(L\otimes X')} & {L\otimes Y'} \\
	{(K\otimes Y)\amalg_{K\otimes X}(L\otimes X)} & {L\otimes Y}
	\arrow[from=1-1, to=1-2]
	\arrow[from=1-1, to=2-1]
	\arrow[from=1-2, to=2-2]
	\arrow[from=2-1, to=2-2]
\end{tikzcd}\]whose vertical arrows are levelwise weak equivalences (because the
pushouts are levelwise homotopy pushouts). Thus, it suffices to show
that $i\square f'$ is a weak equivalence. This follows from Lemma
\ref{lem:Dugger}.

Finally, we prove (5). Suppose that $\mathbf{M}$ is a monoidal model
category. We must show that $s\mathbf{M}_{\kappa\-\inj,\loc}$ is
also a monoidal model category. The only nontrivial part is the acyclic
part of the pushout-product axiom. For this, it suffices to show that
for each cofibrant object $X\in s\mathbf{M}_{\kappa\-\inj,\loc}$,
the functors $X\otimes-,\,-\otimes X\from s\mathbf{M}_{\kappa\-\inj,\loc}\to s\mathbf{M}_{\kappa\-\inj,\loc}$
preserve weak equivalences of cofibrant objects. This follows from
the fact that $\Del^{\op}$ is a sifted $\infty$-category \cite[Lemma 5.5.8.4]{HTT},
because this guarantees that $\hocolim_{\Del^{\op}}\pr{X\otimes Y}\simeq\pr{\hocolim_{\Del^{\op}}X}\otimes\pr{\hocolim_{\Del^{\op}}Y}$
whenever $X,Y\in s\mathbf{M}$ are levelwise cofibrant. The proof
is now complete.
\end{proof}

\begin{proof}
[Proof of Theorems \ref{thm:var_SM} and \ref{thm:var_Mon}]We consider
the symmetric monoidal case; the monoidal case is similar. We start
with Part (1). We treat each case separately:
\begin{itemize}
\item $\CSMMC$: This is the content of Theorem \ref{thm:main}.
\item $\TSMMC$: The proof of Theorem \ref{thm:main} goes through by replacing
$\CSMMC$ by $\TSMMC$.
\item $\TSMMC_{\mathbf{S}}$ and $\CSMMC_{\mathbf{S}}$ and $\TSMMC_{\mathbf{S},\semi}$:
The proof of Proposition \ref{prop:main_flat} goes through by replacing
$\CombSymAlg{\mathbf{S}}$ by these categories. (In the semi-model
categorical case, we replace various model-categorical results by
the results in Section \ref{sec:semi}. Also, in this case, we only
need $\mathbf{S}$ to be a left proper tractable symmetric monoidal
semi-model category.) Thus, the claim for these categories follows
from Proposition \ref{prop:main_L}.
\item $\TSMMC_{\semi}$: By the previous point, it suffices to show that
the forgetful functor 
\[
\TSMMC_{\SS,\semi}\to\TSMMC_{\semi}
\]
induces an equivalence upon localizing at Quillen equivalences. 

For each uncountable regular cardinal $\kappa$, we let $\TSMMC_{\semi}\pr{\kappa}\subset\TSMMC_{\semi}$
denote the subcategory spanned by the objects whose underlying semi-model
categories are $\kappa$-tractable, and the morphisms that preserve
$\kappa$-compact objects. We define $\TSMMC_{\SS,\semi}\pr{\kappa}$
similarly. Lemma \ref{lem:injloc} gives us a functor 
\begin{align*}
\Phi\from\int^{\urCard}\TSMMC_{\semi}\pr - & \to\TSMMC_{\SS,\semi}\pr{\kappa},\\
\pr{\kappa,\mathbf{M}} & \mapsto s\mathbf{M}_{\kappa\-\inj,\loc},
\end{align*}
which fits into the (non-commutative) diagram 
\[\begin{tikzcd}
	{\int^{\mathsf{urCard}}\mathsf{TractSMMC}_{\mathsf{sSet},\mathrm{semi}}(-)} & {\int^{\mathsf{urCard}}\mathsf{TractSMMC}_{\mathrm{semi}}(-)} \\
	{\mathsf{TractSMMC}_{\mathsf{sSet},\mathrm{semi}}} & {\mathsf{TractSMMC}_{\mathrm{semi}}.}
	\arrow["U", from=1-1, to=1-2]
	\arrow["\pi"', from=1-1, to=2-1]
	\arrow["\Phi"{description}, from=1-2, to=2-1]
	\arrow["{\pi'}", from=1-2, to=2-2]
	\arrow["{U'}"', from=2-1, to=2-2]
\end{tikzcd}\]Here $\pi,\pi',U,U'$ are all forgetful functors. Since $\TSMMC_{\SS,\semi}$
is the colimit of $\TSMMC_{\SS,\semi}\pr{\kappa}$ as $\kappa$ ranges
over $\urCard$, the functor $\pi$ and $\pi'$ are localizations
\cite[\href{https://kerodon.net/tag/02UU}{Tag 02UU}]{kerodon}. Thus,
it suffices to show that the diagram commutes up to equivalence when
we localize these categories by Quillen equivalences.
\end{itemize}
By Lemma \ref{lem:injloc}, the lower triangle commutes up to natural
Quillen equivalence. For the upper triangle, recall from the previous
bullet point that the composite
\[
\TSMMC_{\SS,\semi}\xrightarrow{U'}\TSMMC_{\semi}\to\Pr\SM
\]
is a localization at Quillen equivalences. We also have a natural
Quillen equivalence $U'\pi\xrightarrow{\simeq}U'\Phi U$ by Lemma
\ref{lem:injloc}, so this proves the claim on the lower triangle.

Part (2) follows from part (1) and Muro's theorem \cite[Theorem 1]{Mur15}
or its enriched version (Proposition \ref{prop:Mur15}). Parts (3)
and (4) are proved similarly, noting that the proof of Theorem \ref{thm:main}
is unaffected if we restrict our attention to the relevant subcategories.
This completes the proof of the theorem.
\end{proof}

\appendix

\section{\label{sec:semi}Review of semi-model categories}

Semi-model categories are a generalization of model categories, first
introduced by Hovey and Spitzweck \cite{hovey1998monoidalmodelcategories, spitzweck2001operadsalgebrasmodulesgeneral}
and later developed further by Barwick \cite{Bar10}. (See \cite{BW24}
for a historical account and examples of semi-model categories.) For
the most part, they behave very much like (and sometimes better than)
model categories. There are, however, some differences, and some results
about model-categories will be false without making some modifications
to their statements. In this section, we review the definition of
semi-model categories and give precise statements of results on semi-model
categories that directly generalize the model-categorical counterparts.

\subsection{Definition}

In this subsection, we recall the definition of semi-model categories.
\begin{defn}
Let $\mathbf{M}$ be a locally presentable category, and let $I$
be a set of morphisms of $\mathbf{M}$. An \textbf{$I$-cofibration}
is a retract of a transfinite composition of pushouts of elements
of $I$. An \textbf{$I$-injective} is a morphism having the right
lifting property for the maps in $I$. We write $I\-\cof$ and $I\-\inj$
for the classes of $I$-cofibrations and $I$-injectives, respectively.
\end{defn}

\begin{defn}
\cite[Definition 2.1]{BW24} A \textbf{semi-model structure} on a
category $\mathbf{M}$ consists of classes $\pr{W,F,C}$ of weak equivalences,
fibrations, and cofibrations, satisfying the following axioms:

\begin{enumerate}[label=M\arabic*]

\item Fibrations are closed under pullback.

\item Weak equivalences have the two out of three property.

\item The classes $W,F,C$ contain isomorphisms and is closed under
composition and retracts.

\item 

\begin{enumerate}[label=\roman*]

\item Cofibrations have the left lifting property for trivial fibrations
(i.e., elements of $W\cap F$).

\item Trivial cofibrations with cofibrant domains have the left lifting
property for fibrations. (Here, trivial cofibrations refer to elements
in $C\cap W$, and cofibrant objects are those objects such that the
map from the initial object is a cofibration.)

\end{enumerate}

\item 

\begin{enumerate}[label=\roman*]

\item Every morphism in $\mathbf{M}$ can be functorially factored
as a cofibration followed by a trivial fibration.

\item Every morphism whose domain is cofibrant can be functorially
factored as a trivial cofibration followed by a fibration.

\end{enumerate}

\end{enumerate}

A\textbf{ semi-model category} is a finitely bicomplete\footnote{In \cite{BW24}, bicompleteness is assumed, but we drop this to have
a semi-model categorical version of the arguments in \ref{subsec:strongkappa}.} category $\mathbf{M}$ equipped with a semi-model structure. Its
\textbf{underlying $\infty$-category} is the ($\infty$-categorical)
localization at weak equivalences.

We say that a semi-model category $\mathbf{M}$ is \textbf{combinatorial}
if it satisfies the following pair of conditions:
\begin{itemize}
\item $\mathbf{M}$ is locally presentable as a category.
\item There are (small) sets $I,J$ of morphisms of $\mathbf{M}$, such
that the classes of fibrations and trivial fibrations are exactly
the classes of $J$-injectives and $I$-injectives. Elements of the
sets $I$ and $J$ are called \textbf{generating cofibrations} and
\textbf{generating trivial cofibrations}.
\end{itemize}
If further the maps in $I$ and $J$ have cofibrant domains, then
we say that $\mathbf{M}$ is \textbf{tractable}.
\end{defn}

\begin{rem}
\label{rem:lifting}Let $\mathbf{M}$ be a semi-model category, and
let $f$ be a morphism of $\mathbf{M}$. By the retract argument \cite[Lemma 1.1.9]{Hovey},
one has the following:
\begin{itemize}
\item If $f$ is a map of cofibrant objects and has the left lifting property
for fibrations, then it is a trivial cofibration.
\item $f$ is a cofibration if and only if it has the left lifting property
for trivial fibrations.
\item $f$ is a trivial fibration if and only if it has the right lifting
property for cofibrations.
\end{itemize}
This implies the following:
\begin{itemize}
\item A semi-model structure is completely determined by the class of fibrations
and weak equivalences. 
\item In a tractable model category $\mathbf{M}$ with generating sets $I,J$
of cofibrations and trivial cofibrations with cofibrant domains, the
maps in $I$ are cofibrations, and the maps in $J$ are trivial cofibrations.
\end{itemize}
\end{rem}

\begin{rem}
\label{rem:7.14_JT07} Let $\mathbf{M}$ be a semi-model category.
The argument of \cite[Lemma 7.14]{JT07} shows that a cofibration
of cofibrant objects of $\mathbf{M}$ is a trivial cofibration if
and only if it has the left lifting property for fibrations of fibrant
objects.
\end{rem}

\begin{defn}
A functor $F\from\mathbf{M}\to\mathbf{N}$ of tractable semi-model
categories is called a \textbf{left Quillen functor} $\mathbf{M}\to\mathbf{N}$
if it satisfies the following conditions:
\begin{enumerate}
\item $F$ is a left adjoint.
\item $F$ carries cofibrations of cofibrant objects to cofibrations.
\item $F$ carries trivial cofibrations of cofibrant objects to trivial
cofibrations.
\end{enumerate}
Equivalently, $F$ is left Quillen if its right adjoint preserves
fibrations and trivial fibrations. 
\end{defn}

\begin{rem}
\label{rem:und_semi}Let $\mathbf{M}$ be a semi-model category. By
\cite[Proposition 7.7.4]{HCHA}, the underlying $\infty$-category
of $\mathbf{M}$ has small colimits. If $\mathbf{M}$ is tractable,
then the semi-model categorical version of Proposition \ref{prop:strongly_comb_ind}
shows that the underlying $\infty$-category of $\mathbf{M}$ is presentable.
\end{rem}

\subsection{Reedy semi-model structure}

Given a model category $\mathbf{M}$ and a Reedy category $\cat C$,
the functor category $\Fun\pr{\cat C,\mathbf{M}}$ admits a model
structure, called the Reedy model structure. In this subsection, we
will see that this directly generalizes to the semi-model categorical
setting (Proposition \ref{prop:Reedy}). As a consequence of this,
we will obtain a formula for the mapping space of the underlying $\infty$-category
of a semi-model category using (co)simplicial resolutions (Corollary
\ref{cor:ACK25}).
\begin{prop}
[Reedy semi-model structure]\label{prop:Reedy} Let $\mathbf{M}$
be a bicomplete semi-model category, and let $\cat C$ be a Reedy
category. There is a semi-model structure on $\Fun\pr{\cat C,\mathbf{M}}$,
called the \textbf{Reedy semi-model structure}, with the following
properties:
\begin{enumerate}
\item Weak equivalences are the natural weak equivalences.
\item A map $\alpha\from F\to G$ is a fibration (resp. trivial fibration)
if and only if, for each $C\in\cat C$, the map
\[
FC\to GC\times_{M_{C}G}M_{C}F
\]
is a fibration (resp. trivial fibration) of $\mathbf{M}$. Here, $M_{C}F$
denotes the matching object of $F$ at $C$.
\item A map $\alpha\from F\to G$ is a cofibration if and only if, for each
$C\in\cat C$, the map
\[
L_{C}G\amalg_{L_{C}F}FC\to GC
\]
is a cofibration of $\mathbf{M}$. Here, $L_{C}F$ denotes the latching
object of $F$ at $C$.
\item Given a cofibrant $F\in\mathbf{M}^{\cat I}$, a map $\alpha\from F\to G$
is a trivial cofibration if and only if, for each $C\in\cat C$, the
map
\[
L_{C}G\amalg_{L_{C}F}FC\to GC
\]
is a trivial cofibration of $\mathbf{M}$.
\end{enumerate}
Moreover, if $\mathbf{M}$ is tractable, so is the Reedy semi-model
structure on $\Fun\pr{\cat C,\mathbf{M}}$.
\end{prop}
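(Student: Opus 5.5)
I will follow the classical construction of the Reedy model structure (as in Hovey's or Hirschhorn's treatment) and check at each step that only the axioms available in a semi-model category are used. Declare weak equivalences to be levelwise, and define fibrations and cofibrations by the relative matching and latching conditions (2) and (3). Trivial fibrations are defined as the maps whose relative matching maps are trivial fibrations. Part of the work is to show that these coincide with fibrations that are levelwise weak equivalences.

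The standard inductive argument over the Reedy degree does this. It uses only that trivial fibrations of $\mathbf{M}$ are closed under pullback and composition, together with two-out-of-three, and both of these hold in a semi-model category. Axioms M1–M3 for the diagram category are then immediate, and closure under retracts holds because latching and matching constructions are functorial.

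Next come lifting (M4) and factorization (M5), again by induction on degree. For M4(i), given a cofibration $F\to G$ and a trivial fibration $X\to Y$, I extend a lift from the latching data at $C$ to $GC$. This uses the lifting problem of $L_CG\amalg_{L_CF}FC\to GC$ against $XC\to YC\times_{M_CY}M_CX$, which M4(i) in $\mathbf{M}$ solves.

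For M5(i), I factor $L_CG\amalg_{L_CF}FC\to GC\times_{M_CG}M_CF$ (with $G$ replaced by the target under construction) as a cofibration followed by a trivial fibration in $\mathbf{M}$. This needs no cofibrancy hypothesis.

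The semi-model subtlety appears in M4(ii) and M5(ii), together with statement (4). Here the domain $F$ is Reedy cofibrant, so each latching map $L_CF\to FC$ is a cofibration. An induction, using that $L_C$ of a Reedy cofibrant diagram is cofibrant and that pushouts of cofibrations along maps of cofibrant objects preserve cofibrancy, then shows that $L_CG\amalg_{L_CF}FC$ is cofibrant for every $C$. This cofibrancy is exactly what is needed to invoke M4(ii) and M5(ii) in $\mathbf{M}$ at each stage.

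I expect the main obstacle to be statement (4) and the identification of trivial cofibrations with cofibrant source. One must show that if $F$ is cofibrant, then $\alpha$ is a trivial cofibration (cofibration plus levelwise equivalence) iff each relative latching map is a trivial cofibration. I would prove this by the Reedy-cube induction of Hirschhorn (Thm. 15.3.15), in which colimits over latching categories preserve trivial cofibrations between cofibrant objects. The one point to check is that all objects appearing in these colimits are cofibrant, so that Ken Brown–type arguments and the left lifting property apply. This again follows from cofibrancy of $F$ and hence of $G$, which is where I would spend the care.

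Finally, for tractability I take the generating sets $\{i_C\square e\}$, where $\partial C\to C$ denotes the boundary inclusion of the representable (e.g. $\partial\Delta^n\subset\Delta^n$ in the simplicial case) and $e$ ranges over $I$ or $J$, exactly as in \cite[Lemma 3.34]{Bar10}. The domains are cofibrant because those of $e$ are, and lifting against these sets unwinds to the relative matching conditions. Local presentability of $\Fun(\cat C,\mathbf{M})$ is standard.
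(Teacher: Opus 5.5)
Your proposal is correct and is essentially the paper's proof: the paper defers to the classical constructions of \cite[Theorems 15.3.4 and 15.3.15]{Hirschhorn} and \cite[Theorem 5.2.5]{Hovey} for the semi-model structure and to \cite[Lemma 3.34]{Bar10} for tractability, and you put the cofibrancy-of-domains checks exactly where M4(ii) and M5(ii) in $\mathbf{M}$ require them. One small correction: the degree induction identifying trivial fibrations also needs the matching map $M_C X\to M_C Y$ to be a trivial fibration, and that uses the fact that trivial fibrations of $\mathbf{M}$ are exactly the maps with the right lifting property against cofibrations (so they are preserved by limits, via the constant-diagram adjunction), not merely closure under pullback and composition.
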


\begin{proof}
See the proof of \cite[Theorems 15.3.4 and 15.3.15]{Hirschhorn} or
\cite[Theorem 5.2.5]{Hovey} for the construction of the semi-model
structure. For tractability, see \cite[Lemma 3.34]{Bar10}.
\end{proof}

We are particularly interested in the Reedy semi-model structure for
(co)simplicial objects, as it can be used to model derived mapping
spaces (Corollary \ref{cor:ACK25}.) If $\mathbf{M}$ is a tractable
semi-model category, a \textbf{simplicial resolution} in $\mathbf{M}$
is a Reedy fibrant simplicial object $X_{\bullet}$ such that the
map $X_{0}\to X_{n}$ is a weak equivalence for every $n\geq0$. Dually,
a cosimplicial resolution in $\mathbf{M}$ is a Reedy cofibrant cosimplicial
object $X^{\bullet}$ such that the map $X^{n}\to X^{0}$ is a weak
equivalence for every $n\geq0$. We then have the following proposition:
\begin{prop}
\label{prop:16.5.2}Let $\mathbf{M}$ be a semi-model category.
\begin{enumerate}
\item Let $i\from A\to B$ be a cofibration of cofibrant objects of $\mathbf{M}$,
and let $p\from X_{\bullet}\to Y_{\bullet}$ be a Reedy fibration
of simplicial resolutions of $\mathbf{M}$. The map
\[
\mathbf{M}\pr{B,X_{\bullet}}\to\mathbf{M}\pr{B,Y_{\bullet}}\times_{\mathbf{M}\pr{A,Y_{\bullet}}}\mathbf{M}\pr{A,X_{\bullet}}
\]
is a Kan fibration, which is a trivial fibration if $i$ or $p$ is
a weak equivalence.
\item Let $i\from A^{\bullet}\to B^{\bullet}$ be a Reedy cofibration of
cosimplicial resolutions in $\mathbf{M}$, and let $p\from X\to Y$
be a fibration of $\mathbf{M}$. The map
\[
\mathbf{M}\pr{B^{\bullet},X}\to\mathbf{M}\pr{B^{\bullet},Y}\times_{\mathbf{M}\pr{A^{\bullet},Y}}\mathbf{M}\pr{A^{\bullet},X}
\]
is a Kan fibration, which is a trivial fibration if $i$ or $p$ is
one.
\end{enumerate}
\end{prop}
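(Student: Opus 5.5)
The plan is to reduce both parts to ordinary lifting and adjunction arguments, in the style of the model-categorical statement in Hirschhorn (Proposition 16.5.2 and the surrounding results on function complexes), keeping track of cofibrancy hypotheses, since semi-model categories only guarantee liftings against trivial cofibrations with cofibrant domain. By adjunction, the $n$-simplices of $\mathbf{M}\pr{B,X_{\bullet}}$ are maps $\Delta^{n}\to\mathbf{M}\pr{B,X_{\bullet}}$, i.e., maps $B\to\hom\pr{\Delta^{n},X_{\bullet}}$, where $\hom\pr{S,X_{\bullet}}=\lim_{\Delta^{n}\to S}X_{n}$ as in the proof of Lemma \ref{lem:Dugger}. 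Consequently, a lifting problem of a monomorphism $K\to L$ of simplicial sets against the displayed map in (1) is equivalent to a lifting problem of $i\from A\to B$ against the map
\[
\hom\pr{L,X_{\bullet}}\to\hom\pr{L,Y_{\bullet}}\times_{\hom\pr{K,Y_{\bullet}}}\hom\pr{K,X_{\bullet}}.
\]
So the whole proof comes down to understanding this relative matching map.

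The first key step is the Reedy fact, valid in any bicomplete category with the semi-model axioms for fibrations, since it only involves limits. If $p$ is a Reedy fibration and $K\to L$ is a monomorphism, then the map above is a fibration of $\mathbf{M}$. If in addition $p$ is a levelwise weak equivalence, it is a trivial fibration. This is proved by the usual skeletal induction on $L$ relative to $K$, reducing to the boundary inclusions $\partial\Delta^{n}\to\Delta^{n}$, where the map becomes exactly the relative matching map $X_{n}\to Y_{n}\times_{M_{n}Y}M_{n}X$ of Proposition \ref{prop:Reedy}(2). Pullbacks and transfinite towers of (trivial) fibrations remain (trivial) fibrations, so the induction goes through.

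The second key step uses the hypothesis that $X_{\bullet}$ and $Y_{\bullet}$ are simplicial resolutions. In that case, if $K\to L$ is an anodyne extension, the map above is a trivial fibration. It suffices to show that $\hom\pr{L,X_{\bullet}}\to\hom\pr{K,X_{\bullet}}$ is a weak equivalence; the same holds for $Y_{\bullet}$, and one then applies two out of three together with the first key step, using right properness among fibrant objects for the pullback. This is the argument already used in the proof of Lemma \ref{lem:Dugger}, via the argument of \cite[Proposition 4.6.1]{HCHA}: reduce to horn inclusions and then to $\Delta^{0}\to\Delta^{m}$, where the claim is precisely the resolution condition. The comparison of $\hom\pr{K,X_{\bullet}}$ with a homotopy limit needs $X_{\bullet}$ to be Reedy fibrant, which we have.

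Now I conclude (1). Since $i$ is a cofibration, it lifts against trivial fibrations (axiom M4(i)). Therefore the displayed map in (1) lifts against all boundary inclusions when $p$ is a trivial Reedy fibration, making it a trivial Kan fibration. In general, the second key step shows that it lifts against horn inclusions, making it a Kan fibration. If instead $i$ is a trivial cofibration, then because $A$ is cofibrant, M4(ii) lets $i$ lift against the fibrations from the first key step, so the map lifts against all monomorphisms. Part (2) is the formal dual, using latching objects and $\mathbf{M}\pr{L\otimes A^{\bullet},X}$-type colimits in place of limits.

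I expect the main obstacle to be in part (2): the semi-model axioms are not self-dual, so the dual of the first key step needs the relevant latching maps to be cofibrations of cofibrant objects. This is available because $A^{\bullet}$ and $B^{\bullet}$ are Reedy cofibrant, so each latching object and each $K\otimes A^{\bullet}$-type colimit is cofibrant (Proposition \ref{prop:Reedy}(3),(4)). I would check that every lifting invoked against a fibration has a cofibrant domain.
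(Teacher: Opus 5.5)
Your proposal is correct and takes essentially the paper's route: the paper's proof is just ``identical to Hirschhorn's Theorem 16.5.2,'' and your adjunction to relative matching maps (resp.\ relative latching maps), Reedy skeletal induction, homotopical constancy of resolutions, and cofibrancy bookkeeping in part (2) are that argument transplanted to the semi-model setting. Two small adjustments. First, you only need finite $K\subset L$ (horns, boundaries, and the finite unions of faces in the reduction to $\Delta^{0}\to\Delta^{m}$), so you can drop the appeal to transfinite towers of fibrations; that appeal is not available in a merely finitely bicomplete semi-model category whose fibrations need not be characterized by a lifting property. Second, right properness is unnecessary, because $\hom(L,Y_{\bullet})\to\hom(K,Y_{\bullet})$ is already a trivial fibration, and so is its pullback.
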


\begin{proof}
Identical to \cite[Theorem 16.5.2]{Hirschhorn}.
\end{proof}

\begin{cor}
\label{cor:ACK25}Let $\mathbf{M}$ be a semi-model category, let
$L\from\mathbf{M}\to\cat M$ be the localization at weak equivalences,
and let $X_{\bullet}$ be a simplicial resolution in $\mathbf{M}$.
There is a natural equivalence
\[
\mathbf{M}\pr{A,X_{\bullet}}\simeq\cat M\pr{L\pr A,L\pr{X_{0}}}
\]
of functors $\mathbf{M}^{\op}_{\cof}\to\SS[\weq^{-1}]\simeq\cat S$,
which carries a vertex $A\to X_{0}\in\mathbf{M}\pr{A,X_{\bullet}}$
to its image in $\cat M$.
\end{cor}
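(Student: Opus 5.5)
We prove Corollary \ref{cor:ACK25} by reducing it to the classical statement that derived mapping spaces computed by simplicial resolutions agree with the mapping spaces of the underlying $\infty$-category. This is \cite[Corrollary 3.1]{ACK25} for model categories, and we imitate its proof using Proposition \ref{prop:16.5.2} as the semi-model-categorical input. The plan has three steps.

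\textbf{Step 1: the functor $A\mapsto\mathbf{M}\pr{A,X_{\bullet}}$ is homotopically meaningful.} Regard it as a functor $\mathbf{M}^{\op}_{\cof}\to\SS$.

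\begin{itemize}
\item Applying Proposition \ref{prop:16.5.2}(1) with $p\from X_{\bullet}\to\ast$ (a Reedy fibration of simplicial resolutions, since the terminal object is one) shows that each $\mathbf{M}\pr{A,X_{\bullet}}$ is a Kan complex.
\item The same proposition shows that the functor carries trivial cofibrations of cofibrant objects to trivial fibrations.
\item By Ken Brown's lemma, which holds for cofibrant objects in a semi-model category since the factorization axiom M5(i) and M5(ii) apply to maps with cofibrant domain, the functor carries all weak equivalences of cofibrant objects to weak equivalences.
\end{itemize}

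It therefore descends to a functor $\cat M_{\cof}^{\op}\to\cat S$, where $\mathbf{M}_{\cof}[\weq^{-1}]\simeq\cat M$. This last equivalence uses cofibrant replacement, which is available in a semi-model category by M5(i).

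\textbf{Step 2: identify the descended functor as representable.} We construct a natural map to $\cat M\pr{L\pr -,L\pr{X_{0}}}$. The simplicial object $X_{\bullet}$ determines, after localization, a diagram $\Del^{\op}\to\cat M$ in which every map is an equivalence. Since $\Del^{\op}$ is weakly contractible, this diagram is equivalent to the constant diagram at $L\pr{X_{0}}$.

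A $k$-simplex of $\mathbf{M}\pr{A,X_{\bullet}}$ is a map $\Delta^{k}\otimes A\to X_{\bullet}$, or equivalently a compatible family $A\to X_{k}$. We send it to the corresponding $k$-simplex of $\cat M\pr{L\pr A,L\pr{X_{0}}}$, obtained by composing the images in $\cat M$ with the contraction above. This makes the comparison map natural in $A$ and makes it carry a vertex $A\to X_{0}$ to its image in $\cat M$, as the statement requires. To make this rigorous, I would build the comparison through the cosimplicial-resolution side, as in the dual part (2) of Proposition \ref{prop:16.5.2}, or through a Dwyer--Kan hammock-type model. The point is simply that the comparison is a map of functors out of $\mathbf{M}^{\op}_{\cof}$.

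\textbf{Step 3: show the comparison is an equivalence.} This is the main obstacle. The approach is to verify it objectwise on $\pi_{0}$ and on higher homotopy groups by a path-object and cylinder argument.

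\begin{itemize}
\item \emph{$\pi_{0}$.} Here $\pi_{0}\mathbf{M}\pr{A,X_{\bullet}}$ should be the set of homotopy classes $[A,X_{0}]$ computed using the path object $X_{1}\to X_{0}\times X_{0}$. For $A$ cofibrant and $X_{0}$ fibrant, this set equals $\mathrm{Ho}\pr{\mathbf{M}}\pr{A,X_{0}}$ by the semi-model analog of the classical Whitehead theorem. The classical argument goes through because only cofibrant sources are involved.
\item \emph{Higher homotopy groups.} These reduce to $\pi_{0}$ by replacing $X_{\bullet}$ with the simplicial resolution $X_{\bullet}^{\partial\Delta^{k}\to\Delta^{k}}$ (the fiber over a point), together with the corresponding loop-space statement in $\cat M$.
\end{itemize}

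Alternatively, and more slickly, one can avoid homotopy groups entirely. Choose a fibrant replacement $Y$ of $X_{0}$ and a cosimplicial resolution $A^{\bullet}$ of $A$, which exists via Reedy factorization (Proposition \ref{prop:Reedy}) starting from the cofibrant object $A$. Proposition \ref{prop:16.5.2} then gives a zig-zag of equivalences
\[
\mathbf{M}\pr{A,X_{\bullet}}\simeq\mathrm{diag}\,\mathbf{M}\pr{A^{\bullet},X_{\bullet}}\simeq\mathbf{M}\pr{A^{\bullet},X_{0}},
\]
exactly as in \cite[Theorem 17.7.7]{Hirschhorn}. This reduces the claim to the cosimplicial version. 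That version is the semi-model-categorical form of \cite[Corrollary 3.1]{ACK25}, whose proof uses only lifting against cofibrant sources and so transfers verbatim.
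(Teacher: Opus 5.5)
Your core plan (rerun the model-categorical proof of \cite[Corollary 3.1]{ACK25}, with Proposition~\ref{prop:16.5.2} supplying the semi-model-categorical input) is exactly the paper's proof, which consists of the single line ``Identical to \cite[Corollary 3.1]{ACK25}.'' Everything you add beyond that is unnecessary and, as written, shaky: Step~2 does not define a simplicial map (the comparison should be the realization of the levelwise maps $\mathbf{M}(A,X_k)\to\mathcal{M}(L(A),L(X_k))$, using that $[k]\mapsto\mathcal{M}(L(A),L(X_k))$ is essentially constant); the homotopy-group route assumes that $L$ carries the fibre constructions on $X_\bullet$ to limits in $\mathcal{M}$, which you cannot take for granted in a semi-model category; and in the cosimplicial detour you cannot use Ken Brown's lemma on fibrant objects as Hirschhorn does, because axiom M5-ii requires a cofibrant domain, though there it suffices that each $X_n\to X_0$ is a trivial fibration with section $X_0\to X_n$.
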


\begin{proof}
Identical to \cite[Corollary 3.1]{ACK25}.
\end{proof}

\begin{cor}
\label{cor:ACK25_1}Let $\mathbf{M}$ be a semi-model category. The
weak equivalences of $\mathbf{M}$ are exactly the maps whose images
in the underlying $\infty$-category $\cat M$ are equivalences. Moreover,
if two objects in $\mathbf{M}$ have equivalent images in $\cat M$,
then they are connected by a zig-zag of weak equivalences.
\end{cor}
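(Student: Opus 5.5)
We need to prove Corollary ACK25_1: in a semi-model category $\mathbf{M}$, the weak equivalences are exactly the maps that become equivalences in $\cat M$, and two objects with equivalent images are connected by a zig-zag of weak equivalences.

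The plan is to reduce to fibrant–cofibrant objects and then use Corollary \ref{cor:ACK25} to read off mapping spaces in $\cat M$ from simplicial resolutions.

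\emph{Reduction.} Every object can be replaced by a cofibrant one via factorization M5(i) applied to $\emptyset\to X$, which gives a trivial fibration $QX\to X$. That cofibrant object can then be replaced by a fibrant one via M5(ii), which gives a trivial cofibration $QX\to RQX$. By two out of three, and since weak equivalences become equivalences in $\cat M$, it suffices to treat a map $f\from A\to B$ between fibrant–cofibrant objects. The same replacements also handle the second claim. If $L(X)\simeq L(Y)$, then $L(RQX)\simeq L(RQY)$, and $X$ is connected to $RQX$ by a zig-zag of weak equivalences (similarly for $Y$). So it suffices to show that an equivalence in $\cat M$ between fibrant–cofibrant objects lifts to an actual map, and that this map is a weak equivalence by the first claim.

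\emph{Lifting morphisms.} Let $A$ be cofibrant and $X$ fibrant. Choose a simplicial resolution $X_{\bullet}$ with $X_{0}=X$, obtained by Reedy-fibrantly replacing the constant simplicial object; the Reedy semi-model structure (Proposition \ref{prop:Reedy}) allows this, since the constant object on a fibrant $X$ is fibrant up to replacement. Corollary \ref{cor:ACK25} then gives $\pi_{0}\cat M(LA,LX)\cong\pi_{0}\mathbf{M}(A,X_{\bullet})$, a quotient of $\mathbf{M}(A,X)$. Hence every morphism $LA\to LX$ in the homotopy category is the image of a genuine map $A\to X$, and two genuine maps agree there precisely when they are homotopic.

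\emph{Main step.} Suppose $f\from A\to B$ between fibrant–cofibrant objects becomes an equivalence in $\cat M$. By the lifting step, pick $g\from B\to A$ representing its inverse. Then $gf$ and $\mathrm{id}_{A}$ have the same image, so they lie in the same component of $\mathbf{M}(A,A_{\bullet})$. This yields a right homotopy $A\to A_{1}$ whose two face maps to $A_{0}=A$ give $gf$ and $\mathrm{id}$. The maps $A_{1}\to A_{0}$ are weak equivalences because $A_{\bullet}$ is a resolution, and they share the degeneracy as a common section. By two out of three, both face maps are weak equivalences, and they agree after composing with a weak equivalence. So a map homotopic to the identity is a weak equivalence; hence $gf$, and likewise $fg$, are weak equivalences.

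To conclude, I would use the standard 2-out-of-6 property, which holds for weak equivalences in semi-model categories because they are closed under retracts and satisfy two out of three. Applied to $A\to B\to A\to B$, it shows $f$ is a weak equivalence. Alternatively, the retract argument: $f$ is a retract of the weak equivalence $fgf$ up to homotopy, which can be made strict using the path object $A_{1}$.

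I expect the main obstacle to be this last deduction. It is exactly the Whitehead-type argument that in ordinary model categories relies on full factorization, and here I would need to verify that only factorizations with cofibrant domain are used.
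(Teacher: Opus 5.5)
Your reduction to fibrant--cofibrant objects, the use of Corollary \ref{cor:ACK25} to represent morphisms and homotopies by genuine maps, and the argument that a self-map with the same image as the identity is a weak equivalence are all sound. They spell out what the paper compresses into ``immediate from Corollary \ref{cor:ACK25}''.

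The gap is the last step. Two-out-of-six does \emph{not} follow from two-out-of-three together with closure under retracts. Take the free category on two arrows $f\colon A\to B$ and $g\colon B\to A$, and let $W$ be the class of all endomorphisms.
\begin{itemize}
\item $W$ contains the identities, which are the only isomorphisms.
\item $W$ is closed under retracts, because in a free category the only retractions are identities.
\item $W$ satisfies two-out-of-three.
\end{itemize}
Yet $W$ contains $gf$ and $fg$ and not $f$. Two-out-of-six does hold for weak equivalences in a semi-model category, but its usual proof goes through saturation, which is exactly what you are proving. Your fallback has the same problem. It treats $fgf$ as a weak equivalence, which already presupposes two-out-of-six. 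And the ``strictification via $A_1$'' you leave open is precisely the missing argument.

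The repair is the Whitehead argument, and it only needs factorizations with cofibrant domain.
\begin{enumerate}
\item Factor $f=pi$ with $i\colon A\to C$ a trivial cofibration (allowed because $A$ is cofibrant) and $p\colon C\to B$ a fibration. Then $C$ is fibrant--cofibrant and $L(p)$ is an equivalence.
\item By your lifting step, choose $q\colon B\to C$ with $L(q)=L(p)^{-1}$.
\item Take a cosimplicial resolution $B^\bullet$ with $B^0=B$, as in the proof of Proposition \ref{prop:A.2.3.1}. By the dual of Corollary \ref{cor:ACK25}, $pq$ and $\mathrm{id}_B$ lie in the same component of $\mathbf{M}(B^\bullet,B)$.
\item The map $\mathbf{M}(B^\bullet,C)\to\mathbf{M}(B^\bullet,B)$ is a Kan fibration by Proposition \ref{prop:16.5.2}(2). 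Lifting a path that starts at $q$ gives $q'$ with $pq'=\mathrm{id}_B$ strictly and $L(q')=L(q)$.
\item Hence $L(q'p)=\mathrm{id}_{L(C)}$, so $q'p$ is a weak equivalence by your right-homotopy argument in $\mathbf{M}(C,C_\bullet)$.
\item The pairs $(\mathrm{id}_C,q')$ and $(\mathrm{id}_C,p)$ exhibit $p$ as a retract of $q'p$. So $p$ is a weak equivalence, and therefore so is $f=pi$.
\end{enumerate}
This also makes the detour through $gf$ and $fg$ unnecessary.

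A minor point: Reedy-fibrantly replacing $\delta(X)$ only yields a trivial cofibration $X\to X_0$, not $X_0=X$. For fibrant--cofibrant $X$ you should instead build the resolution inductively starting from $X_0=X$, factoring the latching-to-matching maps, whose domains are cofibrant.
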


\begin{proof}
This is immediate from \ref{cor:ACK25}.
\end{proof}

\begin{cor}
\label{cor:7.9.8}Let $\mathbf{M}$ be a semi-model category, and
let $\cat C$ be a small category. Postcomposing with the functor
$\mathbf{M}\to\mathbf{M}[\weq^{-1}]$ induces an equivalence of $\infty$-categories
\[
\Fun\pr{\cat C,\mathbf{M}}[\weq^{-1}]\xrightarrow{\simeq}\Fun\pr{\cat C,\mathbf{M}[\weq^{-1}]}.
\]
\end{cor}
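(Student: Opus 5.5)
I will prove Corollary \ref{cor:7.9.8} by reducing to the Reedy semi-model structure of Proposition \ref{prop:Reedy}. The plan follows the model-categorical proof of \cite[Theorem 7.9.8]{HCHA}, replacing each model-categorical input by its semi-model counterpart from this appendix.

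\textbf{Step 1: reduction to a Reedy category.} First I reduce to the case where $\cat C$ is a Reedy category of a special kind, namely the opposite of the category of simplices $(\Del_{/N\cat C})^{\op}$, or its barycentric-subdivision analogue. There is a homotopy final projection $p\from\Del_{/N\cat C}\to\cat C$ (the last-vertex map), and it exhibits $\cat C$ as an $\infty$-categorical localization of $\Del_{/N\cat C}$. Precomposition with $p$ therefore identifies $\Fun(\cat C,\mathcal{M})$ with the full subcategory of $\Fun(\Del_{/N\cat C},\mathcal{M})$ spanned by diagrams sending the relevant maps to equivalences. On the strict side, $p^{\ast}\from\Fun(\cat C,\mathbf{M})\to\Fun(\Del_{/N\cat C},\mathbf{M})$ admits a homotopy inverse up to natural weak equivalence on the subcategory of homotopically constant diagrams. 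This is given by the left Kan extension along $p$ (or a suitable colimit over fibers), computed on Reedy cofibrant replacements, exactly as in loc. cit. Hence the statement for $\cat C$ follows from the statement for the Reedy category $\Del_{/N\cat C}$, restricted to homotopically locally constant diagrams, and both sides of that restriction are compatible.

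\textbf{Step 2: the Reedy case.} For a Reedy category $\cat R$, Proposition \ref{prop:Reedy} gives the Reedy semi-model structure on $\Fun(\cat R,\mathbf{M})$ whose weak equivalences are levelwise. By Corollary \ref{cor:ACK25}, its mapping spaces are computed by resolutions, so the functor $\Fun(\cat R,\mathbf{M})[\weq^{-1}]\to\Fun(\cat R,\mathcal{M})$ can be checked for full faithfulness on mapping spaces. For Reedy cofibrant $F$ and Reedy fibrant simplicial resolutions $G_\bullet$, the space $\Fun(\cat R,\mathbf{M})(F,G_\bullet)$ is an end of the spaces $\mathbf{M}(F(r),G_\bullet(r))$. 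Using the latching and matching filtrations, it decomposes as an iterated limit of fibrations between such mapping spaces, by Proposition \ref{prop:16.5.2}. The target $\Fun(\cat R,\mathcal{M})(LF,LG)$ is the corresponding end in $\mathcal{S}$, computed as the same iterated homotopy limit. Induction on the Reedy degree then identifies the two. The fibrancy required on the $\mathbf{M}$ side only concerns fibrant targets, so the semi-model restriction, which constrains only trivial cofibrations with cofibrant source, causes no trouble here.

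\textbf{Step 3: essential surjectivity.} I would argue by induction over the Reedy skeleta, as in \cite[Section 7.9]{HCHA}. Extending a diagram from the $n$-skeleton to the $(n+1)$-skeleton amounts to choosing, for each degree-$(n+1)$ object $r$, a factorization of the map from the latching object to the matching object. In $\mathcal{M}$ this is a map $L_rF\to M_rF$, and it can be realized strictly by lifting along the cofibration–trivial-fibration factorization. Axiom M5(i) holds without restriction, so we may keep everything cofibrant.

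The main obstacle I expect is Step 3 (together with the homotopy-inverse argument in Step 1). The difficulty is that realizing a homotopy-coherent map strictly needs fibrant targets together with cofibrant sources simultaneously. In a semi-model category this requires care: fibrant replacement is only available for cofibrant objects, so each stage must be cofibrantly replaced first, using M5(ii) only on maps with cofibrant domain. I would handle this by working throughout with Reedy cofibrant diagrams and taking fibrant replacements only of their cofibrant stages.
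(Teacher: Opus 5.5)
The paper does no Reedy theory here. It deduces the statement in one line from Cisinski's \cite[Theorem 7.9.8]{HCHA}, which is formulated in his axiomatic framework of $\infty$-categories with weak equivalences and (co)fibrations rather than for model categories. The only additional input is Corollary \ref{cor:ACK25_1}: weak equivalences are exactly the maps inverted in $\mathbf{M}[\weq^{-1}]$. Your plan amounts to reproving Cisinski's theorem by hand, and as sketched it leaves the crux open.

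All three of your steps rest on comparing strict constructions in $\mathbf{M}$ with $\infty$-categorical ones in $\cat M=\mathbf{M}[\weq^{-1}]$:
\begin{itemize}
\item In Step 1 you need $p_!$, applied to Reedy cofibrant replacements, to be a homotopy left Kan extension. That is, $QG\to p^{*}p_!QG$ and $p_!Qp^{*}F\to F$ must be weak equivalences. This requires strict colimits over $\Del_{/N(\cat C_{/c})}$ to be homotopy colimits, plus a cofinality argument for the localization $\Del_{/N(\cat C_{/c})}\to\cat C_{/c}$.
\item In Steps 2 and 3 you need the images in $\cat M$ of the strict latching and matching objects $L_rF$, $M_rG$ to be the $\infty$-categorical latching and matching objects. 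This is exactly what would make the $\cat S$-side ``the same iterated homotopy limit''.
\item In Step 2 you also need the identifications of Corollary \ref{cor:ACK25} to be coherently natural, so that your induction compares the map actually induced by your functor.
\end{itemize}
None of this is argued, and it cannot be borrowed from the appendix. The only homotopy-colimit recognition result there, Proposition \ref{prop:hocolim}, is itself deduced from Corollary \ref{cor:7.9.8}, so using it would be circular. Also, $p_!$ needs colimits indexed by the infinite categories $\Del_{/N(\cat C_{/c})}$, while a semi-model category in this paper is only assumed finitely bicomplete.

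Step 3 is, in addition, wrong as described. Once $F$ is defined on the lower skeleton, the map $L_rF\to M_rF$ is already fixed. What the $\infty$-diagram supplies at a new object $r$ is an arbitrary object $X$ together with a factorization $L_r\to X\to M_r$ in $\cat M$. Factoring $L_rF\to M_rF$ as a cofibration followed by a trivial fibration produces one particular strict factorization, not a realization of the given $X$.

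What is actually needed is the following. Choose strict representatives of $X$ and of the two maps. Arrange for $L_rF\to X$ to be a cofibration of cofibrant objects and for $M_rF$ to be fibrant. Then strictify the homotopy-commutative triangle by homotopy extension, as in Proposition \ref{prop:A.2.3.1}. This again presupposes the latching/matching comparison above.

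So the missing ingredient is not a semi-model-specific trick. It is the observation that Cisinski's axiomatic theorem already covers semi-model categories once Corollary \ref{cor:ACK25_1} is available.
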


\begin{proof}
This follows from \cite[Theorem 7.9.8]{HCHA} and Corollary \ref{cor:ACK25_1}.
\end{proof}

Corollary \ref{cor:ACK25} has another important consequence:
\begin{prop}
\label{prop:A.2.3.1}Let $\mathbf{M}$ be a semi-model category, and
let $L\from\mathbf{M}\to\cat M$ be the localization at weak equivalences.
Consider an extension problem in $\mathbf{M}$ depicted as 
\[\begin{tikzcd}
	A & X. \\
	B
	\arrow["f", from=1-1, to=1-2]
	\arrow["i"', from=1-1, to=2-1]
	\arrow[dashed, from=2-1, to=1-2]
\end{tikzcd}\]Suppose that:
\begin{enumerate}
\item $i$ is a cofibration of cofibrant objects.
\item $X$ is fibrant. 
\item There is a morphism $g\from B\to X$ and a homotopy $L\pr g\circ L\pr i\simeq L\pr f$
in $\cat M$.
\end{enumerate}
Then there is a map $g'\from B\to X$ such that $g'i=f$ in $\mathbf{M}$
and a homotopy $L\pr g\simeq L\pr{g'}$ in $\cat M$.
\end{prop}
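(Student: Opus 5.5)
We will deduce the statement from the mapping-space description of Corollary \ref{cor:ACK25}, following the classical model-categorical argument (cf.\ \cite[Proposition A.2.3.1]{HTT}). First, we replace $X$ by a simplicial resolution $X_{\bullet}$ with $X_{0}=X$. For instance, we take a Reedy fibrant replacement of the constant simplicial object $\delta(X)$ which is the identity in degree $0$. This is possible because $X$ is fibrant, so the map $\delta(X)_{0}=X\to\ast$ is already a fibration. We then factor inductively, in the Reedy semi-model structure of Proposition \ref{prop:Reedy}, using only the factorizations through matching objects, which are fibrations.

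Next, we apply Proposition \ref{prop:16.5.2}(1) to the cofibration $i\colon A\to B$ of cofibrant objects and to the Reedy fibration $X_{\bullet}\to\ast$. It shows that the restriction map
\[
i^{\ast}\colon\mathbf{M}\pr{B,X_{\bullet}}\to\mathbf{M}\pr{A,X_{\bullet}}
\]
is a Kan fibration of simplicial sets. By Corollary \ref{cor:ACK25}, naturality in the cofibrant variable identifies $i^{\ast}$, up to weak equivalence, with the map $\cat M\pr{L\pr B,L\pr X}\to\cat M\pr{L\pr A,L\pr X}$ given by precomposition with $L\pr i$. Under this identification, the vertices $g\in\mathbf{M}\pr{B,X}=\mathbf{M}\pr{B,X_{0}}$ and $f\in\mathbf{M}\pr{A,X}$ correspond to $L\pr g$ and $L\pr f$.

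Hypothesis (3) says that $i^{\ast}(g)$ and $f$ lie in the same path component of $\mathbf{M}\pr{A,X_{\bullet}}$. Since $\mathbf{M}\pr{A,X_{\bullet}}$ is a Kan complex (it is fibrant by Proposition \ref{prop:16.5.2} applied to $\emptyset\to A$), there is an edge $e$ from $i^{\ast}(g)$ to $f$. Because $i^{\ast}$ is a Kan fibration, $e$ lifts to an edge $\tilde e$ of $\mathbf{M}\pr{B,X_{\bullet}}$ starting at $g$; this lift uses $\Delta^{0}\to\Delta^{1}$ being anodyne. The endpoint $g'$ of $\tilde e$ is a vertex of $\mathbf{M}\pr{B,X_{0}}$, that is, a map $g'\colon B\to X$ with $g'i=f$ on the nose. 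The edge $\tilde e$ lies in a single component, so Corollary \ref{cor:ACK25} gives $L\pr g\simeq L\pr{g'}$ in $\cat M$.

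The main obstacle is the first step together with the compatibility check. We must produce a simplicial resolution whose degree-$0$ term is literally $X$. We must also verify that the natural equivalence of Corollary \ref{cor:ACK25} is natural in $A$ along $i$, so that it transports the strict restriction $i^{\ast}$ to precomposition by $L\pr i$ and sends vertices to their images. For the resolution, we would use the semi-model axiom M5(ii) only on maps with cofibrant domain, or else the dual factorization available for maps into fibrant targets; the latter is what matching-object fibrations need. If this step fails, a fallback is to take any simplicial resolution $X_{\bullet}$ with a weak equivalence $X\to X_{0}$ and argue with the Kan fibration $\mathbf{M}\pr{B,X_{\bullet}}\to\mathbf{M}\pr{A,X_{\bullet}}$ after postcomposing. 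However, this would yield a strict factorization only through $X_{0}$, so the degree-$0$ identification is the essential point.
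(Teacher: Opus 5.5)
There is a genuine gap in your first step; the path-lifting argument after it is fine. The statement is about the paper's (left) semi-model categories. There, the only factorization available for an arbitrary map is M5-i (cofibration followed by trivial fibration). A factorization into a weak equivalence followed by a fibration (M5-ii) is guaranteed only when the domain is cofibrant. No ``dual'' factorization for maps into fibrant targets appears among the axioms.

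Building a Reedy fibrant $X_\bullet$ with $X_0=X$ and $X_0\to X_n$ weak equivalences needs two things the axioms do not supply when $X$ is not cofibrant:
\begin{itemize}
\item already at $n=1$, a path object $X\to X_1\to X\times X$, i.e.\ a factorization of the diagonal of $X$ as a weak equivalence followed by a fibration;
\item in higher degrees, factorizations out of the latching object $L_nX_\bullet$, which is built from copies of $X$.
\end{itemize}
The statement assumes only that $X$ is fibrant, and the later applications (Propositions \ref{prop:semi_bousfield} and \ref{prop:3.3.16_Hir}) use it for fibrant objects that need not be cofibrant. Your fallback, as you note, only gives a strict extension into $X_0$, not into $X$. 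As written, your argument proves the statement for genuine model categories, or when $X$ is also cofibrant, but not in general.

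The fix is to resolve the cofibrant source side instead, where M5-i suffices; this is what the paper does. First take a Reedy trivial fibration $A^\bullet\to\delta(A)$ from a cosimplicial resolution, with $A^0=A$. Then factor $A^\bullet\to\delta(A)\to\delta(B)$ as a Reedy cofibration $A^\bullet\to B^\bullet$ followed by a Reedy trivial fibration $B^\bullet\to\delta(B)$. Arrange the latter to be the identity in degree $0$, so that the degree-$0$ component of $A^\bullet\to B^\bullet$ is $i$. Proposition \ref{prop:16.5.2}(2), applied to this map and to $X\to\ast$, makes $\mathbf{M}(B^\bullet,X)\to\mathbf{M}(A^\bullet,X)$ a Kan fibration. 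On vertices it is $g\mapsto gi$. The dual of Corollary \ref{cor:ACK25} turns hypothesis (3) into an edge from $gi$ to $f$, and lifting it produces $g'$ exactly as in your third paragraph.

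The naturality you worried about is not actually needed. The restriction is strictly precomposition on vertices. So it suffices to know, separately at $A$ and at $B$, that two vertices lie in the same component if and only if their images in $\mathcal{M}$ are equivalent.
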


\begin{proof}
A closer inspection of the proof of Proposition \ref{prop:Reedy}
shows that the constant cosimplicial object $\delta\pr A$ at $A$
admits a trivial Reedy fibration $A^{\bullet}\to\delta\pr A$ from
a cosimplicial resolution, such that its $0$th degree is the identity
map of $A$. By the same token, the composite $A^{\bullet}\to\delta\pr A\to\delta\pr B$
can be factored as 
\[\begin{tikzcd}
	{A^\bullet} & {\delta(A)} \\
	{B^\bullet } & {\delta(B)}
	\arrow[from=1-1, to=1-2]
	\arrow["{i'}"', from=1-1, to=2-1]
	\arrow["{\delta(i)}", from=1-2, to=2-2]
	\arrow["p"', from=2-1, to=2-2]
\end{tikzcd}\]where $i'$ is a Reedy cofibration and $p$ is a Reedy fibration whose
$0$th degree is the identity map of $B$. By the dual of Corollary
\ref{cor:ACK25}, we can think of a homotopy $L\pr gL\pr i\simeq L\pr f$
as a path $\alpha\from gi\to f$ in the Kan complex $\mathbf{M}\pr{A^{\bullet},X}$.
By Proposition \ref{prop:16.5.2}, the map
\[
\mathbf{M}\pr{B^{\bullet},X}\to\mathbf{M}\pr{A^{\bullet},X}
\]
is a Kan fibration, so the path $\alpha$ can be lifted to a morphism
$g\to g'$ in $\mathbf{M}\pr{B^{\bullet},X}$. The map $g'$ has the
desired property.
\end{proof}

\subsection{Homotopy colimits in Semi-model categories}

Let $\mathbf{M}$ be a semi-model category, let $\cat C$ be a small
category, and let $F\from\cat C^{\rcone}\to\mathbf{M}$ be a diagram.
We say that $F$ is a \textbf{homotopy colimit diagram }if its image
in the underlying $\infty$-category of $\mathbf{M}$ is a colimit
diagram. In this subsection, we prove several results to detect homotopy
colimit diagrams (Proposition \ref{prop:hocolim} and \ref{cor:RR15_3.1}).

We start with the following result on cofibrantly generating semi-model
structures:
\begin{prop}
\label{prop:cofibrantgeneration}Let $\mathbf{M}$ be a locally presentable
category, and let $I$ and $J$ be sets of maps of $\mathbf{M}$.
Call $I$-cofibrations cofibrations, and suppose that the domains
of maps in $I$ and $J$ are cofibrant (i.e., the map from the initial
object is a cofibration). Let $W$ be a class of maps that is closed
under retracts and satisfies the two out of three property. Suppose
that:
\begin{enumerate}
\item $I\-\inj=J\-\inj\cap W$.
\item $\pr{J\-\cof}_{c}\subset I\-\cof\cap W$, where the subscript $c$
indicates the subclass of maps with cofibrant domains.
\end{enumerate}
Then $\mathbf{M}$ becomes a tractable semi-model category with generating
sets $I$ and $J$ and with weak equivalences $W$.
\end{prop}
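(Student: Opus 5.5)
This is the semi-model analogue of the recognition theorem for cofibrantly generated model categories (Hovey 2.1.19, Hirschhorn 11.3.1). The plan is to run that proof, restricting every acyclic factorization and lifting statement to cofibrant domains as the axioms M4(ii) and M5(ii) require. Throughout, set fibrations to be $J\-\inj$, trivial fibrations to be $I\-\inj$, and cofibrations to be $I\-\cof$.

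The first step checks the axioms M1--M3. Fibrations, being defined by a right lifting property, are closed under pullback, composition and retracts. $W$ satisfies two out of three and is closed under retracts by hypothesis; it contains isomorphisms because isomorphisms lie in $I\-\inj=J\-\inj\cap W$. The class $I\-\cof$ is closed under composition and retracts by definition.

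The second step produces the factorizations. Since $\mathbf{M}$ is locally presentable, every object is small, so Quillen's small object argument applies to $I$ and to $J$ and yields functorial factorizations. Factoring an arbitrary map with respect to $I$ gives a map in $I\-\cof$ followed by a map in $I\-\inj$. By hypothesis (1) the second map lies in $J\-\inj\cap W$, so it is a trivial fibration, and M5(i) holds. For M5(ii), factor a map $f\from A\to B$ with $A$ cofibrant using $J$, giving $A\to C\to B$ with $A\to C$ a relative $J$-cell complex and $C\to B$ in $J\-\inj$. That first map is in $J\-\cof$ and has cofibrant domain, so hypothesis (2) places it in $I\-\cof\cap W$, which means it is a trivial cofibration.

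The third step handles lifting, and the main obstacle is the identification of trivial fibrations with $W\cap F$; this is exactly what hypothesis (1) supplies. By hypothesis (1), $W\cap F=J\-\inj\cap W=I\-\inj$, and every $I$-cofibration lifts against $I\-\inj$, which gives M4(i). For M4(ii), take a trivial cofibration $g\from A\to B$ with $A$ cofibrant. Factor it as in M5(ii) as $A\to C\to B$, with $A\to C$ in $J\-\cof$ and $C\to B$ in $J\-\inj$. By two out of three, $C\to B$ is in $W$, so by hypothesis (1) it lies in $I\-\inj$. Since $g$ is a cofibration, it lifts against $C\to B$. The retract argument (Remark \ref{rem:lifting}) then exhibits $g$ as a retract of $A\to C\in J\-\cof$. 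Consequently $g$ has the left lifting property against $J\-\inj$, that is, against fibrations.

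Finally, finite bicompleteness is automatic because $\mathbf{M}$ is locally presentable. The generating-set conditions hold by construction, since $F=J\-\inj$ and $W\cap F=I\-\inj$. The domains of the maps in $I$ and $J$ are cofibrant by hypothesis, so the resulting semi-model structure is tractable with generating sets $I$ and $J$ and weak equivalences $W$.
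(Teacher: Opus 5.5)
Your proposal is correct and follows the paper's proof. Both declare the fibrations to be the $J$-injectives, get M1--M3 trivially, M4(i) and M5(i) from hypothesis (1), and M5(ii) from hypothesis (2) via the small object argument. The only difference is M4(ii): you factor $g$ through a relative $J$-cell complex, use (1), (2) and two out of three to obtain a lift, and apply the retract argument. The paper simply asserts this axiom holds ``by definition'', so your argument supplies the justification it omits.
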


\begin{proof}
Declare a map to be a fibration if it is a $J$-injective. We claim
that this, together with the cofibrations and weak equivalences given,
determines a semi-model structure on $\mathbf{M}$. Axioms M1 through
M3 hold trivially. Axiom M4-i holds by (1), and axiom M4-ii holds
by definition. Axiom M5-i and M5-ii hold by (1) and (2), respectively,
using the small object argument.
\end{proof}

\begin{cor}
[Projective semi-model structure]\label{cor:projective} Let $\mathbf{M}$
be a tractable semi-model category, and let $\cat C$ be a small category.
The functor category $\Fun\pr{\cat C,\mathbf{M}}$ admits a tractable
semi-model structure, called the \textbf{projective semi-model structure},
whose weak equivalences and fibrations are the natural transformations
whose components are weak equivalences and fibrations, respectively.
\end{cor}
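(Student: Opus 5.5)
We obtain the projective semi-model structure on $\Fun\pr{\cat C,\mathbf{M}}$ by transfer along the free--forgetful adjunction, applying Proposition \ref{prop:cofibrantgeneration}. Choose generating sets $I,J$ of cofibrations and trivial cofibrations of $\mathbf{M}$ whose elements have cofibrant domains, and put
\[
I_{\cat C}=\{\cat C\pr{C,-}\cdot i\}_{i\in I,\,C\in\cat C},\qquad J_{\cat C}=\{\cat C\pr{C,-}\cdot j\}_{j\in J,\,C\in\cat C}.
\]
Let $W$ be the class of objectwise weak equivalences; it is closed under retracts and satisfies two out of three because $W_{\mathbf{M}}$ does. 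The category $\Fun\pr{\cat C,\mathbf{M}}$ is locally presentable since $\mathbf{M}$ is and $\cat C$ is small.

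\textbf{Cofibrant domains.} The domains $\cat C\pr{C,-}\cdot A$, with $A$ the domain of a map in $I$ or $J$, must be $I_{\cat C}$-cofibrant. Since $\cat C\pr{C,-}\cdot\pr -$ is left adjoint to evaluation at $C$, it carries $I$-cofibrations to $I_{\cat C}$-cofibrations. As $A$ is $I$-cofibrant (Remark \ref{rem:lifting}), $\cat C\pr{C,-}\cdot A$ is $I_{\cat C}$-cofibrant.

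\textbf{Condition (1) of Proposition \ref{prop:cofibrantgeneration}.} By adjunction, $I_{\cat C}\-\inj$ consists of the objectwise $I$-injectives, i.e. the objectwise trivial fibrations, and $J_{\cat C}\-\inj$ consists of the objectwise fibrations. An objectwise map is a trivial fibration in $\mathbf{M}$ exactly when it is a fibration and a weak equivalence. Hence $I_{\cat C}\-\inj=J_{\cat C}\-\inj\cap W$.

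\textbf{Condition (2).} This is the main obstacle: we must show $\pr{J_{\cat C}\-\cof}_{c}\subset I_{\cat C}\-\cof\cap W$. First, $J\subset I\-\cof$ gives $J_{\cat C}\subset I_{\cat C}\-\cof$, so $J_{\cat C}\-\cof\subset I_{\cat C}\-\cof$.

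For acyclicity, take a $J_{\cat C}$-cell complex $X\to Y$ with $X$ cofibrant, built by pushouts along $\cat C\pr{C,-}\cdot j$ and transfinite composition. Evaluate at $D\in\cat C$. Each pushout becomes a pushout along a coproduct $\coprod_{\cat C\pr{C,D}}j$, which is a trivial cofibration between cofibrant objects of $\mathbf{M}$. Every stage stays projectively cofibrant, so its value at $D$ is cofibrant: evaluation preserves $I_{\cat C}$-cofibrations of cofibrant objects, because $\cat C\pr{C,D}\cdot i$ is a cofibration with cofibrant domain. Each pushout is thus a pushout of a trivial cofibration with cofibrant domain along a map of cofibrant objects, which is a trivial cofibration by the semi-model axioms (lifting against fibrations, M4-ii together with Remark \ref{rem:lifting}). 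Transfinite compositions of trivial cofibrations between cofibrant objects are again trivial cofibrations, since they have the left lifting property against fibrations and Remark \ref{rem:lifting} applies.

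Retracts are handled by closure of $W$ under retracts. A general element of $\pr{J_{\cat C}\-\cof}_{c}$ is a retract of a cell complex, and we may take the cell complex to have the same cofibrant domain. Evaluating objectwise, such a map is a weak equivalence. This verifies (2), and Proposition \ref{prop:cofibrantgeneration} yields the tractable projective semi-model structure, whose fibrations are the $J_{\cat C}$-injectives, i.e. the objectwise fibrations.
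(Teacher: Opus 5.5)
Your proposal is correct and follows essentially the same route as the paper: both apply Proposition \ref{prop:cofibrantgeneration} to the sets $I_{\cat C}$, $J_{\cat C}$ built from the free functors $\cat C(C,-)\cdot(-)$, together with the objectwise weak equivalences. The differences are minor. You use that $J$ consists of $I$-cofibrations, where the paper instead enlarges $I$ to $I\cup J$. You also spell out details the paper leaves implicit: that the domains of $I_{\cat C}$ and $J_{\cat C}$ are cofibrant, and why the components of a $J_{\cat C}$-cofibration with cofibrant domain are trivial cofibrations.
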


\begin{proof}
Let $I,J$ be generating sets of cofibrations and trivial cofibrations
of $\mathbf{M}$ with cofibrant domains. Replacing $I$ by $I\cup J$,
we may assume that $J\subset I$. Consider the following sets of maps
of $\Fun\pr{\cat C,\mathbf{M}}$:
\[
I_{\cat C}=\{\cat C\pr{C,-}\cdot i\mid C\in\cat C,\,i\in I\},\,J_{\cat C}=\{\cat C\pr{C,-}\cdot j\mid C\in\cat C,j\in J\}.
\]
Note that $I_{\cat C}$-injectives are exactly the natural transformations
whose components are trivial fibrations, and $J_{\cat C}$-injectives
are the natural transformations whose components are fibrations. We
claim that the sets $I_{\cat C}$ and $J_{\cat C}$ and the class
of natural weak equivalences satisfy conditions (1) and (2) of Proposition
\ref{prop:cofibrantgeneration}. 

Condition (1) is clear from the characterization of $I_{\cat C}$-injectives
and $J_{\cat C}$-injectives we gave in the previous paragraph. For
(2), observe that the components of a $J_{\cat C}$-cofibration with
a cofibrant domain are trivial cofibrations. It follows that every
$J_{\cat C}$-cofibrations with a cofibrant domain is a natural weak
equivalence. Also, since $J_{\cat C}\subset I_{\cat C}$, every $J_{\cat C}$-cofibration
is an $I_{\cat C}$-cofibration. Hence condition (2) is satisfied,
and the proof is complete.
\end{proof}

Corollary \ref{cor:projective} allows us to detect homotopy colimit
diagrams in semi-model categories. 
\begin{prop}
\label{prop:hocolim}Let $\mathbf{M}$ be a tractable semi-model category
or a combinatorial model category, let $\cat C$ be a small category,
and let $F\from\cat C^{\rcone}\to\mathbf{M}$ be a colimit diagram.
If $F\vert\cat C$ is projectively cofibrant, then $F$ is a homotopy
colimit diagram.
\end{prop}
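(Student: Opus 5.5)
The strategy is to compare $F$ with a cofibrant replacement in the projective semi-model structure on $\Fun\pr{\cat C^{\rcone},\mathbf{M}}$. We then identify the strict colimit with the homotopy colimit through the adjunction $\colim\dashv\mathrm{const}$. Write $G=F\vert\cat C$.

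The first step is to show that $\colim\from\Fun\pr{\cat C,\mathbf{M}}_{\mathrm{proj}}\to\mathbf{M}$ is left Quillen. Its right adjoint is the constant diagram functor, which preserves fibrations and trivial fibrations because these are defined componentwise (Corollary \ref{cor:projective}). Hence $\colim$ carries cofibrations and trivial cofibrations of cofibrant objects to cofibrations and trivial cofibrations. In the semi-model setting we need Ken Brown's lemma in its semi-model form to conclude that $\colim$ preserves weak equivalences between projectively cofibrant diagrams. This form holds because factorizations of maps with cofibrant domain are available, by axiom M5-ii.

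The second step is to identify the total left derived functor $\mathbb{L}\colim$, restricted to cofibrant diagrams and passed to underlying $\infty$-categories, with the $\infty$-categorical colimit functor $\Fun\pr{\cat C,\cat M}\to\cat M$. We use Corollary \ref{cor:7.9.8} to identify $\Fun\pr{\cat C,\mathbf{M}}[\weq^{-1}]\simeq\Fun\pr{\cat C,\cat M}$. The Quillen adjunction $\colim\dashv\mathrm{const}$ induces an adjunction of underlying $\infty$-categories. Its right adjoint is $\mathbb{R}\mathrm{const}$, and this is simply $\mathrm{const}$ on $\Fun\pr{\cat C,\cat M}$, since $\mathrm{const}$ preserves all weak equivalences. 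Uniqueness of left adjoints then identifies the derived $\colim$ with $\colim_{\cat C}$ in $\cat M$. Moreover the counit of the $\infty$-adjunction is represented by the strict map $G\to\mathrm{const}\pr{\colim G}$ when $G$ is cofibrant, and $F$ encodes exactly this map.

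The main obstacle is the existence of an induced $\infty$-adjunction from a Quillen adjunction of semi-model categories, together with the description of its unit and counit. For model categories this is standard (for instance \cite[Theorem 7.5.30]{HCHA}, or the mapping-space argument). For semi-model categories I would use Corollary \ref{cor:ACK25}. For cofibrant $G$ and fibrant $X$ we have $\mathbf{M}\pr{\colim G^{\bullet},X}\cong\Fun\pr{\cat C,\mathbf{M}}\pr{G^{\bullet},\mathrm{const}X}$, where $G^{\bullet}$ is a cosimplicial resolution of $G$ in the Reedy structure on cosimplicial diagrams. This holds provided $\colim G^{\bullet}$ is again a cosimplicial resolution, which follows from the first step. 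Both sides compute mapping spaces in $\cat M$ and in $\Fun\pr{\cat C,\cat M}$ respectively. This exhibits $L\pr{F\pr{\infty}}$ as corepresenting $\mathrm{Map}\pr{L\circ G,\mathrm{const}\pr -}$ via the cone given by $F$, so $F$ is a colimit diagram in $\cat M$.

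For a combinatorial model category the same argument applies verbatim. Alternatively, one can cite the standard identification of homotopy colimits with derived colimits.
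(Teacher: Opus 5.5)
Your argument is correct in outline, but it takes a different route from the paper. The paper never uses $\colim\dashv\mathrm{const}$. It applies the derived-adjunction formalism to $i_{!}\dashv i^{*}$ for the inclusion $i\from\cat C\hookrightarrow\cat C^{\rcone}$, and only at the level of homotopy categories. Via Corollary \ref{cor:7.9.8}, the derived adjunction becomes an adjunction between $\ho\pr{\Fun\pr{\cat C,\cat M}}$ and $\ho\pr{\Fun\pr{\cat C^{\rcone},\cat M}}$ whose right adjoint is $\ho\pr{i^{*}}$. Uniqueness of left adjoints then gives $\mathbb{L}i_{!}\cong\ho\pr{i_{!}}$, where $i_{!}$ is now the $\infty$-categorical left Kan extension. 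Since $F\cong i_{!}\pr{F\vert\cat C}$ strictly and $F\vert\cat C$ is cofibrant, the image of $F$ lies in the essential image of the $\infty$-categorical $i_{!}$, i.e.\ it is a colimit diagram. The cone is part of the object $F\in\Fun\pr{\cat C^{\rcone},\mathbf{M}}$, so no unit has to be tracked and no $\infty$-categorical adjunction is needed.

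With $\colim\dashv\mathrm{const}$, the same uniqueness argument only gives an abstract equivalence $\mathbb{L}\colim\pr G\simeq\colim_{\cat C}\pr{L\circ G}$. That is exactly why you need your last paragraph. Your mapping-space argument proves the statement on its own, so your second step is superfluous. It does cost more:
\begin{enumerate}
\item the Reedy structure on cosimplicial objects of $\Fun\pr{\cat C,\mathbf{M}}_{\mathrm{proj}}$;
\item the cosimplicial dual of Corollary \ref{cor:ACK25}, used both in $\mathbf{M}$ and in $\Fun\pr{\cat C,\mathbf{M}}$;
\item an identification you assert but do not carry out, namely that the resulting equivalence $\cat M\pr{L\pr{\colim G},L\pr X}\simeq\Fun\pr{\cat C,\cat M}\pr{L\circ G,\mathrm{const}\,L\pr X}$ is homotopic to precomposition with the cone $L\pr F$.
\end{enumerate}
For the third item you need a short naturality-plus-Yoneda argument. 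It uses naturality in $X$ and the vertex description in Corollary \ref{cor:ACK25}, evaluated at a fibrant replacement of $\colim G$. Agreement on vertices alone would not suffice.

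Two minor slips. First, $G\to\mathrm{const}\pr{\colim G}$ is the unit, not the counit, of $\colim\dashv\mathrm{const}$. Second, Ken Brown's lemma rests on factoring $A\amalg B\to B$ as a cofibration followed by a trivial fibration (axiom M5-i), not on M5-ii.
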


\begin{proof}
Let $i\from\cat C\hookrightarrow\cat C^{\rcone}$ denote the inclusion,
and consider the adjunction
\[
i_{!}\from\Fun\pr{\cat C,\mathbf{M}}\adj\Fun\pr{\cat C^{\rcone},\mathbf{M}}\from i^{*}
\]
between the left Kan extension functor $i_{!}$ and the restriction
functor $i^{*}$. By the theory of derived adjunctions \cite[Theorem 2.2.11]{cathtpy},
this adjunction induces an adjunction 
\[
\mathbb{L}i_{!}\from\ho\pr{\Fun\pr{\cat C,\mathbf{M}}}\adj\ho\pr{\Fun\pr{\cat C^{\rcone},\mathbf{M}}}\from\mathbb{R}i^{*}
\]
of homotopy categories. Explicitly, $\mathbb{R}i^{*}=\ho\pr{i^{*}}$
and $\mathbb{L}i_{!}=\ho\pr{i_{!}\circ Q}$, where $Q$ is a projective
cofibrant replacement functor of $\Fun\pr{\cat C,\mathbf{M}}$. (Note
that $i_{!}\circ Q$ preserves weak equivalences by Ken Brown's lemma
\cite[Corollary 7.4.14]{HCHA}.)

Now using Corollary \ref{cor:7.9.8}, we can rewrite the above adjunction
as
\[
\mathbb{L}i_{!}\from\ho\pr{\Fun\pr{\cat C,\cat M}}\adj\ho\pr{\Fun\pr{\cat C^{\rcone},\cat M}}\from\ho\pr{i^{*}},
\]
where $\cat M=\mathbf{M}[\weq^{-1}]$ denotes the underlying $\infty$-category
of $\mathbf{M}$. By the uniqueness of left adjoints, it follows that
$\mathbb{L}i_{!}\cong\ho\pr{i_{!}}$, where $i_{!}\from\Fun\pr{\cat C,\mathcal{M}}\to\Fun\pr{\cat C^{\rcone},\cat M}$
denotes the left Kan extension functor. (It exists by Remark \ref{rem:und_semi}.)
Thus, the image of $F$ in $\Fun\pr{\cat C^{\rcone},\cat M}$ lies
in the essential image of the left Kan extension functor $i_{!}$,
meaning that it is a colimit diagram in $\cat M$.
\end{proof}

\begin{cor}
\label{cor:RR15_3.1}Let $\mathbf{M}$ be a tractable semi-model category
or a combinatorial model category, let $\kappa$ be a regular cardinal.
Suppose that $\mathbf{M}$ admits a generating set of cofibrations
consisting of maps of $\kappa$-compact objects. Then:
\begin{enumerate}
\item Weak equivalences of $\mathbf{M}$ are stable under $\kappa$-filtered
colimits.
\item The localization functor $L\from\mathbf{M}\to\mathbf{M}[\weq^{-1}]$
preserves $\kappa$-filtered colimits.
\end{enumerate}
\end{cor}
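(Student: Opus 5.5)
Part (1) reduces to the case of a generating cofibration, and part (2) follows from part (1) combined with Proposition \ref{prop:hocolim}.

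\emph{Part (1).} Let $\mathbf{I}$ be a $\kappa$-filtered category and $f\from F\to G$ a natural weak equivalence in $\Fun\pr{\mathbf{I},\mathbf{M}}$. We must show that $\colim f$ is a weak equivalence.
\begin{enumerate}
\item Factor $F\to G$ in the projective semi-model structure (Corollary \ref{cor:projective}). First take a projectively cofibrant replacement $QF\to F$, which is a trivial fibration. Then factor $QF\to G$ as a projective cofibration $QF\to G'$ followed by a trivial fibration $G'\to G$. By two out of three, $QF\to G'$ is a natural weak equivalence.
\item Show that a natural transformation whose components are trivial fibrations has colimit a trivial fibration. A map is a trivial fibration exactly when it has the right lifting property against the generating set $I$. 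The elements of $I$ have $\kappa$-compact domain and codomain, and $\mathbf{I}$ is $\kappa$-filtered. Hence any lifting problem against $\colim p$ factors through some stage $i\in\mathbf{I}$, where it can be solved.
\item This reduces the claim to the map $\colim QF\to\colim G'$, where $QF\to G'$ is a projective cofibration of projectively cofibrant diagrams and a weak equivalence. By Proposition \ref{prop:hocolim}, both colimits are homotopy colimits, i.e.\ colimits in the underlying $\infty$-category. Since $QF\to G'$ is an equivalence in $\Fun\pr{\mathbf{I},\mathcal{M}}$ (Corollary \ref{cor:7.9.8}), the induced map of colimits is an equivalence in $\mathcal{M}$. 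By Corollary \ref{cor:ACK25_1}, it is therefore a weak equivalence.
\end{enumerate}
One compatibility must be checked: that $\colim QF\to\colim F$ is itself a weak equivalence. This is the trivial-fibration statement of step 2 applied to $QF\to F$.

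\emph{Part (2).} Take any $\kappa$-filtered diagram $F$. The map $\colim QF\to\colim F$ is a weak equivalence by step 2. The diagram $QF$ is projectively cofibrant, so Proposition \ref{prop:hocolim} makes $\colim QF$ a homotopy colimit. Hence $L$ carries the strict colimit cocone of $F$ to a colimit cocone in $\mathcal{M}$.

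We also use that $\kappa$-filtered colimits in $\mathcal{M}$ may be computed from $\kappa$-filtered diagrams indexed by ordinary categories. This holds because any $\kappa$-filtered $\infty$-category admits a cofinal map from a $\kappa$-directed poset (Kerodon Tag 02QA, as used in Proposition \ref{prop:strongly_comb_ind}). Finally, Corollary \ref{cor:7.9.8} lets us lift diagrams indexed by such posets.

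The main obstacle is step 2 in the semi-model setting. Characterizing trivial fibrations by the right lifting property against $I$ needs only axiom M4-i and Remark \ref{rem:lifting}, so it is valid for semi-model categories. The compactness argument requires that the domains and codomains of maps in $I$ be $\kappa$-compact, which is exactly the hypothesis of the corollary.
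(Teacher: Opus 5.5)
Your proposal is correct and follows the paper's proof essentially step for step. Both arguments reduce to $\kappa$-filtered ordinary (poset-indexed) diagrams, replace $F\to G$ by a map of projectively cofibrant diagrams over projective trivial fibrations, use $\kappa$-compactness of the generating cofibrations to show that $\operatorname{colim}$ carries projective trivial fibrations to trivial fibrations, and obtain (2) from this together with Proposition \ref{prop:hocolim}. The only difference is your step 3: the paper simply notes that $\operatorname{colim}$ is left Quillen for the projective (semi-)model structure and applies Ken Brown's lemma, whereas you pass through Proposition \ref{prop:hocolim} and Corollary \ref{cor:ACK25_1}, and both routes work.
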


\begin{proof}
For both (1) and (2), it suffices to consider diagrams indexed by
$\kappa$-filtered categories, because every $\kappa$-filtered $\infty$-category
admits a final functor from a $\kappa$-filtered category \cite[\href{https://kerodon.net/tag/02QA}{Tag 02QA}]{kerodon}.

Part (2) follows from part (1) and Proposition \ref{prop:hocolim},
so it suffices to prove (1). For this, let $\cat C$ be a $\kappa$-filtered
category, and let $\alpha\from F\xrightarrow{\simeq}G$ be a natural
weak equivalence of $\Fun\pr{\cat C,\mathbf{M}}$. We wish to show
that $\colim_{\cat C}\alpha$ is a weak equivalence. For this, find
a commutative diagram
\[\begin{tikzcd}
	{F'} & {G'} \\
	F & G
	\arrow["{\alpha'}", from=1-1, to=1-2]
	\arrow[from=1-1, to=2-1]
	\arrow[from=1-2, to=2-2]
	\arrow["\alpha", from=2-1, to=2-2]
\end{tikzcd}\]where the vertical arrows are projective trivial fibrations, and $F'$
and $G'$ are projectively cofibrant. Since the functor $\colim_{\cat C}$
is left Quillen for the projective (semi-)model structure, Ken Brown's
lemma \cite[Corollary 7.4.14]{HCHA} shows that it preserves weak
equivalences of projectively cofibrant objects. Thus $\colim_{\cat C}\alpha'$
is a weak equivalence. Hence, by two out of three property of weak
equivalences, it suffices to show that $\colim_{\cat C}$ carries
projective trivial fibrations to trivial fibrations. This follows
from our assumption on $\mathbf{M}$.
\end{proof}

\subsection{Jeff Smith's theorem and Bousfield localization}

In a tractable semi-model category, fibrations are exactly the maps
having the right lifting property for trivial cofibrations of cofibrant
objects. Therefore, a tractable semi-model structure is completely
determined by the class of cofibrations and weak equivalences. This
prompts the following question: Given a locally presentable category
and a class of cofibrations and weak equivalences, when does it determine
a tractable semi-model structure? 

Surprisingly, there is a complete answer to this question (Theorem
\ref{thm:Smith}). Using this, we will show that semi-model categories
enjoy excellent properties for Bousfield localization (Proposition
\ref{prop:semi_bousfield}).

We start by giving a precise answer to the question above, summarized
in the following theorem. In the model-categorical setting, the theorem
is often attributed to Jeff Smith.
\begin{thm}
\label{thm:Smith}Let $\mathbf{M}$ be a locally presentable category
equipped with two classes $\pr{C,W}$ of maps, whose elements are
called cofibrations and weak equivalences. Then $\pr{C,W}$ is part
of a tractable semi-model structure on $\mathbf{M}$ if and only if
the following conditions are satisfied:
\begin{enumerate}
\item The class $W$, considered as a full subcategory of $\Fun\pr{[1],\mathbf{M}}$,
is accessible and accessibly embedded.
\item The class $W$ is closed under retracts and has the two out of three
property.
\item If a map has the right lifting property for cofibrations, then it
is a weak equivalence.
\item There is a small set $I$ of maps of cofibrant objects such that $C=I\-\cof$.
\item Given a pushout diagram
\[\begin{tikzcd}
	A & {A'} \\
	B & {B'}
	\arrow[from=1-1, to=1-2]
	\arrow["i"', from=1-1, to=2-1]
	\arrow["{i'}", from=1-2, to=2-2]
	\arrow[from=2-1, to=2-2]
\end{tikzcd}\]in $\mathbf{M}$, if $A$ and $A'$ cofibrant (i.e., maps from the
initial objects are cofibrations) and $i\in C\cap W$, then $i'\in C\cap W$.
\item The maps in $C\cap W$ with cofibrant domains are closed under transfinite
composition.
\end{enumerate}
\end{thm}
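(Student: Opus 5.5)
The plan is to prove the two directions separately. For the \emph{forward} direction I will check each of (1)--(6) for a tractable semi-model structure with generating sets $I,J$ of maps with cofibrant domains. Condition (4) holds by tractability, and (2) holds by axioms M2 and M3. Condition (3) is Remark \ref{rem:lifting}: a map with the right lifting property for cofibrations is a trivial fibration. Condition (5) holds because pushouts of trivial cofibrations have the left lifting property for fibrations, and the cofibrancy of $A'$ is needed only to conclude that $i'$ is again a trivial cofibration in the semi-model sense (M4-ii); I will again use Remark \ref{rem:lifting}. Condition (6) follows the same way, since left lifting properties are stable under transfinite composition.

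The only substantive point in this direction is (1), accessibility of $W$. I plan to deduce it from the localization $\mathbf{M}\to\mathbf{M}[\weq^{-1}]$ and Corollary \ref{cor:ACK25_1}. Choose $\kappa$ large enough that the generating cofibrations are maps between $\kappa$-compact objects. By Corollary \ref{cor:RR15_3.1}, this localization is then $\kappa$-accessible, and $W$ is the preimage of the equivalences. The equivalences form an accessible, accessibly embedded subcategory of $\Fun([1],\mathbf{M}[\weq^{-1}])$, so \cite[Proposition 5.4.6.6]{HTT} gives (1), exactly as in part (i) of the proof of Lemma \ref{lem:injloc}. To compare $\Fun([1],\mathbf{M})$ with $\Fun([1],\mathbf{M}[\weq^{-1}])$ I will use the projective structure together with Corollary \ref{cor:7.9.8}.

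For the \emph{converse} I will imitate Smith's proof, e.g.\ \cite{Bar10} or Lurie \cite[A.2.6]{HTT}, adapted via Proposition \ref{prop:cofibrantgeneration}. Take the given $I$ from (4). By (3) and the small object argument, $I\-\inj\subset W$, so $I\-\inj$ equals the class of maps that are $W$ and have the right lifting property against $C\cap W$ restricted to cofibrant domains. The key step is to produce a \emph{set} $J$ of maps in $C\cap W$ with cofibrant domains such that every map in $C\cap W$ with cofibrant domain is a retract of a $J$-cell complex, or at least has the left lifting property against $J\-\inj$. For this I use (1). Choose $\kappa$ such that $W$ is $\kappa$-accessible and accessibly embedded, the domains and codomains of $I$ are $\kappa$-compact, and $\kappa$-filtered colimits behave well. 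Then let $J$ be a set of representatives of maps in $C\cap W$ between $\kappa$-compact cofibrant objects. The solution-set argument shows that for any $i\colon A\to B$ in $C\cap W$ with $A$ cofibrant, a factorization problem can be filtered by $\kappa$-small subcomplexes, each lying in $J$ up to enlargement. Smith's lemma then gives $C\cap W\cap\{\text{cofibrant domain}\}\subset J\-\cof$, with (5) and (6) ensuring that $J$-cell complexes with cofibrant domain lie in $C\cap W$.

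Finally I verify condition (1) of Proposition \ref{prop:cofibrantgeneration}, namely $I\-\inj=J\-\inj\cap W$. The inclusion $\subset$ is clear. For $\supset$, given $p\in J\-\inj\cap W$, factor $p=qi$ with $i\in I\-\cof$ and $q\in I\-\inj$; two out of three puts $i\in C\cap W$. Here the trouble is that the domain of $i$ need not be cofibrant, so the usual retract argument fails. I plan to handle this by first reducing to checking lifting against $I$, whose maps have cofibrant domains. For a square from some $a\in I$ into $p$, I factor relative to the cofibrant domain of $a$ and argue on that factorization. This reduction, together with the solution-set step, is where I expect the main obstacle to lie, since the semi-model setting forces every smallness argument to remain within cofibrant objects.
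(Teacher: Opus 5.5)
Your necessity direction agrees with the paper's. Your checks of (2)--(6) are the same lifting arguments: the paper cites Remark~\ref{rem:7.14_JT07} where you use Remark~\ref{rem:lifting}. You obtain (1), as the paper does, from the accessibility of $\mathbf{M}\to\mathbf{M}[\weq^{-1}]$ (Corollary~\ref{cor:RR15_3.1}), \cite[Proposition 5.4.6.6]{HTT} and Corollary~\ref{cor:ACK25_1}. The comparison via the projective structure and Corollary~\ref{cor:7.9.8} is not needed, because colimits in $\Fun\pr{[1],-}$ are computed objectwise.

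For sufficiency, the paper proves nothing itself; it invokes \cite[Theorem B]{BW24}. Your self-contained attempt has a genuine gap exactly at the step you flag. The existence of a set $J$ such that every map of $C\cap W$ with cofibrant domain lies in $J\-\cof$ (or lifts against $J\-\inj$) is the entire content of Smith's theorem. Writing ``the solution-set argument \dots\ Smith's lemma then gives'' does not prove it. The classical form of that lemma (Beke's version of Smith's theorem, or \cite[Section A.2.6]{HTT}) uses that $C\cap W$ is closed under \emph{arbitrary} pushouts and transfinite compositions, and (5) and (6) do not give you that. For your particular $J$ (all maps of $C\cap W$ between $\kappa$-compact cofibrant objects), you would also need to bound the size of the factorizations involved, which you do not address.

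If you want to avoid citing \cite{BW24}, Beke's form of the argument adapts directly.

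First, by (1), choose $\kappa$ so that $W$ is $\kappa$-accessible, $W$ is closed under $\kappa$-filtered colimits in $\Fun\pr{[1],\mathbf{M}}$, and every $i\in I$ is $\kappa$-compact there. Then every square from $i\colon K\to L$ to a map of $W$ factors through a square $(a,b)$ from $i$ to some $w'\colon X'\to Y'$ in a small set of representatives of the $\kappa$-compact objects of $W$.

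Next, for each such $(a,b)$:
factor $a$ as $K\to X''\xrightarrow{t}X'$, an $I$-cell map followed by an $I$-injective;
factor the induced map $X''\amalg_{K}L\to Y'$ as an $I$-cell map followed by an $I$-injective $q\colon Z\to Y'$;
let $j\colon X''\to Z$ be the composite.
Then $j\in C$, and $j\in W$ by (3) and two out of three, since $qj=w't$. Its domain $X''$ is cofibrant, and the square $(a,b)$ factors through $j$.

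Let $J$ be the set of all such $j$. A square from $i$ into any $p\in J\-\inj\cap W$ factors through some $j$, so a lift against $j$ gives a lift against $i$; hence $I\-\inj=J\-\inj\cap W$. The other hypothesis of Proposition~\ref{prop:cofibrantgeneration} follows from (5), (6) and retract-closure of $W$, as you say. The usual retract argument then finishes: factor a map of $C\cap W$ with cofibrant domain by the small object argument for $J$, and apply $I\-\inj=J\-\inj\cap W$. This shows such maps are $J$-cofibrations, which is what axiom M4-ii requires.

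This also makes your last paragraph unnecessary. Its reduction is sound: factor $K\to X$ through a cofibrant $X''$, factor $X''\to Y$ as $X''\to Z\to Y$, then lift first against $X''\to Z$ and then against the $I$-injective $Z\to Y$. But it only works once the missing step is in hand.
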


\begin{proof}
Sufficiency is proved in \cite[Theorem B]{BW24}.\footnote{In \cite{BW24}, it is required that weak equivalences with cofibrant
domains span an accessible and accessibly embedded full subcategory.
However, the proof goes through without the cofibrancy assumption,
as verified via private communication with David White.} For necessity, suppose that $\pr{C,W}$ is part of a tractable semi-model
structure on $\mathbf{M}$. We must check that conditions (1) through
(6) are satisfied. Conditions (2), (3), (6) hold trivially, and conditions
(4) and (5) hold by Remark \ref{rem:7.14_JT07}. For part (1), we
recall that from Corollary \ref{cor:RR15_3.1} that the functor 
\[
\mathbf{M}\to\mathbf{M}[\weq^{-1}]
\]
is accessible. Since the target of this functor is presentable (Remark
\ref{rem:und_semi}), the accessibility of $W$ follows from \cite[Proposition 5.4.6.6]{HTT}
and Corollary \ref{cor:ACK25_1}.
\end{proof}

We can use Theorem \ref{thm:Smith} to construct Bousfield localizations
of semi-model categories.
\begin{defn}
Let $\mathbf{M}$ be a semi-model category. A \textbf{left Bousfield
localization }of $\mathbf{M}$ is a semi-model category $\mathbf{M}'$
having the same underlying category and cofibrations as $\mathbf{M}$,
but which has more weak equivalences than $\mathbf{M}$.
\end{defn}

\begin{prop}
\label{prop:semi_bousfield}Let $\mathbf{M}$ be a tractable semi-model
category with underlying $\infty$-category $\mathcal{M}$. Let $F:\mathcal{M}\to\mathcal{N}$
be a functor admitting a fully faithful right adjoint $G$, where
$\mathcal{N}$ is presentable. There is a tractable left Bousfield
localization $\mathbf{N}$ of $\mathbf{M}$ with the following properties:

\begin{enumerate}[label=(\alph*)]

\item The weak equivalences are the maps inverted by the composite
$\mathbf{M}\to\mathcal{M}\xrightarrow{F}\mathcal{N}$.

\item The induced functor $\mathbf{N}\to\mathcal{N}$ is a localization
at weak equivalences.

\item The fibrant objects of $\mathbf{N}$ are the fibrant objects
of $\mathbf{M}$ whose image in $\cat M$ lies in the essential image
of $G$.

\end{enumerate}
\end{prop}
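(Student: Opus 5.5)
We build $\mathbf{N}$ with Jeff Smith's theorem (Theorem \ref{thm:Smith}). The cofibrations $C$ are those of $\mathbf{M}$. The weak equivalences $W_{\mathbf{N}}$ are the maps of $\mathbf{M}$ inverted by the composite $\mathbf{M}\xrightarrow{L}\mathcal{M}\xrightarrow{F}\mathcal{N}$. Since $W_{\mathbf{M}}\subset W_{\mathbf{N}}$, once the semi-model structure exists it is a left Bousfield localization. We then verify conditions (1)--(6) of Theorem \ref{thm:Smith} in order.

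The first four conditions are formal.
\begin{itemize}
\item Condition (1): by Corollary \ref{cor:RR15_3.1} the localization $L\from\mathbf{M}\to\mathcal{M}$ is accessible, and $F$ is a left adjoint between presentable $\infty$-categories (Remark \ref{rem:und_semi}). Hence $FL$ is accessible, and \cite[Proposition 5.4.6.6]{HTT} shows that $W_{\mathbf{N}}$ is accessible and accessibly embedded.
\item Condition (2): closure under retracts and two out of three hold because $W_{\mathbf{N}}$ is the preimage of the equivalences under a functor.
\item Condition (3): a map with the right lifting property for cofibrations is a trivial fibration of $\mathbf{M}$, hence lies in $W_{\mathbf{M}}\subset W_{\mathbf{N}}$.
\item Condition (4): this holds by tractability of $\mathbf{M}$.
\end{itemize}

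Conditions (5) and (6) are the homotopical content. For (5), take a pushout of cofibrant objects along $i\in C\cap W_{\mathbf{N}}$. A pushout along a cofibration of cofibrant objects is a homotopy pushout in $\mathbf{M}$; we get this from Proposition \ref{prop:hocolim} after replacing the span by a projectively cofibrant one, or via the Reedy structure on spans. Hence its image in $\mathcal{M}$ is a pushout. Since $F$ preserves colimits, it carries this to a pushout in $\mathcal{N}$ of the equivalence $F L(i)$, so $FL(i')$ is an equivalence.

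For (6), a transfinite composite of cofibrations between cofibrant objects is a homotopy colimit. Again this follows from Proposition \ref{prop:hocolim}: such a sequence is projectively cofibrant, or Reedy cofibrant when the ordinal is viewed as a direct category. Then $F$ preserves this colimit, and a transfinite composite of equivalences is an equivalence. I expect the main obstacle here to be justifying the homotopy-colimit claims in the semi-model setting, where only cofibrant-domain lifting is available. I would handle it by checking directly that the relevant diagrams are projectively cofibrant using the lifting axioms M4-i for cofibrant sources.

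With the structure in hand, (a) holds by construction. For (b), $\mathbf{N}[W_{\mathbf{N}}^{-1}]\simeq\mathcal{M}[(FL)\text{-equivalences}^{-1}]$. Since $G$ is fully faithful, $F$ is a localization at the maps it inverts, which gives $\mathcal{N}$.

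For (c), a fibrant object of $\mathbf{N}$ is fibrant in $\mathbf{M}$, because $C\cap W_{\mathbf{M}}\subset C\cap W_{\mathbf{N}}$. Conversely, we identify $\mathbf{N}$-fibrancy of a $\mathbf{M}$-fibrant $X$ with locality. By Remark \ref{rem:7.14_JT07} and Proposition \ref{prop:A.2.3.1}, $X$ is $\mathbf{N}$-fibrant exactly when $\mathcal{M}(LB,LX)\to\mathcal{M}(LA,LX)$ is an equivalence for every $i\from A\to B$ in $C\cap W_{\mathbf{N}}$ between cofibrant objects. Here Corollary \ref{cor:ACK25} computes these mapping spaces, and Proposition \ref{prop:A.2.3.1} turns homotopy lifts into strict lifts. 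This condition says precisely that $LX$ is $F$-local. Since $FLi$ is an equivalence, the $F$-local objects are the objects in the essential image of $G$. The direction from local to lifting needs care: lift first up to homotopy, then rectify with Proposition \ref{prop:A.2.3.1}, and use the relative version for the fibration condition.
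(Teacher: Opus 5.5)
Your construction of $\mathbf{N}$ via Theorem \ref{thm:Smith}, your verification of conditions (1)--(6), and your proofs of (a) and (b) are the same as in the paper.

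One correction on (5). The span $B\leftarrow A\to A'$ is not projectively cofibrant, because $A\to A'$ is an arbitrary map. So your fallback of checking projective cofibrancy directly will not work there. What does work is your other option: the Reedy structure on spans in which only the leg $A\to B$ raises degree, or a cube-lemma comparison with a projectively cofibrant replacement. The paper is equally brief at this point.

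The genuine difference is in (c).

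\emph{The paper's route.} It works on homotopy categories. By (b), the derived left adjoint of $\id\colon\mathbf{M}\rightleftarrows\mathbf{N}\colon\id$ is $\ho(F)$. By uniqueness of adjoints, the derived right adjoint is therefore $\ho(G)$, which at once places every $\mathbf{N}$-fibrant object in the essential image of $G$. For the converse, Corollary \ref{cor:ACK25_1} connects an object whose image lies in the essential image of $G$ to an $\mathbf{N}$-fibrant object by a zig-zag. Proposition \ref{prop:A.2.3.1} then transfers the extension property.

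\emph{Your route.} You characterize $\mathbf{N}$-fibrancy of an $\mathbf{M}$-fibrant $X$ as locality of $LX$ against the maps $Li$. You then identify $F$-local objects with the essential image of $G$. This classical route gives a more explicit description of the fibrant objects, but two steps you only gesture at must be made explicit.
\begin{enumerate}
\item To say your condition is exactly $F$-locality, you need every $F$-equivalence of $\mathcal{M}$ to be equivalent, as an arrow, to $Li$ for some cofibration $i$ between cofibrant objects. This follows from Corollary \ref{cor:7.9.8} for $[1]$ together with factorization.
\item In the direction fibrant $\Rightarrow$ local, strict lifting (with or without Proposition \ref{prop:A.2.3.1}) only gives surjectivity on $\pi_0$, not an equivalence of mapping spaces. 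Either invoke Proposition \ref{prop:16.5.2} in $\mathbf{N}$ with cosimplicial resolutions to get the full equivalence. Or note that $\pi_0$-surjectivity against a cofibrant model of the unit $LX\to GFLX$ already makes $LX$ a retract of an object in the essential image of $G$, which suffices.
\end{enumerate}

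The paper's uniqueness-of-adjoints argument sidesteps this bookkeeping, at the price of giving only the statement of (c) rather than an intrinsic locality criterion.
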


\begin{proof}
We first check that there is a tractable left Bousfield localization
of $\mathbf{M}$ satisfying condition (a). For this, we will use Theorem
\ref{thm:Smith} to the class $C$ of cofibrations of $\mathbf{M}$
and the class $W$ of maps inverted by the composite $\mathbf{M}\to\mathcal{M}\xrightarrow{F}\mathcal{N}$.
We must check conditions (1) through (6) of loc. cit. Condition (1)
follows from Corollary \ref{cor:RR15_3.1} and \cite[Proposition 5.4.6.6]{HTT}.
Conditions (2), (3), (4) hold trivially. Condition (5) and (6) follow
from Proposition \ref{prop:hocolim}, because the relevant diagrams
are homotopy colimit diagrams. (Note that the projective semi-model
structures for spans and transfinite sequences agree with the Reedy
semi-model structures. Thus projectively cofibrant diagrams are easy
to identify in these cases.)

We next check that the semi-model category $\mathbf{N}$ we just constructed
satisfies conditions (b) and (c).

For condition (b), notice that $F$ is a localization at the maps
it inverts \cite[\href{https://kerodon.net/tag/04JL}{Tag 04JL}]{kerodon}.
It follows that the composite $\mathbf{M}\to\mathcal{M}\xrightarrow{F}\mathcal{N}$
is a localization at $W$, as claimed.

Finally, for (c), note that \cite[Theorem 2.2.11]{cathtpy}, the adjunction
$\id\from\mathbf{M}\adj\mathbf{N}\from\id$ gives rise to the derived
adjunction
\[
\mathbb{L}\id\from\ho\pr{\mathcal{M}}\adj\ho\pr{\mathcal{N}}\from\mathbb{R}\id.
\]
Explicitly, $\mathbb{L}\id=\ho\pr F$, and $\mathbb{R}\id=\ho\pr R$,
where $R$ is a fibrant replacement functor of $\mathbf{N}$. By the
uniqueness of right adjoints, $\mathbb{R}\id$ is isomorphic to $\ho\pr G$.
This implies the following:
\begin{itemize}
\item If $X\in\mathbf{M}$ is fibrant in $\mathbf{N}$, then its image in
$\cat M$ lies in the essential image of $G$. 
\item If $X\in\mathbf{M}$ is an object whose image in $\cat M$ lies in
the essential image of $G$, then $X$ is connected by a zig-zag of
weak equivalences of $\mathbf{M}$ to a fibrant object of $\mathbf{N}$
(by Corollary \ref{cor:ACK25_1}).
\end{itemize}
The claim then follows from Proposition \ref{prop:A.2.3.1}.
\end{proof}

A model-categorical version of Proposition \ref{prop:semi_bousfield}
is well-known to the experts, but we could not find a reference. We
record it here for future reference:
\begin{prop}
\label{prop:bousfield}Let $\mathbf{M}$ be a combinatorial left proper
model category with underlying $\infty$-category $\mathcal{M}$,
and let $F:\mathcal{M}\to\mathcal{N}$ be a functor admitting a fully
faithful right adjoint $G$, where $\mathcal{N}$ is presentable.
There is a left Bousfield localization $\mathbf{N}$ of $\mathbf{M}$
(in the sense of model categories) with the following properties:

\begin{enumerate}[label=(\alph*)]

\item The weak equivalences are the maps inverted by the composite
$\mathbf{M}\to\mathcal{M}\xrightarrow{F}\mathcal{N}$.

\item The induced functor $\mathbf{N}\to\mathcal{N}$ is a localization
at weak equivalences.

\item The fibrant objects of $\mathbf{N}$ are the fibrant objects
of $\mathbf{M}$ whose image in $\cat M$ lies in the essential image
of $G$.

\end{enumerate}
\end{prop}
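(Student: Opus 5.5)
Since the statement is the model-categorical analogue of Proposition \ref{prop:semi_bousfield}, I will follow the same route, using left properness to replace Theorem \ref{thm:Smith} by its classical model-categorical counterpart. One could instead invoke Smith's theorem for combinatorial model categories directly, with the same class $C$ of cofibrations of $\mathbf{M}$ and the class $W$ of maps inverted by $\mathbf{M}\to\mathcal{M}\xrightarrow{F}\mathcal{N}$. However, in the full model setting one has to verify that trivial cofibrations with \emph{arbitrary} domain are stable under pushout and transfinite composition, and it is exactly here that left properness enters. So I would rather present $\mathbf{N}$ as a classical left Bousfield localization at a set of maps (Hirschhorn's theorem, or Barwick's for left proper combinatorial model categories) and then identify its weak equivalences with $W$.

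The key steps are as follows. First, since $\mathcal{N}$ is presentable and $G$ is fully faithful, $F$ is an accessible localization of the presentable $\infty$-category $\mathcal{M}$ (presentable by Proposition \ref{prop:strongly_comb_ind}). By \cite[Proposition 5.5.4.15]{HTT} it is therefore the localization at a small set $S_0$ of morphisms of $\mathcal{M}$. Second, I lift each element of $S_0$ to a map between cofibrant objects of $\mathbf{M}$; this is possible because $\mathbf{M}_{\cof}\to\mathcal{M}$ is essentially surjective and every morphism of $\mathcal{M}$ is represented by an actual map after cofibrant–fibrant replacement. Call the resulting set $S$, and let $\mathbf{N}=L_S\mathbf{M}$ be the left Bousfield localization, which exists because $\mathbf{M}$ is left proper and combinatorial. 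Third, I use that the $S$-local equivalences are the maps $f$ such that $\mathrm{Map}(f,Z)$ is an equivalence for every $S$-local fibrant $Z$. Via Corollary \ref{cor:ACK25} these derived mapping spaces are mapping spaces in $\mathcal{M}$, so the $S$-local objects are exactly the objects lying in the essential image of $G$. Consequently a map is an $S$-local equivalence if and only if $F$ inverts it, which gives (a).

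Condition (b) then follows as in Proposition \ref{prop:semi_bousfield}: $F$ is a localization at the maps it inverts \cite[\href{https://kerodon.net/tag/04JL}{Tag 04JL}]{kerodon}, so the composite $\mathbf{M}\to\mathcal{M}\to\mathcal{N}$ is a localization at $W$. For (c), the classical description of fibrant objects of $L_S\mathbf{M}$ as the fibrant $S$-local objects identifies them with the fibrant objects of $\mathbf{M}$ whose image lies in the essential image of $G$. Alternatively, one can repeat verbatim the derived-adjunction argument together with Proposition \ref{prop:A.2.3.1} from the semi-model case.

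The main obstacle is the third step, namely the identification of classical $S$-local equivalences (defined through homotopy function complexes) with the $F$-equivalences. This requires carefully comparing Hirschhorn's homotopy function complexes with mapping spaces in $\mathcal{M}$, which I would do via the simplicial/cosimplicial resolution formula of Corollary \ref{cor:ACK25}. I would also need to check that lifting $S_0$ to cofibrant representatives does not change the class of local objects; this holds because weakly equivalent maps have equivalent derived mapping spaces.
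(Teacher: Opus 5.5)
Your argument is correct, but it does not follow the paper's proof; the paper takes exactly the route you set aside.

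\textbf{The paper's route.} The proof repeats the argument of Proposition \ref{prop:semi_bousfield} with Smith's recognition theorem for combinatorial model categories \cite[Proposition 2.2]{Bar10} in place of Theorem \ref{thm:Smith}. It applies this directly to the cofibrations of $\mathbf{M}$ and the class $W$ of maps inverted by $\mathbf{M}\to\mathcal{M}\xrightarrow{F}\mathcal{N}$. The obstacle you anticipate, namely stability of trivial cofibrations with arbitrary domain under pushouts and transfinite composites, is what left properness takes care of. Pushouts along cofibrations are then homotopy pushouts, and for transfinite composites the paper cites \cite[Proposition 17.9.3]{Hirschhorn}. On this route (a) holds by construction, and no small set of maps or comparison of local equivalences is needed. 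Property (c) comes from the derived adjunction together with Proposition \ref{prop:A.2.3.1}.

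\textbf{Your route.} You trade this for a different black box. You use \cite[Proposition 5.5.4.15]{HTT} to present $F$ as a localization at a small set. This is legitimate because $G$, as a right adjoint between presentable $\infty$-categories, is accessible. You then invoke the existence theorem for left Bousfield localizations of left proper combinatorial model categories at a set. The version that applies is the Smith--Barwick one in \cite{Bar10}; Hirschhorn's theorem requires cellularity.

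\textbf{Comparison.} Your approach hides left properness inside the existence theorem and gives (c) for free from the classical description of fibrant objects of $L_S\mathbf{M}$. The cost is the extra step of identifying $S$-local equivalences with $F$-equivalences, via homotopy function complexes versus mapping spaces of $\mathcal{M}$. That step goes through as you sketch, for two reasons. Locality depends only on the images of the maps of $S$ in $\mathcal{M}$. Every object in the essential image of $G$ is represented by a fibrant object of $\mathbf{M}$.
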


\begin{proof}
Identical to Proposition \ref{prop:semi_bousfield}, using \cite[Proposition 2.2]{Bar10}
instead of Theorem \ref{thm:Smith}. (To check (6), we use \cite[Proposition 17.9.3]{Hirschhorn}).
\end{proof}

We conclude this subsection with the following recognition result
for fibrations in Bousfield localization:
\begin{prop}
\label{prop:3.3.16_Hir}Let $\mathbf{M}$ be a tractable semi-model
category, let $L\mathbf{M}$ be a tractable left Bousfield localization
of $\mathbf{M}$, and let $p\from X\to Y$ be a map of fibrant objects
of $L\mathbf{M}$. Then $p$ is a fibration of $L\mathbf{M}$ if and
only if it is a fibration of $\mathbf{M}$.
\end{prop}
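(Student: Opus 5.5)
The plan is to adapt the classical model-categorical argument (Hirschhorn, Proposition 3.3.16) to the tractable semi-model setting. One direction is immediate. Since $L\mathbf{M}$ has the same cofibrations as $\mathbf{M}$ and more weak equivalences, every trivial cofibration of $\mathbf{M}$ with cofibrant domain is a trivial cofibration of $L\mathbf{M}$. In a tractable semi-model category, fibrations are exactly the maps with the right lifting property against the generating trivial cofibrations, which have cofibrant domains. Hence every fibration of $L\mathbf{M}$ is a fibration of $\mathbf{M}$, so the content lies in the converse.

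For the converse, suppose $p\from X\to Y$ is a fibration of $\mathbf{M}$ between $L\mathbf{M}$-fibrant objects. First use the factorization axiom M5-ii of $L\mathbf{M}$. To have a cofibrant domain, first replace $X$ by a cofibrant object $X^{c}\to X$ using an $\mathbf{M}$-trivial fibration. Then factor the composite $X^{c}\to Y$ as an $L\mathbf{M}$-trivial cofibration $j\from X^{c}\to Z$ followed by an $L\mathbf{M}$-fibration $q\from Z\to Y$. Since $Y$ is $L\mathbf{M}$-fibrant, $Z$ is $L\mathbf{M}$-fibrant. Since $X$ is $L\mathbf{M}$-fibrant as well, the map $X^{c}\to X$ and the map $j$ are weak equivalences of $L\mathbf{M}$ between $L\mathbf{M}$-fibrant objects. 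The key lemma here is that a weak equivalence of $L\mathbf{M}$ between $L\mathbf{M}$-fibrant objects is already a weak equivalence of $\mathbf{M}$. I would derive this from Corollary \ref{cor:ACK25}: mapping spaces out of cofibrant objects into fibrant objects computed via simplicial resolutions agree in $\mathbf{M}$ and $L\mathbf{M}$, since the resolutions of an $L\mathbf{M}$-fibrant object can be chosen in $\mathbf{M}$ and remain resolutions in $L\mathbf{M}$. A Yoneda argument in $\mathcal{M}$ then shows that the map is an equivalence there, by Corollary \ref{cor:ACK25_1}. Consequently, in the square formed by $X^{c}\to X$, $j$, $p$, and $q$, the comparison of $p$ with $q$ is a weak equivalence of $\mathbf{M}$ over $Y$.

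To conclude, I would show that $p$ is a retract, up to this comparison, of the $L\mathbf{M}$-fibration $q$, by a lifting argument. I expect this step to be the main obstacle, because the semi-model axioms only provide lifts for trivial cofibrations with cofibrant domain, and $X$ itself need not be cofibrant. The cleanest route I would try first is to avoid retracts entirely. By Remark \ref{rem:lifting} and Remark \ref{rem:7.14_JT07}, it suffices to check that $p$ has the right lifting property against every $L\mathbf{M}$-trivial cofibration $i\from A\to B$ between cofibrant objects. Given a lifting square, use Proposition \ref{prop:A.2.3.1} (applied in $L\mathbf{M}$ to the $L\mathbf{M}$-fibrant object $X$, with $Y$ handled by working in the slice over $Y$, i.e. the fibration $p$ as a fibrant object of $\mathbf{M}_{/Y}$) to produce a homotopy lift. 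Then rectify it using that $p$ is an $\mathbf{M}$-fibration, via the Kan fibration statement of Proposition \ref{prop:16.5.2}(2) applied to cosimplicial resolutions of $i$ together with the map $p$. Concretely, the space $\mathbf{M}(B^{\bullet},X)\to\mathbf{M}(B^{\bullet},Y)\times_{\mathbf{M}(A^{\bullet},Y)}\mathbf{M}(A^{\bullet},X)$ is a Kan fibration. Its fibers are contractible once we know that $\mathbf{M}(B^{\bullet},W)\to\mathbf{M}(A^{\bullet},W)$ is an equivalence for the $L\mathbf{M}$-local objects $W=X,Y$. That equivalence holds because $i$ is an $L\mathbf{M}$-equivalence, and mapping spaces into $L\mathbf{M}$-fibrant objects are the same in $\mathbf{M}$ and $L\mathbf{M}$ by the key lemma. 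A surjective-on-vertices trivial Kan fibration then gives a strict lift, which shows that $p$ is an $L\mathbf{M}$-fibration.
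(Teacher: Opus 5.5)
Your final argument is correct, and it differs from the paper's at the decisive step. The ``only if'' direction and the reduction to lifting against an $L\mathbf{M}$-trivial cofibration $i\colon A\to B$ with $A$ cofibrant are the same in both proofs.

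The paper then factors $i$ in $\mathbf{M}$ as an $\mathbf{M}$-trivial cofibration $j\colon A\to A'$ followed by a map $A'\to B$, which is an $L\mathbf{M}$-equivalence by two out of three. It lifts $j$ against $p$, and then rectifies this partial solution to a strict lift via Proposition~\ref{prop:A.2.3.1} in a slice over $Y$. You instead show that
\[
\mathbf{M}(B^{\bullet},X)\to\mathbf{M}(B^{\bullet},Y)\times_{\mathbf{M}(A^{\bullet},Y)}\mathbf{M}(A^{\bullet},X)
\]
is a trivial Kan fibration, and take a preimage of the vertex given by the square. This is the mechanism inside Proposition~\ref{prop:A.2.3.1} used once, with no auxiliary factorization.

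Your route also makes the real input, the locality of $X$ and $Y$, explicit. Proposition~\ref{prop:A.2.3.1} needs the target to be fibrant, and in $(L\mathbf{M})_{/Y}$ that is the very claim being proved. So the paper's step has to be read in $\mathbf{M}_{/Y}$, where its hypothesis (3) must be supplied by the same locality input.

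The proposal should be streamlined. Delete the retract attempt, the ``key lemma'', and the appeal to Proposition~\ref{prop:A.2.3.1}: none of them is used. Your justification of the key lemma also fails as stated. A simplicial resolution that is Reedy fibrant in $\mathbf{M}$ is not known to be Reedy fibrant in $L\mathbf{M}$: its matching maps are $\mathbf{M}$-fibrations between local objects, so assuming this would be circular.

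What does transfer is cosimplicial resolutions, since $L\mathbf{M}$ has the same cofibrations and more weak equivalences. The argument then runs as follows.
\begin{enumerate}
\item Choose $i^{\bullet}\colon A^{\bullet}\to B^{\bullet}$ as in the proof of Proposition~\ref{prop:A.2.3.1}, with degree-$0$ part equal to $i$, so that vertices of the mapping spaces are honest maps.
\item By two out of three, $i^{\bullet}$ is levelwise an $L\mathbf{M}$-equivalence.
\item Proposition~\ref{prop:16.5.2}(2), applied in $L\mathbf{M}$ to $W\to\ast$, shows that $\mathbf{M}(B^{\bullet},W)\to\mathbf{M}(A^{\bullet},W)$ is a trivial fibration for $W=X,Y$.
\item By base change, the projection of the pullback to $\mathbf{M}(A^{\bullet},X)$ is a trivial fibration.
\item Proposition~\ref{prop:16.5.2}(2), applied in $\mathbf{M}$ to $p$, shows the displayed map is a Kan fibration, and two out of three shows it is trivial.
\end{enumerate}
No comparison with $\infty$-categorical mapping spaces is needed.
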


\begin{proof}
In a tractable semi-model category, fibrations are exactly the maps
having the right lifting property for the trivial cofibrations of
cofibrant objects. Thus, the ``only if'' part is clear. For the
``if'' part, suppose that $p$ is a fibration of $\mathbf{M}$.
Consider a lifting problem
\[\begin{tikzcd}
	A & X \\
	B & Y
	\arrow[from=1-1, to=1-2]
	\arrow["i"', from=1-1, to=2-1]
	\arrow["p", from=1-2, to=2-2]
	\arrow[dashed, from=2-1, to=1-2]
	\arrow[from=2-1, to=2-2]
\end{tikzcd}\]where $i$ is a trivial cofibration of $L\mathbf{M}$ and $A$ is
cofibrant. We must show that there is a dashed filler rendering the
diagram commutative. Factor $i$ as $A\xrightarrow{j}A'\xrightarrow{q}B$,
where $j$ is a trivial cofibration of $\mathbf{M}$ and $q$ is (necessarily)
a weak equivalence of $L\mathbf{M}$. Since $p$ is a fibration of
$\mathbf{M}$, there is a dashed filler depicted as 
\[\begin{tikzcd}
	A & A & X \\
	{A'} & B & Y
	\arrow[equal, from=1-1, to=1-2]
	\arrow["j"', from=1-1, to=2-1]
	\arrow[from=1-2, to=1-3]
	\arrow["i"', from=1-2, to=2-2]
	\arrow["p", from=1-3, to=2-3]
	\arrow[dashed, from=2-1, to=1-3]
	\arrow[from=2-1, to=2-2]
	\arrow[from=2-2, to=2-3]
\end{tikzcd}\]Applying Proposition \ref{prop:A.2.3.1} to the semi-model category
$\pr{L\mathbf{M}}_{/Y}$, we find that the original lifting problem
also admits a solution. The proof is now complete.
\end{proof}

\section{\label{sec:MGUD}Multiplicative Gabriel--Ulmer duality}

The Gabriel--Ulmer duality \cite{GabrielUlmer71} is one precise
formulation of the slogan that ``locally presentable categories are
controlled by small data.'' In the $\infty$-categorical setting,
one formulation of this is that for every regular cardinal $\kappa$,
the $\Ind_{\kappa}$-completion functor induces an equivalence of
$\infty$-categories
\[
\Ind_{\kappa}\from\Cat^{\idem}_{\infty}\pr{\kappa}\xrightarrow{\simeq}\Pr^{L}\pr{\kappa}.
\]
Here $\Cat^{\idem}_{\infty}\pr{\kappa}$ denotes the $\infty$-category
of small idempotent complete $\infty$-categories with $\kappa$-small
colimits and functors preserving $\kappa$-small colimits, and $\Pr^{L}\pr{\kappa}$
denotes the $\infty$-category of $\kappa$-presentable $\infty$-categories\footnote{An $\infty$-category is \textbf{$\kappa$-presentable }if it is $\kappa$-accessible
and presentable.} and functors that preserve $\kappa$-compact objects and small colimits.

The goal of this section is to enhance this equivalence to that of
symmetric monoidal $\infty$-categories (Theorem \ref{thm:MGU}) when
$\kappa$ is uncountable. As a corollary of this, we obtain a Gabriel--Ulmer
duality for $\kappa$-presentably symmetric monoidal $\infty$-categories
(Corollary \ref{cor:MGUD}) for an uncountable $\kappa$.

We start by reviewing symmetric monoidal structures on $\Pr^{L}$
and $\Cat_{\infty}$.

\begin{recollection}
\cite[Corollary 4.8.1.4]{HA}\label{recoll:4.8.1.4} Let $\cat K$
be a small set of small simplicial sets. We write $\Cat_{\infty}\pr{\cat K}\subset\Cat_{\infty}$
for the subcategory whose objects are the $\infty$-categories admitting
colimits indexed by the elements in $\cat K$, whose morphisms are
the functors that preserve such colimits. 

The $\infty$-category $\Cat_{\infty}\pr{\cat K}$ can be enhanced
to a symmetric monoidal $\infty$-category, which we denote by $\Cat_{\infty}\pr{\cat K}^{\t}$.
An active map $\pr{\cat A_{1},\dots,\cat A_{k}}\to\cat B$ is given
by a functor $\cat A_{1}\times\cdots\times\cat A_{k}\to\cat B$ preserving
colimits indexed by the elements of $\cat K$ in each variable. Such
a map is cocartesian if and only if, for any other $\cat C\in\Cat_{\infty}\pr{\kappa}$,
the functor
\[
\Fun^{\cat K}\pr{\cat B,\cat D}\to\Fun^{\cat K}\pr{\cat A_{1}\times\dots\times\cat A_{k},\cat C}
\]
is an equivalence. Here, $\Fun^{\cat K}$ denotes the $\infty$-category
of functors preserving colimits indexed by the elements in $\cat K$
in each variable. 

In the case where $\cat K$ is the set of (representatives of isomorphism
classes of) all $\kappa$-small simplicial sets, where $\kappa$ is
a regular cardinal, we write $\Cat_{\infty}\pr{\cat K}=\Cat_{\infty}\pr{\kappa}$
and $\Fun^{\cat K}=\Fun^{\kappa}$.
\end{recollection}

\begin{recollection}
\cite[Proposition 4.8.1.15]{HA} Applying Recollection \ref{recoll:4.8.1.4}
to the very large $\infty$-category $\hat{\Cat}_{\infty}$ of large
$\infty$-categories and the collection $\cat U$ of all small simplicial
sets, we obtain the symmetric monoidal $\infty$-category $\hat{\Cat}_{\infty}\pr{\cat U}^{\t}$
of large $\infty$-categories with small colimits, and functors preserving
such colimits. The full subcategory $\Pr^{L}\subset\hat{\Cat}_{\infty}\pr{\cat U}$
is stable under tensor products, so it inherits a symmetric monoidal
structure. We denote the resulting symmetric monoidal $\infty$-category
by $\pr{\Pr^{L}}^{\t}$.
\end{recollection}

We can relate $\Cat_{\infty}\pr{\kappa}^{\t}$ with $\pr{\Pr^{L}}^{\t}$
by $\Ind_{\kappa}$-completion (Corollary \ref{cor:4.8.1.8}). For
this, we recall the following propositions:
\begin{prop}
\cite[Remark 4.8.1.8]{HA}\label{prop:4.8.1.8-1} Let $\cat K\subset\cat L$
be an inclusion of small sets of small simplicial sets. The inclusion
\[
\Cat_{\infty}\pr{\cat L}^{\t}\subset\Cat_{\infty}\pr{\cat K}^{\t}
\]
admits a left adjoint relative to $\Fin_{\ast}$ (in the sense of
\cite[Definition 7.3.2.2]{HA}), and the left adjoint $\Cat_{\infty}\pr{\cat K}^{\t}\to\Cat_{\infty}\pr{\cat L}^{\t}$
is symmetric monoidal.
\end{prop}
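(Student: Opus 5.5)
The plan is to build the relative left adjoint in two stages: first on underlying $\infty$-categories, then over $\Fin_{\ast}$, using Lurie's criterion for relative adjoints of maps of $\infty$-operads. Throughout I write $\mathcal{P}^{\mathcal{L}}_{\mathcal{K}}$ for the left adjoint of the inclusion $\Cat_{\infty}\pr{\mathcal{L}}\subset\Cat_{\infty}\pr{\mathcal{K}}$.

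\textbf{Step 1: underlying left adjoint.} By \cite[Proposition 5.3.6.2]{HTT}, each $\mathcal{C}\in\Cat_{\infty}\pr{\mathcal{K}}$ admits a functor $j\from\mathcal{C}\to\mathcal{P}^{\mathcal{L}}_{\mathcal{K}}\pr{\mathcal{C}}$ with the following properties. It preserves $\mathcal{K}$-indexed colimits, its target admits $\mathcal{L}$-indexed colimits, and for every $\mathcal{E}\in\Cat_{\infty}\pr{\mathcal{L}}$ restriction along $j$ gives an equivalence
\[
\Fun^{\mathcal{L}}\pr{\mathcal{P}^{\mathcal{L}}_{\mathcal{K}}\pr{\mathcal{C}},\mathcal{E}}\xrightarrow{\simeq}\Fun^{\mathcal{K}}\pr{\mathcal{C},\mathcal{E}}.
\]
Hence $j$ is a unit for the inclusion, and the inclusion admits a left adjoint $\mathcal{C}\mapsto\mathcal{P}^{\mathcal{L}}_{\mathcal{K}}\pr{\mathcal{C}}$. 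I also need a multivariable version. For $\mathcal{C}_{1},\dots,\mathcal{C}_{k}\in\Cat_{\infty}\pr{\mathcal{K}}$ and $\mathcal{E}\in\Cat_{\infty}\pr{\mathcal{L}}$, restriction along $j\times\cdots\times j$ should give an equivalence
\[
\Fun^{\mathcal{L}}\pr{\mathcal{P}^{\mathcal{L}}_{\mathcal{K}}\pr{\mathcal{C}_{1}}\times\cdots\times\mathcal{P}^{\mathcal{L}}_{\mathcal{K}}\pr{\mathcal{C}_{k}},\mathcal{E}}\xrightarrow{\simeq}\Fun^{\mathcal{K}}\pr{\mathcal{C}_{1}\times\cdots\times\mathcal{C}_{k},\mathcal{E}},
\]
where both sides mean functors preserving the relevant colimits separately in each variable. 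I would prove this by induction on $k$, one variable at a time. The inductive step uses that $\Fun^{\mathcal{L}}\pr{\mathcal{A},\mathcal{E}}$ again admits $\mathcal{L}$-indexed colimits, computed pointwise. It then applies the one-variable universal property with $\mathcal{E}$ replaced by such a functor $\infty$-category, via currying.

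\textbf{Step 2: the inclusion is a map of $\infty$-operads.} By Recollection \ref{recoll:4.8.1.4}, an active map $\pr{\mathcal{A}_{1},\dots,\mathcal{A}_{k}}\to\mathcal{B}$ in either $\infty$-operad is a functor preserving the relevant colimits in each variable. A functor preserving $\mathcal{L}$-indexed colimits in each variable preserves $\mathcal{K}$-indexed ones, so $\Cat_{\infty}\pr{\mathcal{L}}^{\t}\to\Cat_{\infty}\pr{\mathcal{K}}^{\t}$ is a map of $\infty$-operads, i.e.\ lax symmetric monoidal. Its underlying functor has a left adjoint by Step 1. By \cite[Proposition 7.3.2.6]{HA} and \cite[Corollary 7.3.2.7]{HA}, the inclusion therefore admits a left adjoint relative to $\Fin_{\ast}$. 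That relative left adjoint is oplax symmetric monoidal, and it is symmetric monoidal precisely when the canonical comparison maps
\[
\mathcal{P}^{\mathcal{L}}_{\mathcal{K}}\pr{\mathcal{A}_{1}\t\cdots\t\mathcal{A}_{k}}\to\mathcal{P}^{\mathcal{L}}_{\mathcal{K}}\pr{\mathcal{A}_{1}}\t\cdots\t\mathcal{P}^{\mathcal{L}}_{\mathcal{K}}\pr{\mathcal{A}_{k}}
\]
are equivalences. Here the left-hand $\t$ is taken in $\Cat_{\infty}\pr{\mathcal{K}}^{\t}$ and the right-hand one in $\Cat_{\infty}\pr{\mathcal{L}}^{\t}$; the case $k=0$ covers the units.

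\textbf{Step 3: the comparison maps are equivalences.} I would check this with the Yoneda lemma, mapping into an arbitrary $\mathcal{E}\in\Cat_{\infty}\pr{\mathcal{L}}$. By the cocartesian description in Recollection \ref{recoll:4.8.1.4} together with Step 1, maps from the right-hand side into $\mathcal{E}$ are $\Fun^{\mathcal{L}}\pr{\prod_{i}\mathcal{P}^{\mathcal{L}}_{\mathcal{K}}\pr{\mathcal{A}_{i}},\mathcal{E}}$. By the multivariable equivalence of Step 1, this is $\Fun^{\mathcal{K}}\pr{\prod_{i}\mathcal{A}_{i},\mathcal{E}}$. Maps from the left-hand side into $\mathcal{E}$ are $\Fun^{\mathcal{K}}\pr{\t_{i}\mathcal{A}_{i},\mathcal{E}}$, which is again $\Fun^{\mathcal{K}}\pr{\prod_{i}\mathcal{A}_{i},\mathcal{E}}$.

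\textbf{Main obstacle.} The delicate point is not the existence of either equivalence but their compatibility. One has to check that the comparison map produced abstractly in \cite[Proposition 7.3.2.6]{HA}, after restriction along the universal multilinear functors, induces exactly this composite equivalence. I would first try to show that both sides restrict along the same map $\prod_{i}\mathcal{A}_{i}\to\prod_{i}\mathcal{P}^{\mathcal{L}}_{\mathcal{K}}\pr{\mathcal{A}_{i}}$, namely the product of the units $j$. I would verify this by unwinding the definition of the oplax structure from the relative unit, rather than attempting a general coherence argument.
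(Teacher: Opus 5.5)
Your ingredients are the right ones; the paper gives no argument of its own here and simply cites \cite[Remark 4.8.1.8]{HA}. The gap is in Step 2, which rests on a false principle: that a lax symmetric monoidal functor whose underlying functor has a left adjoint automatically admits an (oplax) left adjoint relative to $\mathsf{Fin}_{\ast}$.

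A left adjoint relative to $\mathsf{Fin}_{\ast}$ in the sense of \cite[Definition 7.3.2.2]{HA} is an honest adjunction between the total $\infty$-categories, with unit lying over identities. Let $G\colon\mathcal{D}^{\otimes}\to\mathcal{C}^{\otimes}$ be lax symmetric monoidal between symmetric monoidal $\infty$-categories, and let $F$ be such an adjoint. On the component over an active map $\langle n\rangle\to\langle 1\rangle$, condition (2) of that definition reads
\[
\operatorname{Map}_{\mathcal{D}}(FC_{1}\otimes\cdots\otimes FC_{n},D)\simeq\operatorname{Map}_{\mathcal{C}}(C_{1}\otimes\cdots\otimes C_{n},GD)\simeq\operatorname{Map}_{\mathcal{D}}(F(C_{1}\otimes\cdots\otimes C_{n}),D).
\]
So a relative left adjoint, when it exists, is automatically symmetric monoidal: the same equivalences show it sends cocartesian edges to cocartesian edges. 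Conversely, it can exist only if the comparison maps are already equivalences.

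Laxness plus an underlying left adjoint therefore does not suffice. For example, $X\mapsto X\times X$ on $(\mathsf{Set},\times)$ is even strong symmetric monoidal, but its left adjoint $X\mapsto X\amalg X$ does not preserve the unit, so it has no left adjoint relative to $\mathsf{Fin}_{\ast}$. Two further problems with Step 2:
\begin{itemize}
\item An ``oplax'' functor cannot be encoded as a functor $\mathcal{C}^{\otimes}\to\mathcal{D}^{\otimes}$ over $\mathsf{Fin}_{\ast}$ at all. Such functors come with structure maps $\bigotimes F(C_{i})\to F(\bigotimes C_{i})$, which point in the lax direction.
\item \cite[Corollary 7.3.2.7]{HA} goes the other way: it produces lax monoidal relative \emph{right} adjoints of symmetric monoidal functors.
\end{itemize}
As written, then, you assert existence before you are entitled to it. The ``abstractly produced comparison map'' in your main obstacle belongs to a functor not yet known to exist.

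The repair uses only what you have already proved, and it removes the obstacle. Verify condition (2) of \cite[Definition 7.3.2.2]{HA} directly. Take the unit at $(\mathcal{A}_{1},\dots,\mathcal{A}_{n})$ to be $(j_{\mathcal{A}_{1}},\dots,j_{\mathcal{A}_{n}})$, which is a morphism over $\mathrm{id}_{\langle n\rangle}$. For a target $(\mathcal{E}_{1},\dots,\mathcal{E}_{m})$ and $\alpha\colon\langle n\rangle\to\langle m\rangle$, the required map on the component over $\alpha$ is restriction along products of the units:
\[
\prod_{j=1}^{m}\operatorname{Fun}^{\mathcal{L}}\Bigl(\prod_{\alpha(i)=j}\mathcal{P}^{\mathcal{L}}_{\mathcal{K}}(\mathcal{A}_{i}),\mathcal{E}_{j}\Bigr)^{\simeq}\longrightarrow\prod_{j=1}^{m}\operatorname{Fun}^{\mathcal{K}}\Bigl(\prod_{\alpha(i)=j}\mathcal{A}_{i},\mathcal{E}_{j}\Bigr)^{\simeq}.
\]
This is precisely your multivariable equivalence from Step 1; empty fibers of $\alpha$ give the trivial case. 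The objectwise criterion for adjoints, with all units lying in fibers over $\mathsf{Fin}_{\ast}$, then yields the relative left adjoint. Symmetric monoidality follows formally as above. Step 3 therefore needs no separate compatibility check, because every map in sight is restriction along the units by construction.
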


\begin{prop}
\cite[Propositions 5.3.5.10 and 5.5.1.9]{HTT}\label{prop:Ind_univ}
Let $\cat A$ be a small $\infty$-category with $\kappa$-small colimits.
The functor $\cat A\to\Ind_{\kappa}\pr{\cat A}$ has the following
universal property: For every $\infty$-category $\cat Z$ with small
colimits, the functor
\[
\Fun^{L}\pr{\Ind_{\kappa}\pr{\cat A},\cat Z}\to\Fun^{\kappa}\pr{\cat A,\cat Z}
\]
is an equivalence, where the left-hand side denotes the $\infty$-category
of functors preserving small colimits.
\end{prop}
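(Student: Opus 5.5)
This is the universal property of $\Ind_{\kappa}$-completion, and I would prove it by reducing to presheaves. Write $j\from\cat A\to\Ind_{\kappa}\pr{\cat A}$ for the Yoneda-type embedding. Recall that $\Ind_{\kappa}\pr{\cat A}$ is the full subcategory of $\Fun\pr{\cat A^{\op},\cat S}$ spanned by $\kappa$-filtered colimits of representables. Since $\cat A$ has $\kappa$-small colimits, this is the same as the full subcategory of functors $\cat A^{\op}\to\cat S$ that preserve $\kappa$-small limits. Both sides of the comparison functor can then be described by left Kan extension along $j$.

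First, I would show that $\Ind_{\kappa}\pr{\cat A}$ is presentable and that $j$ preserves $\kappa$-small colimits. Presentability follows because $\Ind_{\kappa}\pr{\cat A}$ is the accessible localization of $\mathcal{P}\pr{\cat A}$ at the set of maps $\colim_{K}h_{a_{k}}\to h_{\colim_{K}a_{k}}$, for $K$ ranging over $\kappa$-small diagrams. Its local objects are exactly the $\kappa$-small-limit-preserving presheaves. Consequently $j$, which equals the Yoneda embedding followed by the localization, preserves $\kappa$-small colimits, because it sends each of these maps to an equivalence. In particular, restriction along $j$ lands in $\Fun^{\kappa}\pr{\cat A,\cat Z}$.

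Second, I would identify the two functor categories. By the universal property of presheaves (HTT 5.1.5.6), $\Fun^{L}\pr{\mathcal{P}\pr{\cat A},\cat Z}\simeq\Fun\pr{\cat A,\cat Z}$ via left Kan extension. By the universal property of localizations (HTT 5.5.4.20), $\Fun^{L}\pr{\Ind_{\kappa}\pr{\cat A},\cat Z}$ is the full subcategory of $\Fun^{L}\pr{\mathcal{P}\pr{\cat A},\cat Z}$ of functors inverting the generating maps above. Under the first equivalence, a colimit-preserving $\widetilde{f}$ extending $f\from\cat A\to\cat Z$ inverts $\colim_{K}h_{a_{k}}\to h_{\colim a_{k}}$ exactly when $\colim_{K}f\pr{a_{k}}\to f\pr{\colim a_{k}}$ is an equivalence. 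That is precisely the condition that $f$ preserves $\kappa$-small colimits. Combining the two gives the equivalence $\Fun^{L}\pr{\Ind_{\kappa}\pr{\cat A},\cat Z}\simeq\Fun^{\kappa}\pr{\cat A,\cat Z}$, and by construction it is restriction along $j$.

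The main obstacle is the identification in the first step. One must check that the local objects of this localization are exactly the $\kappa$-filtered colimits of representables. A presheaf preserves $\kappa$-small limits if and only if its category of elements is $\kappa$-filtered. This uses that $\cat A$ has $\kappa$-small colimits, so that representables are closed under the relevant cones; this is HTT 5.3.5.4 and its surroundings. Everything else is formal bookkeeping with the universal properties of $\mathcal{P}$ and of reflective localizations.
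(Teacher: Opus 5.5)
Your argument is correct, but it takes a different route from the one the paper relies on. The paper gives no proof of its own; it cites [HTT, 5.3.5.10 and 5.5.1.9], which split the statement into two steps. First, restriction along $j$ identifies functors $\mathrm{Ind}_\kappa(\mathcal{A})\to\mathcal{Z}$ preserving $\kappa$-filtered colimits with \emph{all} functors $\mathcal{A}\to\mathcal{Z}$. This holds for any small $\mathcal{A}$ and any $\mathcal{Z}$ with $\kappa$-filtered colimits. Second, a detection criterion shows that such a functor preserves all small colimits if and only if its restriction to $\mathcal{A}$ preserves $\kappa$-small colimits. This criterion rests on two facts: small colimits are generated by $\kappa$-small and $\kappa$-filtered ones, and $\kappa$-small diagrams in $\mathrm{Ind}_\kappa(\mathcal{A})$ are controlled by diagrams in $\mathcal{A}$.

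You instead present $\mathrm{Ind}_\kappa(\mathcal{A})$ by generators and relations, as the localization of $\mathcal{P}(\mathcal{A})$ at the small set of maps $\operatorname{colim}_K h_{a_k}\to h_{\operatorname{colim}_K a_k}$. You then compose the universal property of presheaves [HTT, 5.1.5.6] with that of accessible localizations [HTT, 5.5.4.20]. Your Yoneda computation of the local objects is right, and so is your identification of the image of the generating maps with the comparison maps $\operatorname{colim}_K f(a_k)\to f(\operatorname{colim}_K a_k)$.

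Your route gets two things for free: presentability of $\mathrm{Ind}_\kappa(\mathcal{A})$, and preservation of $\kappa$-small colimits by $j$. It also needs no analysis of diagrams inside $\mathrm{Ind}_\kappa(\mathcal{A})$. Its only $\mathrm{Ind}$-specific input is the identification of $\mathrm{Ind}_\kappa(\mathcal{A})$ with the $\kappa$-small-limit-preserving presheaves, which is exactly where the hypothesis that $\mathcal{A}$ has $\kappa$-small colimits enters.

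The cited route, in exchange, yields a stronger intermediate universal property for targets that admit only $\kappa$-filtered colimits. Your argument cannot give that, since it passes through $\mathcal{P}(\mathcal{A})$ and so needs $\mathcal{Z}$ to have all small colimits.
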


\begin{cor}
\label{cor:4.8.1.8} Let $\kappa$ be a regular cardinal. The assignment
$\cat A\mapsto\Ind_{\kappa}\pr{\cat A}$ can be enhanced to a symmetric
monoidal functor
\[
\Ind_{\kappa}\from\Cat_{\infty}\pr{\kappa}^{\t}\to\pr{\Pr^{L}}^{\t}.
\]
\end{cor}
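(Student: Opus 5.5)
The plan is to obtain the symmetric monoidal functor $\Ind_{\kappa}$ as the restriction of a left adjoint relative to $\Fin_{\ast}$, using Proposition \ref{prop:4.8.1.8-1}. I would pass to a larger universe and regard $\Cat_{\infty}\pr{\kappa}$ as a full subcategory of $\hat{\Cat}_{\infty}\pr{\kappa}$. Here $\kappa$-small colimits in small $\infty$-categories are the same data whether we view them in the small or the large universe. The tensor product of $\Cat_{\infty}\pr{\kappa}^{\t}$ is characterized by a universal property: it corepresents functors preserving $\kappa$-small colimits separately in each variable (Recollection \ref{recoll:4.8.1.4}). Using this, I would first check that the inclusion $\Cat_{\infty}\pr{\kappa}^{\t}\hookrightarrow\hat{\Cat}_{\infty}\pr{\kappa}^{\t}$ is symmetric monoidal. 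Concretely, the tensor product of small $\kappa$-cocomplete $\infty$-categories, formed in the large world, is still small, being a localization of a $\kappa$-small-colimit completion of a product. Hence it agrees with the one formed in the small world.

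Next, I would apply Proposition \ref{prop:4.8.1.8-1} in the very large universe with $\cat K$ the $\kappa$-small simplicial sets and $\cat L=\cat U$ all small simplicial sets. This gives a symmetric monoidal left adjoint
\[
P\from\hat{\Cat}_{\infty}\pr{\kappa}^{\t}\to\hat{\Cat}_{\infty}\pr{\cat U}^{\t}
\]
relative to $\Fin_{\ast}$, left adjoint to the inclusion. Composing gives a symmetric monoidal functor $\Cat_{\infty}\pr{\kappa}^{\t}\to\hat{\Cat}_{\infty}\pr{\cat U}^{\t}$.

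It then remains to identify the underlying functor with $\Ind_{\kappa}$ and to show it lands in $\Pr^{L}$. On underlying categories, $P$ is the left adjoint to the forgetful functor from cocomplete $\infty$-categories to $\kappa$-cocomplete ones. By Proposition \ref{prop:Ind_univ}, for small $\cat A$ with $\kappa$-small colimits, the functor $\cat A\to\Ind_{\kappa}\pr{\cat A}$ exhibits $\Ind_{\kappa}\pr{\cat A}$ as a value of this left adjoint at $\cat A$. The test objects there are all $\infty$-categories with small colimits, which are exactly the objects of $\hat{\Cat}_{\infty}\pr{\cat U}$. Hence $P\pr{\cat A}\simeq\Ind_{\kappa}\pr{\cat A}$, and this is presentable by \cite[Theorem 5.5.1.1]{HTT}. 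Since $\Pr^{L}\subset\hat{\Cat}_{\infty}\pr{\cat U}$ is a full subcategory closed under tensor products, with the induced structure $\pr{\Pr^{L}}^{\t}$, the composite factors through $\pr{\Pr^{L}}^{\t}$ as a symmetric monoidal functor. Its underlying functor is $\cat A\mapsto\Ind_{\kappa}\pr{\cat A}$, which gives Corollary \ref{cor:4.8.1.8}.

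I expect the main obstacle to be the universe bookkeeping. Proposition \ref{prop:4.8.1.8-1} is stated for small sets of small simplicial sets inside a single $\Cat_{\infty}$. Here I need it one universe up, where the class of all small simplicial sets counts as small relative to very large sets, together with the compatibility of the small and large versions of $\Cat_{\infty}\pr{\kappa}^{\t}$. If that compatibility is awkward, my first alternative is to avoid it. I would check directly that, for small $\cat A_{1},\dots,\cat A_{n}$ with $\kappa$-small colimits, the canonical map $\Ind_{\kappa}\pr{\cat A_{1}}\otimes\cdots\otimes\Ind_{\kappa}\pr{\cat A_{n}}\to\Ind_{\kappa}\pr{\cat A_{1}\otimes\cdots\otimes\cat A_{n}}$ is an equivalence in $\Pr^{L}$. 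This follows by comparing corepresented functors via Proposition \ref{prop:Ind_univ} and the universal property of $\otimes$ in both settings. One then invokes the criterion that a lax monoidal left adjoint with these maps invertible is symmetric monoidal.
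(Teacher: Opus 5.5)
Your proposal is correct and follows the paper's route. The paper's one-line proof obtains $\operatorname{Ind}_{\kappa}$ as the symmetric monoidal left adjoint of Proposition~\ref{prop:4.8.1.8-1}, applied one universe up to the $\kappa$-small simplicial sets inside all small simplicial sets, and uses Proposition~\ref{prop:Ind_univ} to identify that left adjoint with $\operatorname{Ind}_{\kappa}$; you additionally spell out the universe bookkeeping and the factorization through $(\mathcal{P}\mathsf{r}^{L})^{\otimes}$ that the paper leaves implicit (its proof also mistakenly cites the corollary itself where Proposition~\ref{prop:4.8.1.8-1} is meant).
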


\begin{proof}
In light of Proposition \ref{prop:Ind_univ}, this is a special case
of Corollary \ref{cor:4.8.1.8}.
\end{proof}

We now use the functor of Corollary \ref{cor:4.8.1.8} to obtain the
classical Gabriel--Ulmer duality. To state it, write $\overline{\Cat_{\infty}}\pr{\kappa}\subset\hat{\Cat}_{\infty}$
for the subcategory whose objects are the essentially small $\infty$-categories
with $\kappa$-small colimits, and whose morphisms are the functors
that preserve $\kappa$-small colimits. We also write $\Cat^{\idem}_{\infty}\pr{\kappa}\subset\Cat_{\infty}\pr{\kappa}$
and $\overline{\Cat^{\idem}_{\infty}}\pr{\kappa}\subset\overline{\Cat_{\infty}}\pr{\kappa}$
for the full subcategories of idempotent complete objects.
\begin{cor}
\label{cor:GU_plain}Let $\kappa$ be a regular cardinal. The functor
of Corollary \ref{cor:4.8.1.8} induces an equivalence of $\infty$-categories
\[
\Ind_{\kappa}\from\Cat^{\idem}_{\infty}\pr{\kappa}\xrightarrow{\simeq}\Pr^{L}\pr{\kappa}.
\]
 The inverse equivalence is given by the forgetful functor
\[
\pr -_{\kappa}\from\Pr^{L}\pr{\kappa}\to\overline{\Cat^{\idem}_{\infty}}\pr{\kappa}\simeq\Cat^{\idem}_{\infty}\pr{\kappa}.
\]
\end{cor}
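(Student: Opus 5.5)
The plan is to show that the restriction of the functor of Corollary \ref{cor:4.8.1.8} to idempotent complete objects is essentially surjective onto $\Pr^{L}\pr{\kappa}$ and fully faithful onto it. The same argument will identify $\pr -_{\kappa}$ as an inverse.

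First, the functor lands in $\Pr^{L}\pr{\kappa}$. For $\cat A\in\Cat^{\idem}_{\infty}\pr{\kappa}$, the $\infty$-category $\Ind_{\kappa}\pr{\cat A}$ is presentable by \cite[Theorem 5.5.1.1]{HTT}, since it is $\kappa$-accessible and cocomplete because $\cat A$ has $\kappa$-small colimits. Its $\kappa$-compact objects are the retracts of objects of $\cat A$ \cite[Lemma 5.4.2.4]{HTT}. As $\cat A$ is idempotent complete, the Yoneda map $\cat A\to\Ind_{\kappa}\pr{\cat A}_{\kappa}$ is therefore an equivalence. A morphism $f\from\cat A\to\cat B$ preserving $\kappa$-small colimits induces the colimit-preserving functor $\Ind_{\kappa}\pr f$ of Proposition \ref{prop:Ind_univ}, and this functor carries $\cat A$ into $\cat B$. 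Hence it preserves $\kappa$-compact objects.

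Second, essential surjectivity. If $\cat C\in\Pr^{L}\pr{\kappa}$, then $\cat C_{\kappa}$ is essentially small, idempotent complete, and closed under $\kappa$-small colimits \cite[Corollary 5.3.4.15]{HTT}. The inclusion induces an equivalence $\Ind_{\kappa}\pr{\cat C_{\kappa}}\simeq\cat C$ by \cite[Proposition 5.3.5.11]{HTT}. Choosing a small model of $\cat C_{\kappa}$ gives the equivalence $\overline{\Cat^{\idem}_{\infty}}\pr{\kappa}\simeq\Cat^{\idem}_{\infty}\pr{\kappa}$ used in the statement.

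Third, full faithfulness, which is the step needing the most care. By Proposition \ref{prop:Ind_univ}, restriction gives an equivalence
\[
\Fun^{L}\pr{\Ind_{\kappa}\pr{\cat A},\Ind_{\kappa}\pr{\cat B}}\xrightarrow{\simeq}\Fun^{\kappa}\pr{\cat A,\Ind_{\kappa}\pr{\cat B}}.
\]
Under this equivalence, the full subcategory of functors preserving $\kappa$-compact objects corresponds exactly to the functors $\cat A\to\Ind_{\kappa}\pr{\cat B}$ landing in $\Ind_{\kappa}\pr{\cat B}_{\kappa}\simeq\cat B$. This uses that $\Ind_{\kappa}\pr{\cat A}_{\kappa}$ is generated by $\cat A$ under retracts, and that $\Ind_{\kappa}\pr{\cat B}_{\kappa}$ is closed under retracts. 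Moreover, $\cat B\hookrightarrow\Ind_{\kappa}\pr{\cat B}$ preserves $\kappa$-small colimits and is fully faithful. So a functor $\cat A\to\cat B$ preserves $\kappa$-small colimits if and only if its composite into $\Ind_{\kappa}\pr{\cat B}$ does. This identifies the mapping spaces in $\Pr^{L}\pr{\kappa}$ with those in $\Cat^{\idem}_{\infty}\pr{\kappa}$, compatibly with the functor, so it is fully faithful.

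Finally, the unit equivalences $\cat A\simeq\Ind_{\kappa}\pr{\cat A}_{\kappa}$ from the first step are natural in $\cat A$, and the counit equivalences $\Ind_{\kappa}\pr{\cat C_{\kappa}}\simeq\cat C$ from the second step are natural in $\cat C$. Together they exhibit $\pr -_{\kappa}$ as the inverse equivalence.
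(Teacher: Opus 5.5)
Your proposal is correct and uses the same ingredients as the paper: \cite[Lemma 5.4.2.4]{HTT} to see that $\Ind_{\kappa}$ lands in $\Pr^{L}\pr{\kappa}$, Proposition \ref{prop:Ind_univ} to obtain the equivalence $\Fun_{\Pr^{L}\pr{\kappa}}\pr{\Ind_{\kappa}\pr{\cat A},\cat X}\simeq\Fun_{\kappa}\pr{\cat A,\cat X_{\kappa}}$, and \cite[Proposition 5.3.5.11]{HTT} for $\Ind_{\kappa}\pr{\cat X_{\kappa}}\simeq\cat X$. The difference is only packaging. The paper uses that equivalence for arbitrary $\cat X$ to exhibit $\pr -_{\kappa}$ as a right adjoint of $\Ind_{\kappa}$ with unit $\eta$, which identifies the inverse automatically; you specialize to $\cat X=\Ind_{\kappa}\pr{\cat B}$ to get full faithfulness and essential surjectivity, so your closing claim that the counit is natural in $\cat C$ is unnecessary, since naturality of the unit coming from $\eta$ already forces $\pr -_{\kappa}\simeq\Ind_{\kappa}^{-1}$.
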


\begin{proof}
By construction, the functor $\Ind_{\kappa}\from\Cat^{\idem}_{\infty}\pr{\kappa}\to\Pr^{L}$
is equipped with a natural transformation $\{\eta_{\cat A}\from\cat A\to\Ind_{\kappa}\pr{\cat A}\}_{\cat A\in\Cat^{\idem}_{\infty}\pr{\kappa}}$
of functors $\Cat_{\infty}\pr{\kappa}\to\hat{\Cat}_{\infty}$, where
each $\eta_{\cat A}$ exhibits $\Ind_{\kappa}\pr{\cat A}$ as an $\Ind_{\kappa}$-completion
of $\cat A$. According to \cite[Lemma 5.4.2.4]{HTT}, each $\eta_{\cat A}$
restricts to an equivalence $\cat A\xrightarrow{\simeq}\Ind_{\kappa}\pr{\cat A}_{\kappa}$.
Therefore, the functor $\Ind_{\kappa}\from\Cat^{\idem}_{\infty}\pr{\kappa}\to\Pr^{L}$
takes values in $\Pr^{L}\pr{\kappa}$.

To show that the resulting functor $\Ind_{\kappa}\from\Cat^{\idem}_{\infty}\pr{\kappa}\to\Pr^{L}\pr{\kappa}$
is an equivalence, we first observe that the functor $\pr -_{\kappa}\from\Pr^{L}\pr{\kappa}\to\overline{\Cat^{\idem}_{\infty}}\pr{\kappa}\simeq\Cat^{\idem}_{\infty}\pr{\kappa}$
is a right adjoint of $\Ind_{\kappa}$ with unit $\eta$. Indeed,
for each $\cat A\in\Cat^{\idem}_{\infty}\pr{\kappa}$ and $\cat X\in\Pr^{L}\pr{\kappa}$,
Proposition \ref{prop:Ind_univ} and the aforementioned equivalence
$\cat A\xrightarrow{\simeq}\Ind_{\kappa}\pr{\cat A}_{\kappa}$ gives
an equivalence
\[
\Fun_{\Pr^{L}\pr{\kappa}}\pr{\Ind_{\kappa}\pr{\cat A},\cat X}\xrightarrow{\simeq}\Fun_{\kappa}\pr{\cat A,\cat X_{\kappa}},
\]
where the left-hand side is the full subcategory of $\Fun\pr{\Ind_{\kappa}\pr{\cat A},\cat X}$
spanned by the functors in $\Pr^{L}\pr{\kappa}$. Now for each $\cat X\in\Pr^{L}\pr{\kappa}$,
the counit $\Ind_{\kappa}\pr{\cat X_{\kappa}}\to\cat X$ is also an
equivalence by \cite[Propositions 5.3.5.11 and 5.4.2.2.]{HTT}. Therefore,
$\Ind_{\kappa}$ and $\pr -_{\kappa}$ is part of an adjoint equivalence.
\end{proof}

We now upgrade Corollary \ref{cor:GU_plain} to a symmetric monoidal
equivalence (Theorem \ref{thm:MGU}) and look at its consequences.
\begin{defn}
Let $\kappa$ be a regular cardinal. We write $\Pr^{L}\pr{\kappa}^{\t}\subset\pr{\Pr^{L}}^{\t}$
for the suboperad spanned by the $\kappa$-presentable $\infty$-categories
and those maps $\pr{\mathcal{C}_{1},\dots,\mathcal{C}_{n}}\to\mathcal{C}$
whose corresponding functors $F:\mathcal{C}_{1}\times\cdots\times\mathcal{C}_{n}\to\mathcal{C}$
have the following property:
\begin{itemize}
\item Given an object $\pr{X_{1},\dots,X_{n}}\in\mathcal{C}_{1}\times\cdots\times\mathcal{C}_{n}$,
if each $X_{i}$ is $\kappa$-presentable, then so is $F\pr{X_{1},\dots,X_{n}}$.
\end{itemize}
\end{defn}

\begin{thm}
\label{thm:MGU}Let $\kappa$ be a regular cardinal. Then:
\begin{enumerate}
\item $\Pr^{L}\pr{\kappa}^{\t}$ is a symmetric monoidal $\infty$-category,
and the inclusion $\Pr^{L}\pr{\kappa}^{\t}\hookrightarrow\pr{\Pr^{L}}^{\t}$
is symmetric monoidal.
\item The symmetric monoidal functor $\Ind_{\kappa}\from\Cat_{\infty}\pr{\kappa}^{\t}\to\pr{\Pr^{L}}^{\t}$
of Corollary \ref{cor:4.8.1.8} restricts to a symmetric monoidal
functor
\[
\Ind_{\kappa}\from\Cat_{\infty}\pr{\kappa}^{\t}\to\pr{\Pr^{L}\pr{\kappa}}^{\t},
\]
which is an equivalence if $\kappa$ is uncountable.
\end{enumerate}
\end{thm}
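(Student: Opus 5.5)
We first establish (1) by checking that $\Pr^{L}\pr{\kappa}^{\t}$ is closed under the relevant cocartesian lifts inside $\pr{\Pr^{L}}^{\t}$. Given $\kappa$-presentable $\cat C_{1},\dots,\cat C_{n}$, write $\cat C_{i}\simeq\Ind_{\kappa}\pr{\cat A_{i}}$ with $\cat A_{i}=\pr{\cat C_{i}}_{\kappa}\in\Cat^{\idem}_{\infty}\pr{\kappa}$ (Corollary \ref{cor:GU_plain}). Since $\Ind_{\kappa}$ is symmetric monoidal (Corollary \ref{cor:4.8.1.8}), the tensor product in $\Pr^{L}$ is
\[
\cat C_{1}\otimes\cdots\otimes\cat C_{n}\simeq\Ind_{\kappa}\pr{\cat A_{1}\otimes\cdots\otimes\cat A_{n}},
\]
where the inner tensor product is taken in $\Cat_{\infty}\pr{\kappa}^{\t}$. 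This object is $\kappa$-presentable. The universal multilinear map $\cat C_{1}\times\cdots\times\cat C_{n}\to\cat C_{1}\otimes\cdots\otimes\cat C_{n}$ restricts on $\kappa$-compact objects to $\cat A_{1}\times\cdots\times\cat A_{n}\to\cat A_{1}\otimes\cdots\otimes\cat A_{n}\to\Ind_{\kappa}\pr{-}$, so it sends $\kappa$-compact tuples to $\kappa$-compact objects and therefore lies in $\Pr^{L}\pr{\kappa}^{\t}$. It remains to see that this map is cocartesian in the suboperad. Given a multilinear $F$ preserving $\kappa$-compact tuples, its induced colimit-preserving functor out of the tensor product sends the generators $a_{1}\otimes\cdots\otimes a_{n}$ to $\kappa$-compact objects. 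Every $\kappa$-compact object of $\Ind_{\kappa}\pr{\cat A_{1}\otimes\cdots\otimes\cat A_{n}}$ is a retract of a $\kappa$-small colimit of such generators, because $\cat A_{1}\otimes\cdots\otimes\cat A_{n}$ is generated under $\kappa$-small colimits (and idempotent completion) by these tensors. The induced functor therefore preserves $\kappa$-compact objects. This proves (1), and the same computation shows that $\Ind_{\kappa}$ factors through $\Pr^{L}\pr{\kappa}^{\t}$ as a symmetric monoidal functor, which is the first half of (2).

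For the equivalence in (2), a symmetric monoidal functor is an equivalence if and only if its underlying functor is. Hence it suffices that $\Ind_{\kappa}\from\Cat_{\infty}\pr{\kappa}\to\Pr^{L}\pr{\kappa}$ is an equivalence of $\infty$-categories. Corollary \ref{cor:GU_plain} only gives this on the full subcategory $\Cat^{\idem}_{\infty}\pr{\kappa}$, so the remaining point is that for uncountable $\kappa$ every small $\infty$-category with $\kappa$-small colimits is already idempotent complete. Indeed, idempotents split via sequential colimits: the splitting of an idempotent $e$ on $X$ is the colimit of $X\xrightarrow{e}X\xrightarrow{e}\cdots$ \cite[Corollary 4.4.5.16]{HTT}. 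This colimit is indexed by $\mathbb{N}$, which is $\kappa$-small when $\kappa>\omega$. Therefore $\Cat^{\idem}_{\infty}\pr{\kappa}=\Cat_{\infty}\pr{\kappa}$, and the equivalence follows.

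The main obstacle I expect is the cocartesian verification in (1). The subtle part is that $\kappa$-compact objects of the tensor product are generated under $\kappa$-small colimits and retracts by pure tensors. I would derive this from the explicit description of the tensor product in $\Cat_{\infty}\pr{\kappa}^{\t}$ as the closure of the image of the product under $\kappa$-small colimits, combined with \cite[Lemma 5.4.2.4]{HTT}.
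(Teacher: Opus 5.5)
Your proof is correct, and its outline matches the paper's. You identify the tensor product with $\operatorname{Ind}_\kappa(\mathcal{A}_1\otimes\cdots\otimes\mathcal{A}_n)$ via Corollary~\ref{cor:4.8.1.8}. You then show that induced functors preserve $\kappa$-compact objects by controlling the $\kappa$-compact objects of the tensor product through pure tensors.

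Where you differ is the key lemma for that last step, and your version is the right one. The paper's (1$'$-b) claims that every $\kappa$-compact object of $\mathcal{X}\otimes\mathcal{Y}$ is a retract of a single pure tensor. It justifies this by asserting that $\mathcal{A}\times\mathcal{B}\to\mathcal{A}\otimes\mathcal{B}$ is essentially surjective, which is false in general. In $\mathcal{S}^{2}\otimes\mathcal{S}^{2}\simeq\mathcal{S}^{4}$, take the compact object $z=(z_{ij})$ with $z_{11}=z_{22}=\ast$ and $z_{12}=z_{21}=\emptyset$. It is not a retract of any $a\boxtimes b=(a_i\times b_j)_{i,j}$. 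The entries $z_{11},z_{22}$ force all $a_i,b_j$ to be nonempty, and $\emptyset$ is not a retract of a nonempty space. It is, however, the coproduct of two pure tensors, as your formulation predicts.

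Your statement is true and is exactly what the cocartesianness check needs. It says the tensor product in $\mathcal{C}\mathsf{at}_\infty(\kappa)$ is generated by pure tensors under $\kappa$-small colimits, which is Lurie's construction as $\mathcal{P}^{\mathcal{K}}_{\mathcal{R}}$. You combine it with the observation that the objects a colimit-preserving functor sends into $\mathcal{D}_\kappa$ form a full subcategory closed under $\kappa$-small colimits and retracts.

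For (2), the paper just cites (1) and Corollary~\ref{cor:GU_plain}. You supply the step it leaves implicit: for uncountable $\kappa$ one has $\mathcal{C}\mathsf{at}^{\mathrm{idem}}_\infty(\kappa)=\mathcal{C}\mathsf{at}_\infty(\kappa)$, since idempotents split via $\mathbb{N}$-indexed colimits. This is what lets Corollary~\ref{cor:GU_plain} apply to the whole source category.
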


\begin{proof}
Part (2) follows from part (1) and Corollary \ref{cor:GU_plain},
so we only have to prove (1). For this, it suffices to prove the following: 
\begin{itemize}
\item [(1$'$)]Let $\cat X,\cat Y$ be $\kappa$-presentable $\infty$-categories.
Then:
\begin{itemize}
\item [(1$'$-a)]Their tensor product $\cat X\t_{\Pr^{L}}\cat Y$ in $\Pr^{L}$
is $\kappa$-presentable.
\item [(1$'$-b)]Every object of $\pr{\cat X\t_{\Pr^{L}}\cat Y}_{\kappa}$
is a retract of an object lying in the image of the functor $\cat X_{\kappa}\times\cat Y_{\kappa}\to\pr{\cat X\t_{\Pr^{L}}\cat Y}_{\kappa}.$
\end{itemize}
\end{itemize}

To prove (1$'$-a), use Corollary \ref{cor:GU_plain} to find objects
$\cat A,\cat B\in\Cat_{\infty}\pr{\kappa}$ and equivalences $\Ind_{\kappa}\pr{\cat A}\xrightarrow{\simeq}\cat X$
and $\Ind_{\kappa}\pr{\cat B}\xrightarrow{\simeq}\cat Y$. For each
$\cat Z\in\Pr^{L}$, Proposition \ref{prop:Ind_univ} gives an equivalence
\begin{align*}
\Fun^{L}\pr{\cat X\t_{\Pr^{L}}\cat Y,\cat Z} & \simeq\Fun^{L}\pr{\cat X,\Fun^{L}\pr{\cat Y,\cat Z}}\\
 & \simeq\Fun^{\kappa}\pr{\cat A,\Fun^{\kappa}\pr{\cat B,\cat Z}}\\
 & \simeq\Fun^{\kappa}\pr{\cat A\t_{\Cat_{\infty}\pr{\kappa}}\cat B,\cat Z}\\
 & \simeq\Fun^{L}\pr{\Ind_{\kappa}\pr{\cat A\t_{\Cat_{\infty}\pr{\kappa}}\cat B},\cat Z}.
\end{align*}
It follows that $\cat X\t_{\Pr^{L}}\cat Y$ is equivalent to $\Ind_{\kappa}\pr{\cat A\t_{\Cat_{\infty}\pr{\kappa}}\cat B}$.
Since the latter is is $\kappa$-presentable, this proves (1$'$-a). 

For part (1$'$-b), note that we can identify the map $\cat X_{\kappa}\times\cat Y_{\kappa}\to\pr{\cat X\t_{\Pr^{L}}\cat Y}_{\kappa}$
with the composite
\[
\cat A\times\cat B\to\cat A\t_{\Cat_{\infty}\pr{\kappa}}\cat B\to\Ind_{\kappa}\pr{\cat A\t_{\Cat_{\infty}\pr{\kappa}}\cat B}_{\kappa}.
\]
The first functor is essentially surjective (see the proof of \cite[Proposition 4.8.1.3]{HA}).
Also, by \cite[Lemma 5.4.2.4]{HTT} the second functor identifies
its target as an idempotent completion of the source. Hence (1$'$-b)
follows.
\end{proof}

\begin{rem}
We do not know if $\Cat^{\idem}_{\infty}\pr{\omega}$ is closed under
tensor products in $\Cat_{\infty}\pr{\omega}$. This is why we imposed
the uncountability assumption in part (2) of Theorem \ref{thm:MGU}.
\end{rem}

We conclude this section with three corollaries of Theorem \ref{thm:MGU}.
For the following corollary, we make $\overline{\Cat}_{\infty}\pr{\kappa}$
into a symmetric monoidal $\infty$-category $\overline{\Cat}_{\infty}\pr{\kappa}^{\t}$
using Recollection \ref{recoll:4.8.1.4}.
\begin{cor}
\label{cor:GU}Let $\kappa$ be an uncountable regular cardinal. The
forgetful functor
\[
\pr -_{\kappa}\from\Pr^{L}\pr{\kappa}^{\t}\to\overline{\Cat}_{\infty}\pr{\kappa}^{\t}
\]
is an equivalence of symmetric monoidal $\infty$-categories.
\end{cor}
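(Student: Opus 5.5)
The plan is to compose two symmetric monoidal functors that we already control and compare the composite with $\pr -_{\kappa}$. By Theorem \ref{thm:MGU}(2), since $\kappa$ is uncountable, the functor $\Ind_{\kappa}\from\Cat_{\infty}\pr{\kappa}^{\t}\to\Pr^{L}\pr{\kappa}^{\t}$ is an equivalence of symmetric monoidal $\infty$-categories. Everything then reduces to two points: first, that the inclusion $\Cat_{\infty}\pr{\kappa}^{\t}\hookrightarrow\overline{\Cat}_{\infty}\pr{\kappa}^{\t}$ (from small to essentially small objects) is a symmetric monoidal equivalence; second, that the forgetful functor $\pr -_{\kappa}$ is a lax symmetric monoidal functor whose composite with $\Ind_{\kappa}$ is equivalent, as a lax symmetric monoidal functor, to this inclusion.

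For the first point, both sides are defined by Recollection \ref{recoll:4.8.1.4}. Both are suboperads of the cartesian structure on $\hat{\Cat}_{\infty}$ with the same multimorphisms, namely functors preserving $\kappa$-small colimits in each variable. The inclusion is therefore fully faithful as a map of $\infty$-operads. It is also essentially surjective, because every essentially small $\infty$-category is equivalent to a small one. Hence it is an equivalence of $\infty$-operads, and so of symmetric monoidal $\infty$-categories.

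For the second point, $\pr -_{\kappa}$ is defined at the level of $\infty$-operads as the restriction of the cartesian operad structure. A multimorphism $F\from\cat X_{1}\times\cdots\times\cat X_{n}\to\cat Y$ in $\Pr^{L}\pr{\kappa}^{\t}$ sends $\kappa$-compact objects to $\kappa$-compact objects by definition. Its restriction $\prod_i\pr{\cat X_{i}}_{\kappa}\to\cat Y_{\kappa}$ preserves $\kappa$-small colimits in each variable, because $\kappa$-compact objects are closed under $\kappa$-small colimits and $F$ preserves colimits in each variable. This gives a map of $\infty$-operads, i.e. a lax symmetric monoidal functor.

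The unit $\eta_{\cat A}\from\cat A\to\Ind_{\kappa}\pr{\cat A}_{\kappa}$ should upgrade to an operadic transformation from the inclusion to $\pr -_{\kappa}\circ\Ind_{\kappa}$. On multimorphisms this is the restriction of the Yoneda-type maps, which is compatible because $\Ind_{\kappa}$ on multimorphisms is defined by left Kan extension along the $\eta$'s (Proposition \ref{prop:Ind_univ}).

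Objectwise, this transformation is an equivalence onto its image up to idempotent completion by \cite[Lemma 5.4.2.4]{HTT}, and the rest of the argument is formal. Here lies the main subtlety. Corollary \ref{cor:GU_plain} is stated for idempotent complete categories, whereas the statement concerns $\overline{\Cat}_{\infty}\pr{\kappa}$ without an idempotence hypothesis. For uncountable $\kappa$, however, an $\infty$-category with $\kappa$-small colimits admits countable colimits. Sequential colimits then split idempotents (\cite[Corollary 4.4.5.16]{HTT}), so such categories are automatically idempotent complete, and $\overline{\Cat}^{\idem}_{\infty}\pr{\kappa}=\overline{\Cat}_{\infty}\pr{\kappa}$. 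I would verify this point first, since it is exactly where uncountability enters.

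With this in hand, $\eta$ is an equivalence. A map of $\infty$-operads that is equivalent to a composite of symmetric monoidal equivalences is itself an equivalence of $\infty$-operads. Its inverse $\Ind_{\kappa}$ is strong monoidal, and lax inverses of equivalences are strong, so $\pr -_{\kappa}$ is symmetric monoidal and an equivalence. Alternatively, one can check directly that $\pr -_{\kappa}$ preserves cocartesian edges using Theorem \ref{thm:MGU}(1) and the identification in its proof of $\pr{\cat X\t\cat Y}_{\kappa}$ with $\cat X_{\kappa}\t\cat Y_{\kappa}$, which uses idempotent completeness again.
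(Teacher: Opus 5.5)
Your proposal is correct and follows the paper's argument: the paper likewise identifies $(-)_{\kappa}\circ\Ind_{\kappa}$ with the inclusion $\Cat_{\infty}(\kappa)^{\otimes}\hookrightarrow\overline{\Cat}_{\infty}(\kappa)^{\otimes}$, which is a symmetric monoidal equivalence, and concludes because $\Ind_{\kappa}$ is an equivalence by Theorem~\ref{thm:MGU}. Your extra remarks only make explicit what the paper leaves implicit: that $(-)_{\kappa}$ is a map of $\infty$-operads, that the unit upgrades to an operadic transformation, and that for uncountable $\kappa$ every object is idempotent complete via sequential colimits, so the unit is an equivalence.
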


\begin{proof}
The composite $\Ind_{\kappa}\from\Cat_{\infty}\pr{\kappa}^{\t}\xrightarrow{\Ind_{\kappa}}\Pr^{L}\pr{\kappa}^{\t}\xrightarrow{\pr -_{\kappa}}\overline{\Cat}_{\infty}\pr{\kappa}^{\t}$
is equivalent to the inclusion, which is an equivalence of symmetric
monoidal $\infty$-categories. Since $\Ind_{\kappa}$ is an equivalence
by Theorem \ref{thm:MGU}, we are done.
\end{proof}

\begin{cor}
\label{cor:MGUD}Let $\kappa$ be an uncountable regular cardinal.
The forgetful functor
\[
\kappa\-\Pr\SM\to\SM\overline{\Cat}\pr{\kappa},\,\mathcal{C}^{\t}\mapsto\mathcal{C}^{\t}_{\kappa}
\]
is a categorical equivalence, where $\SM\overline{\Cat}_{\infty}\pr{\kappa}\subset\SM\hat{\Cat}_{\infty}$
denotes the subcategory of essentially small symmetric monoidal $\infty$-categories
compatible with $\kappa$-small colimits, and those monoidal functors
preserving $\kappa$-small colimits.
\end{cor}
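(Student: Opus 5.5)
We deduce this from Corollary \ref{cor:GU} by passing to commutative algebra objects. Presentably symmetric monoidal $\infty$-categories should be commutative algebras in $\Pr^{L}\pr{\kappa}^{\t}$, and essentially small symmetric monoidal $\infty$-categories compatible with $\kappa$-small colimits should be commutative algebras in $\overline{\Cat}_{\infty}\pr{\kappa}^{\t}$. A symmetric monoidal equivalence of symmetric monoidal $\infty$-categories induces an equivalence of $\infty$-categories of $\mathbb{E}_{\infty}$-algebras. Hence Corollary \ref{cor:GU} yields an equivalence
\[
\operatorname{CAlg}\pr{\Pr^{L}\pr{\kappa}^{\t}}\xrightarrow{\simeq}\operatorname{CAlg}\pr{\overline{\Cat}_{\infty}\pr{\kappa}^{\t}},
\]
induced by $\pr -_{\kappa}$. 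It remains to identify the two sides with $\kappa\-\Pr\SM$ and $\SM\overline{\Cat}_{\infty}\pr{\kappa}$, compatibly with the forgetful functor.

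First we identify the right-hand side. By \cite[Proposition 4.8.1.3]{HA}, or rather its proof via Recollection \ref{recoll:4.8.1.4}, commutative algebras in $\hat{\Cat}_{\infty}\pr{\cat K}^{\t}$ are symmetric monoidal $\infty$-categories with $\cat K$-indexed colimits whose tensor product preserves them in each variable. Morphisms are symmetric monoidal functors preserving $\cat K$-indexed colimits. This gives the identification of the right-hand side with $\SM\overline{\Cat}_{\infty}\pr{\kappa}$. Applying the same fact with $\cat U$ identifies $\operatorname{CAlg}\pr{\Pr^{L}}$ with $\Pr\SM$ \cite[Proposition 4.8.1.15]{HA}.

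Next we identify the left-hand side. Since $\Pr^{L}\pr{\kappa}^{\t}\subset\pr{\Pr^{L}}^{\t}$ is a suboperad, $\operatorname{CAlg}\pr{\Pr^{L}\pr{\kappa}^{\t}}$ is a subcategory of $\operatorname{CAlg}\pr{\Pr^{L}}=\Pr\SM$. Its objects are those $\cat C$ whose underlying $\infty$-category is $\kappa$-presentable and whose multiplication maps $\cat C^{\times n}\to\cat C$ lie in $\Pr^{L}\pr{\kappa}^{\t}$. For $n=0$ this says the unit is $\kappa$-compact, and for $n\geq 2$ it says $\kappa$-compact objects are closed under $n$-fold tensor products. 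These are exactly conditions (1)--(3) of Definition \ref{def:kappa_pres}. Its morphisms are those preserving $\kappa$-compact objects, which matches $\kappa\-\Pr\SM$. The only delicate point is that a suboperad is not simply a full subcategory. Here, however, membership of a multimorphism is a property, and the inclusion is symmetric monoidal by Theorem \ref{thm:MGU}(1). So an algebra in the suboperad is just an algebra in $\Pr^{L}$ whose structure multimorphisms lie in it, and being an algebra map is also a property.

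Finally, the induced functor on algebras is computed on underlying symmetric monoidal $\infty$-categories by restricting to $\kappa$-compact objects, i.e.\ $\cat C^{\t}\mapsto\cat C^{\t}_{\kappa}$. This restriction is symmetric monoidal precisely because $\kappa$-compact objects are closed under tensor products. I expect the main bookkeeping obstacle to be this compatibility: the algebra-level identifications in the two previous steps must commute with $\pr -_{\kappa}$ as functors into $\SM\hat{\Cat}_{\infty}$. We handle it by noting that all functors involved are restrictions of the lax symmetric monoidal inclusions into $\hat{\Cat}_{\infty}^{\times}$. Once that is checked, the equivalence on algebras is the claimed one.
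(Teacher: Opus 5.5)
Your proposal is correct and takes the same route as the paper. The paper's proof is a one-liner: the statement follows from Corollary \ref{cor:GU} together with straightening--unstraightening. In effect it applies $\operatorname{CAlg}$ to the symmetric monoidal equivalence $(-)_\kappa$ and identifies commutative algebras in $\Pr^{L}(\kappa)^{\otimes}$ and in $\overline{\Cat}_{\infty}(\kappa)^{\otimes}$ with $\kappa\text{-}\Pr\SM$ and $\SM\overline{\Cat}_{\infty}(\kappa)$. You make explicit what the paper leaves implicit: these identifications (via suboperad membership and conditions (1)--(3) of Definition \ref{def:kappa_pres}), and their compatibility with $\mathcal{C}^{\otimes}\mapsto\mathcal{C}^{\otimes}_{\kappa}$.
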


\begin{proof}
This follows from Corollary \ref{cor:GU} and the straightening--unstraightening
equivalence.
\end{proof}

\begin{cor}
\label{cor:MGUD_2}Let $\kappa$ be an uncountable regular cardinal,
and let $I:\SM\Cat_{\infty}\pr{\kappa}\to\kappa\-\Pr\SM$ be a functor,
denoted by $\cat C^{\t}\mapsto I\pr{\cat C}^{\t}$. Suppose there
is a natural transformation depicted as
\[\begin{tikzcd}
	{\mathcal{SM}\mathcal{C}\mathsf{at}_\infty^{}(\kappa)} && {\kappa\text{-}\mathcal{P}\mathsf{r}\mathcal{SM}} \\
	& {\mathcal{SM}\widehat{\mathcal{C}\mathsf{at}}_\infty,}
	\arrow["I", from=1-1, to=1-3]
	\arrow[""{name=0, anchor=center, inner sep=0}, hook, from=1-1, to=2-2]
	\arrow[""{name=1, anchor=center, inner sep=0}, hook', from=1-3, to=2-2]
	\arrow["\alpha", between={0.2}{0.8}, Rightarrow, from=0, to=1]
\end{tikzcd}\]with the following property: For each $\mathcal{C}\in\SM\Cat_{\infty}\pr{\kappa}$,
the map $\alpha_{\mathcal{C}}:\mathcal{C}\to I\pr{\mathcal{C}}$ exhibits
$I\pr{\mathcal{C}}$ as an $\Ind_{\kappa}$-completion of $\mathcal{C}$.
Then $I$ is a categorical equivalence.
\end{cor}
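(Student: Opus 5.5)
The plan is to compare $I$ with the equivalence of Corollary \ref{cor:MGUD}. Write $\Phi\from\kappa\-\Pr\SM\to\SM\overline{\Cat}_{\infty}\pr{\kappa}$ for the forgetful functor $\mathcal{D}^{\t}\mapsto\mathcal{D}^{\t}_{\kappa}$, which is an equivalence by Corollary \ref{cor:MGUD}. It therefore suffices to show that the composite $\Phi\circ I\from\SM\Cat_{\infty}\pr{\kappa}\to\SM\overline{\Cat}_{\infty}\pr{\kappa}$ is an equivalence. Since the inclusion $j\from\SM\Cat_{\infty}\pr{\kappa}\hookrightarrow\SM\overline{\Cat}_{\infty}\pr{\kappa}$ is an equivalence (every essentially small object is equivalent to a small one), the claim reduces to producing a natural equivalence $j\simeq\Phi\circ I$.

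First, we show that each component $\alpha_{\mathcal{C}}\from\mathcal{C}\to I\pr{\mathcal{C}}$ factors through $I\pr{\mathcal{C}}_{\kappa}$. Since $\alpha_{\mathcal{C}}$ exhibits $I\pr{\mathcal{C}}$ as an $\Ind_{\kappa}$-completion, \cite[Lemma 5.4.2.4]{HTT} shows that $\alpha_{\mathcal{C}}$ lands in the $\kappa$-compact objects and identifies $I\pr{\mathcal{C}}_{\kappa}$ with an idempotent completion of $\mathcal{C}$. Next, we check that the corestriction $\alpha'_{\mathcal{C}}\from\mathcal{C}\to I\pr{\mathcal{C}}_{\kappa}$ is an equivalence. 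Here we need $\mathcal{C}$ to be idempotent complete. This holds because $\mathcal{C}$ admits $\kappa$-small colimits with $\kappa$ uncountable, so it admits sequential colimits and hence splits idempotents \cite[Corollary 4.4.5.16]{HTT}. Consequently $\alpha'_{\mathcal{C}}$ is an equivalence of underlying $\infty$-categories. It is also symmetric monoidal and preserves $\kappa$-small colimits, because $\alpha_{\mathcal{C}}$ does: it is a morphism of $\SM\hat{\Cat}_{\infty}$, and the preservation of $\kappa$-small colimits follows from the $\Ind_{\kappa}$-completion property together with \cite[Proposition 5.3.5.14]{HTT}.

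The main obstacle is to make the factorization $\alpha'$ natural, i.e. to exhibit it as a natural transformation $j\To\Phi\circ I$ of functors into $\SM\overline{\Cat}_{\infty}\pr{\kappa}$ rather than a family of pointwise maps. To handle this, we use that $\SM\overline{\Cat}_{\infty}\pr{\kappa}\to\SM\hat{\Cat}_{\infty}$ is a subcategory inclusion, and that the full subcategory spanned by the pairs $\pr{\mathcal{D}_{\kappa}\subset\mathcal{D}}$ is a replete subcategory of $\Fun\pr{[1],\SM\hat{\Cat}_{\infty}}$. The transformation $\alpha$, viewed as a functor $\SM\Cat_{\infty}\pr{\kappa}\to\Fun\pr{[1],\SM\hat{\Cat}_{\infty}}$, then factors through the subcategory of maps landing in $\kappa$-compact objects. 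Composing with the factorization through the inclusion $\mathcal{D}_{\kappa}\hookrightarrow\mathcal{D}$, which is a monomorphism, yields $\alpha'$ as a natural transformation. Lifting along a monomorphism of $\infty$-categories is essentially unique, so this involves no additional coherence data.

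Since $\alpha'$ is a natural transformation whose components are equivalences in $\SM\overline{\Cat}_{\infty}\pr{\kappa}$, it is a natural equivalence $j\simeq\Phi\circ I$. It follows that $\Phi\circ I$ is an equivalence, and hence so is $I$.
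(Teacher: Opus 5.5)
Your proposal is correct and follows essentially the same route as the paper. The paper's proof likewise observes that $\alpha$ yields a natural equivalence from the inclusion $\SM\Cat_{\infty}(\kappa)\hookrightarrow\SM\overline{\Cat}_{\infty}(\kappa)$ to $(-)_{\kappa}\circ I$, and then concludes by two-out-of-three from Corollary \ref{cor:MGUD}. You additionally spell out details the paper leaves implicit: the idempotent completeness of $\mathcal{C}$, which uses that $\kappa$ is uncountable, and why the corestriction of $\alpha$ is natural, which follows because factoring through a pointwise monomorphism can be checked objectwise.
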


\begin{proof}
The natural transformation $\alpha$ determines a natural equivalence
\[
\mathcal{C}^{\t}\xrightarrow{\simeq}I\pr{\mathcal{C}}^{\t}_{\kappa}
\]
from the inclusion $\iota:\SM\Cat_{\infty}\pr{\kappa}\hookrightarrow\SM\overline{\Cat}_{\infty}\pr{\kappa}$
to the composite $\SM\Cat_{\infty}\pr{\kappa}\xrightarrow{I}\kappa\-\Pr\SM\xrightarrow{\pr -_{\kappa}}\SM\overline{\Cat}_{\infty}\pr{\kappa}$.
Since the functors $\iota$ and $\pr -_{\kappa}$ are equivalences
(Corollary \ref{cor:MGUD}), so must be $I$.
\end{proof}

\section{\label{sec:lb}Base change of localization}

Let $L\from\mathcal{C}\to\mathcal{D}$ be a localization of $\infty$-categories,
and suppose we are given a pullback square in $\Cat_{\infty}$ 
\[\begin{tikzcd}
	{\mathcal{C}'} & {\mathcal{D}} \\
	{\mathcal{D}'} & {\mathcal{D}.}
	\arrow[from=1-1, to=1-2]
	\arrow["{L'}"', from=1-1, to=2-1]
	\arrow["L", from=1-2, to=2-2]
	\arrow[from=2-1, to=2-2]
\end{tikzcd}\]The functor $L'$ is not a localization in general. This naturally
raises a question: When is $L'$ a localization? The goal of this
subsection is to record a few results related to this question.

We start with a sufficient condition for a localization functor to
be stable under base change; such a functor is called a \textbf{universal
localization}. (A general criteria for universal localization is discussed
in \cite{Hinich24}.) The following theorem says that localization
of model categories, or mo generally, $\infty$-categories with weak
equivalences and fibrations (in the sense of \cite[Definition 7.4.12]{HCHA})
enjoy this property:
\begin{thm}
\label{thm:abs_loc_model_gen}Let $\mathcal{C}$ be an $\infty$-category
with weak equivalences and fibrations. The localization functor
\[
\mathcal{C}\to\mathcal{C}[\mathrm{weq}^{-1}]
\]
is a universal localization.
\end{thm}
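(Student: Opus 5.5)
The plan is to reduce universality to a statement about fibers and then use the calculus of fractions available in an $\infty$-category with weak equivalences and fibrations. Recall the criterion for localizations: a functor $p\from\mathcal{E}\to\mathcal{B}$ is a localization at a class $W$ when the induced map $\mathcal{E}[W^{-1}]\to\mathcal{B}$ is an equivalence. For a base change along $F\from\mathcal{D}'\to\mathcal{D}=\mathcal{C}[\mathrm{weq}^{-1}]$, one can reduce to the case of representables. Indeed, $\mathcal{D}'$ is a colimit in $\Cat_{\infty}$ of simplices $\Delta^{n}$ over $\mathcal{D}'$, pullback along the cocartesian fibration-free map $\mathcal{C}\to\mathcal{D}$ should commute with these colimits, and localization commutes with colimits of relative $\infty$-categories. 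This is the strategy of Hinich's criterion \cite{Hinich24}. Concretely, I would aim to verify that for every $n$ and every functor $\sigma\from\Delta^{n}\to\mathcal{D}$, the pullback $\mathcal{C}_{\sigma}=\Delta^{n}\times_{\mathcal{D}}\mathcal{C}\to\Delta^{n}$ is a localization. I also need the pullback of $L$ to be compatible with colimits in the base, which I would deduce by exhibiting $L$ as a "cartesian-like" map: after replacing $\mathcal{C}$ by its subcategory of fibrant objects (by the axioms of \cite[Definition 7.4.12]{HCHA} this is equivalent on localizations), the localization is described by right fractions.

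The key step is to identify $\mathcal{C}_{\sigma}[W^{-1}]\simeq\Delta^{n}$. For that it suffices to show that $\mathcal{C}_{\sigma}$ has weakly contractible fibers in a strong sense, namely that the localization of $\mathcal{C}_{\sigma}$ is a poset with the correct mapping spaces. I would use the fibration structure to produce, for objects $x,y$ of $\mathcal{C}$ lying over $\sigma(i)\to\sigma(j)$, a model of the mapping space in $\mathcal{D}$ as the classifying space of the category of spans $x\xleftarrow{\sim}x'\to y$ with $x'\to x$ a trivial fibration (Cisinski's formula for mapping spaces in $\infty$-categories with weak equivalences and fibrations, \cite[Theorem 7.2.18 / Section 7.4]{HCHA}). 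Restricting to those spans whose image is the specified morphism of $\Delta^{n}$ picks out a union of components. This shows that the mapping spaces of $\mathcal{C}_{\sigma}[W^{-1}]$ are the fibers of $\mathcal{D}(x,y)\to\mathcal{D}(\sigma i,\sigma j)$ over the given point; by construction these fibers are contractible when $x,y$ lie over $\sigma(i),\sigma(j)$. Essential surjectivity onto $\Delta^{n}$ is clear since $L$ is essentially surjective and equivalences in $\mathcal{D}$ lift to weak equivalences, which we may take to be fibrations.

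The main obstacle is the first reduction: showing that forming $\mathcal{C}\times_{\mathcal{D}}(-)$ followed by localization commutes with colimits of base $\infty$-categories. I would handle this by Hinich's criterion, or alternatively by noting that $L$ restricted to fibrant objects is, by the existence of fibrant replacements and path objects, a "localization with fibrant lifting". The lifting of paths along fibrations (trivial fibrations are stable under pullback, and we have factorizations) would give the homotopy-coherent lifting property needed to check the criterion on simplices $\Delta^{n}$, reducing by induction to $n\le1$. If that route stalls, I would fall back on proving directly that $L$ is a cocartesian-type fibration up to weak equivalence via Cisinski's fraction calculus and then invoke Kerodon's base-change results for such fibrations.
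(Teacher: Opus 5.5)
There is a genuine gap. Neither of your two steps is actually carried out, and the second one is circular.

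\emph{The reduction to simplices.} Base change along $\mathcal{C}\to\mathcal{D}=\mathcal{C}[\mathrm{weq}^{-1}]$ does not commute with colimits in the $\infty$-category of $\infty$-categories unless that functor is exponentiable, and localization functors typically are not. To make a decomposition of $\mathcal{D}'$ into simplices work, you would have to:
\begin{itemize}
\item pass to marked simplicial sets;
\item replace the localization by a categorical fibration;
\item keep track of marked edges lying over non-degenerate equivalences of $\mathcal{D}'$.
\end{itemize}
You flag this as ``the main obstacle'' and leave it open.

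\emph{The step over a simplex.} You propose to compute mapping spaces of $\mathcal{C}_\sigma[W^{-1}]$ using the fraction calculus of $\mathcal{C}$. But you have not given $\mathcal{C}_\sigma=\Delta^n\times_{\mathcal{D}}\mathcal{C}$ any fibration structure. The claim that its localized mapping spaces are fibres of the corresponding maps in $\mathcal{D}$ is exactly the compatibility of localization with base change that you are trying to prove. Even the intended answer is garbled: restricting spans to a prescribed image gives components of $\mathcal{D}(\sigma i,\sigma j)$, not a fibre over a point.

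The real homotopical input never appears. For example, the fibre of $\mathcal{C}\to\mathcal{D}$ over an object, in which every morphism gets inverted, must be weakly contractible. Something like \cite[Lemma 7.6.9]{HCHA}, which says that $\mathcal{W}\to\mathcal{C}[\mathrm{weq}^{-1}]^{\simeq}$ is a weak homotopy equivalence, has to enter here.

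\emph{The missing idea} is to use a localization criterion that is visibly stable under base change, so that no decomposition of $\mathcal{D}'$ is needed. By Mazel-Gee's theorem, in the generalized form of \cite{A23b}, $L$ is a localization at $\mathcal{W}$ as soon as each map $\theta_{L,n}\colon\operatorname{Fun}([n],\mathcal{C})\times_{\mathcal{C}^{n+1}}\mathcal{W}^{n+1}\to\operatorname{Fun}([n],\mathcal{D})^{\simeq}$ is a weak homotopy equivalence.

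\begin{itemize}
\item \emph{Stability.} For a categorical fibration $\mathcal{D}'\to\mathcal{D}$, the map $\theta_{L',n}$ is the pullback of $\theta_{L,n}$ along the Kan fibration $\operatorname{Fun}([n],\mathcal{D}')^{\simeq}\to\operatorname{Fun}([n],\mathcal{D})^{\simeq}$. It is therefore a weak homotopy equivalence by right properness of the Kan--Quillen model structure.
\item \emph{Verification for $L$.} The paper first saturates $\mathcal{W}$. Then $\operatorname{Fun}([n],\mathcal{C})$ is again an $\infty$-category with weak equivalences and fibrations, with levelwise weak equivalences, and its localization is $\operatorname{Fun}([n],\mathcal{C}[\mathrm{weq}^{-1}])$ \cite[Theorems 7.4.20 and 7.6.17]{HCHA}. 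This identifies $\theta_{L,n}$ with $\theta_{\operatorname{Fun}([n],\mathcal{C}),0}$. Everything thus reduces to the case $n=0$, which is Cisinski's Lemma 7.6.9.
\end{itemize}
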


The proof of Theorem \ref{thm:abs_loc_model_gen} requires the following
lemma: 
\begin{lem}
\label{lem:abs_loc}Let $\mathcal{C}$ be an $\infty$-category, let
$\mathcal{W}\subset\mathcal{C}$ be a subcategory containing all objects,
and let $L\from\mathcal{C}\to\mathcal{D}$ be a functor of $\infty$-categories
which carries every morphism in $\mathcal{W}$ to an equivalence.
If for each $n\geq0$, the functor
\[
\theta_{L,n}:\Fun\pr{[n],\mathcal{C}}\times_{\mathcal{C}^{n+1}}\mathcal{W}^{n+1}\to\Fun\pr{[n],\mathcal{D}}^{\simeq}
\]
is a weak homotopy equivalence, then $L$ is a universal localization.
\end{lem}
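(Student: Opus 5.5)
Two things are needed: $L$ is a localization at $\mathcal{W}$, and this persists after any base change $\mathcal{D}'\to\mathcal{D}$. The hypothesis is phrased through the complete Segal space model, so I would argue there.

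\emph{The localization criterion.} Associate to $(\mathcal{C},\mathcal{W})$ the simplicial space $N^{\mathcal{W}}(\mathcal{C})$ whose $n$-th space is the groupoidification (weak homotopy type) of $\Fun\pr{[n],\mathcal{C}}\times_{\mathcal{C}^{n+1}}\mathcal{W}^{n+1}$, the classification diagram in the sense of Rezk and Barwick--Kan. A standard result identifies $\mathcal{C}[\mathcal{W}^{-1}]$ with the $\infty$-category associated to the completion of this simplicial space. The simplicial space $n\mapsto\Fun\pr{[n],\mathcal{D}}^{\simeq}$ is already a complete Segal space, and it models $\mathcal{D}$. The maps $\theta_{L,n}$ assemble into a map from $N^{\mathcal{W}}(\mathcal{C})$ to this complete Segal space. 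By hypothesis this map is a levelwise equivalence, so $N^{\mathcal{W}}(\mathcal{C})$ is itself complete Segal and models $\mathcal{D}$ via $L$. Hence $L$ is a localization. (Alternatively, verify directly that $\Fun(\mathcal{D},\mathcal{E})\to\Fun_{\mathcal{W}}(\mathcal{C},\mathcal{E})$ is an equivalence by computing both sides through these simplicial spaces.)

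\emph{Stability under base change.} Let $p\from\mathcal{D}'\to\mathcal{D}$, set $\mathcal{C}'=\mathcal{C}\times_{\mathcal{D}}\mathcal{D}'$, and let $\mathcal{W}'$ consist of the morphisms whose image in $\mathcal{C}$ lies in $\mathcal{W}$. Then $\mathcal{W}'$ contains all objects, and $L'$ inverts $\mathcal{W}'$ because the second component of such a morphism is an equivalence in $\mathcal{D}'$. By the first step, it suffices to show that each $\theta_{L',n}$ is a weak homotopy equivalence. The source of $\theta_{L',n}$ is
\[
\pr{\Fun\pr{[n],\mathcal{C}}\times_{\mathcal{C}^{n+1}}\mathcal{W}^{n+1}}\times_{\Fun\pr{[n],\mathcal{D}}^{\simeq}}\Fun\pr{[n],\mathcal{D}'}^{\simeq},
\]
since a morphism in $\mathcal{W}'$ corresponds to one in $\mathcal{W}$ together with an equivalence in $\mathcal{D}'$ lying over its image. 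In other words, the source of $\theta_{L',n}$ is the pullback of $\theta_{L,n}$ along the map of Kan complexes $\Fun\pr{[n],\mathcal{D}'}^{\simeq}\to\Fun\pr{[n],\mathcal{D}}^{\simeq}$.

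\emph{Main obstacle.} Weak homotopy equivalences are not stable under pullback in general. The step I would rely on is that the target of $\theta_{L,n}$ is a Kan complex, i.e.\ an $\infty$-groupoid, and colimits in $\mathcal{S}$ are universal. For a functor $F\from\mathcal{E}\to K$ into an $\infty$-groupoid, the weak homotopy type of $\mathcal{E}\times_{K}K'$ is $|\mathcal{E}|\times_{K}K'$: replace $F$ by the left fibration it classifies over $K$ (every map to a Kan complex is, up to weak equivalence, the colimit of its fibers), and then use universality of colimits in $\mathcal{S}$. Applying this with $|\theta_{L,n}|$ an equivalence onto $K=\Fun\pr{[n],\mathcal{D}}^{\simeq}$ shows that $\theta_{L',n}$ is a weak homotopy equivalence. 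Some care is needed to check that the $\infty$-categorical fiber products above are modeled correctly, for instance by taking $L$ to be an inner or isofibration.
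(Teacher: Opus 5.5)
Your route is the paper's: use the Mazel-Gee-type localization criterion, identify the source of $\theta_{L',n}$ as the pullback of the source of $\theta_{L,n}$ along $\operatorname{Fun}([n],\mathcal{D}')^{\simeq}\to\operatorname{Fun}([n],\mathcal{D})^{\simeq}$, and show the pulled-back map is still a weak homotopy equivalence. $\mathcal{W}$ (and hence $\mathcal{W}'$) is only assumed to contain all objects. So the ``standard result'' you invoke has to be the version of the criterion for arbitrary wide subcategories. That generalized version is the one the paper cites.

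Two points need correcting.

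\emph{The definition of $\mathcal{W}'$.} It cannot be ``the morphisms whose image in $\mathcal{C}$ lies in $\mathcal{W}$''. The functor $\mathcal{D}'\to\mathcal{D}$ need not be conservative, so the $\mathcal{D}'$-component of such a morphism need not be an equivalence. For example, take $\mathcal{C}=\mathcal{W}=\mathcal{D}=\Delta^0$ and $\mathcal{D}'=\Delta^1$: then $L'$ is the identity of $\Delta^1$ and does not invert $0\to 1$. The correct choice is $\mathcal{W}'=(\mathcal{D}')^{\simeq}\times_{\mathcal{D}}\mathcal{W}$. This is in fact the description you use when you compute the source of $\theta_{L',n}$.

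\emph{Where the fibrancy goes.} The fibrancy condition should be imposed on $\mathcal{D}'\to\mathcal{D}$, not on $L$. An inner fibration would not suffice in any case: $\{0\}\hookrightarrow J$, with $J$ the walking isomorphism, is an inner fibration whose strict pullback along $\{1\}\hookrightarrow J$ is empty. If $\mathcal{D}'\to\mathcal{D}$ is a categorical fibration, then:
\begin{itemize}
\item $\operatorname{Fun}([n],\mathcal{D}')^{\simeq}\to\operatorname{Fun}([n],\mathcal{D})^{\simeq}$ is a Kan fibration;
\item the strict fiber products you wrote down compute the correct homotopy types;
\item your ``main obstacle'' is just right properness of the Kan--Quillen model structure, which is exactly how the paper concludes.
\end{itemize}
Your argument via universality of colimits over the $\infty$-groupoid $\operatorname{Fun}([n],\mathcal{D})^{\simeq}$ is a correct $\infty$-categorical proof of the same fact. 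However, it only applies once the strict pullback is known to model the homotopy pullback. Making $L$ an isofibration does not by itself give this, because $\theta_{L,n}$ need not be an isofibration when $\mathcal{W}$ omits some equivalences.
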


\begin{proof}
Let $\mathcal{D}'\to\mathcal{D}$ be a categorical fibration of $\infty$-categories,
and set $\mathcal{C}'=\mathcal{D}'\times_{\mathcal{D}}\mathcal{C}$.
We must show that the functor $L':\mathcal{C}'\to\mathcal{D}'$ is
a localization. For this, set $\mathcal{W}'=\pr{\mathcal{D}'}^{\simeq}\times_{\mathcal{D}}\mathcal{W}\subset\mathcal{C}'$.
We claim that $L'$ is a localization at $\mathcal{W}'$. According
to the generalized Mazel-Gee's localization theorem \cite[Theorem 1.7]{A23b},
it suffices to show that for each $n\geq0$, the map $\theta_{L',n}$
is a weak homotopy equivalence. But $\theta'_{n}$ is a pullback of
the map $\theta_{n}$ along the  Kan fibration $\Fun\pr{[n],\mathcal{D}'}^{\simeq}\to\Fun\pr{[n],\mathcal{D}}^{\simeq}$,
so the claim follows from the right properness of the Kan--Quillen
model structure. (If $\cat W$ is the subcategory of morphisms mapped
to equivalences in $\mathcal{D}$, we can use the original version
of Mazel-Gee's theorem \cite[Theorem 3.8]{MR4045352}. See \cite[Theorem 1.1]{AC25}
for a different proof of this theorem.) 
\end{proof}

\begin{proof}
[Proof of Theorem \ref{thm:abs_loc_model_gen}]Let $\mathcal{W}\subset\mathcal{C}$
denote the subcategory of weak equivalences. By \cite[Remark 7.5.22]{HCHA},
we may assume that $\mathcal{W}$ is saturated, i.e., that it consists
of the morphisms whose images in $L\pr{\mathcal{C}}=\mathcal{C}[\mathrm{weq}^{-1}]$
are equivalences. By Lemma \ref{lem:abs_loc}, it will suffice to
show that the functor
\[
\theta_{\mathcal{C},n}:\Fun\pr{[n],\mathcal{C}}\times_{\mathcal{C}^{n+1}}\mathcal{W}^{n+1}\to\Fun\pr{[n],L\pr{\mathcal{C}}}
\]
is a weak homotopy equivalence for every $n\geq0$. By \cite[Theorem 7.4.20]{HCHA},
the $\infty$-category $\Fun\pr{[n],\mathcal{C}}$ has the structure
of an $\infty$-category with weak equivalences and fibrations whose
subcategory of weak equivalences is given by $\Fun\pr{[n],\mathcal{C}}\times_{\mathcal{C}^{n+1}}\mathcal{W}^{n+1}$.
Moreover, by \cite[Theorem 7.6.17]{HCHA}, the functor
\[
L\pr{\Fun\pr{[n],\mathcal{C}}}\to\Fun\pr{[n],L\pr{\mathcal{C}}}
\]
is a categorical equivalence. It follows that $\Fun\pr{[n],\cat C}$
is saturated, and the map $\theta_{L,n}$ can be identified with $\theta_{\Fun\pr{[n],\mathcal{C}},0}$.
Thus, replacing $\mathcal{C}$ by $\Fun\pr{[n],\mathcal{C}}$, we
are reduced to showing that $\theta_{\mathcal{C},0}\from\mathcal{W}\to L\pr{\mathcal{C}}$
is a weak homotopy equivalence. This is the content of \cite[Lemma 7.6.9]{HCHA},
and the proof is complete.
\end{proof}

We next consider the interaction of pullback and homotopy equivalences
of relative $\infty$-categories.
\begin{prop}
\label{prop:hoeq_pb}Consider a diagram $[2]\times[1]\to\Cat_{\infty}$
depicted as 
\[\begin{tikzcd}
	&&& {\mathcal{A}} && {\mathcal{B}} \\
	{\mathcal{A}'} && {\mathcal{B}'} \\
	&&&& {\mathcal{C},} \\
	& {\mathcal{C}'}
	\arrow["f", from=1-4, to=1-6]
	\arrow["p"', from=1-4, to=3-5]
	\arrow["q", from=1-6, to=3-5]
	\arrow[from=2-1, to=1-4]
	\arrow["{f'}", from=2-1, to=2-3]
	\arrow["{p'}"', from=2-1, to=4-2]
	\arrow[from=2-3, to=1-6]
	\arrow["{q'}", from=2-3, to=4-2]
	\arrow["u"', from=4-2, to=3-5]
\end{tikzcd}\]where the squares are all cartesian. Regard $\cat A$ and $\cat B$
(resp. $\cat A'$ and $\cat B'$) as relative $\infty$-categories
whose weak equivalences are the maps whose images in $\cat C$ (resp.
$\cat C'$) are equivalences. Suppose that the following conditions
are satisfied:
\begin{enumerate}
\item The functor $f\from\cat A\to\cat B$ is a homotopy equivalence of
relative $\infty$-categories.
\item The functor $u\from\cat C'\to\cat C$ is conservative (i.e., reflects
equivalences).
\end{enumerate}
Then $f'$ is a homotopy equivalence of relative $\infty$-categories.
\end{prop}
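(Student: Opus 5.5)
The strategy is to lift the homotopy inverse of $f$ and the two zig-zags of natural weak equivalences along the cartesian squares. A functor into a pullback of $\infty$-categories is the same as a pair of functors together with a homotopy between their images in $\cat C$. So each lift reduces to producing suitable homotopies in $\cat C$. The one structural input is hypothesis (2). Since $u$ is conservative and $p=u\circ p'$ on $\cat A'$ (and likewise for $\cat B'$), a morphism of $\cat A'$ (resp. $\cat B'$) is a weak equivalence if and only if its image in $\cat A$ (resp. $\cat B$) is one. Equivalently, its image in $\cat C$ is an equivalence. In particular $f'$ is a relative functor, and checking weak equivalences in $\cat A',\cat B'$ can always be done after projecting to $\cat A,\cat B$.

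First I would record the lifting device that everything rests on. Let $X$ be an $\infty$-category, let $(F,c',h)$ be a functor $X\to\cat A'=\cat C'\times_{\cat C}\cat A$, with $F\colon X\to\cat A$, $c'\colon X\to\cat C'$ and $h\colon uc'\simeq pF$. Let $\alpha\colon F\to G$ be a natural transformation in $\Fun(X,\cat A)$ whose image $p\alpha$ is a natural equivalence. Then $(G,c',(p\alpha)\circ h)$ is again a functor $X\to\cat A'$. Moreover, the pair $(\alpha,\mathrm{id}_{c'})$, together with the evident commuting $2$-cell, defines a natural transformation $(F,c',h)\to(G,c',(p\alpha)\circ h)$ lifting $\alpha$. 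The same construction works backwards, for transformations $G\to F$, using $(p\alpha)^{-1}$. By the first paragraph, every such lift is a natural weak equivalence of $\cat A'$. The analogous statement holds for $\cat B'$.

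Next, I fix a homotopy inverse $g\colon\cat B\to\cat A$ together with zig-zags $gf\leftrightsquigarrow\mathrm{id}_{\cat A}$ and $fg\leftrightsquigarrow\mathrm{id}_{\cat B}$ of natural weak equivalences. Applying $q$ to the second zig-zag gives a zig-zag of natural equivalences in $\Fun(\cat B,\cat C)$, hence a natural equivalence $pg=qfg\simeq q$. Writing $\pi\colon\cat B'\to\cat B$ for the projection, I define
\[
g'=(g\pi,\,q',\,h)\colon\cat B'\to\cat A',\qquad h\colon uq'=q\pi\simeq pg\pi .
\]
The $\cat C'$-component of $g'$ is $q'$, and $q'$ sends weak equivalences to equivalences. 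Hence $g'$ is relative.

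Then I would lift the zig-zags one arrow at a time. For $f'g'\leftrightsquigarrow\mathrm{id}_{\cat B'}$, start at the functor $\mathrm{id}_{\cat B'}=(\pi,q',\mathrm{id})$. Precompose the zig-zag $fg\leftrightsquigarrow\mathrm{id}$ with $\pi$ and transport it through the lifting device, one arrow at a time. This produces a zig-zag of natural weak equivalences of $\cat B'$ from $\mathrm{id}_{\cat B'}$ to a functor of the form $(fg\pi,q',h_{\mathrm{end}})$. The analogous procedure for $\cat A'$, starting from $\mathrm{id}_{\cat A'}$ and the zig-zag $gf\leftrightsquigarrow\mathrm{id}_{\cat A}$, produces a zig-zag ending at $(gf\pi_{\cat A},p',h'_{\mathrm{end}})$.

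The main obstacle is the last step: matching these endpoints with $f'g'$ and $g'f'$. Their first two components agree, but the homotopy $h_{\mathrm{end}}$ produced by the transport must be identified with the homotopy carried by $f'g'$, which is the composite of $h$ with the structure homotopy of $f'$. I would handle this by choosing $h$ above to be exactly the homotopy obtained by transporting along the images in $\cat C$ of the zig-zag $fg\leftrightsquigarrow\mathrm{id}$. With that choice the two homotopies coincide by construction. For the $\cat A'$ side no such choice is available. Instead, any two homotopies $uc'\simeq pF$ differ by a natural automorphism $\varphi$ of $pF$, namely $h_{\mathrm{end}}=\varphi\circ h$. The pair $(\mathrm{id}_F,\mathrm{id}_{c'})$ together with the $2$-cell recording $\varphi$ is then a natural equivalence $(F,c',h)\simeq(F,c',\varphi\circ h)$ in $\Fun(X,\cat A')$, hence a natural weak equivalence. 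Appending this one extra equivalence to each zig-zag closes it, showing that $f'$ is a homotopy equivalence with homotopy inverse $g'$.
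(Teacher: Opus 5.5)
Most of your argument is sound, but the final step on the $\cat A'$ side fails as written. The lifting device is correct and $g'=(g\pi,q',h)$ is relative. With your choice of $h$, transporting $fg\leftrightsquigarrow\mathrm{id}_{\cat B}$ does give $f'g'\leftrightsquigarrow\mathrm{id}_{\cat B'}$.

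\textbf{The gap.} You claim an equivalence $(F,c',h)\simeq(F,c',\varphi\circ h)$ lying over $(\mathrm{id}_{c'},\mathrm{id}_F)$. In $\Fun(X,\cat A')\simeq\Fun(X,\cat C')\times_{\Fun(X,\cat C)}\Fun(X,\cat A)$, such a morphism is a $2$-cell filling the square with edges $u(\mathrm{id}_{c'})$, $p(\mathrm{id}_F)$, $h$, $\varphi\circ h$. That is, it is a homotopy $\varphi\circ h\simeq h$, and it exists only if $\varphi$ is homotopic to the identity. The automorphism $\varphi$ is $1$-dimensional data and cannot be ``recorded'' by a $2$-cell. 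Put differently, the fiber of $\Fun(X,\cat A')\to\Fun(X,\cat C')\times\Fun(X,\cat A)$ over $(c',F)$ is the space of equivalences $uc'\simeq pF$, and $h$ and $\varphi\circ h$ can lie in different components. For example, with $X=\cat A=\cat C'=\ast$ and $\cat C=B\mathbb{Z}$, the pullback $\cat A'\simeq\mathbb{Z}$ is discrete.

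In your situation, after identifying $p$ with $qf$, $\varphi$ compares two natural equivalences $p\simeq pgf$. One comes from applying $p$ to the zig-zag $gf\leftrightsquigarrow\mathrm{id}_{\cat A}$. The other comes from applying $q$ to $fg\leftrightsquigarrow\mathrm{id}_{\cat B}$ and whiskering with $f$. Nothing forces these to agree; their agreement is a triangle-identity-type coherence. For instance, take $\cat A=\cat B=\cat C=\cat C'=B\mathbb{Z}$ with all functors identities, and let the two zig-zags be single natural automorphisms $n\neq m$ of $\mathrm{id}_{B\mathbb{Z}}$. Then $\varphi$ is not homotopic to the identity.

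\textbf{The fix.} The conclusion $g'f'\leftrightsquigarrow\mathrm{id}_{\cat A'}$ is still true, but it needs the idea the paper uses at the end of its proof: do not try to close both zig-zags with one choice of data. Write $\sim$ for ``connected by a zig-zag of natural weak equivalences''; this relation is stable under whiskering with relative functors.
\begin{enumerate}
\item Your argument shows $f'g'\sim\mathrm{id}_{\cat B'}$.
\item Run the same argument with the roles of $f$ and $g$ exchanged. This gives a relative functor $f''=(f\pi_{\cat A},p',h_2)\colon\cat A'\to\cat B'$. Choose $h_2$, exactly as you chose $h$, so that the homotopy carried by $g'f''$ matches the one produced by transporting along $gf\leftrightsquigarrow\mathrm{id}_{\cat A}$. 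Thus $g'f''\sim\mathrm{id}_{\cat A'}$.
\item Then $f'\sim f'g'f''\sim f''$, hence $g'f'\sim g'f''\sim\mathrm{id}_{\cat A'}$.
\end{enumerate}
This is the two-out-of-six step in the paper. The rest of the paper's proof is essentially a packaged form of your lifting device: it lifts $g$, and a homotopy $\Phi\colon\cat B\times\cat I\to\cat B$ from $fg$ to an equivalence $i$, into $\Cat_{\infty/\cat C}$, and then pulls back along $u$.

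Alternatively, you could first replace $gf\leftrightsquigarrow\mathrm{id}_{\cat A}$ by its half-adjoint correction, built from both zig-zags by concatenation and whiskering, so that $\varphi$ becomes trivial.
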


\begin{proof}
We claim that there are maps $g\from\cat B\to\cat A$ and $i\from\cat B\to\cat B$
in $\Cat_{\infty/\cat C}$ and a diagram of the form 
\begin{equation}\label{diagram:htpy}
\begin{tikzcd}
	{\mathcal{B}\times \{0\}} \\
	& {\mathcal{B}\times \mathcal{I}} & {\mathcal{B}} \\
	{\mathcal{B}\times \{1\}}
	\arrow[hook, from=1-1, to=2-2]
	\arrow["fg", curve={height=-12pt}, from=1-1, to=2-3]
	\arrow["\Phi", from=2-2, to=2-3]
	\arrow[hook', from=3-1, to=2-2]
	\arrow["i"', curve={height=12pt}, from=3-1, to=2-3]
\end{tikzcd}
\end{equation}in $\Cat_{\infty/\cat C}$, where:

\begin{enumerate}[label=(\Roman*)]

\item $\cat I$ is a weakly contractible $\infty$-category equipped
with two distinguished objects $0,1\in\cat I$.

\item For each $B\in\cat B$, the functor $\Phi\vert\{B\}\times\cat I$
carries each morphism to a weak equivalence.

\item The functor $i$ is an equivalence of $\infty$-categories.

\item $\cat B\times\cat I$ lies over $\cat C$ via the composite
$\cat B\times\cat I\xrightarrow{\mathrm{pr}}\cat B\xrightarrow{q}\cat C$.

\end{enumerate}

Pulling back along $u\from\cat C'\to\cat C$, we obtain functors $g'\from\cat B'\to\cat A'$,
$i'\from\cat B'\to\cat B'$, and $\Phi'\from\cat B'\times\cat I\to\cat B'$.
Using condition (2), we find that $\Phi'$ witnesses the fact that
$f'$ has a right homotopy inverse. Applying the same argument to
$g$, we find that $g'$ also has a right homotopy inverse. It then
follows from the two out of six property of homotopy equivalences
that $f'$ is a homotopy equivalence of relative categories.

To prove the above claim, use condition (1) to find a relative functor
$g\from\cat B\to\cat A$ and diagrams of the form (\ref{diagram:htpy})
in $\Cat_{\infty}$ (not $\Cat_{\infty/\cat C}$ yet!) that satisfy
conditions (I), (II), and (III). Let $S$ denote the set of morphisms
of $\cat B\times\cat I$ of the form $\pr{B,x}\to\pr{B,y}$, where
$B\in\cat B$ and $x\to y$ is a morphism in $\cat I$. By condition
(I), $q\Phi$ factors through the localization $\pr{\cat B\times\cat I}[S^{-1}]\simeq\cat B\times\cat I[\cat I^{-1}]$.
Since $\cat I$ is weakly contractible, this localization can be identified
with $\cat B$, with localizing functor given by the projection $\cat B\times\cat I\to\cat B$.
Thus, there is a diagram $[1]\times[1]\to\Cat_{\infty}$ of the form
\[\begin{tikzcd}
	{\mathcal{B}\times \mathcal{I}} & {\mathcal{B}} \\
	{\mathcal{B}} & {\mathcal{C}}
	\arrow["{\Phi }", from=1-1, to=1-2]
	\arrow["{\mathrm{pr}}"', from=1-1, to=2-1]
	\arrow["q", from=1-2, to=2-2]
	\arrow["r"', from=2-1, to=2-2]
\end{tikzcd}\] Since $r\simeq\pr{q\Phi}\vert\cat B\times\{1\}=q$, we may assume
that $r=q$. Then we obtain a diagram $\sigma\from[2]\to\Cat_{\infty}$
whose boundary is depicted as 
\[\begin{tikzcd}
	{\mathcal{B}\times \mathcal{I}} & {\mathcal{B}} \\
	& {\mathcal{C}.}
	\arrow["{\Phi }", from=1-1, to=1-2]
	\arrow["{q\circ \mathrm{pr}}"', from=1-1, to=2-2]
	\arrow["q", from=1-2, to=2-2]
\end{tikzcd}\]

We now contemplate the diagrams $\pr{[1]\times[1]}^{\rcone}\to\Cat_{\infty}$
and $[3]\to\Cat_{\infty}$ depicted as
\[\begin{tikzcd}[scale cd =. 95]
	{\mathcal{B}\times\{0\}} &&& {\mathcal{A}} &&& {\mathcal{B}\times \mathcal{I}} && {\mathcal{B}} \\
	{\mathcal{B}\times \mathcal{I}} &&&& {\mathcal{B}} & {\mathcal{B}\times \{1\}} \\
	&&& {\mathcal{C}} &&&& {\mathcal{C}}
	\arrow["g", from=1-1, to=1-4]
	\arrow[hook, from=1-1, to=2-1]
	\arrow["q"{description, pos=0.7}, from=1-1, to=3-4]
	\arrow["f", from=1-4, to=2-5]
	\arrow["p"{description, pos=0.7}, from=1-4, to=3-4]
	\arrow["\Phi", from=1-7, to=1-9]
	\arrow["{q\circ \mathrm{pr}}"', from=1-7, to=3-8]
	\arrow["q", from=1-9, to=3-8]
	\arrow["\Phi"{description}, from=2-1, to=2-5, crossing over]
	\arrow["{q\circ \mathrm{pr}}"', from=2-1, to=3-4]
	\arrow["q", from=2-5, to=3-4]
	\arrow[hook, from=2-6, to=1-7]
	\arrow["i"{description, pos=0.6}, from=2-6, to=1-9, crossing over,shorten >=1.5ex]
	\arrow["q"', from=2-6, to=3-8]
	\arrow["fg"{description}, from=1-1, to=2-5, crossing over, shorten >=1.5ex]
\end{tikzcd}\]The left-hand diagram is constructed as follows: Think of this as
an amalgamation of two $3$-simplices $\Delta^{3}\to\Cat_{\infty}$.
The front $3$-simplex is obtained by filling the horn $\Lambda^{3}_{1}\subset\Delta^{3}$,
using $\sigma$, the natural equivalence $fg\simeq\Phi\vert\cat B\times\{0\}$,
and the $2$-simplex corresponding to the equality $q\circ\mathrm{pr}\vert\cat B\times\{0\}=q$.
We then fill the $3$-simplex in the back by filling the horn $\Lambda^{3}_{2}\subset\Delta^{3}$.
Likewise, the right-hand $3$-simplex is obtained by filling the inner
horn $\Lambda^{3}_{1}\subset\Delta^{3}$. These diagrams lifts the
maps $g,i,$ and $\Phi$ to those in $\Cat_{\infty/\cat C}$. The
lifts give rise to a diagram in $\Cat_{\infty/\cat C}$ satisfying
conditions (I) through (IV). The proof is now complete.
\end{proof}

\section{\label{sec:Muro}Enriched Muro's theorem}

In this section, we show that every monoidal $\mathbf{V}$-model category
can be made into one with a cofibrant unit by changing the class of
cofibrations. The result is a minor generalization of \cite[Theorem 1]{Mur15},
which establishes the corresponding theorem in the unenriched setting.
\begin{prop}
\label{prop:Mur15}Let $\mathbf{V}$ be a combinatorial monoidal model
category with a cofibrant unit, and let $\mathbf{M}$ be a combinatorial
monoidal $\mathbf{V}$-model category. There is another combinatorial
monoidal $\mathbf{V}$-model structure on the underlying $\mathbf{V}$-monoidal
category of $\mathbf{M}$, denoted by $\widetilde{\mathbf{M}}$, satisfying
the following conditions:
\begin{enumerate}
\item The weak equivalences of $\widetilde{\mathbf{M}}$ are the weak equivalences
of $\mathbf{M}$.
\item Every cofibration of $\mathbf{M}$ is a cofibration of $\widetilde{\mathbf{M}}$.
\item The unit object is cofibrant in $\widetilde{\mathbf{M}}$.
\end{enumerate}
Moreover, it satisfies the following universal property:

\begin{enumerate}[resume]

\item Let $\mathbf{N}$ be another monoidal $\mathbf{V}$-model category
with a cofibrant unit. Then every monoidal left Quillen $\mathbf{V}$-functor
$\mathbf{M}\to\mathbf{N}$ extends uniquely to a left Quillen $\mathbf{V}$-functor
$\widetilde{\mathbf{M}}\to\mathbf{N}$.

\end{enumerate}

An analogous claim holds for tractable semi-model categories.
\end{prop}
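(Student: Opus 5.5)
We will follow Muro's construction in \cite{Mur15} and check at each stage that the $\mathbf{V}$-enrichment is respected. By Muro's unit axiom, choose a cofibrant replacement $q\colon\widetilde{\mathbf{1}}\to\mathbf{1}$; after factoring, we may take $q$ to be a trivial fibration with $\widetilde{\mathbf{1}}$ cofibrant. Let $I,J$ be generating sets of cofibrations and trivial cofibrations of $\mathbf{M}$. We define $\widetilde{\mathbf{M}}$ as the cofibrantly generated structure with generating cofibrations
\[
\widetilde I=I\cup\{\widetilde{\mathbf{1}}\amalg\widetilde{\mathbf{1}}\to\mathrm{Cyl}(q)\text{-type map},\ \varnothing\to\mathbf{1}\},
\]
following Muro's choice: a mapping-cylinder factorization $\widetilde{\mathbf{1}}\rightarrowtail C\xrightarrow{\sim}\mathbf{1}$ together with the map $\widetilde{\mathbf{1}}\to\mathbf{1}$ turned into a cofibration. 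The generating trivial cofibrations are $J$ together with the corresponding acyclic generator. The weak equivalences are unchanged.

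The key steps are as follows. First, we invoke \cite[Theorem 1]{Mur15} directly for the underlying monoidal model category, which provides the model structure $\widetilde{\mathbf{M}}$ satisfying (1)--(3) and the unenriched pushout-product axiom. In that setting, the main input is that $\mathbf{1}\otimes-$ is the identity, and that tensoring the new generators with cofibrant objects produces weak equivalences by Muro's unit axiom. The only remaining structural point is that the $\mathbf{V}$-tensoring $\mathbf{V}\times\widetilde{\mathbf{M}}\to\widetilde{\mathbf{M}}$ is a left Quillen bifunctor. Because both sides are cofibrantly generated, it suffices to check pushout-products $i\,\square\,g$ with $i$ a generating (trivial) cofibration of $\mathbf{V}$ and $g\in\widetilde I$ (resp.\ $\widetilde J$). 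For $g\in I$ or $J$, this holds because $\mathbf{M}$ is a $\mathbf{V}$-model category and cofibrations of $\mathbf{M}$ remain cofibrations of $\widetilde{\mathbf{M}}$. For the new generators, built from $\mathbf{1}$ and $\widetilde{\mathbf{1}}$, we write $V\otimes\mathbf{1}_{\mathbf{M}}$ as the image of $V$ under the monoidal unit functor $\mathbf{V}\to\mathbf{M}$, $V\mapsto V\otimes\mathbf{1}$. Since $\mathbf{V}$ has a cofibrant unit, $i\,\square\,(\varnothing\to\mathbf{1})=i\otimes\mathbf{1}$. We then show that $(-)\otimes\mathbf{1}\colon\mathbf{V}\to\widetilde{\mathbf{M}}$ is left Quillen by comparing it with $(-)\otimes\widetilde{\mathbf{1}}$, which is left Quillen into $\mathbf{M}$, via a pushout along the new generator $\widetilde{\mathbf{1}}\to\mathbf{1}$. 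The acyclic case uses the relation $V\otimes q\simeq$ equivalence for cofibrant $V$, which follows from Muro's unit axiom applied to $V\otimes\widetilde{\mathbf{1}}$.

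For the universal property (4), we argue as follows. Since $\widetilde{\mathbf{M}}$ and $\mathbf{M}$ have the same underlying $\mathbf{V}$-monoidal category, a monoidal left Quillen $\mathbf{V}$-functor $F\colon\mathbf{M}\to\mathbf{N}$ has a unique candidate extension, namely $F$ itself. It therefore suffices to check that $F$ sends the new generators to (trivial) cofibrations. The map $F(\varnothing\to\mathbf{1})=\varnothing\to\mathbf{1}_{\mathbf{N}}$ is a cofibration by hypothesis. The other generators are built functorially from $\mathbf{1}$, $\widetilde{\mathbf{1}}$ and their cylinders, so their images are handled exactly as in \cite[Proposition 12]{Mur15}, using that $F(q)$ is a weak equivalence between cofibrant objects because $F$ is left Quillen. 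The semi-model version follows verbatim, using Theorem \ref{thm:Smith} in place of the classical recognition theorem, since all generators have cofibrant domains.

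The main obstacle is the acyclic half of the $\mathbf{V}$-pushout-product axiom for the new generators, that is, showing that $j\otimes\mathbf{1}$ is a weak equivalence for a generating trivial cofibration $j$ of $\mathbf{V}$. The approach I would try first is to factor this through $j\otimes\widetilde{\mathbf{1}}$ together with the map $q$. The step to verify is that $V\otimes q$ is a weak equivalence for all cofibrant $V\in\mathbf{V}$. I would do this by writing $V\otimes-$ as the tensor in $\mathbf{M}$ with the cofibrant object $V\otimes\widetilde{\mathbf{1}}$, so that $V\otimes q$ is identified with $(V\otimes\widetilde{\mathbf{1}})\otimes_{\mathbf{M}}q$ up to a unit comparison, and then applying Muro's unit axiom in $\mathbf{M}$.
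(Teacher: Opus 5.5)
Your plan breaks where you take Muro's structure from \cite[Theorem 1]{Mur15} and then try to show that $\mathbf{V}\times\widetilde{\mathbf{M}}\to\widetilde{\mathbf{M}}$ is a left Quillen bifunctor. In that structure the cofibrations are generated by $I\cup\{\phi\}$, with $\phi\colon\emptyset\to\mathbf{1}$. For $i\colon A\to A'$ in $I_{\mathbf{V}}$ you need $i\square\phi=i\otimes\mathbf{1}$ to be a cofibration, and comparing with $i\otimes\widetilde{\mathbf{1}}$ cannot give this. The map $i\otimes\mathbf{1}$ factors as a cobase change of $i\otimes\widetilde{\mathbf{1}}$ (along $A\otimes q$) followed by $i\square q$, and nothing makes $i\square q$ a cofibration, since $q$ need not be one.

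In fact the claim is false for Muro's structure. Take $\mathbf{V}=\mathsf{sSet}$ and let $\mathbf{M}$ be the simplicial $G$-sets ($G\neq 1$) under cartesian product, with the model structure transferred along $G\times(-)$. Then $\mathbf{1}=\ast$ is not cofibrant, and $q\colon EG\to\ast$ satisfies Muro's axiom. An $(I\cup\{\phi\})$-cell complex only adds copies of $\ast$ and free cells, so its new nondegenerate simplices of positive dimension have free orbits. Now consider $\partial\Delta^{1}\otimes\mathbf{1}\to\Delta^{1}\otimes\mathbf{1}$ with trivial action: its nondegenerate $1$-simplex is $G$-fixed and not in the image of $\partial\Delta^1$. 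So this map is not a retract of such a cell complex, hence not a cofibration of Muro's $\widetilde{\mathbf{M}}$.

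The missing idea is to put the $\mathbf{V}$-tensors into the generators. The paper takes $I_{\widetilde{\mathbf{M}}}=I_{\mathbf{M}}\cup(I_{\mathbf{V}}\square\{\phi\})$ and $J_{\widetilde{\mathbf{M}}}=J_{\mathbf{M}}\cup(I_{\mathbf{V}}\square\{\kappa\})\cup(J_{\mathbf{V}}\square\{\phi\})$, with $\emptyset\to\mathbf{1}_{\mathbf{V}}\in I_{\mathbf{V}}$. That last condition, not the formula for $i\square\phi$, is where the cofibrant unit of $\mathbf{V}$ is used. This is a different model structure from Muro's, so his theorem cannot be quoted and the recognition theorem has to be rerun. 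The paper does this using:
\begin{itemize}
\item associativity of $\square$;
\item the description of $I_{\widetilde{\mathbf{M}}}$-injectives through the enriched maps $\langle i,p\rangle$;
\item the fact that $\phi$ is a retract of $\emptyset\to C$;
\item the fact that $(-)\otimes\widetilde{\mathbf{1}}$ detects weak equivalences and carries $\kappa$ to a $J_{\mathbf{M}}$-cofibration.
\end{itemize}
Monoidality then reduces to $i\square\kappa$ and $\kappa\square i$ being $J_{\mathbf{M}}$-cofibrations. Your acyclicity argument, via $V\otimes q\cong(V\otimes\mathbf{1})\otimes q$ and Muro's axiom, is essentially the paper's $(\ast)$ and survives.

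Separately, in (4) your reason that $F(q)$ is a weak equivalence (``between cofibrant objects because $F$ is left Quillen'') is invalid, since $\mathbf{1}$ is not cofibrant in $\mathbf{M}$. The property is nevertheless necessary, because $q$ is a weak equivalence between cofibrant objects of $\widetilde{\mathbf{M}}$. It is Hovey's unit condition on monoidal Quillen functors and must be assumed. In the example above, consider the identity from $\mathbf{M}$ to simplicial $G$-sets with monomorphisms as cofibrations and fixed-point-wise weak equivalences. This is a monoidal left Quillen $\mathsf{sSet}$-functor into a monoidal $\mathsf{sSet}$-model category with cofibrant unit, yet $EG\to\ast$ is not an equivalence on $G$-fixed points. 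The paper's ``immediate from the construction'' passes over the same point.
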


\begin{proof}
We will follow \cite{Mur15}. We will use the following notation:
\begin{itemize}
\item Given a morphism $f:A\to A'$ in $\mathbf{V}$ or $\mathbf{M}$ and
a morphism $g:B\to B'$ in $\mathbf{V}$ or $\mathbf{M}$, we write
$f\square g$ for their \textbf{pushout product}
\[
A\otimes B'\amalg_{A\otimes B}A'\otimes B\to A'\otimes B'
\]
whenever the notation makes sense (i.e., either (i) both $f$ and
$g$ are maps of $\mathbf{V}$, (ii) $f$ is a morphism of $\mathbf{V}$
and $g$ is a morphism of $\mathbf{M}$, or (iii) both $f$ and $g$
are morphisms of $\mathbf{M}$.) 
\item If $S$ and $T$ are collections of morphisms in $\mathbf{V}$ or
$\mathbf{M}$, we will write $S\square T=\{s\square t\mid s\in S,t\in T\}$
whenever the right-hand side makes sense. 
\item If $i:A\to B$ and $p:X\to Y$ are morphisms of $\mathbf{M}$, we
will write $\inp{i,p}$ for their \textbf{pullback power}
\[
\mathbf{M}\pr{B,X}\to\mathbf{M}\pr{A,X}\times_{\mathbf{M}\pr{A,Y}}\mathbf{M}\pr{B,Y},
\]
where we wrote $\mathbf{M}\pr{-,-}$ for the enriched hom-objects
of $\mathbf{M}$. 
\item The symbol $\mathbf{1}$ will always mean the unit of $\mathbf{M}$;
when we need to refer to the unit of $\mathbf{V}$ (which happens
only once in the proof), we denote it by $\mathbf{1}_{\mathbf{V}}$.
\end{itemize}

Choose a weak equivalence $q:\widetilde{\mathbf{1}}\to\mathbf{1}$
that witnesses Muro's unit axiom in $\mathbf{M}$. Choose a factorization
\[
\widetilde{\mathbf{1}}\amalg\mathbf{1}\xrightarrow{\iota}C\xrightarrow{\pi}\mathbf{1}
\]
of the map $\pr{q,\id}$, where $\iota$ is a cofibration in $\mathbf{M}$
and $\pi$ is a weak equivalence in $\mathbf{M}$. Let $\opn{inc}_{1}:\widetilde{\mathbf{1}}\to\widetilde{\mathbf{1}}\amalg\mathbf{1}$
denote the inclusion of the first summand, and set $\kappa=\iota\circ\opn{inc}_{1}:\widetilde{\mathbf{1}}\to C$.
Also, choose generating sets $I_{\mathbf{M}}$ and $J_{\mathbf{M}}$
of cofibrations and trivial cofibrations of $\mathbf{M}$, and choose
generating sets $I_{\mathbf{V}}$ and $J_{\mathbf{V}}$ of cofibrations
and trivial cofibrations of $\mathbf{V}$. By enlarging $I_{\mathbf{V}}$
if necessary, we may assume that the map $\emptyset\to\mathbf{1}_{\mathbf{V}}$
belongs to $I_{\mathbf{V}}$. Let $\phi:\emptyset\to\mathbf{1}$ denote
the unique map from the initial object. We set
\begin{align*}
I_{\widetilde{\mathbf{M}}} & =I_{\mathbf{M}}\cup\pr{I_{\mathbf{V}}\square\{\phi\}},\\
J_{\widetilde{\mathbf{M}}} & =J_{\mathbf{M}}\cup\pr{I_{\mathbf{V}}\square\{\kappa\}}\cup\pr{J_{\mathbf{V}}\square\{\phi\}}.
\end{align*}
We claim that there is a cofibrantly generated model structure on
$\mathbf{M}$ with generating sets of cofibrations and trivial cofibrations
given by $I_{\widetilde{\mathbf{M}}}$ and $J_{\widetilde{\mathbf{M}}}$,
and with weak equivalences given by those of $\mathbf{M}$. Write
$I_{\widetilde{\mathbf{M}}}\-\cof$ and $I_{\widetilde{\mathbf{M}}}\-\inj$
for the classes of $I_{\widetilde{\mathbf{M}}}$-cofibrations and
$I_{\widetilde{\mathbf{M}}}$-injectives; we also make use of similar
notations, such as $J_{\mathbf{V}}\-\cof$ and $J_{\widetilde{\mathbf{M}}}\-\inj$,
and let $W$ denote the class of weak equivalences of $\mathbf{M}$.
According to \cite[Theorem 11.3.1]{Hirschhorn}, we must verify the
following (In the case of semi-model categories, we use Proposition
\ref{prop:cofibrantgeneration}):

\begin{enumerate}[label=(\alph*)]

\item $J_{\widetilde{\mathbf{M}}}\-\cof\subset\pr{I_{\widetilde{\mathbf{M}}}\-\cof}\cap W$.
(In the case of semi-model categories, we replace $J_{\widetilde{\mathbf{M}}}\-\cof$
by its subclass of maps with cofibrant domain.)

\item $I_{\widetilde{\mathbf{M}}}\-\inj\subset\pr{J_{\widetilde{\mathbf{M}}}\-\inj}\cap W$.

\item $\pr{J_{\widetilde{\mathbf{M}}}\-\inj}\cap W\subset I_{\widetilde{\mathbf{M}}}\-\inj$.

\end{enumerate}

Before we prove (a) through (c), we observe that the associativity
(up to natural isomorphism) of pushout-product implies the following
inclusions: 
\begin{itemize}
\item [(d)]$\pr{I_{\mathbf{V}}\-\cof}\square\pr{I_{\widetilde{\mathbf{M}}}\-\cof}\subset\pr{I_{\widetilde{\mathbf{M}}}\-\cof}$.
\item [(e)]$\pr{\pr{J_{\mathbf{V}}\-\cof}\square\pr{I_{\widetilde{\mathbf{M}}}\-\cof}}\subset J_{\widetilde{\mathbf{M}}}\-\cof$.
\item [(f)]$\pr{\pr{I_{\mathbf{V}}\-\cof}\square\pr{J_{\widetilde{\mathbf{M}}}\-\cof}}\subset J_{\widetilde{\mathbf{M}}}\-\cof$.
\end{itemize}
Note that (d) implies:
\begin{itemize}
\item [(d$'$)]A map $p$ of $\mathbf{M}$ is an $I_{\widetilde{\mathbf{M}}}$-injective
if and only if, for each $I_{\widetilde{\mathbf{M}}}$-cofibration
$i$, the map $\inp{i,p}$ is a trivial fibration of $\mathbf{V}$.
\end{itemize}
(There is a similar characterization of $J_{\widetilde{\mathbf{M}}}$-injectives,
but we will not need this.)

We also observe that:
\begin{itemize}
\item [($\ast$)]The functor $-\otimes\widetilde{\mathbf{1}}:\mathbf{M}\to\mathbf{M}$
preserves and detects weak equivalences. 
\item [($\ast\ast$)]The morphism $\kappa\otimes\widetilde{\mathbf{1}}$
is a $J_{\mathbf{M}}$-cofibration.
\end{itemize}
Indeed, assertion ($\ast$) is clear, because the functor $-\otimes\widetilde{\mathbf{1}}\from\mathbf{M}\to\mathbf{M}$
is naturally weakly equivalent to the identity functor. Assertion
($\ast\ast$) is a consequence of the fact that $\kappa\otimes\widetilde{\mathbf{1}}$
is an $I_{\mathbf{M}}$-cofibration (being the composite of a pushout
of $\emptyset\to\widetilde{\mathbf{1}}$ and the map $\iota\otimes\widetilde{\mathbf{1}}$)
and is a weak equivalence by ($\ast$).

Now we prove (a) through (c). We start from (a). To show that $J_{\widetilde{\mathbf{M}}}\-\cof\subset I_{\widetilde{\mathbf{M}}}\-\cof$,
it will suffice to show that $J_{\widetilde{\mathbf{M}}}\subset I_{\widetilde{\mathbf{M}}}\-\cof$.
For this, in light of (d), it suffices to show that $\kappa$ is an
$I_{\widetilde{\mathbf{M}}}$-cofibration. This is clear, because
it is the composite of the pushout of $\phi$ and the $I_{\mathbf{M}}$-cofibration
$\iota$. To show that $J_{\widetilde{\mathbf{M}}}\-\cof\subset W$,
we observe that $\pr{J_{\widetilde{\mathbf{M}}}\-\cof}\otimes\widetilde{\mathbf{1}}\subset J_{\mathbf{M}}\-\cof$
by ($\ast\ast$). It follows from ($\ast$) that $J_{\widetilde{\mathbf{M}}}\-\cof\subset W$,
as desired.

Next, we prove (b). Let $p$ be an $I_{\widetilde{\mathbf{M}}}$-injective.
We must show that $p\in\pr{J_{\widetilde{\mathbf{M}}}\-\inj}\cap W$.
Since $I_{\widetilde{\mathbf{M}}}\-\inj\subset I_{\mathbf{M}}\-\inj=\pr{J_{\mathbf{M}}\-\inj}\cap W$,
it suffices to show that $\inp{\kappa,p}$ is a trivial fibration
and $\inp{\phi,p}$ is a fibration. By (d$'$), this will follow once
we show that $\kappa$ and $\phi$ are $I_{\widetilde{\mathbf{M}}}$-cofibrations.
For $\phi$, this is clear. We have also seen in the previous paragraph
that $\kappa$ is an $I_{\widetilde{\mathbf{M}}}$-cofibration. Thus
we have proved (b).

Finally, we prove (c). Let $p\in\pr{J_{\widetilde{\mathbf{M}}}\-\inj}\cap W$.
We wish to show that $p\in I_{\widetilde{\mathbf{M}}}\-\inj$. Since
$\pr{J_{\widetilde{\mathbf{M}}}\-\inj}\cap W\subset\pr{J_{\mathbf{M}}\-\inj}\cap W=I_{\mathbf{M}}\-\inj$,
it suffices to show that $\inp{\phi,p}$ is a trivial fibration. Since
$\phi$ is a retract of the map $\psi:\emptyset\to C$, it suffices
to show that $\inp{\psi,p}$ is a trivial fibration. For this, factor
the map $\psi$ as
\[
\emptyset\xrightarrow{\widetilde{\phi}}\widetilde{\mathbf{1}}\xrightarrow{\kappa}C.
\]
The map $\widetilde{\phi}$ is an $I_{\mathbf{M}}$-cofibration, so
$\inp{\widetilde{\phi},p}$ is a trivial fibration. The map $\inp{\kappa,p}$
is a trivial fibration by the definition of $J_{\widetilde{\mathbf{M}}}$-injectives.
Since $\inp{\psi,p}$ is the composition of $\inp{\kappa,p}$ and
a pullback of $\inp{\widetilde{\phi},p}$, this proves that $\inp{\psi,p}$
is a trivial fibration, as desired.

We have thus shown that the sets $I_{\widetilde{\mathbf{M}}}$, $J_{\widetilde{\mathbf{M}}}$
and the class $W$ of weak equivalences of $\mathbf{M}$ gives rise
to a model structure on $\mathbf{M}$, which we denote by $\widetilde{\mathbf{M}}$.
By construction, it satisfies conditions (1) through (3). We complete
the proof by showing that $\widetilde{\mathbf{M}}$ is a monoidal
model category; once we prove this, condition (4) is immediate from
the construction.

To show that $\widetilde{\mathbf{M}}$ is a monoidal model category,
it suffices to show that $I_{\widetilde{\mathbf{M}}}\square I_{\widetilde{\mathbf{M}}}\subset I_{\widetilde{\mathbf{M}}}\-\cof$,
$I_{\widetilde{\mathbf{M}}}\square J_{\widetilde{\mathbf{M}}}\subset J_{\widetilde{\mathbf{M}}}\-\cof$,
and $J_{\widetilde{\mathbf{M}}}\square I_{\widetilde{\mathbf{M}}}\subset J_{\widetilde{\mathbf{M}}}\-\cof$.
Using the fact that $\mathbf{M}$ is a monoidal $\mathbf{V}$-model
category, and also using inclusions (a), (d), and (e), we are reduced
to showing the following:
\begin{itemize}
\item [($\ast\ast\ast$)]For every $I_{\mathbf{M}}$-cofibration $i$, the
maps $i\square\kappa$ and $\kappa\square i$ are $J_{\mathbf{M}}$-cofibrations
(and hence $J_{\widetilde{\mathbf{M}}}$-cofibrations). (In the semi-model
categorical case, we also assume $i$ has a cofibrant domain in $\mathbf{M}$.)
\end{itemize}

To prove ($\ast\ast\ast$), we observe that $i\square\kappa$ is a
composition of $i\square\iota$ and a pushout of $i\otimes\phi\cong i$.
Since the maps $i\square\iota$ and $i$ are $I_{\mathbf{M}}$-cofibrations,
we deduce that $i\square\kappa$ is an $I_{\mathbf{M}}$-cofibration.
Moreover, we know from ($\ast\ast$) that $\kappa\otimes\widetilde{\mathbf{1}}$
is an $J_{M}$-cofibration, so $\pr{i\square\kappa}\otimes\widetilde{\mathbf{1}}$
is also a $J_{\mathbf{M}}$-cofibration. It follows from ($\ast$)
that $i\square\kappa$ is a weak equivalence. Hence $i\square\kappa$
is a $J_{\mathbf{M}}$-cofibration. Likewise, $\kappa\square i$ is
a $J_{\mathbf{M}}$-cofibration. The proof is now complete. 
\end{proof}

\subsection*{Acknowledgment}

The author thanks David White for patiently answering many questions
about (semi-)model categories and monoidal categories. He also thanks
Chris Kapulkin and Daniel Carranza for helpful discussions on cubical
sets, without which this paper would not have been possible. He is
also grateful to Chris for general advice on writing. This work was
supported by JSPS KAKENHI Grant Number 24KJ1443.

\input{main.bbl}

\end{document}

%% file: macros.tex
\global\long\def\u#1{\underline{#1}}%

\global\long\def\cat#1{\mathcal{#1}}%

\global\long\def\tild#1{\widetilde{#1}}%

\global\long\def\mrm#1{\mathrm{#1}}%

\global\long\def\pr#1{\left(#1\right)}%

\global\long\def\abs#1{\left|#1\right|}%

\global\long\def\inp#1{\left\langle #1\right\rangle }%

\global\long\def\br#1{\left\{  #1\right\}  }%

\global\long\def\norm#1{\left\Vert #1\right\Vert }%

\global\long\def\hat#1{\widehat{#1}}%

\global\long\def\opn#1{\operatorname{#1}}%

\global\long\def\bigmid{\,\middle|\,}%

\global\long\def\Top{\mathsf{Top}}%

\global\long\def\Set{\mathsf{Set}}%

\global\long\def\SS{\mathsf{sSet}}%

\global\long\def\Kan{\mathsf{Kan}}%

\global\long\def\FB{\mathsf{FB}}%

\global\long\def\Fin{\mathsf{Fin}}%

\global\long\def\urCard{\mathsf{urCard}}%

\global\long\def\LPC{\mathsf{LocPresCat}}%

\global\long\def\RelCat{\mathsf{RelCat}}%

\global\long\def\Rel{\mathcal{R}\mathsf{el}}%

\global\long\def\PermRelCat{\mathsf{PermRelCat}}%

\global\long\def\RelCatlarge{\mathsf{Rel}\hat{\mathsf{Cat}}}%

\global\long\def\SMRelCat{\mathsf{SMRelCat}}%

\global\long\def\SMRelCatlarge{\mathsf{SMRel}\widehat{\mathsf{Cat}}}%

\global\long\def\TMMC{\mathsf{TractMMC}}%

\global\long\def\CMMC{\mathsf{CombMMC}}%

\global\long\def\CSMMC{\mathsf{CombSMMC}}%

\global\long\def\TSMMC{\mathsf{TractSMMC}}%

\global\long\def\CombSymAlg#1{\mathsf{CombSym}#1\text{-}\mathsf{Alg}}%

\global\long\def\combsymalg#1{\mathsf{combsym}#1\text{-}\mathsf{alg}}%

\global\long\def\CombCentAlg#1{\mathsf{CombCent}#1\text{-}\mathsf{Alg}}%

\global\long\def\BiCat{\mathcal{B}\mathsf{i}\mathcal{C}\mathsf{at}}%

\global\long\def\Cat{\mathcal{C}\mathsf{at}}%

\global\long\def\SM{\mathcal{SM}}%

\global\long\def\Mon{\mathcal{M}\mathsf{on}}%

\global\long\def\LFib{\mathcal{L}\mathsf{Fib}}%

\global\long\def\Cart{\mathcal{C}\mathsf{art}}%

\global\long\def\Pr{\mathcal{P}\mathsf{r}}%

\global\long\def\Del{\mathbf{\Delta}}%

\global\long\def\Sig{\mathbf{\Sigma}}%

\global\long\def\id{\operatorname{id}}%

\global\long\def\Aut{\operatorname{Aut}}%

\global\long\def\End{\operatorname{End}}%

\global\long\def\Hom{\operatorname{Hom}}%

\global\long\def\Env{\operatorname{Env}}%

\global\long\def\Map{\operatorname{Map}}%

\global\long\def\Und{\operatorname{Und}}%

\global\long\def\Ar{\operatorname{Ar}}%

\global\long\def\cEnd{\mathcal{E}\mathrm{nd}}%

\global\long\def\ho{\operatorname{ho}}%

\global\long\def\ob{\operatorname{ob}}%

\global\long\def\-{\text{-}}%

\global\long\def\rep{\mathrm{rep}}%

\global\long\def\op{\mathrm{op}}%

\global\long\def\bi{\mathrm{bi}}%

\global\long\def\sk{\mathrm{sk}}%

\global\long\def\cc{\mathrm{cc}}%

\global\long\def\gr{\mathrm{gr}}%

\global\long\def\act{\mathrm{act}}%

\global\long\def\inj{\mathrm{inj}}%

\global\long\def\cof{\mathrm{cof}}%

\global\long\def\fib{\mathrm{fib}}%

\global\long\def\BK{\mathrm{BK}}%

\global\long\def\Reedy{\mathrm{Reedy}}%

\global\long\def\loc{\mathrm{loc}}%

\global\long\def\Seg{\mathrm{Seg}}%

\global\long\def\semi{\mathrm{semi}}%

\global\long\def\weq{\mathrm{weq}}%

\global\long\def\opd{\mathrm{opd}}%

\global\long\def\idem{\mathrm{idem}}%

\global\long\def\rel{\mathrm{rel}}%

\global\long\def\Quilleq{\mathrm{Quill.eq}}%

\global\long\def\loceq{\mathrm{loc.eq}}%

\global\long\def\To{\Rightarrow}%

\global\long\def\rr{\rightrightarrows}%

\global\long\def\rl{\rightleftarrows}%

\global\long\def\mono{\rightarrowtail}%

\global\long\def\epi{\twoheadrightarrow}%

\global\long\def\comma{\downarrow}%

\global\long\def\ot{\leftarrow}%

\global\long\def\from{\colon}%

\global\long\def\corr{\leftrightsquigarrow}%

\global\long\def\lim{\operatorname{lim}}%

\global\long\def\colim{\operatorname{colim}}%

\global\long\def\holim{\operatorname{holim}}%

\global\long\def\hocolim{\operatorname{hocolim}}%

\global\long\def\Ran{\operatorname{Ran}}%

\global\long\def\Lan{\operatorname{Lan}}%

\global\long\def\Ind{\operatorname{Ind}}%

\global\long\def\Fun{\operatorname{Fun}}%

\global\long\def\Fact{\operatorname{Fact}}%

\global\long\def\Perm{\operatorname{Perm}}%

\global\long\def\Alg{\operatorname{Alg}}%

\global\long\def\Coll{\operatorname{Coll}}%

\global\long\def\MInd{\mathbf{Ind}}%

\global\long\def\SSeq{\Sigma\mathrm{Seq}}%

\global\long\def\MSSeq{\mathbf{\Sigma Seq}}%

\global\long\def\B{\mathrm{B}}%

\global\long\def\H{\mathrm{H}}%

\global\long\def\Day{\mathrm{Day}}%

\global\long\def\xmono#1#2{\stackrel[#2]{#1}{\rightarrowtail}}%

\global\long\def\xepi#1#2{\stackrel[#2]{#1}{\twoheadrightarrow}}%

\global\long\def\adj{\stackrel[\longleftarrow]{\longrightarrow}{\bot}}%

\global\long\def\w{\wedge}%

\global\long\def\t{\otimes}%

\global\long\def\ev{\operatorname{ev}}%

\global\long\def\bp{\boxplus}%

\global\long\def\rcone{\triangleright}%

\global\long\def\lcone{\triangleleft}%

\global\long\def\teq{\trianglelefteq}%

\global\long\def\ll{\vartriangleleft}%

\global\long\def\S{\mathsection}%

\global\long\def\p{\prime}%

\global\long\def\pp{\prime\prime}%

\global\long\def\sto{\rightsquigarrow}%

%% file: main.bbl
\providecommand{\bysame}{\leavevmode\hbox to3em{\hrulefill}\thinspace}
\providecommand{\MR}{\relax\ifhmode\unskip\space\fi MR }
\providecommand{\MRhref}[2]{%
  \href{http://www.ams.org/mathscinet-getitem?mr=#1}{#2}
}
\providecommand{\href}[2]{#2}